\documentclass[11pt, a4paper]{article}

%packages
%format
\usepackage[utf8]{inputenc}
\usepackage[english]{babel}
\usepackage[T1]{fontenc}
\usepackage[colorlinks = true,
            linkcolor = blue,
            urlcolor  = blue,
            citecolor = blue,
            anchorcolor = blue,
            hyperfootnotes=false]{hyperref}
\usepackage[a4paper,width=150mm,top=25mm,bottom=25mm]{geometry}
\usepackage{fancyhdr}
\usepackage[symbol]{footmisc}

%mathematics
\usepackage{amsmath,amsthm,amssymb, enumitem, mathtools, mathrsfs}
\usepackage{siunitx}
\setitemize{noitemsep}

%figures/tables
\usepackage{tikz}
\usepackage{tikz-cd}
\usepackage{caption}
\usepackage[all]{xy}
\usepackage{graphicx}
\usepackage{chngcntr}
\usepackage{booktabs}
\usepackage[labelformat=simple]{subcaption}

% \counterwithin{figure}{section}

%misc
\usepackage{pdfpages}

%bibliography
\usepackage{csquotes}
\usepackage[
backend=biber,
sorting=anyt,
minbibnames=4,
maxbibnames=4
]{biblatex}
\addbibresource{biblio.bib}

\setcounter{biburlnumpenalty}{100}
\setcounter{biburlucpenalty}{100}
\setcounter{biburllcpenalty}{100}

%include images folder
\graphicspath{ {images/} }

%theoremstyle
\theoremstyle{plain}
\newtheorem{theorem}{Theorem}[section]
\newtheorem*{theorem*}{Theorem}
\newtheorem*{satz*}{Satz}
\newtheorem{proposition}[theorem]{Proposition}
\newtheorem{lemma}[theorem]{Lemma}
\newtheorem{corollary}[theorem]{Corollary}
\theoremstyle{definition}
\newtheorem{definition}[theorem]{Definition}
\newtheorem{assumption}[theorem]{Assumption}

\theoremstyle{remark}
\newtheorem{remark}[theorem]{Remark}

%numbering
\numberwithin{equation}{section}

%definitions

\DeclareMathOperator{\id}{id}

\DeclareMathOperator{\trace}{tr}

\newcommand{\ev}{\mathbb{E}}
\newcommand{\pr}{\mathbb{P}}

\newcommand{\R}{\mathbb{R}}
\renewcommand{\P}{\mathcal{P}}
\newcommand{\F}{\mathcal{F}}
\newcommand{\B}{\mathcal{B}}

\newcommand{\leb}{\mathrm{Leb}}
\renewcommand{\L}{\mathcal{L}}
\newcommand{\M}{\mathcal{M}}
\renewcommand{\d}{\mathrm{d}}

\newcommand{\define}{\mathpunct{:}}
\newcommand{\bb}[1]{\mathbb{#1}}
\renewcommand{\bf}[1]{\mathbf{#1}}
\renewcommand{\cal}[1]{\mathcal{#1}}

\title{Title}
\author{Philipp Jettkant}

\begin{document}

% \maketitle

% \pagenumbering{roman}

\noindent
\begin{center}
    \Large
    \textbf{Optimal Control of the \\
    Nonlinear Stochastic Fokker--Planck Equation}

    \vspace{1em}

    \normalsize
    Ben Hambly\footnote[1]{Mathematical Institute, University of Oxford, United Kingdom.} \& Philipp Jettkant\footnote[2]{Mathematical Institute, University of Oxford, United Kingdom and Department of Mathematics, Imperial College London, United Kingdom, Corresponding Author, \href{mailto:p.jettkant@imperial.ac.uk}{p.jettkant@imperial.ac.uk}.}

\end{center}

\vspace{1em}

\begin{abstract}
\thispagestyle{plain}
\small
We consider a control problem for the nonlinear stochastic Fokker--Planck equation. This equation describes the evolution of the distribution of nonlocally interacting particles affected by a common source of noise. The system is directed by a controller that acts on the drift term with the goal of minimising a cost functional. We establish the well-posedness of the state equation, prove the existence of optimal controls, and formulate a stochastic maximum principle (SMP) that provides necessary and sufficient optimality conditions for the control problem. The adjoint process arising in the SMP is characterised by a nonlocal (semi)linear backward SPDE for which we study existence and uniqueness. We also rigorously connect the control problem for the nonlinear stochastic Fokker--Planck equation to the control of the corresponding McKean--Vlasov SDE that describes the motion of a representative particle. Our work extends existing results for the control of the Fokker--Planck equation to nonlinear and stochastic dynamics. In particular, the sufficient SMP, which we obtain by exploiting the special structure of the Fokker--Planck equation, seems to be novel even in the linear deterministic setting. We illustrate our results with an application to a model of government interventions in financial systems, supplemented by numerical illustrations.
\end{abstract}

\normalsize

\section{Introduction} \label{sec:introduction}

In this article we study the optimal control of the nonlinear stochastic Fokker--Planck equation
\begin{equation} \label{eq:fpe_intro}
    \d \langle \nu_t, \varphi\rangle = \bigl\langle \nu_t, \L\varphi(t, \cdot, \nu_t, \gamma_t)\bigr\rangle \, \d t + \bigl\langle \nu_t, (\nabla \varphi)^{\top} \sigma_0\bigr\rangle \, \d W_t
\end{equation}
for $\varphi \in C^2_c(\R^d)$ and $t \in [0, T]$. Here the second-order differential operator $\L$ is the generator of a nonlinear diffusion process, the control $\gamma$ is a space-dependent progressively measurable process, $W$ is a $d_W$-dimensional Brownian motion, and $\sigma_0 \in \R^{d \times d_W}$. A controller chooses the control $\gamma$ with the objective of minimising the cost functional
\begin{equation*}
    J(\gamma) = \ev\biggl[\int_0^T \langle \nu_t, f(t, \cdot, \nu_t, \gamma_t)\rangle \, \d t + \psi(\nu_T)\biggr]
\end{equation*}
over $\gamma$ in some admissible set. The measure-valued solution $\nu$ of the Fokker--Planck equation corresponds to the conditional law of a representative particle whose motion is described by a controlled McKean--Vlasov SDE with generator $\L$ and a common noise, modelled by the Brownian motion $W$. The latter problem is referred to as McKean--Vlasov or mean-field control (MFC) and has received widespread attention in recent years \cite{andersson_smp_mfc_2011, lauriere_dp_mfc_2014, carmona_fbsde_smp_2015, pham_dynamic_programming_mkv_2017, djete_mkv_control_limit_2022, cardaliaguet_rate_mfc_2023}. While most of the literature studies MFC problems from the point of view of the McKean--Vlasov SDE, in our analysis we emphasise the stochastic Fokker--Planck equation \eqref{eq:fpe_intro}. This perspective is particularly appropriate when the generator $\L$ includes zeroth-order terms, in which case the measure $\nu_t$ gains or loses mass over time. This change in mass can be interpreted as the insertion or killing of particles at a state-dependent rate in the associated McKean--Vlasov SDE. Models featuring such mechanisms were recently introduced in \cite{hambly_mvcp_arxiv_2023} and \cite{carmona_nssc_2023}. The insertion and killing mechanism manifests itself as an additional process in the McKean--Vlasov SDE, appended to the particle's state, that tracks the cumulative insertion or killing intensity. As we prove in Theorem \ref{thm:equivalence}, knowledge of this process is not necessary to optimally control the McKean--Vlasov SDE. Nonetheless, we cannot dispense with it in the specification of the representative particle. On the other hand, in the Fokker--Planck formulation of the control problem, the intensity is encoded as a zeroth-order term in the generator $\L$. No additional equations are required, leading to a cleaner description. 

As indicated above, the goal of the present paper is to provide a comprehensive analysis of the control problem for the nonlinear stochastic Fokker--Planck equation \eqref{eq:fpe_intro}. First, we establish the well-posedness of this equation for any admissible control $\gamma$. Our next objective is to prove the existence of optimal controls through the use of tightness arguments. Then, we formulate both a necessary and a sufficient stochastic maximum principle (SMP) for the control problem. Lastly, we apply our results to the MFC model for government bailouts from \cite{hambly_mvcp_arxiv_2023}, illuminating the optimal range for capital injections by the government. We also discuss the equivalence between the optimal control of the nonlinear stochastic Fokker--Planck equation and the optimal control of the associated McKean--Vlasov SDE to justify our approach (see Appendix \ref{sec:equivalence}). In our analysis we exploit that the stochasticity in the Fokker--Planck equation can be removed by shifting the solution $\nu$ of this stochastic PDE (SPDE) along the noise $W$. This requires a constant (or, more generally, time-dependent) diffusion coefficient $\sigma_0$. Through the shift, the stochastic Fokker--Planck equation is transformed into a PDE with random coefficients -- a nonlinear random Fokker--Planck equation. This PDE captures the evolution of the distribution of a living representative particle whose state dynamics follow a McKean--Vlasov SDE with random coefficients. The same method was applied in \cite{cardaliaguet_master_2019} and, more recently, in \cite{cardaliaguet_fo_mgf_2022, cardaliaguet_mfg_common_2022} to study the existence of mean-field game (MFG) equilibria. (In a MFG, the particles described by the Fokker--Planck equation are in competition as opposed to being controlled simultaneously by a single agent.) 

It is convenient to study the random Fokker--Planck equation as a stochastic evolution equation in $L^2(\R^d)$, assuming the initial measure $\nu_0$ admits a density $\rho_0 \in L^2(\R^d)$ with respect to the Lebesgue measure. A slight complication arises because, inspired by the correspondence with McKean--Vlasov SDEs, the nonlinearities in the measure argument, i.e.\@ with respect to $\nu_t$, of the generator $\L$ are assumed to be Lipschitz continuous with respect to the bounded Lipschitz distance on the space of measures. In general, such nonlinearities, when viewed as a function of the density of a measure, will not be continuous with respect to the $L^2$-norm. This leads us to consider a weighted $L^2$-space whose norm dominates the bounded Lipschitz distance in a suitable way. In this framework, the adjoint process arising in the necessary and the sufficient SMP is naturally an element of the dual of this weighted $L^2$-space. This dual can be identified as the weighted $L^2$-space whose weight is the multiplicative inverse of the original weight. While the shifted state process satisfies a random PDE, the adjoint process is nonetheless characterised through a stochastic equation. As for classical control problems, this equation runs backwards and to ensure adaptedness of the adjoint process, a martingale is introduced in the adjoint PDE, which removes randomness as one steps backwards in time. In other words, the adjoint process satisfies a linear backward SPDE (BSPDE). Since the tightness arguments employed to prove the existence of an optimal control force us to work in a general filtration, which may extend that of the Brownian motion $W$, the martingale driving the BSPDE may have merely c\`adl\`ag as opposed to continuous trajectories. In addition, the nonlinearities of the Fokker--Planck equation translate into nonlocalities for this BSPDE. In summary, the adjoint process is characterised by a nonlocal BSPDE with c\`adl\`ag martingale driver. 

By inserting the necessary optimality condition from the SMP into the adjoint BSPDE, we derive a nonlocal semilinear BSPDE of Hamilton--Jacobi--Bellman (HJB) type. We provide a general existence and uniqueness analysis of such nonlocal (semi)linear BSPDEs with c\`adl\`ag martingale driver. Together with the random Fokker--Planck equation, the stochastic HJB equation satisfied by the adjoint process forms a coupled forward-backward system of stochastic PDEs reminiscent of the classical MFG systems appearing in the pioneering work of Lasry \& Lions \cite{lasry_mfg_st_2006, lasry_mfg_fh_2006, lasry_mfg_2007}. Indeed, our forward-backward SPDE, deduced from the optimality condition of the necessary SMP, characterises a MFG equilibrium with extended running and terminal cost functions deriving from $f$ and $\psi$. This connection between MFC and MFGs was already established in \cite{lasry_mfg_2007}, but we also refer to the account in \cite[Section 6.2.5]{carmona_mfg_2018} which resembles our setup more closely. Under a rather restrictive convexity assumption and if the minimiser of the Hamiltonian associated with the control of SPDE \eqref{eq:fpe_intro} is unique, it follows from the sufficient SMP that this forward-backward SPDE has a unique solution. This in turn implies that the associated MFG equilibrium is unique. 

Let us note that the convexity assumptions required for the sufficient SMP imply linearity of the Fokker--Planck equation, though not that of the cost functions. However, even for the control of the linear Fokker--Planck equation our sufficient optimality condition seems to be new in the literature. This is likely to be due to the fact that the structure of the Fokker--Planck equation generically precludes joint convexity in the state and the control variable (except in pathological situations), which is crucially used to deduce the sufficient SMP for standard (stochastic) optimal control problems. Fortunately, this same structure can be exploited to establish sufficiency in the absence of joint convexity; an observation that seems to have been gone unnoticed so far. In particular, as soon as both the Fokker--Planck equation and the cost functionals are linear, the sufficient SMP holds.

\subsection{Related Literature} \label{sec:related_literature}

The interest in controlling the Fokker--Planck equation stems from the fact that it describes the law of a particle whose motion is described by an SDE. Hence, controlling the Fokker--Planck equation corresponds to the control of the particle's trajectories. A number of papers have studied the optimal control of deterministic Fokker--Planck equations, in the linear case \cite{barbu_ana_contr_1993, fleig_oc_fpe_2017, breitenbach_mp_fpe_2020, anita_sde_fpe_2021, daudin_oc_const_2023} or where the equation has local \cite{barbu_nonlin_fp_ctrl_2023, anita_nonlin_fp_2024} or nonlocal nonlinearities \cite{bensoussan_mfg_mfc_2013, carrillo_mfc_2020}. As in our case, the objective of many of the cited works, particularly those in the linear setting, is to establish the existence of optimal controls as well as the necessary SMP. In this article, we borrow from the exposition in \cite{anita_sde_fpe_2021} to construct an optimal control. However, due to the stochasticity present in our setup, the proof becomes more involved. Indeed, the tightness arguments we employ force us to introduce a weak formulation of the control problem, where the filtration underlying the admissible controls may be larger than that generated by the noise $W$. We must show that this relaxation does not change the optimal value of the control problem. We achieve this by adapting ideas of Djete, Possama\"i \& Tan \cite{djete_mkv_control_limit_2022} developed in the context of McKean--Vlasov control problems with common noise. Since our state equation only has random coefficients as opposed to a stochastic driver, we can simplify their arguments considerably. 

For linear equations, the necessary SMP can be derived using the spike variation technique, without a need for a second-order adjoint process (cf.\@ \cite{breitenbach_mp_fpe_2020, anita_sde_fpe_2021}). In our nonlinear setting, changing the control on a set of measure $\epsilon$ leads to a variation in the state of order $\epsilon^{1/2}$, so a second-order adjoint would become necessary. To avoid this, we opted for a formulation of the necessary SMP with only a first-order adjoint at the price of convexity assumptions on the domain of the control and the Hamiltonian. Let us stress once more that we also establish a sufficient optimality condition for control of the Fokker--Planck equation. This result cannot be found in the aforementioned literature, even in the linear and deterministic setting.

Linear BSPDEs and their precursors were initially explored in the context of the optimal control of SPDEs by Pardoux \cite{pardoux_bspde_1980}, Bensoussan \cite{bensoussan_smp_spde_1983, benoussan_mp_dp_1983}, Zhou \cite{zhou_nec_spde_1993, zhou_duality_1992}, among others.  Their particular focus was on the Zakai equation, which arises in the filtering of partially observed diffusions. As in our case, the BSPDEs characterise the adjoint processes appearing in the necessary optimality condition. However, all of the cited articles consider linear SPDEs (such as the Zakai equation) and work in a Brownian filtration (in the context of filtering that would be the filtration of the Brownian motion driving the observation process). Consequently, they do not have to deal with nonlocalities or a c\`adl\`ag martingale driver. For the latter, we crucially draw on a generalisation of It\^o's formula for c\`adl\`ag semimartingales in $L^2$-spaces, see Theorem \ref{thm:ito}.

The semilinear BSPDEs analysed in \cite{hu_semilinear_bspde_1991, peng_stochastic_hjb_1992, cardaliaguet_mfg_common_2022} are structurally closest to those in our work. Peng \cite{peng_stochastic_hjb_1992} studies a local stochastic HJB equation arising in the control of SDEs with random coefficients. The McKean--Vlasov SDE associated with the Fokker--Planck equation \eqref{eq:fpe_intro} takes this form if we view the dependence on the conditional law $\nu_t$ in its coefficients as a fixed random input. However, unlike in \cite{peng_stochastic_hjb_1992}, this source of randomness $\nu_t$ is not fixed and varies under perturbations of the control $\gamma_t$, which is the cause of the nonlocalities present in the BSPDEs in our setting. Cardaliaguet, Seeger \& Souganidis \cite{cardaliaguet_mfg_common_2022} analyse a local stochastic HJB equation using an alternative solution theory based on the propagation of certain semiconcavity estimates, which does not require uniform parabolicity. Their arguments, however, cannot be straightforwardly adapted to nonlocal equations. Finally, let us mention that to the best of our knowledge the only articles that consider a similar class of BSPDEs in a non-Brownian filtration are \cite{al_hussein_bspde_2006, al_hussein_bspde_2009}. However, these works explicitly assume that the filtration only generates continuous martingales in order to avoid jumps in the dynamics.
% Let us mention here that degenerate linear BSPDEs, without a uniformly parabolic generator, were also studied by Ma \& Yong \cite{ma_bspde_1997, jin_1999_bspde} with extensions to equations with zeroth-order nonlinearities by Hu, Ma \& Yong \cite{hu_semilinear_bspde_2002}. 

% Structurally closest to ours are the semilinear BSPDEs analysed in \cite{hu_semilinear_bspde_1991, peng_stochastic_hjb_1992}. The second article studies a stochastic HJB equation arising in the control of SDEs with random coefficients. The McKean--Vlasov SDE associated with the Fokker--Planck equation \eqref{eq:fpe_intro} is precisely of this type if we view the dependence on the conditional law $\nu_t$ in the coefficients of the SDE as a fixed source of randomness. Unlike in \cite{peng_stochastic_hjb_1992}, this source of randomness, namely $\nu_t$, varies under perturbations of the control, which is the cause of the nonlinearities present in our setting. Finally, let us mention that to the best of our knowledge the only articles that consider a similar class of BSPDEs in a non-Brownian filtration are \cite{al_hussein_bspde_2006, al_hussein_bspde_2009}. However, these works explicitly assume that the filtration only generates continuous martingales in order to avoid jumps in the dynamics.

MFGs and -- to a lesser extent -- MFC have seen a surge of interest in the last decade and a half and have been studied both from a probabilistic and analytic perspective. Particularly for MFGs, the analytic approach, which leads to a coupled forward-backward PDE consisting of the Fokker--Planck equation and the HJB equation, has received considerable attention \cite{lasry_mfg_st_2006, lasry_mfg_2007, cardaliaguet_mfg_2014, gomes_mfg_2016, cardaliaguet_master_2019, cardaliaguet_fo_mgf_2022, cardaliaguet_mfg_common_2022}. The corresponding system for MFC problems, see e.g.\@ \cite{bensoussan_mfg_mfc_2013, achdou_mfc_pde_2015, carmona_mfg_2018}, features less prominently in the literature. Its derivation as the necessary optimality condition of the SMP is rather different from the derivation of the MFG system, which proceeds via dynamic programming. Since the proof of the necessary SMP relies on the stability of the state equation under small perturbation of the control, it requires strict regularity conditions, such as uniform parabolicity of the generator $\L$. The derivation of the MFG system is less burdensome.
% Its derivation requires rather strong assumptions, since the proof of the necessary SMP from which the system arises as an optimality condition relies on the stability of the state equation under small perturbation of the control. To obtain the latter one typically has to impose strict regularity conditions on the system, such as uniform parabolicity of the generator $\L$. 
Moreover, in contrast to the MFG system, the HJB equation of the MFC system exhibits nonlocalities, so one cannot rely on the theory of viscosity solutions or the comparison principle to analyse the equation, unless the nonlocalities are restricted to zeroth-order terms as e.g.\@ in \cite{carmona_nssc_2023}. 

Several authors have established necessary and sufficient SMPs for the control of the McKean--Vlasov SDE underlying the stochastic Fokker--Planck equation \eqref{eq:fpe_intro} in the open-loop formulation. Andersson \& Djehiche \cite{andersson_smp_mfc_2011} treat coefficients that depend on moments of the population's distribution and derive a necessary SMP for convex control spaces. A similar result was obtained by Li \cite{li_smp_mfc_2012} for scalar interactions. Buckdahn, Djehiche \& Li \cite{buckdahn_mfc_smp_2011}  
derive the first SMP for McKean--Vlasov control with a general action space, requiring the application of spike variation techniques. General dependence on the population's law are treated in the monographs \cite{bensoussan_mfg_mfc_2013} and \cite{carmona_mfg_2018} and by Carmona \& Delarue \cite{carmona_fbsde_smp_2015}, who also provide a sufficient condition. Extended mean-field control, where interaction is through the joint law of the population and the control is covered in \cite{acciaio_smp_mfc_2019}. As for the control of standard SDEs, all of the above works require differentiability of the coefficients with respect to the state variable, which is not needed for our approach. Moreover, while in our approach the regularity of the coefficients and cost functions with respect to the measure argument is captured through the so-called linear functional derivative (cf.\@ Definition \ref{def:linear_functional_derivatve}), when proceeding via the McKean--Vlasov SDE, the Lions derivative (see \cite[Definition 5.22]{carmona_mfg_2018}) is the appropriate concept. For sufficiently regular function, \cite[Proposition 5.51]{carmona_mfg_2018} shows that the Lions derivative coincides with the spatial derivative of the linear function derivative. So again, we require less regularity. Note, however, that our SMP characterises closed-loop as opposed to open-loop controls.

We should remark that our framework does not cover models where the killing of particles occurs at a fixed boundary. These have received increasing attention recently, both in an uncontrolled setting \cite{lions_cond_proc_2016, hambly_mckean_vlasov_absorbing_2017, hambly_mckean_vlasov_blow_up_2019, hambly_spde_model_2019, nadtochiy_ps_singular_inter_2019} and in the context of MFGs \cite{campi_mfg_absorption_2018, campi_mfg_hitting_2021, burzoni_mean_field_absorption_2023, nadtochiy_mean_field_network_2020}. Instead of a zeroth-order term, killing at a boundary leads to a Dirichlet boundary condition for SPDE \eqref{eq:fpe_intro}. Such nonlinear stochastic Fokker--Planck equations with Dirichlet boundary condition were studied in \cite{hambly_mckean_vlasov_absorbing_2017, hambly_spde_model_2019}. To establish uniqueness, one has to perform a careful analysis of the regularity of the solution near the boundary. Embedding a control problem into this framework is an interesting topic for future research.

Finally, let us note here that the proof of Theorem \ref{thm:equivalence}, which establishes the equivalence of the McKean--Vlasov control problem (in open-loop formulation) and the control problem for the corresponding stochastic Fokker--Planck equation with space-dependent control processes $\gamma$ as introduced above, relies on a variation of the mimicking theorem from \cite{lacker_mimicking_2020} combined with classical measurable selection arguments. The main difference to standard McKean--Vlasov control problems is that we must apply the measurable selection to an open-loop control of the McKean--Vlasov control problem in such a way that the resulting feedback control $\gamma$ does not depend on the auxiliary cumulative intensity process. For otherwise, we cannot derive a stochastic Fokker--Planck equation for the conditional distribution $\nu_t$ of the living particles. If the Hamiltonian of the McKean--Vlasov control problem is convex, such a selection is indeed possible. This yields a rather straightforward proof of the equivalence between both formulations of the control problem. A similar result was concurrently obtained by Carmona, Lauri\`ere \& Lions \cite{carmona_nssc_2023} in the related context of the control of conditional processes. Their proof proceeds with a careful analysis of a nonlocal PDE of HJB type that characterises the optimal controls of the McKean--Vlasov control problem. Our approach simplifies the proof considerably.

\subsection{Main Contributions and Structure of the Paper}

In Section \ref{sec:main_results} we present the main results of the paper. We discuss the transformation of the stochastic Fokker--Planck equation to a Fokker--Planck equation with random coefficients, state the results concerning the well-posedness of the state equation and existence of optimal controls, and formulate the necessary and the sufficient SMP.

We present a general theory of nonlocal semilinear BSPDEs with c\`adl\`ag drivers in Section \ref{sec:bspde}. We first establish a generalisation of It\^o's formula for c\`adl\`ag semimartingales with values in $L^2(\R^d)$. This extension of It\^o's formula is a key tool that is used frequently throughout this work and also seems to be novel in this generality. Subsequently, we prove existence and uniqueness for the BSPDE based on a Galerkin approximation and fixed-point arguments. 

The analysis of the control problem is the topic of Section \ref{sec:control}. We begin by establishing the equivalence between the stochastic Fokker--Planck equation and its transformation -- the Fokker--Planck equation with random coefficients. For the latter, we prove existence and uniqueness given any control input, so the control problem is well-posed. Then, we construct an optimal control by combining compactness theorems from PDE theory with tightness arguments from stochastic analysis. Since the tightness arguments require us to change or, alternatively, enlarge the underlying probability space, we must verify that the optimal cost of the control problem does not depend on the underlying probability space or its filtration. We achieve this by adapting ideas from \cite{djete_mkv_control_limit_2022}. The next goal is to prove the necessary SMP. This requires an analysis of the infinitesimal variations of the state process under perturbations of the control, which is complicated by the nonlinearities of the Fokker--Planck equation. We combine these variations with the adjoint process to deduce an explicit expression for the Gateaux derivative of the cost functional. From this expression and standard convexity assumptions, we deduce the necessary condition of the SMP. We conclude the chapter by establishing the novel sufficient SMP. We conclude Section \ref{sec:control} by establishing the sufficient SMP.

In Section \ref{sec:application}, we give a concrete application of the results of this paper in the context of a MFC model of government bailouts in financial systems from \cite{hambly_mvcp_arxiv_2023}. In particular, we discuss the shape of the adjoint process, supplemented by numerical simulations for the noiseless case (i.e.\@ $\sigma_0 = 0$), and the implications that can be drawn for the control problem.

\subsection{Notation}

We conclude this section by introducing notation that will be used frequently throughout this work. 

\begin{itemize}
    \item We use $\nabla$ and $\nabla^2$ to denote the gradient and the Hessian, and write $a \cdot b = a^{\top} b$ for $a$, $b \in \R^d$ as well as $A : B = \trace(A^{\top}B)$ for $A$, $B \in \R^{d \times d}$, where $(\cdot)^{\top}$ is the transpose. For two matrices $A$, $B \in \bb{S}^d$, we write $A \geq B$ if $A - B$ is positive semidefinite. Here $\bb{S}^d$ is the space of symmetric $d \times d$ matrices. Lastly, $I_d$ is the identity matrix in $d$ dimensions.

    \item We denote by $\M(\R^d)$ the space of finite measures on $\R^d$ equipped with the bounded Lipschitz distance $d_0$. It is defined by
    \begin{equation} \label{eq:blip}
        d_0(v_1, v_2) = \sup_{\lVert \varphi\rVert_{\text{bLip}} \leq 1} \langle v_1 - v_2, \varphi\rangle \quad \text{for } v_1, v_2 \in \M(\R^d),
    \end{equation}
    where for a bounded Lipschitz continuous function $\varphi \define \R^d \to \R$ we denote by $\lVert \varphi\rVert_{\text{bLip}}$ the maximum between the Lipschitz constant of $\varphi$ and $\lVert \varphi \rVert_{\infty}$. The space $(\M(\R^d), d_0)$ is a complete separable metric space by \cite[Theorem 8.3.2]{bogachev_measure_theory_vol_2_2007} and the topology induced by $d_0$ coincides with the topology of weak convergence of measures.

    \item For a Polish space $E$ and $n \geq 1$, we let $\cal{B}_b(E; \R^n)$ denote the space of Borel measurable and bounded functions $E \to \R^n$. We set $\cal{B}_b(E) = \cal{B}_b(E; \R)$.

    \item Let $d \geq 1$ and $\eta \define \R^d \to \R$ be a measurable function. Then for $n \geq1$, $L_{\eta}^2(\R^d; \R^n)$ denotes the space of measurable functions $\varphi \define \R^d \to \R^n$ such that $\int_{\R^d} \lvert \varphi\rvert^2 e^{\eta(x)} \, \d x < \infty$. Similarly, for any integer $k$, we define the weighted Sobolev space $H_{\eta}^k(\R^d; \R^n)$ and for simplicity set $L_{\eta}^2(\R^d) = L_{\eta}^2(\R^d; \R)$ and $H_{\eta}^k(\R^d) = H_{\eta}^k(\R^d; \R)$. We denote the inner product and norm of $L_{\eta}^2(\R^d; \R^n)$ by $\langle \cdot, \cdot\rangle_{\eta}$ and $\lVert \cdot \rVert_{\eta}$, respectively. 
    
    \item We let $L^2_{\eta, \text{loc}}(\R^d; \R^n)$ be the space of measurable functions $\varphi \define \R^d \to \R^n$ such that $\varphi \bf{1}_{B_R} \in L_{\eta}^2(\R^d; \R^n)$ for any $R > 0$, where $B_R$ denotes the centred ball of radius $R$. We equip $L^2_{\eta, \text{loc}}(\R^d; \R^n)$ with the distance
    \begin{equation*}
        (\varphi_1, \varphi_2) \mapsto \sum_{k = 1}^{\infty} 2^{-k} \bigl(\lVert (\varphi_1 - \varphi_2) \bf{1}_{B_k}\rVert_{\eta} \land 1\bigr),
    \end{equation*}
    which turns $L^2_{\eta, \text{loc}}(\R^d; \R^n)$ into a complete separable metric space and a topological vector space. We set $L^2_{\eta, \text{loc}}(\R^d) = L^2_{\eta, \text{loc}}(\R^d; \R)$. Note that a sequence $(\varphi_k)_k$ converges weakly in $L^2_{\eta, \text{loc}}(\R^d; \R^n)$ to some limit $\varphi$ if and only if for all $h \in L^2_{\eta}(\R^d; \R^n)$ and all $R > 0$ it holds that $\langle \varphi_k, \bf{1}_{B_R}h\rangle_{\eta} \to \langle \varphi, \bf{1}_{B_R}h\rangle_{\eta}$. Whenever $\eta$ is the constant zero function we drop it from the subscript in the designation of the respective function space.

    \item Let $\cal{B}$ be a separable Banach space with norm $\lVert \cdot \rVert_{\cal{B}}$. We denote the space of bounded linear operators from $\cal{B}$ to itself by $B(\cal{B})$ and equip it with the operator norm. For $p \in [1, \infty]$ we let $L_{\bb{F}}^p([0, T]; \cal{B})$ denote the Banach space of $\bb{F}$-progressively measurable $\cal{B}$-valued processes $h$ such that $\ev \int_0^T \lVert h_t\rVert_{\cal{B}}^p \, \d t < \infty$ if $p \in [1, \infty)$ and $\lVert h_t\rVert_{\cal{B}}$ is $\leb \otimes \pr$-essentially bounded if $p = \infty$. Clearly, if $\cal{B}$ is a separable Hilbert space, then $L_{\bb{F}}^2([0, T]; \cal{B})$ is a Hilbert space as well. Next, we denote by $C_{\bb{F}}^2([0, T]; \cal{B})$ (respectively by $D_{\bb{F}}^2([0, T]; \cal{B})$) the space of $\bb{F}$-adapted continuous processes $h$ (respectively c\`adl\`ag processes) with values in $\cal{B}$ such that $\ev \sup_{0 \leq t \leq T} \lVert h_t\rVert_{\cal{B}}^2 < \infty$. Lastly, assuming again that $\cal{B}$ is a separable Hilbert space with inner product $\langle \cdot, \cdot\rangle_{\cal{B}}$, we let $\M_{\bb{F}}^2([0, T]; \cal{B})$ be the space of $\cal{B}$-valued $\bb{F}$-martingales $m = (m_t)_{0 \leq t \leq T}$ with a.s.\@ c\`adl\`ag trajectories for which $\ev[\langle m_T, m_T\rangle_{\cal{B}}] < \infty$. We equip $\M_{\bb{F}}^2([0, T]; \cal{B})$ with the inner product $(m^1, m^2) \mapsto \ev \langle m^1_T, m^2_T\rangle_{\cal{B}}$ which turns $\M_{\bb{F}}^2([0, T]; \cal{B})$ into a Hilbert space. 
\end{itemize}

\section{Main Results} \label{sec:main_results}

In this section we state the main results of the paper. The proofs will be provided in the subsequent sections.
% TODO: do we need a complete probability space here? Why? We e.g. use it to derive that hitting times on the set [k, \infty) of a cadlag process are stopping times, but could circumvent that

\subsection{The Control Problem}

Let $(\Omega, \F, \pr)$ be a complete probability space equipped with a complete filtration $\bb{F} = (\F_t)_{0 \leq t \leq T}$ and a $d_W$-dimensional $\bb{F}$-Brownian motion $W = (W_t)_{0 \leq t \leq T}$ for $d_W \geq 1$. We consider the \textit{nonlinear stochastic Fokker--Planck equation}
\begin{equation} \label{eq:sfpe}
    \d \langle \nu_t, \varphi\rangle = \bigl\langle \nu_t, \L\varphi(t, \cdot, \nu_t, \gamma_t)\bigr\rangle \, \d t + \bigl\langle \nu_t, (\nabla \varphi)^{\top} \sigma_0\bigr\rangle \, \d W_t
\end{equation}
for $\varphi \in C^2_c(\R^d)$ and $t \in (0, T]$ with initial condition $\nu_0 = v_0 \in \cal{P}(\R^d)$. The \textit{generator} $\L$ acts on $\varphi \in C^2_c(\R^d)$ by
\begin{equation} \label{eq:generator}
    \L\varphi(t, x, v, g) = \lambda(t, x, v) \varphi(x) + b(t, x, v, g) \cdot \nabla \varphi + \bigl(a(t, x) + \tfrac{1}{2}\sigma_0 \sigma_0^{\top}\bigr) \colon \nabla^2\varphi(x)
\end{equation}
for $(t, x, v, g) \in [0, T] \times \R^d \times \M(\R^d) \times G$, where $G$ is a nonempty, compact, and convex subset of $\R^{d_G}$ with $d_G \geq 1$. The \textit{coefficients} are measurable functions $\lambda \define [0, T] \times \R^d \times \M(\R^d) \to \R$, $b \define [0, T] \times \R^d \times \M(\R^d) \times G \to \R^d$, and $a \define [0, T] \times \R^d \to \R^{d \times d}$, while $\sigma_0 \in \R^{d \times d_W}$. We will make suitable assumptions on the coefficients below. The \textit{control} $\gamma \define [0, T] \times \Omega \times \R^d \to G$ is an $\bb{F}$-progressively measurable random function with values in $G$. We denote the collection of such random functions by $\bb{G}$. Next, we introduce the \textit{cost functional}
\begin{equation} \label{eq:cost_functional}
    J(\gamma) = \ev\biggl[\int_0^T \langle \nu_t, f(t, \cdot, \nu_t, \gamma_t)\rangle \, \d t + \psi(\nu_T)\biggr]
\end{equation}
and denote the \textit{value} of the control problem, i.e.\@ the infimum of $J$ over $\gamma \in \bb{G}$, by $V$. The \textit{running} and \textit{terminal cost} are measurable functions $f \define [0, T] \times \R^d \times \M(\R^d) \times G \to \R$ and $\psi \define \M(\R^d) \to \R$. 

% Throughout this work, we assume that the coefficient $b$ and the cost $f$ can be decomposed into a function depending on the measure and another that depends on the control. That is, there exist measurable maps $b_0$ and $f_0 \define [0, T] \times \R^d \times \M(\R^d) \to \R$ as well as $b_1$ and $f_1 \define [0, T] \times \R^d \times G \to \R$ such that $b(t, x, v, g) = b_0(t, x, v) + b_1(t, x, g)$ and $f(t, x, v, g) = f_0(t, x, v) + f_1(t, x, g)$ for all $(t, x, v, g) \in [0, T] \times \R^d \times \M(\R^d) \times G \to \R$.
Throughout this work, we make the following assumptions on the coefficients and cost functions.

\begin{assumption} \label{ass:fpe}
We assume there exist constants $C > 0$ and $c > 0$ such that
\begin{enumerate}[noitemsep, label = (\roman*)]
    \item \label{it:continuity_fpe} for all $t$ the map $x \mapsto a(t, x)$ has a weak derivative, for all $(t, x)$ and uniformly in $g$ the maps $v \mapsto f(t, x, v, g)$ and $v \mapsto \psi(v)$ are continuous, and for all $(t, x, v, v', g)$ we have
    \begin{equation*}
        \lvert b(t, x, v, g) - b(t, x, v', g)\rvert + \lvert \lambda(t, x, v) - \lambda(t, x, v')\rvert \leq Cd_0(v, v');
    \end{equation*}
    \item \label{it:growth_fpe} the coefficients $\lambda$, $b$, $a$, and $\partial_x a$ as well as the costs $f$ and $\psi$ are bounded;
    \item \label{it:nondeg_fpe} for all $(t, x)$ we have $a(t, x) \geq c I_d$.
\end{enumerate}
\end{assumption}

Our goal is twofold: first, we will show that for a suitably weakened version of the control problem, an optimal control is guaranteed to exist. Secondly, we derive a necessary and a sufficient stochastic maximum principle (SMP). For both purposes, it is advantageous to transform the stochastic Fokker--Planck equation \eqref{eq:sfpe} to a Fokker--Planck equation with random coefficients by applying an appropriate shift along the noise $W$. More precisely, we set
\begin{equation*}
    \tilde{\lambda}(t, x, v) = \lambda\bigl(t, x + \sigma_0 W_t, (\id + \sigma_0 W_t)^{\#}v\bigr)
\end{equation*}
for $(t, x, v) \in [0, T] \times \R^d \times \M(\R^d)$ and similarly define the shifted coefficient $\tilde{b}$ and $\tilde{a}$ as well as the shifted cost functions $\tilde{f}$ and $\tilde{\psi}$. Here $F^{\#}v$ denotes the pushforward of $v \in \M(\R^d)$ under a measurable functions $F \define \R^d \to \R^d$ and by $\id + \sigma_0 W_t$ we mean the random function $\R^d \to \R^d$ given by $x \mapsto x + \sigma_0 W_t$. No shift is applied to the control argument. Then we consider the \textit{nonlinear random Fokker--Planck equation}
\begin{equation} \label{eq:rfpe}
    \d \langle \mu_t, \varphi\rangle = \bigl\langle \mu_t, \tilde{\L}\varphi(t, \cdot, \mu_t, \gamma_t)\bigr\rangle \, \d t
\end{equation}
for $t \in (0, T]$ and $\varphi \in C^2_c(\R^d)$ with initial condition $\mu_0 = v_0$. The generator $\tilde{\L}$ is obtained by replacing $\lambda$, $b$, $a$, and $\sigma_0$ in the definition of $\L$ in \eqref{eq:generator} by $\tilde{\lambda}$, $\tilde{b}$, $\tilde{a}$, and $\tilde{\sigma}_0 = 0$. 

\begin{remark}
If we interpret $\nu_t$ as the conditional subprobability of the solution $X = (X_t)_{0 \leq t \leq T}$ of a McKean--Vlasov SDE (e.g.\@ SDE \eqref{eq:sde}) given the common information $\F_t$, then the performed shift corresponds to subtracting $\sigma_0 W_t$ from the state $X_t$. The resulting process $Y_t = X_t - \sigma_0 W_t$ solves a McKean--Vlasov SDE without common noise, but where the coefficients depend on $\sigma_0 W_t$. For example, if the drift coefficient of the original SDE was given by $b$, then the transformed coefficient is precisely provided by the shifted function $\tilde{b}$ defined above. Moreover, the conditional law of the process $Y_t$ is simply $\mu_t$.
\end{remark}

The \textit{cost functional} $\tilde{J}$ for the control of PDE \eqref{eq:rfpe} is given by
\begin{equation} \label{eq:cost_rfpe}
    \tilde{J}(\gamma) = \ev\biggl[\int_0^T \bigl\langle \mu_t, \tilde{f}(t, \cdot, \mu_t, \gamma_t)\bigr\rangle \, \d t + \tilde{\psi}(\mu_T)\biggr]
\end{equation}
for $\gamma \in \bb{G}$.

\begin{proposition} \label{prop:sfpe_to_rfpe}
Let Assumption \ref{ass:fpe} be satisfied, fix $\gamma \in \bb{G}$, and define $\tilde{\gamma} \in \bb{G}$ by $\tilde{\gamma}_t(x) = \gamma_t(x + \sigma_0 W_t)$. If $\nu = (\nu_t)_{0 \leq t \leq T}$ is a continuous $\M(\R^d)$-valued process that satisfies SPDE \eqref{eq:sfpe} with control $\gamma$, then $\mu = (\mu_t)_{0 \leq t \leq T}$ defined by $\mu_t = (\id - \sigma_0 W_t)^{\#}\nu_t$ solves PDE \eqref{eq:rfpe} with control $\tilde{\gamma}$ and $\tilde{J}(\tilde{\gamma}) = J(\gamma)$.

Conversely, if $\mu = (\mu_t)_{0 \leq t \leq T}$ is a continuous $\M(\R^d)$-valued process that satisfies PDE \eqref{eq:rfpe} with control $\tilde{\gamma}$, then $\nu = (\nu_t)_{0 \leq t \leq T}$ defined by $\nu_t = (\id + \sigma_0 W_t)^{\#}\mu_t$ solves SPDE \eqref{eq:sfpe} with control $\gamma$ and $J(\gamma) = \tilde{J}(\tilde{\gamma})$.
\end{proposition}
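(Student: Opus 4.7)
The strategy is to prove both directions of the equivalence by an It\^o--Wentzell-type computation on the time-dependent test function obtained by shifting $\varphi$ along the noise $W$. The cost identity $\tilde J(\tilde \gamma) = J(\gamma)$ will then follow from the pushforward change-of-variables formula.

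Fix $\varphi \in C_c^2(\R^d)$ and set $\Phi_t(x) = \varphi(x - \sigma_0 W_t)$, so that $\langle \mu_t, \varphi\rangle = \langle \nu_t, \Phi_t\rangle$. The classical It\^o formula applied in the $W$-variable pointwise in $x$ yields the semimartingale decomposition
\[
    \d \Phi_t(x) = -\nabla\varphi(x - \sigma_0 W_t)^{\top} \sigma_0 \, \d W_t + \tfrac{1}{2} \sigma_0 \sigma_0^{\top} : \nabla^2 \varphi(x - \sigma_0 W_t) \, \d t.
\]
The next step is to apply a generalised It\^o--Wentzell formula to the real-valued semimartingale $t \mapsto \langle \nu_t, \Phi_t\rangle$. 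To justify this rigorously I would approximate $\Phi_t$ uniformly in $C^2$ by finite linear combinations of fixed test functions in $C_c^2(\R^d)$ with scalar semimartingale coefficients, apply the standard It\^o product rule to each term together with SPDE \eqref{eq:sfpe}, and pass to the limit using the smoothness and compact support of $\varphi$, the boundedness of the coefficients from Assumption \ref{ass:fpe}, and the uniform boundedness of the total mass of $\nu_t$.

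The resulting decomposition has three pieces: the contribution from $\d \nu_t$ with $\Phi_t$ frozen, which by \eqref{eq:sfpe} equals $\langle \nu_t, \L \Phi_t(t, \cdot, \nu_t, \gamma_t)\rangle \, \d t + \langle \nu_t, (\nabla \Phi_t)^{\top} \sigma_0\rangle \, \d W_t$; the contribution from $\d \Phi_t$ with $\nu_t$ frozen, equal to $\tfrac{1}{2}\langle \nu_t, \sigma_0 \sigma_0^{\top} : \nabla^2 \varphi(\cdot - \sigma_0 W_t)\rangle \, \d t - \langle \nu_t, \nabla \varphi(\cdot - \sigma_0 W_t)^{\top} \sigma_0\rangle \, \d W_t$; and a cross-variation term which, after a short computation using the basis expansion above, reduces to $-\langle \nu_t, \sigma_0 \sigma_0^{\top} : \nabla^2 \varphi(\cdot - \sigma_0 W_t)\rangle \, \d t$. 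Since $\nabla \Phi_t(x) = \nabla \varphi(x - \sigma_0 W_t)$, the two stochastic integrals cancel exactly, and the remaining second-order $\d t$ contributions combine to cancel the $\tfrac{1}{2} \sigma_0 \sigma_0^{\top} : \nabla^2$ part of $\L \Phi_t$. Invoking the identity $\langle \nu_t, h(\cdot - \sigma_0 W_t)\rangle = \langle \mu_t, h\rangle$ together with the definitions of the shifted coefficients $\tilde \lambda$, $\tilde b$, $\tilde a$ and of $\tilde \gamma_t(x) = \gamma_t(x + \sigma_0 W_t)$, the remaining drift collapses to $\langle \mu_t, \tilde \L \varphi(t, \cdot, \mu_t, \tilde \gamma_t)\rangle \, \d t$, which is \eqref{eq:rfpe}.

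The cost identity is immediate from the same pushforward computation, since $\langle \nu_t, f(t, \cdot, \nu_t, \gamma_t)\rangle = \langle \mu_t, \tilde f(t, \cdot, \mu_t, \tilde \gamma_t)\rangle$ and $\psi(\nu_T) = \tilde \psi(\mu_T)$. The converse direction is entirely symmetric: starting from $\mu$ solving \eqref{eq:rfpe} with control $\tilde \gamma$, the same argument applied to $\Psi_t(x) = \varphi(x + \sigma_0 W_t)$ recovers SPDE \eqref{eq:sfpe} for $\nu_t$. The main technical difficulty is the rigorous justification of the It\^o--Wentzell step for a merely measure-valued $\nu_t$ paired with a random test function; the basis-expansion/approximation argument outlined above handles this, and is tractable here because the randomness in $\Phi_t$ enters only through the continuous semimartingale $t \mapsto \sigma_0 W_t$.
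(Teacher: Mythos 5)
Your proposal matches the paper's proof in both the key idea and the computation: both shift the test function to $\tilde{\varphi}_t(x) = \varphi(x - \sigma_0 W_t)$, compute $\d\langle\nu_t,\tilde{\varphi}_t\rangle$ via an It\^o--Wentzell-type product rule, and observe that the stochastic integrals cancel while the $\tfrac12\sigma_0\sigma_0^\top:\nabla^2$ part of $\L$ is absorbed by the second-order drift from $\d\tilde{\varphi}_t$ and the cross-variation $-\langle\nu_t, \sigma_0\sigma_0^\top:\nabla^2\tilde{\varphi}_t\rangle\,\d t$. The only difference is the technical justification — the paper mollifies $\nu$ and invokes its Theorem \ref{thm:ito} (via Corollary \ref{cor:ito}), whereas you propose a basis-expansion approximation of $\tilde{\varphi}_t$; both are standard and equivalent in substance.
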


Proposition \ref{prop:sfpe_to_rfpe} justifies studying the control problem for SPDE \eqref{eq:sfpe} from the point of view of PDE \eqref{eq:rfpe}, and that is the approach we follow from now on. In our analysis of the control problem for PDE \eqref{eq:rfpe} we do not explicitly use that $W$ is a Brownian motion and we could in principle replace it by an arbitrary $\bb{F}$-progressively measurable stochastic process. For simplicity, we will not pursue this level of generality here.

To begin with, we will state a well-posedness result for PDE \eqref{eq:rfpe}. We would like to analyse PDE \eqref{eq:rfpe} in a standard $L^2$-framework, studying the density of $\mu_t$ as an element of $L^2(\R^d)$, as this facilitates the derivation of the SMP. However, in order to deal with the nonlinearities in the coefficients $\lambda$ and $b$ in the measure argument, which, in general, will not be continuous with respect to the $L^2$-norm of the density of $\mu_t$, we have to consider weighted $L^2$-spaces. Let us, therefore, fix the function $\eta \define \R^d \to \R$ defined by $\eta(x) = \eta_0 \sqrt{1 + \lvert x\rvert^2}$ for some $\eta_0 > 0$ and set $\bar{\eta} = -\eta$. Then the weighted space $L_{\eta}^2(\R^d; [0, \infty))$ is embedded in $\cal{M}(\R^d)$. Indeed, for any $v \in L_{\eta}^2(\R^d; [0, \infty))$ we have $\int_{\R^d} v(x) \, \d x \leq \lVert e^{-\eta/2} \rVert_{L^2} \lVert v\rVert_{\eta} < \infty$
and if $v'$ is another element of $L_{\eta}^2(\R^d; [0, \infty))$, then
\begin{equation*}
    d_0(v, v') \leq \sup_{\lVert \varphi\rVert_{\text{bLip}} \leq 1} \int_{\R^d} \lvert \varphi(x)\rvert \lvert v(x) - v'(x)\rvert \, 
    \d x \leq \lVert e^{-\eta/2} \rVert_{L^2} \lVert v - v'\rVert_{\eta}.
\end{equation*}
Here we identified $v$ and $v'$ with the respective measures they induce. This embedding makes $L_{\eta}^2(\R^d; [0, \infty))$ a convenient state space for our purpose. Note that we will be working with the specific choice for $\eta$ detailed above throughout most of this article, we consider a more general $\eta$ only in Section \ref{sec:bspde}.

\begin{proposition} \label{prop:rfpe}
Let Assumption \ref{ass:fpe} be satisfied and suppose that $v_0$ has a density $\rho_0 \in L_{\eta}^2(\R^d)$. Then for any $\gamma \in \bb{G}$, PDE \eqref{eq:rfpe} has a unique solution $\mu = (\mu_t)_{0 \leq t \leq T}$ with values in $C([0, T]; \M(\R^d))$. Moreover, $\pr$-a.s.\@ the measure $\mu_t$ has a density $\rho_t$ for all $t \in [0, T]$ such that $(\rho_t)_{0 \leq t \leq T} \in C_{\bb{F}}^2([0, T]; L_{\eta}^2(\R^d)) \cap L_{\bb{F}}^2([0, T]; H_{\eta}^1(\R^d))$ with
\begin{equation} \label{eq:rfpe_estimate}
    \sup_{0 \leq t \leq T} \lVert \rho_t \rVert_{\eta}^2 + \int_0^T \lVert \nabla \rho_t\rVert_{\eta}^2 \, \d t \leq C \lVert \rho_0\rVert_{\eta}^2
\end{equation}
for a constant $C > 0$ depending only on the coefficients $\lambda$, $b$, $a$, and $\sigma$, the parameter $\eta_0$ as well as the time horizon $T$.
\end{proposition}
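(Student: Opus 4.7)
The plan is to use a fixed-point argument that freezes the measure-valued input in the nonlinear coefficients of \eqref{eq:rfpe}, solves the resulting linear random parabolic equation in the weighted space $L_\eta^2(\R^d)$, and then iterates. Given a continuous input process $\bar\mu = (\bar\mu_t)_{0 \leq t \leq T}$ with values in $\M(\R^d)$, one substitutes $\bar\mu_t$ for $\mu_t$ inside $\tilde\lambda$ and $\tilde b_0$. Looking for a density $\rho$ with $\mu_t = \rho_t \, \d x$ and integrating the weak equation by parts, one obtains a linear divergence-form parabolic PDE
\begin{equation*}
    \partial_t \rho = \partial_{ij}^2\bigl(\tilde{a}_{ij}\rho\bigr) - \nabla \cdot \bigl(\tilde b(t, \cdot, \bar\mu_t, \gamma_t)\rho\bigr) + \tilde\lambda(t, \cdot, \bar\mu_t)\rho,
\end{equation*}
whose coefficients are, for each $\omega$, bounded and uniformly parabolic by Assumption \ref{ass:fpe}.

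For this linear equation, I would first establish pathwise well-posedness in $C_{\bb{F}}^2([0, T]; L_\eta^2(\R^d)) \cap L_{\bb{F}}^2([0, T]; H_\eta^1(\R^d))$ via a Galerkin approximation on a basis of the weighted space. The key tool is the energy identity obtained by computing $\tfrac{\d}{\d t}\lVert \rho_t\rVert_\eta^2$: after integrating by parts, the second-order term yields the dissipation $\int_{\R^d} e^\eta \tilde a \nabla\rho \cdot \nabla\rho \, \d x$, bounded below by $c\lVert \nabla\rho\rVert_\eta^2$ thanks to uniform parabolicity. Cross-terms involving $\nabla\eta$, arising from differentiation of the weight, are controlled using $\lvert \nabla\eta\rvert \leq \eta_0$ and Young's inequality, and can be absorbed into the dissipation provided $\eta_0$ is small enough relative to $c$. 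Combined with boundedness of $\tilde\lambda$ and $\tilde b$, Gronwall's lemma yields \eqref{eq:rfpe_estimate}. Nonnegativity of $\rho$ follows by testing against $-\rho^-$, so that $\rho_t \, \d x$ is indeed a finite measure.

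The fixed-point map $\Phi \colon \bar\mu \mapsto \mu = (\rho_t \, \d x)_t$ is well-defined on continuous $\M(\R^d)$-valued processes thanks to the embedding $d_0(\mu^1, \mu^2) \leq \lVert e^{-\eta/2}\rVert_{L^2}\lVert \rho^1 - \rho^2\rVert_\eta$ recalled in the excerpt. Applying the same energy estimate to the difference of two iterates and using the Lipschitz continuity of $\tilde\lambda$ and $\tilde b_0$ in $d_0$ (Assumption \ref{ass:fpe}\ref{it:continuity_fpe}), I expect an inequality of the form
\begin{equation*}
    \sup_{0 \leq s \leq t}\lVert \rho^1_s - \rho^2_s\rVert_\eta^2 \leq C\int_0^t d_0(\bar\mu_s^1, \bar\mu_s^2)^2 \, \d s + C\int_0^t \sup_{0 \leq r \leq s}\lVert \rho^1_r - \rho^2_r\rVert_\eta^2 \, \d s,
\end{equation*}
from which contraction in a Bielecki-type exponentially time-weighted norm follows. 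Iterating yields a unique fixed point, and uniqueness for the nonlinear problem is an immediate consequence of the same inequality.

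The main obstacle, in my view, is the quantitative balance between the weight and the parabolicity: the restriction on $\eta_0$ needed to keep the weighted energy identity coercive must simultaneously be compatible with the embedding constant $\lVert e^{-\eta/2}\rVert_{L^2}$ used to translate the Lipschitz-in-$d_0$ hypotheses into Lipschitz-in-$\lVert \cdot\rVert_\eta$ estimates. A secondary technicality is justifying the weighted integration by parts and the passage to the limit in the Galerkin scheme, which is standard but must be carried out carefully. Since the entire argument is pathwise and the input coefficients are $\bb{F}$-progressively measurable, the adaptedness of $\rho$ is automatic.
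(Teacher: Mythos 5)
Your proposal follows the same overall architecture as the paper's proof: freeze the measure argument in $\tilde\lambda$ and $\tilde b_0$, solve the resulting linear random parabolic equation in $L^2_\eta(\R^d)$, derive a weighted energy estimate, and close with a Bielecki-norm contraction. The paper obtains linear solvability by citing the parabolic Lax--Milgram theorem rather than building a Galerkin scheme explicitly, and establishes nonnegativity through the generalised It\^o formula with $F(v) = \int (v_-)^2 \, \d x$ rather than by testing against $-\rho^-$, but these are cosmetic differences.

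Two points deserve attention. First, the restriction you identify as the ``main obstacle''---that $\eta_0$ must be small relative to the parabolicity constant $c$---is not actually needed, and the proposition holds for \emph{any} $\eta_0 > 0$. When you integrate by parts against the weighted test function $e^\eta\rho$, the cross-terms produced by $\nabla\eta$ (bounded by $\eta_0$) all land on products of the form $\lVert\nabla\rho\rVert_\eta \cdot \lVert\rho\rVert_\eta$. Applying Young's inequality $ab \leq \tfrac{\epsilon}{2}a^2 + \tfrac{1}{2\epsilon}b^2$ with a \emph{free} parameter $\epsilon$ taken as a fixed fraction of $c$ absorbs the $\lVert\nabla\rho\rVert_\eta^2$ contributions into the dissipation, leaving only $\lVert\rho\rVert_\eta^2$ terms whose coefficients grow with $\eta_0$; these are absorbed by Gr\"onwall and feed into the constant $C$ of \eqref{eq:rfpe_estimate}. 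There is also no tension with the embedding constant $\lVert e^{-\eta/2}\rVert_{L^2}$: it is finite for every $\eta_0 > 0$ and faces no competing smallness requirement. In short, the Young's parameter is tuned, not the weight exponent.

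Second, there is a genuine gap in the uniqueness claim. Your contraction argument yields existence and uniqueness within the regularity class $C_{\bb{F}}^2([0,T]; L^2_\eta(\R^d)) \cap L^2_{\bb{F}}([0,T]; H^1_\eta(\R^d))$, but the proposition asserts uniqueness among \emph{all} $\bb{F}$-adapted continuous $\M(\R^d)$-valued solutions of \eqref{eq:rfpe}, a strictly larger class in which no a priori density regularity is assumed. The paper closes this gap in a separate final step: given an arbitrary adapted $C([0,T]; \M(\R^d))$-valued solution $\mu'$, one mollifies and invokes standard parabolic estimates to show that $\mu'$ automatically possesses a density in $C_{\bb{F}}^2([0,T]; L^2_\eta(\R^d)) \cap L^2_{\bb{F}}([0,T]; H^1_\eta(\R^d))$, after which uniqueness within the smaller class applies. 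Your proposal omits this bootstrap, so as written it establishes a strictly weaker uniqueness statement than the one claimed.
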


From now on we will always assume that $v_0$ has a density in $L_{\eta}^2(\R^d)$ and no longer distinguish between $\mu_t$ and its density.

Before we state the existence result for optimal controls, let us make the following remark: it is possible to introduce a control problem analogous to the one discussed so far on any (complete) filtered probability space that carries a $d_W$-dimensional Brownian motion. In that case, the space of controls simply consists of the $G$-valued random functions that are progressively measurable with respect to the underlying filtration and the coefficients are shifted along the corresponding Brownian motion. If we wish to emphasise the \textit{probabilistic setup} $\bb{W} = (\Omega, \F, \bb{F}, \pr, W)$, we add it as a subscript to the symbols denoting the generator, the space of controls, and the cost functional. That is, we write $\tilde{\L}_{\bb{W}}$, $\bb{G}_{\bb{W}}$ and $\tilde{J}_{\bb{W}}$ instead of $\tilde{\L}$, $\bb{G}$ and $\tilde{J}$, respectively. 

\begin{theorem} \label{thm:optimal}
Let Assumption \ref{ass:fpe} be satisfied. If for all $(t, x, v) \in [0, T] \times \R^d \times \M(\R^d)$ the map $G \ni g \mapsto b(t, x, v, g)$ is continuous and the map $G \ni g \mapsto f(t, x, v, g)$ is upper semicontinuous, then the value $\inf_{\gamma \in \bb{G}_{\bb{W}}} \tilde{J}_{\bb{W}}(\gamma)$ does not depend on the probabilistic setup $\bb{W}$. 

Next, if for all $(t, x, v) \in [0, T] \times \R^d \times \M(\R^d)$ the map $G \ni g \mapsto b(t, x, v, g)$ is affine and the map $G \ni g \mapsto f(t, x, v, g)$ is convex and continuous, then on some probabilistic setup $\bb{W}$ there exists an optimal control.
\end{theorem}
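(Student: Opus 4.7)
The proof naturally splits along the two claims, which require rather different arguments: the first is an \emph{invariance} statement establishing that the weak and strong formulations give the same value on any setup rich enough to carry $W$, while the second is a compactness argument adapted to the weighted-$L^2$ setting.

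For the setup-invariance I would adapt the conditioning/randomisation strategy of Djete, Possama\"i \& Tan, exploiting a crucial simplification: by Proposition \ref{prop:rfpe}, the solution $\mu$ of the random Fokker--Planck equation is a deterministic measurable functional of the pair $(W, \gamma)$, so $\tilde J_{\bb{W}}(\gamma) = \ev[\Phi(W, \gamma)]$ for some setup-independent functional $\Phi$. To compare two setups I introduce a canonical reference $\bb{W}^0$ obtained from the Wiener space by adjoining an independent uniform random variable $U$. Given $\gamma \in \bb{G}_{\bb{W}}$, view $\gamma$ as a progressively measurable process valued in a Polish space of $G$-valued Borel functions on $\R^d$, form a regular conditional kernel for $\gamma$ given $W$, and use $U$ on $\bb{W}^0$ to realise a control $\gamma^0$ whose joint law with $W^0$ matches that of $(\gamma, W)$; this gives $\tilde J_{\bb{W}^0}(\gamma^0) = \tilde J_{\bb{W}}(\gamma)$. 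Continuity of $b_1$ and upper semicontinuity of $f_1$ enter when regularising $\gamma$ by simple, piecewise-constant-in-$g$ controls so that the kernel transfer is well-defined and the cost passes to the limit. The reverse inequality is immediate by transporting any $\gamma^0 \in \bb{G}_{\bb{W}^0}$ back to $\bb{W}$ through an independent uniform random variable that one may always append to the setup without affecting its value.

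For the existence of an optimal control I would fix any setup $\bb{W}$, which is legitimate by Part 1, take a minimising sequence $(\gamma^n) \subset \bb{G}_{\bb{W}}$, and apply tightness arguments in the spirit of \cite{anita_sde_fpe_2021} adapted to random coefficients. The estimate \eqref{eq:rfpe_estimate} yields uniform bounds on $\mu^n$ in $L^2_{\bb{F}}([0,T]; H^1_\eta) \cap C_{\bb{F}}([0,T]; L^2_\eta)$, and testing the state equation against smooth functions provides a uniform modulus of continuity in time; an Aubin--Lions-type compact embedding then gives tightness of the laws of $\mu^n$ in $C([0,T]; L^2_{\eta,\text{loc}})$ endowed with its weak topology. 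Since $G$ is compact, representing each $\gamma^n$ as a random Young measure on $[0,T] \times \R^d \times G$ also yields tightness of the controls. Skorokhod's theorem produces a new setup $\bb{W}^\star$ and a.s.\ convergent versions with limit $(\mu^\star, W^\star, \gamma^\star)$. The linearity assumption on $b_1$ is then what lets me pass to the limit in the drift term of \eqref{eq:rfpe} after identifying $\gamma^\star$ with the barycentre of the limiting Young measure, which remains $G$-valued by convexity of $G$.

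The main obstacle is the final lower-semicontinuity step $\liminf_n \tilde J_{\bb{W}}(\gamma^n) \geq \tilde J_{\bb{W}^\star}(\gamma^\star)$. The contributions of $\tilde f_0$ and $\tilde\psi$ converge by the continuity of $v \mapsto f_0(t,x,v)$ and $v \mapsto \psi(v)$ together with uniform boundedness from Assumption \ref{ass:fpe}. The delicate term is $\ev \int_0^T \langle \mu^n_t, \tilde f_1(t, \cdot, \gamma^n_t)\rangle \, \d t$: tested against the limiting Young measure, continuity of $g \mapsto f_1(t,x,g)$ gives genuine convergence, while replacing the Young measure by its barycentre $\gamma^\star$ produces a lower bound through Jensen's inequality applied to the convex function $f_1(t,x,\cdot)$. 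Combining these inequalities with the preservation of the state equation delivers $\tilde J_{\bb{W}^\star}(\gamma^\star) \leq \lim_n \tilde J_{\bb{W}}(\gamma^n) = \inf_{\gamma \in \bb{G}_{\bb{W}}} \tilde J_{\bb{W}}(\gamma)$, so $\gamma^\star$ is optimal on $\bb{W}^\star$.
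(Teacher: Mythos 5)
Your second part (existence) is essentially the paper's argument: Lemma \ref{lem:int_conv}~\ref{it:conv_3} plays the role of your Young-measure/Jensen step (weak lower semicontinuity of convex integrands), and the weak limit of $(\gamma^n)$ is taken directly as the new control in place of the barycentre identification, but these are two renderings of the same idea. Your first part, however, has a genuine circularity at the decisive step. The forward inequality $\inf_{\bb{W}^0}\le\inf_{\bb{W}}$ (realise $\gamma$ on the canonical setup with the same joint law with $W$) is fine in spirit, but you then assert the reverse is ``immediate by transporting any $\gamma^0\in\bb{G}_{\bb{W}^0}$ back to $\bb{W}$ through an independent uniform random variable that one may always append to the setup without affecting its value.'' That appending independent randomness does not affect the value is precisely (a special case of) what you need to prove: enlarging the filtration only increases the class of admissible controls and so can a priori only lower the infimum, and showing that it does \emph{not} lower it is the entire content of setup-independence. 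Your reverse direction therefore presupposes the conclusion.

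The paper instead proves (Proposition \ref{prop:independent}) that the infimum over $\bb{F}^W$-progressively measurable controls $\bb{G}_{\bb{W}}^0$ already equals the infimum over all of $\bb{G}_{\bb{W}}$; setup-independence then follows because the restricted class is intrinsically defined from $W$ alone. The nontrivial point is that a control using randomness beyond $W$ can be emulated, with no cost increase in the limit, by $\bb{F}^W$-adapted controls. The paper achieves this by first reducing to piecewise-constant-in-time controls via \cite[Lemma 4.4]{liptser_statistics_rp_1977} and Proposition \ref{prop:stability}~\ref{it:cont_usc}, then applying the Kurtz--Nappo conditional representation \cite[Lemma 1.3]{kurtz_weak_strong_2014} to write $\gamma_{t_i}=\Phi_i(B_{\cdot\wedge t_i},B_{\cdot\vee t_i}-B_{t_i},\xi_1,\dots,\xi_i)$, replacing the future increments $B_{\cdot\vee t_i}-B_{t_i}$ (independent of $\gamma_{t_i}$ by adaptedness) by independent copies $\zeta_i$, and finally --- the step your proposal lacks --- encoding the auxiliary randomness $(\xi_i,\zeta_i)_i$ as a measurable function of the normalised increment $W_{s_k}/\sqrt{s_k}$ while simultaneously passing to a time-reparametrised copy $W^k$ of $W$, so that $(\gamma^k,W^k)\sim(\gamma,W)$ with $\gamma^k$ adapted to $W$ alone. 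Sending $k\to\infty$ and invoking Proposition \ref{prop:stability}~\ref{it:cont_usc} once more closes the argument. This absorption of the external randomness into the noise itself has no counterpart in your kernel-transfer scheme, and it is exactly what makes the reverse inequality non-circular.
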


Note that by $b$ being affine in the control argument, we mean that there exist functions $b_0 \define [0, T] \times \R^d \times \M(\R^d) \to \R^d$ and $b_1 \define [0, T] \times \R^d \times \M(\R^d) \to \R^{d_G \times d}$ such that $b(t, x, v, g) = b_0(t, x, v) + g^{\top} b_1(t, x, v)$. Theorem \ref{thm:optimal} tells us that under suitable assumptions on the coefficients and cost functions, we can always move to a probabilistic setup on which there exists at least one optimal control. 

\subsection{The Necessary Stochastic Maximum Principle}

Let us now move on to the stochastic maximum principle (SMP). In order to formulate the SMP, we have to consider an appropriate adjoint process. The adjoint will solve a \textit{backward SPDE} (BSPDE), involving the derivative of the Hamiltonian of the control problem with respect to the state, i.e.\@ the measure $\mu_t$. We shall introduce these two objects next. The \textit{Hamiltonian} $\cal{H} \define [0, T] \times \Omega \times \M(\R^d) \times \cal{B}_b(\R^d) \times \cal{B}_b(\R^d; \R^d) \times \cal{G} \to \R$ is given by
\begin{equation} \label{eq:adjoint_coefficient}
    \cal{H}(t, v, r, p, g) = \Bigl\langle v, \tilde{\lambda}(t, \cdot, v)r + \tilde{b}(t, \cdot, v, g) \cdot p + \tilde{f}(t, \cdot, v, g)\Bigr\rangle
\end{equation}
for $(t, \omega, v, r, p, g) \in [0, T] \times \Omega \times \M(\R^d) \times \cal{B}_b(\R^d) \times \cal{B}_b(\R^d; \R^d) \times \cal{G}$. Here $\cal{G}$ denotes the space of measurable functions $\R^d \to G$. Next, we define an appropriate notion of derivative for functions on the space of measures $\M(\R^d)$.

\begin{definition} \label{def:linear_functional_derivatve}
A map $F \define \M(\R^d) \to \R$ is said to have a \textit{linear functional derivative} if there exists a measurable and bounded function $DF \define \M(\R^d) \times \R^d \to \R$ such that for any $v$, $v' \in \M(\R^d)$ it holds that
    \begin{equation}
        F(v) - F(v') = \int_0^1 \int_{\R^d} DF\bigl(\theta v + (1 - \theta) v'\bigr)(x) \, \d (v - v')(x) \, \d \theta.
    \end{equation}
\end{definition}

The linear functional derivative satisfies the following type of product rule (see Lemma \ref{lem:ldf_product} for a proof): if $F \define \R^d \times \M(\R^d) \to \R$ is a measurable and bounded function such that $\M(\R^d) \ni v \mapsto F(x, v)$ has a linear functional derivative $DF(x, \cdot)$ for all $x \in \R^d$, so that $DF \define \R^d \times \M(\R^d) \times \R^d \to \R$ is measurable, bounded, and continuous in the measure argument, then the map $\M(\R^d) \ni v \mapsto \langle v, F(\cdot, v)\rangle$ possesses the linear functional derivative $F(x, v) + \langle v, DF(\cdot, v)(x)\rangle$. Consequently, assuming that the coefficients $\lambda$ and $b$ as well as the running cost $f$ have linear functional derivatives that are bounded and continuous in the measure argument, we can compute
\begin{align} \label{eq:coeff_lfd}
\begin{split}
    D\cal{H}(t, v, r, p, g)(x) &= \tilde{\lambda}(t, x, v)r(x) + \tilde{b}\bigl(t, x, v, g(x)\bigr) \cdot p(x) + \tilde{f}\bigl(t, x, v, g(x)\bigr) \\
    &\ \ \ + \Bigl\langle v, D\tilde{\lambda}(t, \cdot, v)(x) r + D\tilde{b}(t, \cdot, v, g)(x) \cdot p + D\tilde{f}(t, \cdot, v, g)(x)\Bigr\rangle.
\end{split}
\end{align}
Note that the expression above as well as $\cal{H}$ itself are also defined for $v \in L_{\eta}^2(\R^d; [0, \infty))$, $r \in L_{\bar{\eta}}(\R^d)$, and $p \in L_{\bar{\eta}}(\R^d; \R^d)$, and we shall use this extension in what follows. 

Now, let $\gamma \in \bb{G}$ and denote the corresponding solution to PDE \eqref{eq:rfpe} by $\mu$. Then the \textit{adjoint process} solves the linear BSPDE
\begin{align} \label{eq:adjoint}
\begin{split}
    \d u_t(x) = -\Bigl(D\cal{H}(t, \mu_t, u_t, \nabla u_t, \gamma_t)(x) + \tilde{a}(t, x) \colon \nabla^2 u_t(x)\Bigr) \, \d t + \d m_t(x)
\end{split}
\end{align}
for $(t, x) \in [0, T) \times \R^d$ with terminal condition $u_T(x) = D\tilde{\psi}(\mu_T)(x)$. The process $m$ is a c\`adl\`ag martingale, which takes values in the Hilbert space $L_{\bar{\eta}}^2(\R^d)$. It ensures that the process $u$ stays $\bb{F}$-adapted as we step backwards in time.
% We say that two martingales $m^1 \in \M_{\bb{F}}^2([0, T]; \bb{H}_1)$, $m^2 \in \M_{\bb{F}}^2([0, T]; \bb{H}_2)$ with values in separable Hilbert spaces $\bb{H}_1$ and $\bb{H}_2$, respectively, are \textit{very strongly orthogonal} if for all elements $h_1 \in \bb{H}_1$, $h_2 \in \bb{H}_2$ the real-valued c\`adl\`ag martingales $\langle m^1, h_1\rangle_{\bb{H}_1}$ and $\langle m^2, h_2\rangle_{\bb{H}_2}$ are strongly orthogonal, i.e.\@ $\bigl[\langle m^1, h_1\rangle_{\bb{H}_1}, \langle m^2, h_2\rangle_{\bb{H}_2}\bigr]_t = 0$ for all $t \in [0, T]$.

\begin{definition} \label{def:adjoint}
A \textit{solution} of BSPDE \eqref{eq:adjoint} is a pair $(u, m) \in L^2_{\bb{F}}([0, T]; H^1_{\bar{\eta}}(\R^d)) \times \M_{\bb{F}}^2([0, T]; L^2_{\bar{\eta}}(\R^d))$ such that $m$ is started from zero and
\begin{align*}
    \langle u_t, \varphi\rangle &= \bigl\langle D\tilde{\psi}(\mu_T), \varphi\bigr\rangle + \int_t^T \Bigl(\bigl\langle D\cal{H}(t, \mu_t, u_t, \nabla u_t, \gamma_t), \varphi\bigr\rangle - \bigl\langle \nabla u_s, \nabla \cdot \bigl(\tilde{a}(t, \cdot)\varphi\bigr)\bigr\rangle\Bigr) \, \d s \\
    &\ \ \ - \langle m_T - m_t, \varphi\rangle
\end{align*}
for all $\varphi \in H^1_{\eta}(\R^d)$ and $\leb \otimes \pr$-a.e.\@ $(t, \omega) \in [0, T] \times \Omega$.
\end{definition}

Note that $u$ and $m$ take values in function spaces which are weighted according to $\bar{\eta} = -\eta$ instead of $\eta$ itself. This ensures that the adjoint $u_t$ can be integrated against the state $\mu_t$. This is of crucial importance for the SMP.

Naturally, we require additional assumptions to study the existence and uniqueness of solutions to BSPDE \eqref{eq:adjoint}, in particular concerning the existence of linear functional derivatives for the coefficients $\lambda$ and $b$ as well as the cost functions $f$ and $\psi$. We shall state them together with the assumptions for the SMP.

\begin{assumption} \label{ass:smp}
We assume that
\begin{enumerate}[noitemsep, label = (\roman*)]
    % \item \label{it:lfd} for all $(t, x, g)$ the maps $v \mapsto \lambda(t, x, v)$, $v \mapsto h(t, x, v, g)$ for $h \in \{b, f\}$, and $v \mapsto \psi(v)$ have linear functionals derivatives $D\lambda$, $Dh$ and $D\psi$, which are measurable and bounded functions on $[0, T] \times \R^d \times \M(\R^d) \times \R^d$, $[0, T] \times \R^d \times \M(\R^d) \times G \times \R^d$, and $\M(\R^d) \times \R^d$, respectively, such that for all $(t, x, g, y)$ the maps $v \mapsto D\lambda(t, x, v)(y)$, $v \mapsto Dh(t, x, v, g)(y)$ and $v \mapsto D\psi(v)(y)$ are continuous;
    \item \label{it:lfd} for all $(t, x, g)$ the maps $v \mapsto \lambda(t, x, v)$, $v \mapsto h(t, x, v, g)$ for $h \in \{b, f\}$, and $v \mapsto \psi(v)$ have measurable and bounded linear functionals derivatives $D\lambda$, $Dh$, and $D\psi$ which are continuous in the measure argument;
    \item \label{it:cost_cntrl_diff} there exists a measurable and bounded function $\nabla_g f \define [0, T] \times \R^d \times \M(\R^d) \times G \to \R^{d_G}$ such that for all $(t, x)$ the map $(v, g) \mapsto \nabla_g f(t, x, v, g)$ is continuous and for all $(t, x, v, g, g')$ we have
    \begin{equation*}
        f(t, x, v, g) - f(t, x, v, g) = \int_0^1 \nabla_g f\bigl(t, x, v, \theta g + (1 - \theta)g'\bigr) \cdot (g - g') \, \d \theta.
    \end{equation*}
    \item \label{it:convexity} for all $(t, x, v)$ the map $g \mapsto b(t, x, v, g)$ is affine and the map $g \mapsto f(t, x, v, g)$ is convex;
\end{enumerate}
\end{assumption}

The slightly awkward phrasing of the differentiability condition for $f$ in the control argument in Assumption \ref{ass:smp} \ref{it:cost_cntrl_diff} is needed because the set $G$ is not an open subset of $\R^{d_G}$.

\begin{proposition} \label{prop:adjoint}
Let Assumptions \ref{ass:fpe} and \ref{ass:smp} \ref{it:lfd} be satisfied. Then for any $\gamma \in \bb{G}$ and corresponding state $\mu$, the BSPDE \eqref{eq:adjoint} has a unique solution.
\end{proposition}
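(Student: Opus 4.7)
The plan is to recognise that BSPDE \eqref{eq:adjoint}, once the explicit expression \eqref{eq:coeff_lfd} for $D\cal{H}$ is substituted, is a \emph{linear} equation of exactly the form covered by the general nonlocal BSPDE theory developed in Section \ref{sec:bspde}, and then to quote the existence and uniqueness theorem proved there. After substitution, the driver of \eqref{eq:adjoint} splits into three pieces: a local second-order part with coefficients $\tilde{a}(t, x)$, $\tilde{b}(t, x, \mu_t, \gamma_t(x))$, and $\tilde{\lambda}(t, x, \mu_t)$; two nonlocal zeroth-order operators
\begin{equation*}
    r \mapsto \bigl\langle \mu_t, D\tilde{\lambda}(t, \cdot, \mu_t)(\cdot)\, r\bigr\rangle, \qquad p \mapsto \bigl\langle \mu_t, D\tilde{b}_0(t, \cdot, \mu_t)(\cdot) \cdot p\bigr\rangle;
\end{equation*}
and a progressively measurable forcing term $F_t(x) = \tilde{f}(t, x, \mu_t, \gamma_t(x)) + \langle \mu_t, D\tilde{f}_0(t, \cdot, \mu_t)(x)\rangle$, with terminal condition $D\tilde{\psi}(\mu_T)$.

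It then remains to check that these data satisfy the measurability, boundedness and ellipticity hypotheses of the general theorem. Uniform ellipticity of $\tilde{a}$ and boundedness of the local coefficients are immediate from Assumption \ref{ass:fpe}. For the nonlocal operators, a weighted Cauchy--Schwarz inequality (using the factorisation $1 = e^{\eta(y)/2} \cdot e^{-\eta(y)/2}$) gives the pointwise bound
\begin{equation*}
    \bigl|\langle \mu_t, D\tilde{\lambda}(t, \cdot, \mu_t)(x)\, r\rangle\bigr| \leq \lVert D\lambda\rVert_{\infty}\, \lVert \mu_t\rVert_{\eta}\, \lVert r\rVert_{\bar{\eta}},
\end{equation*}
and integrating against $e^{-\eta(x)}$, which is integrable on $\R^d$ because $\eta(x) = \eta_0 \sqrt{1 + \lvert x\rvert^2}$ grows linearly, yields an $L^2_{\bar{\eta}}$-bound of the form $C \lVert \mu_t\rVert_{\eta} \lVert r\rVert_{\bar{\eta}}$. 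The analogous estimate controls the $p$-operator and shows that $F_t$ lies in $L^2_{\bar{\eta}}$. Crucially, all these bounds are deterministic, since Proposition \ref{prop:rfpe} provides a deterministic $L^{\infty}$-in-time bound on $\lVert \mu_t\rVert_{\eta}$. The terminal datum $D\tilde{\psi}(\mu_T)$ is bounded and $\F_T$-measurable, hence in $L^2(\Omega; L^2_{\bar{\eta}})$, and progressive measurability of the full driver in $(t, \omega)$ follows from the continuity of $t \mapsto \mu_t$ established in Proposition \ref{prop:rfpe} together with the continuity in the measure argument of the linear functional derivatives imposed in Assumption \ref{ass:smp} \ref{it:lfd}.

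The main conceptual point is that although \eqref{eq:adjoint} features nonlocal operators acting on both $u_t$ and $\nabla u_t$, the dependence on the unknowns is genuinely linear once the forward state $\mu$ has been fixed; the nonlinearity enters only through $\mu$, which is determined upstream by PDE \eqref{eq:rfpe}. Thus existence reduces to a linear Galerkin-plus-fixed-point argument delivered by Section \ref{sec:bspde}, and uniqueness follows by applying the same energy estimate to the difference of two candidate solutions, which satisfies the homogeneous version of \eqref{eq:adjoint} with zero terminal datum. The genuine technical burdens -- namely handling a c\`adl\`ag $L^2_{\bar{\eta}}$-valued martingale $m$ and justifying the energy estimate via the extension of It\^o's formula given in Theorem \ref{thm:ito} -- are entirely absorbed into the general theory of Section \ref{sec:bspde} and need not be revisited here, which is why the proof should be compact.
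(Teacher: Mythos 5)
Your proposal is correct and follows exactly the paper's route: the paper proves Proposition \ref{prop:adjoint} by reducing to Theorem \ref{thm:bspde} (the general nonlocal BSPDE theorem of Section \ref{sec:bspde}) with the weight $\eta$ there taken to be $\bar{\eta} = -\eta_0\sqrt{1 + |x|^2}$, which is precisely what you do. You supply the Cauchy--Schwarz verification of the Lipschitz bounds for the nonlocal zeroth-order operators, the role of the deterministic bound on $\lVert \mu_t \rVert_{\eta}$ from Proposition \ref{prop:rfpe}, and the progressive measurability of the driver -- all details the paper explicitly chooses to omit -- so the argument is a fleshed-out version of the paper's own proof rather than a different one.
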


We are finally in a position to state the necessary SMP.

\begin{theorem} \label{thm:nsmp}
Let Assumptions \ref{ass:fpe} and \ref{ass:smp} be satisfied. Suppose further that $\gamma \in \bb{G}$ is an optimal control and let $\mu$ and $(u, m)$ be the corresponding solutions to PDE \eqref{eq:rfpe} and BSPDE \eqref{eq:adjoint}, respectively. Then it holds that
\begin{equation} \label{eq:nsmp}
    \tilde{b}\bigl(t, x, \mu_t, \gamma_t(x)\bigr) \cdot \nabla u_t(x) + \tilde{f}\bigl(t, x, \mu_t, \gamma_t(x)\bigr) = \inf_{g \in G} \Bigl(\tilde{b}(t, x, \mu_t, g) \cdot \nabla u_t(x) + \tilde{f}(t, x, \mu_t, g)\Bigr)
\end{equation}
for $\mu_t$-a.e.\@ $x \in \R^d$ and $\leb \otimes \pr$-a.e.\@ $(t, \omega) \in [0, T] \times \Omega$. 
\end{theorem}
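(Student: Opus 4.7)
The argument follows the classical convex-perturbation route, but with the state equation \eqref{eq:rfpe} and the adjoint BSPDE \eqref{eq:adjoint} coupled through nonlocal terms in the weighted duality between $L^2_\eta$ and $L^2_{\bar\eta}$. Since $G$ is convex, for any $\beta \in \bb{G}$ and $\epsilon \in [0, 1]$ the perturbed control $\gamma^\epsilon = \gamma + \epsilon(\beta - \gamma)$ lies in $\bb{G}$. Writing $\mu^\epsilon$ for the associated state from Proposition \ref{prop:rfpe}, the optimality of $\gamma$ gives the first-order condition $\frac{\d}{\d \epsilon}\tilde{J}(\gamma^\epsilon)|_{\epsilon = 0^+} \geq 0$, and the entire strategy is to rewrite this derivative in terms of the adjoint $(u, m)$ and $\beta - \gamma$, then to localise.

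First I would identify the first variation $\zeta = \lim_{\epsilon \to 0^+} \epsilon^{-1}(\mu^\epsilon - \mu)$. Subtracting the PDEs for $\mu^\epsilon$ and $\mu$, dividing by $\epsilon$, and using the linear functional derivatives of $\tilde{\lambda}$ and $\tilde{b}_0$ (Assumption \ref{ass:smp}\ref{it:lfd}) together with an $L^2_\eta$ energy estimate in the spirit of Proposition \ref{prop:rfpe}, one identifies $\zeta$ as the unique solution of the linearised equation
\begin{align*}
    \d \langle \zeta_t, \varphi\rangle &= \bigl\langle \zeta_t, \tilde{\L}\varphi(t, \cdot, \mu_t, \gamma_t)\bigr\rangle \, \d t + \bigl\langle \mu_t, \bigl\langle \zeta_t, D\tilde{\L}\varphi(t, \cdot, \mu_t, \gamma_t)\bigr\rangle\bigr\rangle \, \d t \\
    &\quad + \bigl\langle \mu_t, \tilde{b}_1(t, \cdot, \beta_t - \gamma_t) \cdot \nabla \varphi\bigr\rangle \, \d t,
\end{align*}
with $\zeta_0 = 0$. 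Because $b_1$ is linear in $g$, the perturbation $\beta - \gamma$ enters the drive directly rather than through a chain-rule expansion. The same variational argument, applied to the cost and using Assumption \ref{ass:smp}\ref{it:cost_cntrl_diff} for $f_1$, yields
\begin{align*}
    \frac{\d}{\d \epsilon}\tilde{J}(\gamma^{\epsilon})\bigg|_{\epsilon = 0^+} &= \ev\biggl[\int_0^T \Bigl(\langle \zeta_t, \tilde{f}(t, \cdot, \mu_t, \gamma_t)\rangle + \langle \mu_t, \langle \zeta_t, D\tilde{f}_0(t, \cdot, \mu_t)\rangle\rangle \\
    &\quad + \langle \mu_t, \nabla_g \tilde{f}_1(t, \cdot, \gamma_t) \cdot (\beta_t - \gamma_t)\rangle\Bigr) \, \d t + \langle D\tilde{\psi}(\mu_T), \zeta_T\rangle\biggr].
\end{align*}

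Next, I would apply the generalised It\^o formula of Theorem \ref{thm:ito} to the duality pairing $\langle \zeta_t, u_t\rangle$, which is legitimate since $\zeta$ lives in $L^2_\eta$ and $(u, m)$ in $L^2_{\bar\eta}$ with reciprocal weight. Combining the linearised equation for $\zeta$ with the BSPDE \eqref{eq:adjoint} and the explicit form \eqref{eq:coeff_lfd} of $D\cal{H}$ produces several cancellations: the local terms $\tilde{\lambda}u_t$, $\tilde{b} \cdot \nabla u_t$, and $\tilde{a} \colon \nabla^2 u_t$ in the adjoint's drift are matched by the $\langle \zeta_t, \tilde{\L}u_t\rangle$ contribution from the $\zeta$ equation (after weighted integration by parts); the nonlocal $D\tilde{\lambda}$ and $D\tilde{b}_0$ double pairings in the adjoint cancel against the analogous terms in the $\zeta$ equation by Fubini; the $D\tilde{f}_0$ and $D\tilde{\psi}$ contributions cancel with the corresponding nonlocal terms of the Gateaux derivative of $\tilde{J}$; and the martingale increments $\d m_t$ vanish in expectation. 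What survives is
\begin{equation*}
    \frac{\d}{\d \epsilon}\tilde{J}(\gamma^{\epsilon})\bigg|_{\epsilon = 0^+} = \ev\int_0^T \bigl\langle \mu_t, \tilde{b}_1(t, \cdot, \beta_t - \gamma_t) \cdot \nabla u_t + \nabla_g \tilde{f}_1(t, \cdot, \gamma_t) \cdot (\beta_t - \gamma_t)\bigr\rangle \, \d t \geq 0.
\end{equation*}

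Finally, to localise, fix a countable dense subset $\{g_k\} \subset G$ and consider controls of the form $\beta_t(\omega, x) = g_k \bf{1}_{A \times B}(t, \omega, x) + \gamma_t(\omega, x) \bf{1}_{(A \times B)^c}(t, \omega, x)$ for progressively measurable $A \subset [0, T] \times \Omega$ and Borel $B \subset \R^d$; such $\beta$ lies in $\bb{G}$. Varying $A$, $B$, and $k$ and appealing to continuity of $\nabla_g \tilde{f}_1$ in $g$ (Assumption \ref{ass:smp}\ref{it:cost_cntrl_diff}) yields, for $\d t \otimes \d \pr \otimes \mu_t(\d x)$-a.e.\@ $(t, \omega, x)$ and every $g \in G$,
\begin{equation*}
    \tilde{b}_1(t, x, g - \gamma_t(x)) \cdot \nabla u_t(x) + \nabla_g \tilde{f}_1(t, x, \gamma_t(x)) \cdot (g - \gamma_t(x)) \geq 0.
\end{equation*}
Linearity of $\tilde{b}_1$ in $g$ together with the convex subgradient inequality $\tilde{f}_1(t, x, g) - \tilde{f}_1(t, x, \gamma_t(x)) \geq \nabla_g \tilde{f}_1(t, x, \gamma_t(x)) \cdot (g - \gamma_t(x))$ (from Assumption \ref{ass:smp}\ref{it:convexity} and \ref{it:cost_cntrl_diff}) then upgrades this differential inequality to the pointwise minimisation \eqref{eq:nsmp}. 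The main technical obstacle is the rigorous identification of $\zeta$ as a limit in $L^2_\eta$ in the presence of the nonlocal couplings $D\tilde{\lambda}$ and $D\tilde{b}_0$, together with the careful bookkeeping in the It\^o duality calculation needed to verify that every nonlocal pairing really does cancel.
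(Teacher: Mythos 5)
Your proposal is correct and follows essentially the same route as the paper: linearise the state equation via the linear functional derivatives of $\tilde\lambda$ and $\tilde b_0$ (this is the paper's Proposition \ref{prop:variation}), compute the Gateaux derivative of $\tilde J$ by pairing the first variation with the adjoint through the weighted It\^o formula of Corollary \ref{cor:ito} (the paper's Proposition \ref{eq:cost_derivative}), then localise with indicator-type perturbations. The only difference is cosmetic: the paper first inserts the convexity inequality $\nabla_g H_1(\cdot, \gamma_t) \cdot (\gamma'_t - \gamma_t) \leq H_1(\cdot, \gamma'_t) - H_1(\cdot, \gamma_t)$ into the aggregate Gateaux derivative and then localises, whereas you localise the stationarity condition first and apply the convexity upgrade pointwise; both orderings are sound and yield \eqref{eq:nsmp}.
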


Note that owing to Theorem \ref{thm:optimal}, by changing the probabilistic setup if necessary, optimal controls are guaranteed to exist.

Next, we wish to use the necessary SMP to derive a semilinear BSPDE from the linear BSPDE \eqref{eq:adjoint} satisfied by the adjoint process. Fix an optimal control $\gamma \in \bb{G}$ and let $\mu$ denote the corresponding solution to PDE \eqref{eq:rfpe}. From Equation \eqref{eq:nsmp}, we obtain the equality
\begin{equation*}
    \cal{H}(t, \mu_t, u_t, \nabla u_t, \gamma_t) = \inf_{g \in \cal{G}}\cal{H}(t, \mu_t, u_t, \nabla u_t, g),
\end{equation*}
which motivates us to introduce the function $\cal{H}_{\ast} \define [0, T] \times \Omega \times \M(\R^d) \times \cal{B}_b(\R^d) \times \cal{B}_b(\R^d; \R^d) \to \R$ given by
\begin{equation*}
    \cal{H}_{\ast}(t, v, r, p) = \inf_{g \in \cal{G}} \cal{H}(t, v, r, p, g)
\end{equation*}
for $(t, \omega, v, r, p) \in [0, T] \times \Omega \times \M(\R^d) \times \cal{B}_b(\R^d) \times \cal{B}_b(\R^d; \R^d)$. As for $\cal{H}$, the functions $\cal{H}_{\ast}$ and $D\cal{H}_{\ast}$ are also defined for $v \in L_{\eta}^2(\R^d; [0, \infty))$, $r \in L_{\bar{\eta}}^2(\R^d)$, and $p \in L^2_{\bar{\eta}}(\R^d; \R^d)$. Note that to guarantee that $D\cal{H}_{\ast}$ exists, we have to make an additional assumption (see Assumption \ref{ass:separability} below). Then, we may consider the semilinear BSPDE
\begin{equation} \label{eq:hjb}
    \d u_t(x) = -\Bigl(D\cal{H}_{\ast}(t, \mu_t, u_t, \nabla u_t)(x) + a(t, x) \colon \nabla^2 u_t(x)\Bigr) \, \d t + \d m_t(x)
\end{equation}
with terminal condition $u_T(x) = D\tilde{\psi}(\mu_T)(x)$. We understand this BSPDE in the sense of Definition \ref{def:adjoint}.

\begin{assumption} \label{ass:separability}
We assume there exist $(b_0, f_0) \define [0, T] \times \R^d \times \M(\R^d) \to \R^d \times \R$ and $(b_1, f_1) \define [0, T] \times \R^d \times G \to \R^d \times \R$ such that for all $(t, x, v, g)$ we have
\begin{equation*}
    b(t, x, v, g) = b_0(t, x, v) + b_1(t, x, g) \quad \text{and} \quad f(t, x, v, g) = f_0(t, x, v) + f_1(t, x, g).
\end{equation*}
\end{assumption}

In what follows Assumption \ref{ass:separability} will typically be in force in conjunction with Assumption \ref{ass:smp}, in which case the properties of $b$ and $f$ assumed in the latter transfer to the functions $b_0$, $b_1$, $f_0$, and $f_1$ from the former. As for the remaining coefficients and cost functions, we will indicate the shifted versions of $b_0$, $b_1$, $f_0$, and $f_1$ with a tilde. Under Assumption \ref{ass:separability}, we can write
\begin{align*}
    \cal{H}_{\ast}(t, v, r, p) &= \inf_{g \in \cal{G}} \Bigl\langle v, \tilde{\lambda}(t, \cdot, v) + \tilde{b}(t, \cdot, v, g) \cdot p + \tilde{f}(t, \cdot, v, g) \Bigr\rangle \\
    &= \cal{H}_0(t, v, r, p) + \Bigl\langle v, \inf_{g \in G} H_1(t, \cdot, p, g) \Bigr\rangle,
\end{align*}
where $\cal{H}_0 \define [0, T] \times \Omega \times \M(\R^d) \times \cal{B}_b(\R^d) \times \cal{B}_b(\R^d; \R^d) \to \R$ is given by
\begin{equation*}
    \cal{H}_0(t, v, r, p) = \Bigl\langle v, \tilde{\lambda}(t, \cdot, v)r + \tilde{b}_0(t, \cdot, v) \cdot p + \tilde{f}_0(t, \cdot, v)\Bigr\rangle
\end{equation*}
for $(t, \omega, v, r, p) \in [0, T] \times \Omega \times \M(\R^d) \times \cal{B}_b(\R^d) \times \cal{B}_b(\R^d; \R^d) \to \R$ and $H_1 \define [0, T] \times \Omega \times \R^d \times \R^d \times G \to \R$ is defined as
\begin{equation*}
    H_1(t, x, p, g) = \tilde{b}_1(t, x, g) \cdot p + \tilde{f}_1(t, x, g)
\end{equation*}
for $(t, \omega, x, p, g) \in [0, T] \times \Omega \times \R^d \times \R^d \times G$. Note that $\cal{H}_0$ and $D\cal{H}_0$ can be extended to $v \in L_{\eta}^2(\R^d; [0, \infty))$, $r \in L_{\bar{\eta}}(\R^d)$, and $p \in L_{\bar{\eta}}(\R^d; \R^d)$. Hence, it follows that $\cal{H}_{\ast}$ possesses the linear functional derivative 
\begin{align*}
    D\cal{H}_{\ast}(t, v, r, p)(x) = D\cal{H}_0(t, v, r, p)(x) + \inf_{g \in G} H_1\bigl(t, x, p(x), g\bigr).
\end{align*}

\begin{proposition} \label{prop:hjb}
Let Assumptions \ref{ass:fpe}, \ref{ass:smp} \ref{it:lfd}, and \ref{ass:separability} be satisfied. Then for any $\mu \in L_{\bb{F}}^{\infty}([0, T]; L_{\eta}(\R^d))$, BSPDE \eqref{eq:hjb} has a unique solution.
\end{proposition}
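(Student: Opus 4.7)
The approach is to set up a Picard iteration, reducing BSPDE \eqref{eq:hjb} to a sequence of linear BSPDEs governed by Proposition \ref{prop:adjoint} and the general theory developed in Section \ref{sec:bspde}.

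First, I would make $\cal{H}_*$ and its linear functional derivative explicit. Since $\cal{G}$ consists of arbitrary measurable selections $\R^d \to G$ and $\cal{H}$ splits additively according to $b = b_0 + b_1$ and $f = f_0 + f_1$, the infimum can be executed pointwise in $x$ inside the pairing with $v$, yielding
\begin{equation*}
\cal{H}_*(t, v, r, p) = \bigl\langle v, \tilde{\lambda}(t, \cdot, v) r + \tilde{b}_0(t, \cdot, v) \cdot p + \tilde{f}_0(t, \cdot, v) + H_1(t, \cdot, p)\bigr\rangle,
\end{equation*}
where $H_1(t, x, q) = \inf_{g \in G}\bigl(\tilde{b}_1(t, x, g) \cdot q + \tilde{f}_1(t, x, g)\bigr)$. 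Since $G$ is compact and $\tilde{b}_1, \tilde{f}_1$ are bounded, $H_1$ is bounded and Lipschitz in $q$ with constant $\lVert \tilde{b}_1\rVert_\infty$. Combining the product rule for linear functional derivatives (Lemma \ref{lem:ldf_product}) with Assumption \ref{ass:smp} \ref{it:lfd}, I would then obtain
\begin{align*}
D\cal{H}_*(t, v, r, p)(x) &= \tilde{\lambda}(t, x, v) r(x) + \tilde{b}_0(t, x, v) \cdot p(x) + \tilde{f}_0(t, x, v) + H_1(t, x, p(x)) \\
&\quad + \bigl\langle v, D\tilde{\lambda}(t, \cdot, v)(x) r + D\tilde{b}_0(t, \cdot, v)(x) \cdot p + D\tilde{f}_0(t, \cdot, v)(x)\bigr\rangle,
\end{align*}
which is bounded, continuous in $v$, and globally Lipschitz in $(r, p) \in L^2_{\bar{\eta}}(\R^d) \times L^2_{\bar{\eta}}(\R^d; \R^d)$, with a constant depending only on the coefficient sup-norms and on $\lVert \mu_t\rVert_\eta$. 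The latter is essentially bounded by hypothesis.

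Second, I would set up the Picard iteration. For any input $(\bar r, \bar p) \in L_{\bb{F}}^2([0,T]; L^2_{\bar{\eta}}(\R^d)) \times L_{\bb{F}}^2([0,T]; L^2_{\bar{\eta}}(\R^d; \R^d))$, the linear BSPDE obtained by substituting $(\bar r, \bar p)$ into $D\cal{H}_*$ falls within the scope of Proposition \ref{prop:adjoint} (or more directly, of Section \ref{sec:bspde}'s linear BSPDE theory), producing a unique solution $(u^{\bar r,\bar p}, m^{\bar r,\bar p})$. Define $\Phi(\bar r,\bar p) := (u^{\bar r,\bar p}, \nabla u^{\bar r,\bar p})$. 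Applying the generalised It\^o formula for c\`adl\`ag $L^2$-valued semimartingales (Theorem \ref{thm:ito}) to $\lVert u^{\bar r,\bar p} - u^{\bar r',\bar p'}\rVert_{\bar{\eta}}^2$ furnishes the standard energy estimate, and coupled with the Lipschitz property of $D\cal{H}_*$ this delivers a contraction of $\Phi$ in an exponentially time-weighted version of the norm on $L^2_{\bb{F}}([0,T]; H^1_{\bar{\eta}}(\R^d))$. The unique fixed point provides both existence and uniqueness for \eqref{eq:hjb}.

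The principal subtlety is that $D\cal{H}_*$ involves an infimum over $g$, which generically destroys differentiability with respect to the measure argument. This difficulty is dissolved by the observation above: because the pointwise infimum is pulled outside the pairing with $v$, the term $H_1(t, x, p(x))$ enters $\cal{H}_*$ only linearly in $v$, so its linear functional derivative in $v$ is simply $H_1(t, x, p(x))$ itself, with no differentiation of the infimum required. Once this is noted, verifying the hypotheses of the general theory from Section \ref{sec:bspde} from Assumptions \ref{ass:fpe} and \ref{ass:smp} \ref{it:lfd} is routine.
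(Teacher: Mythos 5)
Your proposal is correct and follows essentially the same route as the paper, which simply invokes Theorem \ref{thm:bspde} (whose proof is the Picard iteration on linear BSPDEs you describe, via Proposition \ref{prop:bspde_linear}) and leaves the verification of Assumption \ref{ass:bspde} to the reader. The one ingredient you spell out that the paper elides --- that the infimum over $\cal{G}$ in $\cal{H}_{\ast}$ can be taken pointwise in $x$, so $H_1(t,x,p(x))$ enters only linearly in $v$ and $D\cal{H}_{\ast}$ is therefore well-defined, bounded, and globally Lipschitz in $(r,p)$ with respect to $\lVert\cdot\rVert_{\bar{\eta}}$ --- is exactly the content needed to make that verification routine.
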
 

One might expect that if $\gamma \in \bb{G}$ is an optimal control with state $\mu$, then the corresponding adjoint process $(\tilde{u}, \tilde{m})$ solves BSPDE \eqref{eq:hjb}. However, there is a slight technicality: Equation \eqref{eq:nsmp} only implies that
\begin{equation*}
    D\cal{H}_{\ast}(t, \mu_t, \tilde{u}_t, \nabla \tilde{u}_t)(x) = D\cal{H}(t, \mu_t, \tilde{u}_t, \nabla \tilde{u}_t, \gamma_t)(x)
\end{equation*}
for $\mu_t$-a.e.\@ $x \in \R^d$ and a priori it is not clear whether $\mu_t$ is equivalent to the Lebesgue measure on $\R^d$ for $\leb \otimes \pr$-a.e.\@ $(t, \omega) \in [0, T] \times \Omega$. However, it is possible to circumvent this issue by modifying $\gamma$ on null sets of $\mu_t$. Indeed, given the solution $(u, m)$ to the semilinear BSPDE \eqref{eq:hjb}, we alter $\gamma$ as follows: let $g_{\ast} \define [0, T] \times \R^d \times \R^d \to G$ denote a measurable minimiser of the map $g \mapsto H_1(t, x, p, g)$ for $(t, x, p) \in [0, T] \times \R^d \times \R^d$ (which is guaranteed to exist under Assumptions \ref{ass:fpe} and \ref{ass:smp}) and let $\tilde{g}_{\ast}$ be the shift of $g_{\ast}$ along the noise $W$. Then, we define the new control $\gamma^{\ast} \in \bb{G}$ by
\begin{equation*}
    \gamma^{\ast}_t(x) = \bf{1}_{\mu_t > 0} \gamma_t(x) + \bf{1}_{\mu_t = 0}\tilde{g}_{\ast}(t, x, \nabla u_t(x)).
\end{equation*}
Note that replacing $\gamma$ with $\gamma^{\ast}$ in PDE \eqref{eq:rfpe} does not change the solution $\mu$. In particular, $\gamma^{\ast}$ is still an optimal control. Consequently, the SMP applies with $\gamma^{\ast}$ and the corresponding adjoint process $(u^{\ast}, m^{\ast})$. Plugging the equality
\begin{equation*}
    D\cal{H}_{\ast}(t, \mu_t, u^{\ast}_t, \nabla u^{\ast}_t)(x) = D\cal{H}(t, \mu_t, u^{\ast}_t, \nabla u^{\ast}_t, \gamma^{\ast}_t)(x)
\end{equation*}
for $\leb$-a.e.\@ $x \in \R^d$, implied by \eqref{eq:nsmp} and our modification, into the linear BSPDE \eqref{eq:adjoint} satisfied by the adjoint process $(u^{\ast}, m^{\ast})$, shows that $(u^{\ast}, m^{\ast})$ in fact solves the semilinear BSPDE \eqref{eq:hjb}. Uniqueness of the latter, guaranteed by Proposition \ref{prop:hjb}, then yields $(u, m) = (u^{\ast}, m^{\ast})$. In other words, $(\mu, u, m)$ is a solution to the \textit{forward-backward SPDE} (FBSPDE) consisting of the nonlinear random Fokker--Planck equation \eqref{eq:rfpe} and the \textit{stochastic Hamilton--Jacobi--Bellman equation} \eqref{eq:hjb}:
\begin{align} \label{eq:fbspde}
\begin{split}
    \d \langle \mu_t, \varphi\rangle &= \bigl\langle \mu_t, \tilde{\L}\varphi\bigl(t, \cdot, \mu_t, \tilde{g}_{\ast}(t, \cdot, \nabla u_t)\bigr)\bigr\rangle \, \d t \\
    \d u_t(x) &= -\Bigl(D\cal{H}_{\ast}(t, \mu_t, u_t, \nabla u_t)(x) + a(t, x) \colon \nabla^2 u_t(x)\Bigr) \, \d t + \d m_t(x)
\end{split}
\end{align}
for $\varphi \in C_c^2(\R^d)$ with initial condition $\mu_0 = v_0$ and terminal condition $u_T(x) = D\tilde{\psi}(\mu_T)(x)$. 

\begin{corollary}
Let Assumptions \ref{ass:fpe}, \ref{ass:smp}, and \ref{ass:separability} be satisfied and suppose there exists an optimal control $\gamma \in \bb{G}$. Then FBSPDE \eqref{eq:fbspde} has a solution $(\mu, u, m)$.
\end{corollary}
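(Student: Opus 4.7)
The plan is essentially to assemble the building blocks sketched in the paragraphs preceding the corollary. Let $\gamma \in \bb{G}$ be the given optimal control and let $\mu$ denote the associated solution of the random Fokker--Planck equation \eqref{eq:rfpe} provided by Proposition \ref{prop:rfpe}. With this $\mu$ fixed as a random input, Proposition \ref{prop:hjb} produces a unique solution $(u, m)$ of the semilinear BSPDE \eqref{eq:hjb}. What remains is to show that this $(u, m)$ is in fact the adjoint process attached (in the sense of Proposition \ref{prop:adjoint}) to some optimal control driving the same state $\mu$, so that the forward equation in \eqref{eq:fbspde} is automatically satisfied and the backward equation follows from the identification $D\cal{H}_\ast = D\cal{H}$ under the minimiser.

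The first technical step is to produce a measurable minimiser $g_\ast \define [0, T] \times \R^d \times \R^d \to G$ of the map $g \mapsto b_1(t, x, g) \cdot p + f_1(t, x, g)$. Under Assumption \ref{ass:fpe} (compactness of $G$, continuity of $b_1$ and $f_1$ in $g$) and Assumption \ref{ass:smp} (linearity of $b_1$, convexity of $f_1$), existence of a jointly measurable selector follows from a standard Kuratowski--Ryll-Nardzewski / Filippov-type argument. Shifting along the noise yields $\tilde{g}_\ast$, and I then define
\begin{equation*}
    \gamma^\ast_t(x) = \bf{1}_{\mu_t(x) > 0}\gamma_t(x) + \bf{1}_{\mu_t(x) = 0}\tilde{g}_\ast\bigl(t, x, \nabla u_t(x)\bigr),
\end{equation*}
which is $\bb{F}$-progressively measurable and $G$-valued, hence $\gamma^\ast \in \bb{G}$. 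Since $\gamma^\ast$ agrees with $\gamma$ on $\{\mu_t > 0\}$, the coefficients $\tilde{b}(\cdot, \mu_t, \gamma^\ast_t)$ and $\tilde{\lambda}(\cdot, \mu_t)$ act identically against $\mu_t$, so $\mu$ still solves \eqref{eq:rfpe} with control $\gamma^\ast$; the cost functional is likewise unchanged, so $\gamma^\ast$ is also optimal.

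Now I apply Theorem \ref{thm:nsmp} to $\gamma^\ast$, obtaining an adjoint pair $(u^\ast, m^\ast)$ solving the linear BSPDE \eqref{eq:adjoint} with control $\gamma^\ast$ and state $\mu$. The necessary condition \eqref{eq:nsmp} holds for $\mu_t$-a.e.\@ $x$ under $\gamma^\ast$; by the construction of $\gamma^\ast$ on $\{\mu_t = 0\}$ via the minimiser $\tilde{g}_\ast$, the identity
\begin{equation*}
    D\cal{H}\bigl(t, \mu_t, u^\ast_t, \nabla u^\ast_t, \gamma^\ast_t\bigr)(x) = D\cal{H}_\ast\bigl(t, \mu_t, u^\ast_t, \nabla u^\ast_t\bigr)(x)
\end{equation*}
holds for $\leb$-a.e.\@ $x \in \R^d$, and hence for $\leb \otimes \pr$-a.e.\@ $(t, \omega)$, by inspection of the structure \eqref{eq:coeff_lfd} of the linear functional derivative $D\cal{H}$. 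Substituting this into \eqref{eq:adjoint} shows that $(u^\ast, m^\ast)$ satisfies the semilinear BSPDE \eqref{eq:hjb} with the same $\mu$. Uniqueness from Proposition \ref{prop:hjb} then forces $(u, m) = (u^\ast, m^\ast)$, so that the forward equation in \eqref{eq:fbspde} (driven by $\tilde{g}_\ast(t, \cdot, \nabla u_t)$) reduces via the same minimising property to the equation satisfied by $\mu$ under $\gamma^\ast$, completing the verification.

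The one genuinely non-routine step is the measurable selection of $g_\ast$ together with the check that the on/off modification on $\{\mu_t = 0\}$ preserves both optimality and progressive measurability. Everything else is a matter of chaining Propositions \ref{prop:rfpe}, \ref{prop:adjoint}, \ref{prop:hjb} and Theorem \ref{thm:nsmp}, and exploiting that $D\cal{H}$ and $D\cal{H}_\ast$ only differ through the choice of $g$ at each point.
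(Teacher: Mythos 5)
Your proof reproduces, in slightly expanded form, exactly the argument the paper gives in the paragraphs preceding the corollary: take the optimal control and its state $\mu$, solve the semilinear BSPDE \eqref{eq:hjb} with input $\mu$ to get $(u,m)$, modify $\gamma$ on $\{\mu_t=0\}$ via the measurable minimiser $\tilde g_\ast(\cdot,\nabla u_t)$, apply the necessary SMP (Theorem \ref{thm:nsmp}) to the modified control, and identify the resulting adjoint with $(u,m)$ through the uniqueness in Proposition \ref{prop:hjb}. Your explicit remark that $g_\ast$ requires a measurable-selection argument is a useful detail the paper only asserts.

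One shared subtlety worth flagging, since you assert it without comment: on $\{\mu_t=0\}$ the modification yields a $\gamma^\ast$ that minimises $g \mapsto \tilde b_1(t,x,g)\cdot\nabla u_t(x) + \tilde f_1(t,x,g)$, whereas the identity $D\cal{H}(t,\mu_t,u^\ast_t,\nabla u^\ast_t,\gamma^\ast_t) = D\cal{H}_\ast(t,\mu_t,u^\ast_t,\nabla u^\ast_t)$ that you invoke $\leb$-a.e.\@ requires $\gamma^\ast$ to minimise against $\nabla u^\ast_t$, where $(u^\ast,m^\ast)$ is the adjoint for $\gamma^\ast$. These gradients coincide on $\{\mu_t=0\}$ only once one already knows $(u^\ast,m^\ast)=(u,m)$, which is the conclusion being sought. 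The paper's discussion has the same circularity, so you have not departed from it; but a fully rigorous version of this step should either close the loop (e.g.\@ by showing that $\mu_t > 0$ $\leb$-a.e.\@ for $t>0$ by strict positivity of solutions to uniformly parabolic equations, which renders the modification vacuous) or recast the modification as a fixed-point problem in $\gamma^\ast$.
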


\subsection{The Sufficient Stochastic Maximum Principle}

Next, we will formulate a sufficient SMP which together with uniqueness of the minimum on the right-hand side of Equation \eqref{eq:nsmp} implies uniqueness for FBSPDE \eqref{eq:fbspde}. 
% We introduce the function $\cal{H}_0 \define [0, T] \times \Omega \times \M(\R^d) \times \cal{B}_b(\R^d) \times \cal{B}_b(\R^d; \R^d) \to \R$ given by
% \begin{equation*}
%     \cal{H}_0(t, v, r, p) = \Bigl\langle v, \tilde{\lambda}(t, \cdot, v)r + \tilde{b}_0(t, \cdot, v) \cdot p + \tilde{f}_0(t, \cdot, v)\Bigr\rangle
% \end{equation*}
% for $(t, \omega, v, r, p) \in [0, T] \times \Omega \times \M(\R^d) \times \cal{B}_b(\R^d) \times \cal{B}_b(\R^d; \R^d) \to \R$ and again note that $\cal{H}_0$ and $D\cal{H}_0$ can be extended to $v \in L_{\eta}^2(\R^d; [0, \infty))$, $r \in L_{\bar{\eta}}(\R^d)$, and $p \in L_{\bar{\eta}}(\R^d; \R^d)$.

\begin{theorem} \label{thm:ssmp}
Let Assumptions \ref{ass:fpe}, \ref{ass:smp} \ref{it:lfd}, and \ref{ass:separability} be satisfied. Fix $\gamma \in \bb{G}$ and let $\mu$ and $(u, m)$ be the corresponding solutions to PDE \eqref{eq:rfpe} and BSPDE \eqref{eq:adjoint}, respectively. Suppose $L_{\eta}^2(\R^d; [0, \infty)) \ni v \mapsto \psi(v)$ is convex and $L_{\eta}^2(\R^d; [0, \infty)) \ni v \mapsto \cal{H}_0(t, v, u_t, \nabla u_t)$ is convex for $\leb \otimes \pr$-a.e.\@ $(t, \omega) \in [0, T] \times \Omega$. Then, if
\begin{equation} \label{eq:ssmp}
    H_1\bigl(t, x, \nabla u_t(x), \gamma_t(x)\bigr) = \inf_{g \in G} H_1\bigl(t, x, \nabla u_t(x), g\bigr)
\end{equation}
for $\mu_t$-a.e.\@ $x \in \R^d$ and $\leb \otimes \pr$-a.e.\@ $(t, \omega) \in [0, T] \times \Omega$, the control $\gamma$ is optimal. 

Moreover, if the map $g \mapsto H_1(t, x, \nabla u_t(x), g)$ has a unique minimiser for $\leb$-a.e.\@ $x \in \R^d$ and $\leb \otimes \pr$-a.e.\@ $(t, \omega) \in [0, T] \times \Omega$, then $\gamma$ is the (up to modifications on null sets of $\mu_t$) unique optimal control.
\end{theorem}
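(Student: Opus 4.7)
The plan is to show $\tilde{J}(\gamma') \geq \tilde{J}(\gamma)$ for every $\gamma' \in \bb{G}$ with associated state $\mu'$. Before starting the comparison, I would modify $\gamma_t(x)$ on $\{\mu_t = 0\}$ to a measurable selector of the minimiser of $g \mapsto \tilde{b}_1(t,x,g)\cdot\nabla u_t(x) + \tilde{f}_1(t,x,g)$, exactly as in the construction of $\gamma^\ast$ preceding \eqref{eq:fbspde}. This leaves $\mu$, $(u,m)$, and $\tilde{J}(\gamma)$ unchanged, and upgrades the hypothesis \eqref{eq:ssmp} so that it holds for $\leb$-a.e.\@ $x \in \R^d$.

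Convexity of $\tilde{\psi}$ combined with $u_T = D\tilde{\psi}(\mu_T)$ yields $\tilde{\psi}(\mu'_T) - \tilde{\psi}(\mu_T) \geq \langle u_T, \mu'_T - \mu_T\rangle$. To rewrite the right-hand side in terms of the running costs, I would apply the generalised It\^o formula of Theorem \ref{thm:ito} to the pairings $\langle u_t, \mu'_t\rangle$ and $\langle u_t, \mu_t\rangle$ in the duality between $L^2_\eta$ and $L^2_{\bar\eta}$. Since $\mu$ and $\mu'$ have absolutely continuous trajectories, cross variations with $m$ vanish, and after integrating by parts in the $\tilde{a}\colon\nabla^2 u_t$ term (valid because $\mu_t, \mu'_t \in H^1_\eta$ and $u_t \in H^1_{\bar\eta}$) and taking expectations to drop the martingale contribution, subtracting the two identities and combining with the terminal bound yields
\begin{equation*}
\tilde{J}(\gamma') - \tilde{J}(\gamma) \geq \ev\int_0^T \Delta_t \, \d t,
\end{equation*}
where $\Delta_t = \cal{H}(t,\mu'_t,u_t,\nabla u_t,\gamma'_t) - \cal{H}(t,\mu_t,u_t,\nabla u_t,\gamma_t) - \langle \mu'_t - \mu_t, D\cal{H}(t,\mu_t,u_t,\nabla u_t,\gamma_t)\rangle$, the running cost contributions having been absorbed into $\cal{H}$. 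I would then show $\Delta_t \geq 0$ by splitting $\cal{H} = \cal{H}_0 + \cal{H}_1$ with $\cal{H}_1(t,v,r,p,g) = \langle v, \tilde{b}_1(t,\cdot,g)\cdot p + \tilde{f}_1(t,\cdot,g)\rangle$ affine in $v$. The $\cal{H}_0$-contribution is $\geq 0$ by the subgradient-type inequality for the linear functional derivative under the convexity assumption on $v \mapsto \cal{H}_0(t,v,u_t,\nabla u_t)$. Since $\cal{H}_1$ is affine in $v$, its linear functional derivative at $\mu_t$ is just its integrand, and the $\cal{H}_1$-contribution telescopes to $\langle \mu'_t, [\tilde{b}_1(\cdot,\gamma'_t) - \tilde{b}_1(\cdot,\gamma_t)]\cdot\nabla u_t + \tilde{f}_1(\cdot,\gamma'_t) - \tilde{f}_1(\cdot,\gamma_t)\rangle$, which is $\geq 0$ because $\mu'_t \geq 0$ and $\gamma_t(x)$ is now $\leb$-a.e.\@ a minimiser.

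For uniqueness, if $\gamma'$ is also optimal then $\Delta_t = 0$ for $\leb\otimes\pr$-a.e.\@ $(t,\omega)$, the $\cal{H}_1$-integrand vanishes $\mu'_t$-a.e., and uniqueness of the minimiser forces $\gamma'_t = \gamma_t$ on $\{\mu'_t > 0\}$. Substituting into PDE \eqref{eq:rfpe} shows that $\mu'$ solves \eqref{eq:rfpe} with control $\gamma$, so Proposition \ref{prop:rfpe} yields $\mu' = \mu$ and hence $\gamma' = \gamma$ up to $\mu_t$-null sets. The main obstacle I expect is the rigorous execution of the duality step: $u_t$ lies only in $H^1_{\bar\eta}$, not $H^2_{\bar\eta}$, so the It\^o expansion of $\langle u_t, \mu'_t\rangle$ requires Theorem \ref{thm:ito} in its full generality for c\`adl\`ag $L^2$-valued semimartingales and a careful interpretation of the second-order term via integration by parts against $\nabla\cdot(\mu'_t \tilde{a})$; once this is in place the algebraic manipulations leading to $\Delta_t$ are routine.
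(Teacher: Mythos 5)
Your core argument is the paper's: apply the generalised It\^o formula (Corollary \ref{cor:ito}) to the pairing of $u$ against $\mu - \mu'$, split $\cal{H} = \cal{H}_0 + \cal{H}_1$, control the $\cal{H}_0$-part by the subgradient inequality for the linear functional derivative (Lemma \ref{lem:ldf_convex}), and control the $\cal{H}_1$-part by pointwise minimality of $\gamma_t(x)$ together with $\mu'_t \geq 0$. Packaging the three ingredients into the single quantity $\Delta_t$ is a harmless rewriting of the paper's inequality \eqref{eq:convexity_expression}, and the uniqueness argument is the paper's verbatim.

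The one place your proposal goes wrong is the WLOG reduction, and it is a genuine gap. You assert that modifying $\gamma$ on $\{\mu_t = 0\}$ ``leaves $\mu$, $(u,m)$, and $\tilde{J}(\gamma)$ unchanged''. The first and third are fine, but the second is false in general: by \eqref{eq:coeff_lfd} the drift $D\cal{H}(t, \mu_t, u_t, \nabla u_t, \gamma_t)(x)$ of the adjoint BSPDE \eqref{eq:adjoint} contains the term $\tilde{b}_1\bigl(t, x, \gamma_t(x)\bigr) \cdot \nabla u_t(x) + \tilde{f}_1\bigl(t, x, \gamma_t(x)\bigr)$, which depends on $\gamma_t(x)$ \emph{pointwise in $x$}, not merely $\mu_t$-a.e. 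Since $\{\mu_t = 0\}$ may have positive Lebesgue measure (the paper explicitly flags that Lebesgue-equivalence of $\mu_t$ is not known a priori), the modification changes the BSPDE drift on a Lebesgue-positive set and hence, in general, changes $(u, m)$. The paper's own proof accordingly replaces $(u, m)$ by a new adjoint $(u^{\ast}, m^{\ast})$ for $\gamma^{\ast}$; it does not claim the adjoint survives. Your incorrect claim hides a real obstruction: $\gamma^{\ast}$ is built to minimise pointwise against $\nabla u_t$, but once the adjoint is replaced the $\cal{H}_1$-bound must hold against $\nabla u^{\ast}_t$ and the convexity hypothesis on $v \mapsto \cal{H}_0(t, v, \cdot, \cdot)$ must hold with $(u^{\ast}_t, \nabla u^{\ast}_t)$ rather than $(u_t, \nabla u_t)$; neither follows for free, and this is exactly the bookkeeping that collapses if one pretends the adjoint is untouched.
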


In stark contrast to standard formulations of the sufficient SMP no joint convexity in the state and the control variable is required. This is due to the specific structure of the Fokker--Planck equation. Regarding the uniqueness part of Theorem \ref{thm:ssmp}, a very natural situation in which the map $g \mapsto H_1(t, x, \nabla u_t(x), g)$ has a unique minimum is if Assumption \ref{ass:smp} \ref{it:convexity} is in place with strict convexity of the map $g \mapsto f_1(t, x, g)$.

\begin{remark}
Note that if $\lambda$, $b_0$, and $f_0$ do not depend on $v$ and $v \mapsto \psi(v)$ is simply the integration of a bounded and measurable function $\R^d \to \R$ against $v$, then the convexity requirements in Theorem \ref{thm:ssmp} are automatically satisfied. Unfortunately, $\lambda$ and $b_0$ being independent of the measure argument also seems to be necessary in order for the convexity assumptions to hold. For example, suppose that $\lambda$ only depends on the measure argument. Then the map $v \mapsto \langle v, \lambda(v) r\rangle$, which is one of the summands in $\cal{H}_0$, is convex if and only if $D\lambda(v)(x) = r(x)$ for all $x \in \R^d$. Hence, the convexity condition from Theorem \ref{thm:ssmp} would imply that $D\lambda(v)(x) = u_t(x)$, which in general can of course not be satisfied. Thus, there is little hope that $v \mapsto \cal{H}_0(t, v, u_t, \nabla u_t)$ is convex if $\lambda$ or $b_0$ depend on $v$. Consequently, Theorem \ref{thm:ssmp} should be understood as a sufficient optimality condition for the linear Fokker--Planck equation \eqref{eq:rfpe} with possibly nonlinear cost functions.
\end{remark}

If $(\mu, u, m)$ is a solution to the FBSPDE \eqref{eq:fbspde} and the assumptions of Theorem \ref{thm:ssmp} are satisfied, then the sufficient SMP implies that the control $\gamma$ defined by $\gamma_t(x) = \tilde{g}_{\ast}(t, x, \nabla u_t(x))$ is optimal. Moreover, under the additional assumption that the minimum of $g \mapsto H_1(t, x, \nabla u_t(x), g)$ is unique, if $(\mu', q', m')$ is another solution to FBSPDE \eqref{eq:fbspde} and we define $\gamma' \in \bb{G}$ by $\gamma'_t(x) = \tilde{g}_{\ast}(t, x, \nabla u'_t(x))$, uniqueness of optimal controls implies that $\gamma' = \gamma$. Consequently, it follows that $\mu' = \mu$ by Proposition \ref{prop:rfpe} and, finally, by Proposition \ref{prop:hjb}, we get $(u', m') = (u, m)$. We have proved the following corollary.

\begin{corollary}
Let Assumptions \ref{ass:fpe}, \ref{ass:smp}, and \ref{ass:separability} be satisfied and let $(\mu, u, m)$ be a solution to FBSPDE \eqref{eq:fbspde}. If the convexity assumptions from Theorem \ref{thm:ssmp} hold for $(\mu, u, m)$ and the map $g \mapsto H_1(t, x, \nabla u_t(x), g)$ has a unique minimum for $\mu_t$-a.e.\@ $x \in \R^d$ and $\leb \otimes \pr$-a.e.\@ $(t, \omega) \in [0, T] \times \Omega$, then $(\mu, u, m)$ is the unique solution of FBSPDE \eqref{eq:fbspde}.
\end{corollary}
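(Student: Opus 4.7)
The plan is to bootstrap the uniqueness of optimal controls provided by Theorem \ref{thm:ssmp} into uniqueness of the full triple $(\mu, u, m)$ by successively invoking the well-posedness results for the forward and backward equations. Let $(\mu, u, m)$ and $(\mu', u', m')$ be two solutions of FBSPDE \eqref{eq:fbspde} and define the controls
\begin{equation*}
    \gamma_t(x) = \tilde{g}_{\ast}\bigl(t, x, \nabla u_t(x)\bigr), \qquad \gamma'_t(x) = \tilde{g}_{\ast}\bigl(t, x, \nabla u'_t(x)\bigr),
\end{equation*}
where $\tilde{g}_{\ast}$ is the shifted measurable minimiser used in the discussion preceding the corollary. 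By construction, $(\mu, u, m)$ solves both the random Fokker--Planck equation \eqref{eq:rfpe} driven by $\gamma$ and the linear adjoint BSPDE \eqref{eq:adjoint} (since the semilinear BSPDE \eqref{eq:hjb} reduces to the linear one along the minimising control). Analogously for $(\mu', u', m')$ and $\gamma'$. Moreover, by definition of $\tilde{g}_{\ast}$, both $\gamma$ and $\gamma'$ satisfy the pointwise minimality condition \eqref{eq:ssmp} against their respective adjoints.

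Next I would apply the sufficient SMP (Theorem \ref{thm:ssmp}) to deduce that both $\gamma$ and $\gamma'$ are optimal controls for the control problem with state equation \eqref{eq:rfpe} and cost \eqref{eq:cost_rfpe}. The hypothesis that $g \mapsto \tilde{b}_1(t, x, g) \cdot \nabla u_t(x) + \tilde{f}_1(t, x, g)$ has a unique minimiser for $\mu_t$-a.e.\@ $x$ and $\leb \otimes \pr$-a.e.\@ $(t, \omega)$ then invokes the uniqueness clause of Theorem \ref{thm:ssmp}, giving $\gamma' = \gamma$ up to modifications on null sets of $\mu_t$. This equality of controls on the support of $\mu_t$ is precisely what is needed to feed back into the forward equation.

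Finally, I would close the loop. Since $\gamma$ and $\gamma'$ agree $\mu_t$-a.e., both $\mu$ and $\mu'$ satisfy PDE \eqref{eq:rfpe} with the same effective control input (any redefinition of $\gamma'$ on $\{\mu_t = 0\}$ does not alter the evolution of $\mu'$). The uniqueness statement of Proposition \ref{prop:rfpe} then yields $\mu' = \mu$. Given $\mu' = \mu$ and $\gamma' = \gamma$ on the support of $\mu$, the pairs $(u, m)$ and $(u', m')$ are two solutions of the same semilinear stochastic HJB equation \eqref{eq:hjb} (with common forward input $\mu$), so uniqueness from Proposition \ref{prop:hjb} forces $(u', m') = (u, m)$.

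The only delicate point to verify carefully is that, even though Theorem \ref{thm:ssmp} only identifies optimal controls modulo null sets of the state measure, this is still enough to conclude equality of the forwards $\mu$ and $\mu'$. This is consistent because the drift and zeroth-order coefficients of PDE \eqref{eq:rfpe} are tested against $\mu_t$, so modifying the control on $\{\mu_t = 0\}$ leaves the weak formulation unchanged; hence both $\mu$ and $\mu'$ solve the same initial value problem, and the standard uniqueness from Proposition \ref{prop:rfpe} applies directly. I expect this bookkeeping around $\mu_t$-null sets to be the only subtlety; the remainder of the argument is a two-step application of the uniqueness results already at our disposal.
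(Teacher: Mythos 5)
Your proof follows essentially the same route as the paper's argument in the paragraph preceding the corollary: derive the feedback controls $\gamma = \tilde{g}_{\ast}(\cdot, \cdot, \nabla u)$ and $\gamma' = \tilde{g}_{\ast}(\cdot, \cdot, \nabla u')$ from the two FBSPDE solutions, invoke the sufficient SMP and its uniqueness clause to identify the controls $\mu_t$-a.e., and then propagate uniqueness through Proposition \ref{prop:rfpe} (to get $\mu' = \mu$) and Proposition \ref{prop:hjb} (to get $(u', m') = (u, m)$). The only subtlety, which is present in the paper's own argument as well, is that deducing optimality of $\gamma'$ via Theorem \ref{thm:ssmp} formally requires convexity of $v \mapsto \cal{H}_0(t, v, u'_t, \nabla u'_t)$, whereas the corollary's hypothesis is only stated for $(\mu, u, m)$; this transfer is harmless in the regime where the convexity assumptions are expected to hold at all (namely $\lambda$, $b_0$, $f_0$ independent of the measure argument, cf.\@ the remark following Theorem \ref{thm:ssmp}), but is not literally among the stated hypotheses.
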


\subsection{Practical Implementation of the Criticality Condition}

Before we conclude this section, let us briefly comment on the practical implementation of the criticality condition of the necessary SMP. This condition gives rise to the forward-backward system \eqref{eq:fbspde}. In general, this system may have multiple solutions $(\mu, u, m)$, not all of which necessarily determine an optimal control $\gamma_t$ via
\begin{equation*}
    \gamma_t(x) = \tilde{g}_{\ast}(t, x, \nabla u_t(x)).
\end{equation*}
Optimality of $\gamma_t$ can be guaranteed, however, under the additional convexity assumptions from the sufficient SMP, Theorem 2.14. In other words, solving FBSPDE \eqref{eq:fbspde} provides a candidate control, which is optimal under the conditions of Theorem 2.14. In practice, FBSPDE \eqref{eq:fbspde} will not be analytically solvable, except perhaps in certain special cases, so numerical methods must be deployed. One possibility is to discretise both the Fokker--Planck equation and the stochastic Hamilton--Jacobi--Bellman equation from \eqref{eq:fbspde} by a finite-difference or Galerkin scheme. The resulting high-dimensional forward-backward SDE can be approximated, for example, by using the deep BSDE method of Jiequn, Jentzen \& Weinan \cite{jiequn_deep_bsde_2018}. A numerical implementation of this procedure is beyond the scope of this paper.

\section{Semilinear BSPDEs with C\`adl\`ag Noise} \label{sec:bspde}

In this section, we will study a more general version of the BSPDEs \eqref{eq:adjoint} and \eqref{eq:hjb} appearing in Section \ref{sec:main_results}. In preparation, we will introduce an extension of It\^o's formula to $L^2$-valued semimartingales that will serve us throughout this paper.

\subsection{C\`adl\`ag Martingales with Values in Hilbert Spaces} \label{sec:hilbert_space_martingale}

We recall a few notions from the theory of martingales with values in infinite-dimensional Hilbert spaces. For more information we refer to \cite[Section 26]{metivier_semimartingales_1982}. We fix a separable Hilbert space $\bb{H}$ with inner product $\langle \cdot, \cdot \rangle_{\bb{H}}$ and induced norm $\lVert \cdot \rVert_{\bb{H}}$. Recall that $\cal{M}_{\bb{F}}^2([0, T]; \bb{H})$ denotes the Hilbert space of $\bb{H}$-valued $\bb{F}$-martingales $m$ with a.s.\@ c\`adl\`ag trajectories for which $\ev \lVert m_T\rVert_{\bb{H}}^2 < \infty$. We denote the \textit{quadratic variation} of a martingale $m \in \cal{M}_{\bb{F}}^2([0, T]; \bb{H})$ by $[m]$ and its \textit{tensor quadratic variation} by $[[m]]$. The tensor quadratic variation is a process with values in the trace class operators from $\bb{H}$ to itself and is symmetric as such. Moreover, it holds that $[m]_t = \trace [[m]]_t$. We can define the \textit{quadratic covariation} $[m^1, m^2]$ and \textit{tensor quadratic covariation} $[[m^1, m^2]]$ of two martingales $m^1$, $m^2 \in \cal{M}_{\bb{F}}^2([0, T]; \bb{H})$ through polarisation. 

We say that two martingales $m^1 \in \M_{\bb{F}}^2([0, T]; \bb{H}_1)$, $m^2 \in \M_{\bb{F}}^2([0, T]; \bb{H}_2)$ with values in separable Hilbert spaces $\bb{H}_1$ and $\bb{H}_2$, respectively, are \textit{very strongly orthogonal} if for all elements $h_1 \in \bb{H}_1$, $h_2 \in \bb{H}_2$ the real-valued c\`adl\`ag martingales $\langle m^1, h_1\rangle_{\bb{H}_1}$ and $\langle m^2, h_2\rangle_{\bb{H}_2}$ are strongly orthogonal, i.e.\@ $\bigl[\langle m^1, h_1\rangle_{\bb{H}_1}, \langle m^2, h_2\rangle_{\bb{H}_2}\bigr]_t = 0$ for all $t \in [0, T]$. If $\bb{H}_1 = \bb{H}_2$, this is equivalent to saying that $[[m^1, m^2]] = 0$.

For $m \in \cal{M}_{\bb{F}}^2([0, T]; \bb{H})$, let $\P_m^2$ denote the space of $\bb{F}$-predictable $\bb{H}$-valued processes such that $\int_0^T \lVert h_t\rVert_{\bb{H}}^2 \, \d [m]_t < \infty$ almost surely. For $h \in \P_m^2$ we let $\int_0^t \langle h_s, \d m_s\rangle_{\bb{H}}$ be the stochastic integral of $h$ against $m$ over the interval $(0, t]$, i.e.\@ we exclude the left endpoint. The indefinite stochastic integral $\int_0^{\cdot} \langle h_s, \d m_s\rangle_{\bb{H}}$ is a real-valued local martingale and its quadratic variation is given by $\int_0^{\cdot} \langle h_s, h_s \cdot \d [[m]]_s\rangle_{\bb{H}}$. Here again the integral excludes the left endpoint, and for a bounded operator $A \define \bb{H} \to \bb{H}$ and an element $h \in \bb{H}$ we write $h \cdot A$ for $Ah$. 

Since the operator norm of a trace class operator $A$ is bounded from above by its trace, it holds that $\langle h, h \cdot A\rangle_{\bb{H}} \leq \lVert h\rVert_{\bb{H}}^2 \trace A$. We can apply this inequality to simple integrands $h \in \P_m^2$ of the form $h = \sum_{i = 0}^{n - 1} h_i \bf{1}_{(t_i, t_{i + 1}]}$ for $h_i$ bounded and $\bb{F}_{t_i}$-measurable and $0 = t_0 < t_1 < \dots < t_n = T$ to see $\int_0^t \langle h_s, h_s \cdot \d [[m]]_s\rangle_{\bb{H}} \leq \int_0^t \lVert h_s\rVert_{\bb{H}}^2 \, \d [m]_s$. Next, we extend this inequality to arbitrary $h \in \P_m^2$ by density of the simple integrands. This implies the following Burkholder--Davis--Gundy type inequality.

\begin{proposition} \label{prop:bdg}
Let $m \in \cal{M}_{\bb{F}}^2([0, T]; \bb{H})$ and $h \in \cal{P}^2_m$. Then for all $p \geq 1$ and any stopping time $\tau$ with values in $[0, T]$ we have
\begin{equation}
    \ev \sup_{0 \leq t \leq \tau} \biggl\lvert\int_0^t \langle h_s, \d m_s\rangle_{\bb{H}}\biggr\rvert^p \leq C_p \ev\biggl(\int_0^{\tau} \lVert h_t\rVert_{\bb{H}}^2 \, \d [m]_t\biggr)^{p/2}
\end{equation}
for a constant $C_p > 0$ independent of $m$, $h$, and $\tau$.
\end{proposition}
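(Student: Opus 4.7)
The plan is to reduce the Hilbert-space statement to the classical scalar Burkholder-Davis-Gundy inequality. Set $N_t = \int_0^t \langle h_s, \d m_s\rangle_{\bb{H}}$. As recalled in the text preceding the statement, $N$ is a real-valued local martingale with quadratic variation $[N]_t = \int_0^t \langle h_s, h_s \cdot \d [[m]]_s\rangle_{\bb{H}}$, and the paragraph just before the proposition has already established the key pointwise bound
\begin{equation*}
    \int_0^t \langle h_s, h_s \cdot \d [[m]]_s\rangle_{\bb{H}} \leq \int_0^t \lVert h_s\rVert_{\bb{H}}^2 \, \d [m]_s
\end{equation*}
for every $h \in \cal{P}^2_m$, via approximation by simple integrands and the inequality $\langle h, h\cdot A\rangle_{\bb{H}} \leq \lVert h\rVert_{\bb{H}}^2 \trace A$ for trace class operators $A$. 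So the main work has already been done; what remains is to plug this into a scalar BDG and handle the localisation.

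First I would introduce a localising sequence of stopping times $(\tau_n)$ with $\tau_n \uparrow T$ for the local martingale $N$, and consider the stopped times $\tau \wedge \tau_n$. Applying the classical scalar Burkholder-Davis-Gundy inequality to the real-valued martingale $N^{\tau_n}$ at the stopping time $\tau$ gives
\begin{equation*}
    \ev \sup_{0 \leq t \leq \tau \wedge \tau_n} \lvert N_t\rvert^p \leq C_p \ev [N]_{\tau \wedge \tau_n}^{p/2} = C_p \ev\biggl(\int_0^{\tau \wedge \tau_n} \langle h_s, h_s \cdot \d [[m]]_s\rangle_{\bb{H}}\biggr)^{p/2}.
\end{equation*}
Combined with the pointwise bound above, the right-hand side is dominated by $C_p \ev\bigl(\int_0^{\tau} \lVert h_t\rVert_{\bb{H}}^2 \, \d [m]_t\bigr)^{p/2}$, which is the desired upper bound independent of $n$.

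Finally, I would send $n \to \infty$: the left-hand side converges to $\ev \sup_{0 \leq t \leq \tau} \lvert N_t\rvert^p$ by monotone convergence, yielding the claimed inequality. The only subtlety here is that the classical BDG as usually stated applies to martingales, not local martingales, but this is precisely why we localise; and since the right-hand side bound depends only on $h$ and $[m]$, which are uniformly controlled in $n$, passing to the limit is immediate. I do not expect any genuine obstacle, because the heavy lifting -- relating $\int \langle h, h \cdot \d [[m]]\rangle_{\bb{H}}$ to $\int \lVert h\rVert_{\bb{H}}^2 \, \d [m]$ -- has been done in the discussion preceding the statement; the proof amounts to recording this reduction carefully.
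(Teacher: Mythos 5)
Your proposal is correct and takes essentially the same approach as the paper: apply the scalar Burkholder--Davis--Gundy inequality to the real-valued local martingale $N_t = \int_0^t \langle h_s, \d m_s\rangle_{\bb{H}}$, then invoke the pointwise bound $\int_0^t \langle h_s, h_s \cdot \d [[m]]_s\rangle_{\bb{H}} \leq \int_0^t \lVert h_s\rVert_{\bb{H}}^2 \, \d [m]_s$ established just before the statement. The only cosmetic difference is that you spell out the localisation step, which the paper leaves implicit since the classical BDG already holds for local martingales.
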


\begin{proof}
By the classical Burkholder--Davis--Gundy inequality we have
\begin{equation*}
    \ev \sup_{0 \leq t \leq \tau} \biggl\lvert\int_0^t \langle h_s, \d m_s\rangle_{\bb{H}}\biggr\rvert^p \leq C_p \ev\biggl(\int_0^{\tau} \langle h_t, h_t \cdot \d [[m]]_t\rangle_{\bb{H}}\biggr)^{p/2}.
\end{equation*}
Then the bound $\int_0^t \langle h_s, h_s \cdot \d [[m]]_s\rangle_{\bb{H}} \leq \int_0^t \lVert h_s\rVert_{\bb{H}}^2 \, \d [m]_s$ allows us to conclude the proof.
\end{proof}

\subsection{It\^o's Formula for C\`adl\`ag Hilbert Space-Valued Semimartingales}

Our next goal is to derive a generalisation of It\^o's formula which is suitable for our analysis of c\`adl\`ag semimartingales with values in Hilbert spaces.

% TODO: refer to paper by siska and gyongy
\begin{theorem} \label{thm:ito}
For $i = 1$,~\ldots, $k$, let $u^i \in L_{\bb{F}}^2([0, T]; H^1(\R^d))$, $\zeta^i \in L_{\bb{F}}^2([0, T]; H^{-1}(\R^d))$, and $m^i \in \cal{M}_{\bb{F}}^2([0, T]; L^2(\R^d))$ such that
\begin{equation*}
    \langle u^i_t, \varphi\rangle = \int_0^t \zeta^i_s(\varphi) \, \d s + \langle m^i_t, \varphi\rangle
\end{equation*}
for all $\varphi \in H^1(\R^d)$. Then $u^1$,~\ldots, $u^k$ lie in $D_{\bb{F}}^2([0, T]; L^2(\R^d))$ (or $C_{\bb{F}}^2([0, T]; L^2(\R))$ if $m^1$,~\ldots, $m^k$ have continuous trajectories).

Next, let $F \define L^2(\R^d; \R^k) \to \R$ be a twice continuously Fr\'echet differentiable function and assume that for all $1 \leq i, j \leq k$ it holds that
\begin{enumerate}[noitemsep, label = \textup{(\roman*)}]
    \item \label{it:bounded_on_h_1} for each $h \in H^1(\R^d; \R^k)$ the derivative $D_i F(h)$ is an element of $H^1(\R^d)$ and the map $H^1(\R^d; \R^k) \to H^1(\R^d)$, $h \mapsto D_i F(h)$ is uniformly continuous on bounded subsets of $H^1(\R^d; \R^k)$;
    \item \label{it:hs_bounded} for each $h \in L^2(\R^d; \R^k)$ the second derivative $D_{ij}^2 F(h)$ is an element of $B(L^2(\R^d))$ and the map $L^2(\R^d; \R^k) \to B(L^2(\R^d))$, $h \mapsto D_{ij}^2 F(h)$ is uniformly continuous on bounded subsets of $L^2(\R^d; \R^k)$.
\end{enumerate}
Then for any $t \in [0, T]$ it holds that
\begin{align} \label{eq:ito_formula}
\begin{split}
    F(u_t) &= F(m_0) + \sum_{i = 1}^k \biggl(\int_0^t \zeta^i_s(D_i F(u_s)) \, \d s + \int_0^t \langle D_i F(u_{s-}), \, \d m^i_s\rangle \biggr) \\
    &\ \ \ + \sum_{i, j = 1}^k \frac{1}{2}\int_0^t \trace\bigl(D^2_{ij}F(u_{s-}) 
    \, \d [[m^i, m^j]]^c_s\bigr) \\
    &\ \ \ + \sum_{0 < s \leq t} \biggl(F(u_{s-} + \Delta m_s) - F(u_{s-}) - \sum_{i = 1}^k \langle D_i F(u_{s-}), \Delta m^i_s\rangle\biggr).
\end{split}
\end{align}
Here $\Delta m_s$ denotes the jump of $m$ at time $s$.
\end{theorem}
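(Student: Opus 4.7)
The argument naturally splits into the regularity claim that $u^i$ admits a c\`adl\`ag (or continuous) $L^2$-valued modification, and then the It\^o formula itself. For the regularity, I would use a standard energy-identity argument. Mollify $u^i$ in the time variable so that $u^{i,\epsilon}$ is $C^1$-in-time with values in $H^1(\R^d)$, apply the classical chain rule to $\lVert u^{i,\epsilon}_t\rVert_{L^2}^2$, and pass to the limit $\epsilon \to 0$; the duality pairing $\zeta^i_s(u^i_s)$ is integrable because $u^i \in L^2_{\bb{F}}([0,T]; H^1(\R^d))$ and $\zeta^i \in L^2_{\bb{F}}([0,T]; H^{-1}(\R^d))$. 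This yields the estimate $\ev \sup_{t \leq T} \lVert u^i_t\rVert_{L^2}^2 < \infty$ together with the existence of a c\`adl\`ag $L^2$-modification, continuous whenever $m^i$ is continuous, since the drift part is already $C([0,T]; H^{-1}(\R^d))$-valued and the jumps of $u^i$ are inherited from those of $m^i$.

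For the formula itself I would follow a Galerkin approach. Choose an orthonormal basis $(e_n)_{n \geq 1}$ of $L^2(\R^d)$ contained in $H^1(\R^d)$ (e.g.\@ rescaled Hermite functions), set $V_N = \spanset(e_1, \dots, e_N)$, and let $\pi_N \colon L^2(\R^d) \to V_N$ be the orthogonal projection, extended by duality to $H^{-1}(\R^d)$. Define $u^{i,N} = \pi_N u^i$, $\zeta^{i,N} = \pi_N \zeta^i$, and $m^{i,N} = \pi_N m^i$; these are finite-dimensional c\`adl\`ag semimartingales satisfying the same structural equation. Apply the classical It\^o formula for finite-dimensional c\`adl\`ag semimartingales to $F(u^{1,N}_t, \dots, u^{k,N}_t)$, which produces termwise the right-hand side of \eqref{eq:ito_formula} with $u$ replaced by $u^N$ and $m^i$ by $m^{i,N}$.

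It remains to pass to the limit $N \to \infty$ in each term. The drift term converges by dominated convergence using assumption \ref{it:bounded_on_h_1}, which gives $H^1$-continuity of $D_i F$ on bounded sets; the stochastic-integral term converges via Proposition \ref{prop:bdg}, again using \ref{it:bounded_on_h_1}. For the continuous quadratic-variation term I would exploit assumption \ref{it:hs_bounded}: since $[[m^i, m^j]]^c$ takes values in the trace-class operators on $L^2(\R^d)$, the bound $\lvert \trace(A B)\rvert \leq \lVert B\rVert_{B(L^2)} \trace \lvert A\rvert$ applied to $A = \d [[m^i, m^j]]^c_s$ and $B = D^2_{ij} F(u^N_{s-}) - D^2_{ij} F(u_{s-})$ reduces convergence to the operator-norm continuity furnished by \ref{it:hs_bounded}. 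The sum over jumps is handled by dominated convergence: each summand is bounded by $C \sum_{i} \lVert \Delta m^i_s\rVert_{L^2}^2$ via a second-order Taylor expansion, and the summability $\sum_{s \leq t} \lVert \Delta m^i_s\rVert_{L^2}^2 \leq [m^i]_t < \infty$ gives the required integrable envelope.

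The main obstacle is the reconciliation of the two Hilbert-space structures at play: the drift lives in $H^{-1}(\R^d)$ while the jumps and the tensor quadratic variation live in $L^2(\R^d)$. The Galerkin basis in $H^1(\R^d) \cap L^2(\R^d)$ bridges these scales, but pushing the trace term through the limit genuinely requires the operator-continuity hypothesis \ref{it:hs_bounded}; a mere pointwise continuity assumption on $D^2 F$ would not interact correctly with the trace-class nature of $[[m^i, m^j]]^c$. All other convergences are comparatively routine consequences of assumption \ref{it:bounded_on_h_1} combined with the energy estimate established in the first paragraph.
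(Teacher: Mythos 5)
Your Galerkin route for the Itô identity is a genuinely different strategy from the paper's, which instead mollifies $u^i$, $\zeta^i$, $m^i$ in the \emph{space} variable (so that the smoothed drift $\zeta^{i,\epsilon}$ lands in $L^2_{\bb{F}}([0,T];L^2(\R^d))$, making $u^{i,\epsilon}$ an honest $L^2$-valued c\`adl\`ag semimartingale), applies Metivier's infinite-dimensional It\^o formula to the mollified process, and then passes to the limit. Both strategies are plausible, and yours has the mild advantage of reducing the core identity to the finite-dimensional c\`adl\`ag chain rule rather than citing the $L^2(\R^d)$-valued version. If you pursue it you should be more careful with two points: the projected derivative in the drift term is $\pi_N D_i F(u^N_s)$, not $D_i F(u^N_s)$, so you need $\pi_N \to \id$ strongly on $H^1$ (which constrains the choice of basis); and the trace term actually reads $\trace\bigl(\pi_N D^2_{ij}F(u^N_{s-})\pi_N\,\d[[m^i,m^j]]^c_s\bigr)$, which converges not by operator-norm continuity but by the trace-class duality $\lvert\trace(AB)\rvert\leq \lVert B\rVert\,\trace\lvert A\rvert$ combined with strong operator convergence of the sandwiched projections and dominated convergence over the spectral decomposition of the trace-class measure. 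These are the subtleties the paper's Lemma~\ref{lem:stoch_int_conv} is designed to absorb, and a Galerkin version of that lemma would still be needed.

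The genuine gap is in your first paragraph, where you attempt to prove the $D_{\bb{F}}^2([0,T];L^2(\R^d))$-regularity by \emph{time}-mollification and the classical chain rule for $\lVert u^{i,\epsilon}_t\rVert_{L^2}^2$. That is a deterministic parabolic device (Lions--Magenes): it gives the energy identity $\lVert u_t\rVert^2 = \lVert u_0\rVert^2 + 2\int_0^t\zeta_s(u_s)\,\d s$ when $u$ has no martingale part, but it produces neither the stochastic-integral term $2\int_0^t\langle u_{s-},\d m^i_s\rangle$ nor the quadratic-variation contribution $[m^i]_t$. Worse, convolving in time mixes future and past, so $u^{i,\epsilon}$ is no longer $\bb{F}$-adapted and the limiting stochastic integral is not even well defined a priori; and the supremum bound $\ev\sup_{t\leq T}\lVert u^i_t\rVert_{L^2}^2<\infty$ requires a Burkholder--Davis--Gundy estimate on a stochastic integral that your argument never isolates. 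The regularity in $D_{\bb{F}}^2$ must come from the same approximation that produces the formula: in the paper, one applies the Itô identity to $\lVert u^{i,\epsilon}-u^{i,\delta}\rVert_{L^2}^2$ (spatial mollifications, both adapted $L^2$-semimartingales) and uses Proposition~\ref{prop:bdg} to conclude that $(u^{i,\epsilon})_{\epsilon}$ is Cauchy in $D_{\bb{F}}^2([0,T];L^2(\R^d))$. In your Galerkin framework the correct analogue is to run the finite-dimensional It\^o identity for $\lVert u^{i,N}-u^{i,M}\rVert_{L^2}^2$, estimate the stochastic-integral term by BDG, and deduce the Cauchy property in $D_{\bb{F}}^2$; that step must replace the time-mollification paragraph, and it must come \emph{before} you attempt to pass to the limit in the stochastic integral $\int_0^t\langle D_iF(u^N_{s-}),\d m^{i,N}_s\rangle$, which only makes sense once you know $u^N_{s-}\to u_{s-}$ uniformly in $s$.
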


\begin{proof}
We proceed in two steps.

\textit{Step 1}: First, we assume that $\zeta^i$ is an element of $L_{\bb{F}}^2([0, T]; L^2(\R^d))$ for $i = 1$,~\ldots, $k$, so that $u^i_t = \int_0^t \zeta^i_s \, \d s + m^i_t$, where $\int_0^t \zeta^i_s \, \d s$ is understood as a Bochner integral. Then $u^1$,~\ldots, $u^k$ are $L^2(\R^d)$-valued c\`adl\`ag semimartingales, so we simply apply \cite[Theorem 27.2]{metivier_semimartingales_1982}, which gives precisely \eqref{eq:ito_formula}. Note that \cite[Theorem 27.2]{metivier_semimartingales_1982} is formulated for the case that the second derivative $D^2_{ij}F$ is a uniformly continuous map from $L^2(\R^d; \R^k)$ to the space of Hilbert--Schmidt operators on $L^2(\R^d)$ equipped with the Hilbert--Schmidt norm. However, since the tensor quadratic variation $[[m^i, m^j]]$ is a finite variation process with values in the trace class operators, the theorem can easily be extended to the situation where $D^2_{ij}F$ only satisfies Assumption \ref{it:hs_bounded} above.

\textit{Step 2}: Now, we let $\zeta^i \in L_{\bb{F}}^2([0, T]; H^{-1}(\R^d))$. We will reduce to the previous case by mollification. Let $\rho \define \R^d \to \R$ be the standard mollifier defined by $\rho(x) = c_d e^{-1/(1 - \lvert x\rvert^2)}$ if $x$ is in the interior of the centred ball of radius $1$ in $\R^d$ and zero otherwise, and set $\rho_{\epsilon}(x) = \frac{1}{\epsilon^d} \rho(x/\epsilon)$. Here $c_d > 0$ is a normalisation constant which ensures that $\int_{\R^d} \rho(x) \, \d x = 1$. For a function $\varphi \in L^2(\R^d)$ we let $(\rho_{\epsilon} \ast \varphi)(x) = \int_{\R^d} \rho_{\epsilon}(x - y) \varphi(x) \, \d y$. Then we set $u^{i, \epsilon}_t = \rho_{\epsilon} \ast u^i_t$, and $m^{i, \epsilon}_t = \rho_{\epsilon} \ast m^i_t$, and define $\zeta^{i, \epsilon}$ via $\zeta^{i, \epsilon}_t(\varphi) = \zeta^i_t(\rho_{\epsilon} \ast \varphi)$. With this we obtain
\begin{equation} \label{eq:mollified_differential}
    \d \langle u^{i, \epsilon}_t, \varphi\rangle = \d \langle u^i_t, \rho_{\epsilon} \ast \varphi\rangle = \zeta^i_t(\rho_{\epsilon} \ast \varphi) \, \d t + \d \langle m^i_t, \rho_{\epsilon} \ast \varphi\rangle = \zeta^{i, \epsilon}_t(\varphi) \, \d t + \d \langle m^{i, \epsilon}_t, \varphi\rangle
\end{equation}
for $\varphi \in L^2(\R^d)$. Note that for any $\xi \in H^{-1}(\R^d)$ it holds that $\lvert \xi(\rho_{\epsilon} \ast \varphi) \rvert \leq \lVert \xi\rVert_{H^{-1}} \lVert \rho_{\epsilon} \ast \varphi \rVert_{H^1} \leq C_{\epsilon, d} \lVert \rho_{\epsilon}\rVert_{W^{1, \infty}} \lVert \xi\rVert_{H^{-1}} \lVert \varphi\rVert_{L^2}$ for some constant $C_{\epsilon, d} > 0$. Hence, the mollification of $\xi$ is a bounded linear functional on $L^2(\R^d)$, so we may identify it with an element of $L^2(\R^d)$. Consequently, we can view $\zeta^{i, \epsilon}$ as a process in $L_{\bb{F}}^2([0, T]; L^2(\R^d))$. Thus, by Step 1 it holds that
\begin{align} \label{eq:ito_formula_epsilon}
\begin{split}
    F(u^{\epsilon}_t) &= F(m^{\epsilon}_0) + \sum_{i = 1}^k \biggl(\int_0^t \zeta^{i, \epsilon}_s(D_i F(u^{\epsilon}_s)) \, \d s + \int_0^t \langle D_i F(u^{\epsilon}_{s-}), \, \d m^{i, \epsilon}_s\rangle \biggr) \\
    &\ \ \ + \sum_{i, j = 1}^k \frac{1}{2}\int_0^t \trace\bigl(D^2_{ij}F(u^{\epsilon}_{s-}) 
    \, \d [[m^{i, \epsilon}, m^{j, \epsilon}]]^c_s\bigr) \\
    &\ \ \ + \sum_{0 < s \leq t} \biggl(F(u^{\epsilon}_{s-} + \Delta m^{\epsilon}_s) - F(u^{\epsilon}_{s-}) - \sum_{i = 1}^k \langle D_i F(u^{\epsilon}_{s-}), \Delta m^{i, \epsilon}_s\rangle\biggr).
\end{split}
\end{align}
Our goal is to pass to the limit $\epsilon \to 0$ in Equation \eqref{eq:ito_formula_epsilon}. However, while the sequences $(u^{i, \epsilon})_{\epsilon > 0}$, $(\zeta^{i, \epsilon})_{\epsilon > 0}$, and $(m^{i, \epsilon})_{\epsilon > 0}$, $i = 1$,~\ldots, $k$, converge in their respective Hilbert spaces $L_{\bb{F}}^2([0, T]; H^1(\R^d))$, $L_{\bb{F}}^2([0, T]; H^{-1}(\R^d))$, and $\cal{M}_{\bb{F}}^2([0, T]; L^2(\R^d))$, we do not yet know whether $(u^{i, \epsilon})_{\epsilon > 0}$ also converges as a sequence in $D_{\bb{F}}^2([0, T]; L^2(\R^d))$. But this is certainly necessary to pass to the limit in the stochastic integral term $\int_0^t \langle D_i F(u^{\epsilon}_{s-}), \, \d m^{i, \epsilon}_s\rangle$ and to make sense of its limit $\int_0^t \langle D_iF(u_{s-}), \d m^i_s\rangle$. 

To deduce the convergence in $D_{\bb{F}}^2([0, T]; L^2(\R^d))$, we first apply \eqref{eq:ito_formula_epsilon} to $u^{i, \epsilon, \delta} = u^{i, \epsilon} - u^{i, \delta}$ for the specific choice $F(\varphi) = \sum_{i = 1}^k\lVert \varphi^i\Vert_{L^2}^2$. This implies that for any $\epsilon$, $\delta > 0$ we have
\begin{equation} \label{eq:l_2_app_ito}
    \sum_{i = 1}^k \lVert u^{i, \epsilon, \delta}_t \Vert_{L^2}^2 = \sum_{i = 1}^k \biggl(\lVert m^{i, \epsilon, \delta}_0 \Vert_{L^2}^2 + \int_0^t 2\zeta^{i, \epsilon, \delta}_s(u^{i, \epsilon, \delta}_s) \, \d s + \int_0^t 2\bigl\langle u^{i, \epsilon, \delta}_{s-}, \, \d m^{i, \epsilon, \delta}_s\bigr\rangle + [m^{i, \epsilon, \delta}]_t\biggr),
\end{equation}
where $\zeta^{i, \epsilon, \delta}$ and $m^{i, \epsilon, \delta}$ are defined in the obvious way. We take the supremum over $t \in [0, T]$ and then expectation on both sides. Further, we estimate with Proposition \ref{prop:bdg},
\begin{align*}
    \ev \sup_{t \in [0, T]} 2\biggl\lvert \int_0^t \bigl\langle u^{i, \epsilon, \delta}_{s-}, \, \d m^{i, \epsilon, \delta}_s\bigr\rangle\biggr\rvert &\leq 2C_1 \ev\biggl(\int_0^T \lVert u^{i, \epsilon, \delta}_{t-}\rVert_{L^2}^2 \, \d [m^{i, \epsilon, \delta}]_t\biggr)^{1/2} \\
    &\leq 2 C_1 \ev\biggl(\sup_{0 < t \leq T} \lVert u^{i, \epsilon, \delta}_{t-}\rVert_{L^2}^2 [m^{i, \epsilon, \delta}]_T\biggr)^{1/2} \\
    &\leq \frac{1}{2} \ev \sup_{0 \leq t \leq T} \lVert u^{i, \epsilon, \delta}_t\rVert_{L^2}^2 + 8 C_1^2 \ev [m^{i, \epsilon, \delta}]_T.
\end{align*}
Inserting this into \eqref{eq:l_2_app_ito} and rearranging gives
\begin{align*}
    \sum_{i = 1}^k \frac{1}{2} \ev \sup_{0 \leq t \leq T} \lVert u^{i, \epsilon, \delta}_t\rVert_{L^2}^2 \leq \sum_{i = 1}^k \biggl(&\lVert m^{i, \epsilon, \delta}_0 \Vert_{L^2}^2 + \ev\int_0^T 2 \lVert \zeta^{i, \epsilon, \delta}_t\rVert_{H^{-1}} \lVert u^{i, \epsilon, \delta}_t\rVert_{H^1} \, \d t \\
    &+ (2C_1 + 1) \ev[m^{i, \epsilon, \delta}]_T\biggr).
\end{align*}
The right-hand side clearly tends to zero as $\epsilon$, $\delta \to 0$, which means that $(u^{i, \epsilon})_{\epsilon > 0}$ is a Cauchy sequence in the space $D_{\bb{F}}^2([0, T]; L^2(\R^d))$ for $i = 1$,~\ldots, $k$. Hence, it has a unique limit, which must coincide with $u^i$, since $u^{i, \epsilon} \to u^i$ in $L_{\bb{F}}^2([0, T]; L^2(\R^d))$. Consequently, we have $\ev \sup_{0 \leq t \leq T} \lVert u^{i, \epsilon}_t - u^i_t\rVert_{L^2}^2 \to 0$ and $u^i$ lies in $D_{\bb{F}}^2([0, T]; L^2(\R^d))$.

Let us return to Equation \eqref{eq:ito_formula_epsilon}. We can clearly pass to the limit in probability in the terms $F(u^{\epsilon}_t)$, $F(m^{\epsilon}_0)$, and $\int_0^t \zeta^{i, \epsilon}_s(D_i F(u^{\epsilon}_s)) \, \d s$ resulting in the expressions $F(u_t)$, $F(m_0)$, and $\int_0^t \zeta^i_s(D_iF(u_s)) \, \d s$. Here we make use of Assumption \ref{it:bounded_on_h_1}. Next, by Lemma \ref{lem:stoch_int_conv}, the stochastic integrals and the quadratic variation terms converge to $\int_0^t \langle D_i F(u_{s-}), \d m^i_s\rangle$ and $\frac{1}{2}\int_0^t \trace\bigl(D_{ij}^2 F(u_{s-}) \, \d [[m^i, m^j]]^c_s\bigr)$, respectively, in probability as $\epsilon \to 0$. This leaves the jump terms. First note that the jumps of $m^{\epsilon}$ and $m$ align perfectly and, moreover, it holds that $\lVert \Delta m^{\epsilon}_s\rVert_{L^2} \leq \lVert \Delta m_s\rVert_{L^2}$. We can use Taylor's theorem to bound $F(u^{\epsilon}_{s-} + \Delta m^{\epsilon}_s) - F(u^{\epsilon}_{s-}) - \sum_{i = 1}^k \langle D_i F(u^{\epsilon}_{s-}), \Delta m^{i, \epsilon}_s\rangle$ by
\begin{align*}
    \sum_{i, j = 1}^k \bigl\langle \Delta m^{i, \epsilon}_s, D_{ij}^2 F(u^{\epsilon}_{s-}) \Delta m^{j, \epsilon}_s\bigr\rangle &\leq \sum_{i, j = 1}^k \frac{\lVert D_{ij}^2F(u^{\epsilon}_{s-})\rVert}{2} \bigl(\lVert \Delta m^{i, \epsilon}_s\rVert_{L^2}^2 + \lVert \Delta m^{j, \epsilon}_s\rVert_{L^2}^2\bigr) \\
    &\leq \sum_{i, j = 1}^k \frac{\lVert D_{ij}^2F(u^{\epsilon}_{s-})\rVert}{2} \bigl(\lVert \Delta m^i_s\rVert_{L^2}^2 + \lVert \Delta m^j_s\rVert_{L^2}^2\bigr),
\end{align*}
where $\lVert \cdot \rVert$ denotes the operator norm for elements of $B(L^2(\R^d))$. From above we know that $\ev \sup_{0 < s \leq t} \lVert u^{\epsilon}_{s-} - u_{s-}\rVert_{L^2}^2 \to 0$, so since $D_{ij}^2 F$ is bounded on bounded sets of $L^2(\R^d)$ with respect to the operator norm by Assumption \ref{it:hs_bounded}, we deduce that $\lVert D_{ij}^2F(u^{\epsilon}_{s-})\rVert$ is bounded by some random constant $C > 0$ uniformly in $\epsilon$ and $s \in [0, T]$. Hence, we can upper bound $F(u^{\epsilon}_{s-} + \Delta m^{\epsilon}_s) - F(u^{\epsilon}_{s-}) - \sum_{i = 1}^k \langle D_i F(u^{\epsilon}_{s-}), \Delta m^{i, \epsilon}_s\rangle$ by the a.s.\@ summable expression $Ck \sum_{i = 1}^k \lVert \Delta m^i_s\rVert_{L^2}^2$. Thus, we can apply the dominated convergence theorem to pass to the a.s.\@ limit in the sum in the third line of Equation \eqref{eq:ito_formula_epsilon}. Putting all convergences together yields the desired formula \eqref{eq:ito_formula}.
\end{proof}

We have the following corollary of Theorem \ref{thm:ito} for weighted function spaces, where the exponent of the weight $\eta \in C^1(\R^d)$ has uniformly bounded derivative. Note that $\eta$ is not necessarily equal to the exponent from Section \ref{sec:main_results}.

\begin{corollary} \label{cor:ito}
Let $u^{\pm} \in L_{\bb{F}}^2([0, T]; H_{\pm \eta}^1(\R^d))$, $\zeta^{\pm} \in L_{\bb{F}}^2([0, T]; (H_{\mp \eta}^1(\R^d))^{\ast})$, and $m^{\pm} \in \cal{M}_{\bb{F}}^2([0, T]; L^2_{\pm \eta}(\R^d))$ such that
\begin{equation*}
    \langle u^{\pm}_t, \varphi\rangle = \int_0^t \zeta^{\pm}_s(\varphi) \, \d s + \langle m^{\pm}_t, \varphi\rangle
\end{equation*}
for all $\varphi \in H^1_{\mp \eta}(\R^d)$. Then $u^{\pm}$ lies in $D_{\bb{F}}^2([0, T]; L^2_{\pm \eta}(\R^d))$ (or $C_{\bb{F}}^2([0, T]; L^2_{\pm \eta}(\R))$ if $m^{\pm}$ has continuous trajectories) and for any $t \in [0, T]$ it holds that
\begin{equation} \label{eq:ito_formula_2}
    \langle u^+_t, u^-_t\rangle = \int_0^t \zeta^+_s(u^-_s) + \zeta^-_s(u^+_s) \, \d s + \int_0^t \bigl(\langle u^-_s, \d m^+_s\rangle + \langle u^+_s, \d m^-_s\rangle\bigr) + [m^+, m^-]_t.
\end{equation}
\end{corollary}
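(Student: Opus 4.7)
The natural approach is to reduce to the unweighted case covered by Theorem \ref{thm:ito} via a pointwise multiplicative transformation along $\eta$. I would set $v^{\pm}_t = e^{\pm\eta/2} u^{\pm}_t$ and $n^{\pm}_t = e^{\pm\eta/2} m^{\pm}_t$, and define $\xi^{\pm}_t$ acting on $\psi \in H^1(\R^d)$ by $\xi^{\pm}_t(\psi) = \zeta^{\pm}_t(e^{\pm\eta/2}\psi)$. Because $\eta \in C^1(\R^d)$ has uniformly bounded gradient, multiplication by $e^{\pm\eta/2}$ is a bounded linear isomorphism from $L^2_{\pm\eta}(\R^d)$ (respectively $H^1_{\pm\eta}(\R^d)$) onto $L^2(\R^d)$ (respectively $H^1(\R^d)$) with inverse $e^{\mp\eta/2}$; the $H^1$-regularity follows from the product rule $\nabla(e^{\pm\eta/2} u) = \pm\tfrac{1}{2}(\nabla\eta) e^{\pm\eta/2}u + e^{\pm\eta/2}\nabla u$ and the boundedness of $\nabla\eta$.

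One then verifies that $v^{\pm} \in L_{\bb{F}}^2([0,T]; H^1(\R^d))$, $\xi^{\pm} \in L_{\bb{F}}^2([0,T]; H^{-1}(\R^d))$, $n^{\pm} \in \M_{\bb{F}}^2([0,T]; L^2(\R^d))$, and that
\begin{equation*}
    \langle v^{\pm}_t, \psi\rangle = \int_0^t \xi^{\pm}_s(\psi) \, \d s + \langle n^{\pm}_t, \psi\rangle
\end{equation*}
holds for every $\psi \in H^1(\R^d)$; the latter identity is obtained by writing $\langle v^{\pm}_t, \psi\rangle = \langle u^{\pm}_t, e^{\pm\eta/2}\psi\rangle$ and using that $e^{\pm\eta/2}\psi \in H^1_{\mp\eta}(\R^d)$ is an admissible test function in the equation satisfied by $u^{\pm}$. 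The c\`adl\`ag regularity of $u^{\pm}$ in $L^2_{\pm\eta}(\R^d)$ then transfers directly from the corresponding conclusion of Theorem \ref{thm:ito} applied to $v^{\pm}$ via the isomorphism.

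The identity \eqref{eq:ito_formula_2} follows by applying Theorem \ref{thm:ito} to the pair $(v^+, v^-)$ with $F \define L^2(\R^d; \R^2) \to \R$ given by $F(v^1, v^2) = \langle v^1, v^2\rangle$. Being a bounded bilinear form, $F$ is twice continuously Fr\'echet differentiable with $D_1 F(v^1, v^2) = v^2$, $D_2 F(v^1, v^2) = v^1$, $D^2_{12} F = D^2_{21} F = I$, and $D^2_{11} F = D^2_{22} F = 0$; conditions \ref{it:bounded_on_h_1} and \ref{it:hs_bounded} are therefore trivially met since the first derivatives are bounded linear in their argument and the second derivatives are constant. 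Substituting into \eqref{eq:ito_formula}, the continuous trace contributions collapse to $[n^+, n^-]^c_t$ via the polarised identity $\trace[[n^+, n^-]] = [n^+, n^-]$, while the jump terms simplify to $\sum_{0 < s \leq t} \langle \Delta n^+_s, \Delta n^-_s\rangle$ by directly expanding the bilinear pairing, the two together yielding $[n^+, n^-]_t$.

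Finally, undoing the transformation recovers \eqref{eq:ito_formula_2}: the weight factors cancel pairwise so that $\langle v^+_t, v^-_t\rangle = \langle u^+_t, u^-_t\rangle$, $\xi^{\pm}_s(v^{\mp}_s) = \zeta^{\pm}_s(u^{\mp}_s)$, $\int_0^t \langle v^{\mp}_{s-}, \d n^{\pm}_s\rangle = \int_0^t \langle u^{\mp}_{s-}, \d m^{\pm}_s\rangle$, and $[n^+, n^-]_t = [m^+, m^-]_t$ since multiplying one martingale by $e^{\eta/2}$ and the other by $e^{-\eta/2}$ leaves the scalar cross-variation invariant. The main obstacle is entirely bookkeeping: verifying the mapping properties of $e^{\pm\eta/2}$ on the various weighted Sobolev and dual spaces and the cancellations in each term of the Itô expansion, all of which reduce to straightforward estimates controlled by $\lVert \nabla\eta\rVert_{\infty}$.
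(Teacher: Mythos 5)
Your proof is correct and follows the same route as the paper: transform $u^{\pm}$, $\zeta^{\pm}$, $m^{\pm}$ by the weight factor $e^{\pm\eta/2}$ to land in the unweighted spaces, apply Theorem \ref{thm:ito} to the bilinear functional $F(v^1,v^2)=\langle v^1, v^2\rangle$, and undo the transformation. The additional detail you supply on the derivatives of $F$, the trace and jump contributions, and the cancellation of the weight factors is a correct and more explicit rendering of what the paper leaves implicit.
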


\begin{proof}
To see that $u^{\pm}$ lies in $D_{\bb{F}}^2([0, T]; L^2_{\pm \eta}(\R^d))$ (or $C_{\bb{F}}^2([0, T]; L^2_{\pm \eta}(\R))$ if $m^{\pm}$ has continuous trajectories), we simply apply Theorem \ref{thm:ito} to $\tilde{u}^{\pm} \in L_{\bb{F}}^2([0, T]; H^1(\R^d))$ defined by $\tilde{u}^{\pm}_t(x) = e^{\pm \eta(x)/2} u^{\pm}_t(x)$, which satisfies
\begin{equation*}
    \langle \tilde{u}^{\pm}_t, \varphi\rangle = \int_0^t \tilde{\zeta}^{\pm}_s(\varphi) \, \d s + \langle \tilde{m}^{\pm}_t, \varphi\rangle
\end{equation*}
for $\varphi \in H^1(\R^d)$, where $\tilde{\zeta}^{\pm} \in L_{\bb{F}}^2([0, T]; H^{-1}(\R^d))$ and $\tilde{m}^{\pm} \in \cal{M}_{\bb{F}}^2([0, T]; L^2(\R^d))$ are given by $\tilde{\zeta}^{\pm}_t(\varphi) = \zeta^{\pm}(e^{\pm \eta/2}\varphi)$ for $\varphi \in H^1(\R^d)$ and $\tilde{m}^{\pm}_t(x) = e^{\pm \eta(x)/2} m^{\pm}_t(x)$, respectively. Next, we apply It\^o's formula from \eqref{eq:ito_formula} to $\langle \tilde{u}^+_t, \tilde{u}^-_t\rangle$, which implies that
\begin{align*}
    \langle u^+_t, u^-_t\rangle &= \langle \tilde{u}^+_t, \tilde{u}^-_t\rangle \\
    &= \int_0^t \tilde{\zeta}^+_s(\tilde{u}^-_s) + \tilde{\zeta}^-_s(\tilde{u}^+_s) \, \d s + \int_0^t \bigl(\langle \tilde{u}^-_s, \d \tilde{m}^+_s\rangle + \langle \tilde{u}^+_s, \d \tilde{m}^-_s\rangle\bigr) + [\tilde{m}^+, \tilde{m}^-]_t \\
    &= \int_0^t \zeta^+_s(u^-_s) + \zeta^-_s(u^+_s) \, \d s + \int_0^t \bigl(\langle u^-_s, \d m^+_s\rangle + \langle u^+_s, \d m^-_s\rangle\bigr) + [m^+, m^-]_t
\end{align*}
as required.
\end{proof}

\subsection{Semilinear BSPDEs}

In this subsection we study semilinear BSPDEs of a slightly more general type than BSPDEs \eqref{eq:adjoint} and \eqref{eq:hjb} introduced in Section \ref{sec:main_results}. The existence and uniqueness of the latter BSPDEs (cf.\@ Propositions \ref{prop:adjoint} and \ref{prop:hjb}) then follows as a special case. This is further discussed at the end of this subsection. The BSPDE we consider here takes the form
\begin{align} \label{eq:bspde}
\begin{split}
    \d u_t(x) = &-\Bigl(F_t(x, u_t, \nabla u_t, q_t) + \alpha_t(x) \colon \nabla^2 u_t(x) + \beta_t(x) \colon \nabla q_t(x)\Bigr) \, \d t \\
    &+ q_t(x) \cdot \d W_t + \d m_t(x)
\end{split}
\end{align}
for $(t, x) \in [0, T) \times \R^d$ with terminal condition $u_T(x) = \Psi(x)$. Here $m$ is a c\`adl\`ag $\bb{F}$-martingale with values in a suitable weighted function space and together with $q$ ensures that the solution $(u, q, m)$ is $\bb{F}$-progressively measurable. The coefficients $\alpha$ and $\beta$ are $\bb{F}$-progressively measurable random functions on $[0, T] \times \Omega \times \R^d$ with values in $\bb{S}^d$ and $\R^{d \times d_W}$, respectively. The map $F$ is an $\bb{F}$-progressively measurable random function $[0, T] \times \Omega \times \R^d \times L^2_{\eta}(\R^d) \times L^2_{\eta}(\R^d; \R^d) \times L^2_{\eta}(\R^d; \R^{d_W}) \to \R$, where $\eta \in C^1(\R^d)$ has a uniformly bounded derivative. Again we do not assume that $\eta$ is the exponent from Section \ref{sec:main_results}. We define the function space
\begin{equation*}
    \cal{S}_{\eta} = L^2_{\bb{F}}([0, T]; H^1_{\eta}(\R^d)) \times L^2_{\bb{F}}([0, T]; L^2_{\eta}(\R^d; \R^{d_W})) \times \M_{\bb{F}}^2([0, T]; L^2_{\eta}(\R^d)).
\end{equation*}

\begin{definition} \label{def:bspde}
A \textit{solution} of BSPDE \eqref{eq:bspde} consists of a triple $(u, q, m) \in \cal{S}_{\eta}$ such that $m$ is started from zero and is very strongly orthogonal to $W$, and
\begin{align*}
    \langle u_t, \varphi\rangle = \langle \Psi, \varphi\rangle &+ \int_t^T \Bigl(\bigl\langle F_s(\cdot, u_s, \nabla u_s, q_s), \varphi\bigr\rangle - \langle \nabla u_s, \nabla \cdot (\alpha_s\varphi)\rangle - \bigl\langle q_s, \nabla \cdot (\beta_s \varphi)\bigr\rangle\Bigr) \, \d s \\
    &- \int_t^T \langle q_s, \varphi\rangle \cdot \d W_s - \langle m_T - m_t, \varphi\rangle
\end{align*}
for all $\varphi \in H^1_{\bar{\eta}}(\R^d)$ and $\leb \otimes \pr$-a.e.\@ $(t, \omega) \in [0, T] \times \Omega$.
\end{definition}

Note that Definition \ref{def:bspde} implicitly assumes that the coefficients $\alpha$ and $\beta$ are differentiable in $x$. We shall always assume that this is the case.

\begin{assumption} \label{ass:bspde}
We assume that $\Psi \in L^2(\Omega, \F_T, \pr; L^2(\R^d))$ and that there exist constants $c > 0$ and $C > 0$ such that
\begin{enumerate}[noitemsep, label = (\roman*)]
    \item \label{it:diff_reg} for all $(t, \omega)$ the maps $x \mapsto \alpha_t(x)$ and $x \mapsto \beta_t(x)$ have a weak derivative;
    \item \label{it:lipschitz_bspde} for all $(t, \omega)$ and $(r, p, q)$, $(r', p', q') \in L_{\eta}^2(\R^d) \times L_{\eta}^2(\R^d; \R^d) \times L_{\eta}^2(\R^d; \R^{d_W})$ we have
    \begin{align*}
        \bigl\lVert F_t(\cdot, r, p, q) - F_t(\cdot, r', p', q')\bigr\rVert_{\eta} \leq C\bigl(\lVert r - r'\rVert_{\eta} + \lVert p - p'\rVert_{\eta} + \lVert q - q'\rVert_{\eta}\bigr);
    \end{align*}
    \item \label{it:growth_bspde} the coefficients $\alpha$, $\nabla \cdot \alpha$, $\beta$, and $\nabla \cdot \beta$ are bounded by $C$ and for all $(t, \omega)$ and $(r, p, q) \in L_{\eta}^2(\R^d) \times L_{\eta}^2(\R^d; \R^d) \times L_{\eta}^2(\R^d; \R^{d_W})$, we have
    \begin{align*}
        \lVert F_t(\cdot, r, p, q)\rVert_{\eta} \leq C\bigl(1 + \lVert r\rVert_{\eta} + \lVert p\rVert_{\eta} + \lVert q\rVert_{\eta}\bigr);
    \end{align*}
    \item \label{it:nondeg_bspde} for all $(t, \omega, x)$ we have $\alpha_t(x) - \frac{\beta_t(x) \beta^{\top}_t(x)}{2} \geq c I_d$.
\end{enumerate}
\end{assumption}

We have the following existence and uniqueness result for BSPDE \eqref{eq:bspde}.

\begin{theorem} \label{thm:bspde}
Let Assumption \ref{ass:bspde} be satisfied. Then BSPDE \eqref{eq:bspde} has a unique solution $(u, q, m)$ for which $u \in D_{\bb{F}}^2([0, T]; L^2_{\eta}(\R^d))$. Moreover, it holds that
\begin{equation} \label{eq:bspde_estimate}
    \sup_{t \in [0, T]}\ev\lVert u_t\rVert_{\eta}^2 + \ev\lVert m_T\rVert_{\eta}^2 + \ev\int_0^T \lVert \nabla u_t \rVert_{\eta}^2 + \lVert q_t \rVert_{\eta}^2 \, \d t \leq C\bigl(1 + \ev\lVert \Psi \rVert_{\eta}^2\bigr)
\end{equation}
for a constant $C > 0$ that only depends on the coefficients, the exponent $\eta$, and $T$.
\end{theorem}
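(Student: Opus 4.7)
The plan is to combine an energy estimate obtained from the extended Itô formula of Corollary \ref{cor:ito} with a Galerkin approximation to handle the linear case, and then to extend to the full nonlinear equation via a Banach contraction argument. This follows the classical Pardoux--Bensoussan strategy for linear BSPDEs, with additional care needed to accommodate the càdlàg martingale $m$, the weighted spaces $L_\eta^2$, and the cross-interaction between the $\beta \colon \nabla q$ drift and the stochastic integral $q \cdot \d W$.

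For the a priori estimate, I would apply Corollary \ref{cor:ito} to obtain an expression for $\lVert u_t\rVert_\eta^2$, pairing $u$ against itself after transforming to $\tilde{u} = u e^{\eta/2} \in H^1(\R^d)$ (or, equivalently, via the duality between $H^1_\eta$ and $H^1_{\bar{\eta}}$ using the test element $ue^{\eta}$). The drift contribution splits into three parts: a source term $-2\langle u_s, F_s\rangle_\eta$, controllable by the Lipschitz/growth bound of Assumption \ref{ass:bspde}\ref{it:lipschitz_bspde}--\ref{it:growth_bspde} and Young's inequality; a term from $\alpha \colon \nabla^2 u$, which after integration by parts produces the coercive quantity $-2\langle \alpha_s \nabla u_s, \nabla u_s\rangle_\eta$ modulo zeroth-order remainders controlled by $\nabla \cdot \alpha$ and $\nabla \eta$; and a cross-term from $\beta \colon \nabla q$, which after integration by parts becomes $\langle \nabla u_s, \beta_s q_s\rangle_\eta$. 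Itô's formula additionally contributes $\int_0^T \lVert q_s\rVert_\eta^2 \, \d s$ from the $q \cdot \d W$ integrator and $\lVert m_T\rVert_\eta^2$ from the tensor quadratic variation of $m$. Splitting the cross-term by $2|\langle \nabla u, \beta q\rangle_\eta| \leq \langle \beta_s \beta_s^{\top} \nabla u_s, \nabla u_s\rangle_\eta + \lVert q_s\rVert_\eta^2$ and invoking Assumption \ref{ass:bspde}\ref{it:nondeg_bspde} leaves a residual coercive term $c\lVert \nabla u_s\rVert_\eta^2$. Grönwall's lemma together with Proposition \ref{prop:bdg} applied to the martingale terms yields estimate \eqref{eq:bspde_estimate}; applied to the difference of two solutions, it also gives uniqueness.

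For existence in the linear case, where $F_s$ does not depend on $(u, \nabla u, q)$, I would run a Galerkin approximation. Fix an orthonormal basis $(e_k)_{k \geq 1}$ of $L^2_\eta(\R^d)$ lying in $H^1_\eta(\R^d) \cap C^\infty_c(\R^d)$ and project the weak formulation onto $V_n = \spanset(e_1, \ldots, e_n)$. This reduces the problem to a finite-dimensional backward linear equation driven by $W$ together with an orthogonal martingale in the filtration $\bb{F}$, which admits a unique solution via the Kunita--Watanabe decomposition of square-integrable martingales. The uniform a priori estimate from the previous step yields weak compactness of the Galerkin approximants in $\cal{S}_\eta$; passing to a weak limit and verifying the equation on test functions produces the required solution, while very strong orthogonality of $m$ to $W$ survives the limit as it is a closed linear constraint.

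Finally, for the nonlinear equation, I would define the map $\Phi \define \cal{S}_\eta \to \cal{S}_\eta$ that sends $(u, q, m)$ to the unique linear solution associated with the source $\phi_t = F_t(\cdot, u_t, \nabla u_t, q_t)$. Applying the a priori estimate to the difference $\Phi(u, q, m) - \Phi(u', q', m')$ and using the Lipschitz property of $F$ from Assumption \ref{ass:bspde}\ref{it:lipschitz_bspde}, after reweighting the norm of $\cal{S}_\eta$ by an exponential factor $e^{-Kt}$ with $K$ sufficiently large, one sees that $\Phi$ is a contraction. Its unique fixed point is the desired solution, and membership of $u$ in $D_{\bb{F}}^2([0, T]; L^2_\eta(\R^d))$ follows once more from Corollary \ref{cor:ito}. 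I expect the main obstacle to be the bookkeeping of the a priori estimate: tracking how the weight $e^\eta$, the divergences of $\alpha$ and $\beta$, the cross-term between $\beta \colon \nabla q$ and $q \cdot \d W$, and the jump part of $m$ conspire to produce a bound controlled purely by the ellipticity constant in Assumption \ref{ass:bspde}\ref{it:nondeg_bspde}. Secondary difficulties are ensuring that Proposition \ref{prop:bdg} correctly controls the càdlàg martingale term and that the Kunita--Watanabe decomposition in the Galerkin step is compatible with the weighted inner product, both of which are standard once cast in the right framework.
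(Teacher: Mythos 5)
Your proposal follows the paper's proof almost step for step: reduce to a linear BSPDE, establish an a priori energy estimate via the extended It\^o formula, build the linear solution through a Galerkin approximation, and close the nonlinear case by a Banach contraction in an exponentially reweighted norm. The only genuinely optional difference is that the paper first conjugates by $e^{\eta/2}$ to normalise to $\eta = 0$ before running the Galerkin scheme and energy estimates, whereas you propose working directly in the weighted basis; this is equivalent but costs a little extra bookkeeping with weight-derived lower-order terms.

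One technical point in your energy estimate does not work as written. You split the cross-term as
\begin{equation*}
    2\bigl\lvert\langle \nabla u_s, \beta_s q_s\rangle_{\eta}\bigr\rvert \leq \langle \beta_s\beta_s^{\top}\nabla u_s, \nabla u_s\rangle_{\eta} + \lVert q_s\rVert_{\eta}^2,
\end{equation*}
i.e.\@ you apply Young's inequality with equal weights. This exactly consumes the $\int_t^T \lVert q_s\rVert_{\eta}^2\,\d s$ term that It\^o's formula contributes, so the left-hand side of your resulting inequality contains no residual multiple of $\lVert q\rVert_{\eta}^2$ and you cannot conclude the $q$-bound that \eqref{eq:bspde_estimate} claims. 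The paper instead keeps the exact identity
\begin{equation*}
    2\langle q_s, \beta_s^{\top}\nabla u_s\rangle_{\eta} + \lVert q_s\rVert_{\eta}^2 = \lVert \beta_s^{\top}\nabla u_s + q_s\rVert_{\eta}^2 - \langle \beta_s\beta_s^{\top}\nabla u_s, \nabla u_s\rangle_{\eta}
\end{equation*}
and then bounds $-\lVert \beta_s^{\top}\nabla u_s + q_s\rVert_{\eta}^2$ from below with an $\epsilon$-weighted Young inequality in which $\epsilon$ is taken close to $1$, preserving a small positive multiple of $\lVert q_s\rVert_{\eta}^2$ (cf.\@ \eqref{eq:fd_galerkin_est}). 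Alternatively, you can recover the $q$-bound a posteriori from the energy identity once $\lVert u_t\rVert_{\eta}$ and $\int_t^T\lVert\nabla u_s\rVert_{\eta}^2\,\d s$ are controlled. Either fix is short but necessary to obtain the full estimate \eqref{eq:bspde_estimate} and hence the contraction step.
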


The remainder of this subsection is dedicated to proving Theorem \ref{thm:bspde}. First, we reduce to the case $\eta = 0$. We observe that if $(u, q, m) \in \cal{S}_{\eta}$ solves BSPDE \eqref{eq:bspde} then $(\tilde{u}, \tilde{q}, \tilde{m}) \in \cal{S}_0$ defined by $\tilde{u}_t(x) = e^{\eta(x)/2}u_t(x)$, $\tilde{q}_t(x) = e^{\eta(x)/2}q_t(x)$, and $\tilde{m}_t(x) = e^{\eta(x)/2}m_t(x)$ solves BSPDE \eqref{eq:bspde} with $F$ replaced by an appropriately chosen function $\tilde{F} \define [0, T] \times \Omega \times \R^d \times L^2(\R^d) \times L^2(\R^d; \R^d) \times L^2(\R^d; \R^{d_W}) \to \R$, satisfying Assumption \ref{ass:bspde} with $\eta$ replaced by the constant zero function $\R^d \to \R$, and terminal condition $\Psi$ replaced by $\tilde{\Psi} \in L^2(\Omega, \F, \pr; L^2(\R^d))$ defined as $\tilde{\Psi}(x) = e^{\eta(x)/2} \Psi(x)$. The converse is also true. Thus, we may simply analyse the BSPDE satisfied by $(\tilde{u}, \tilde{q}, \tilde{m})$. Existence, uniqueness, and the estimate \eqref{eq:bspde_estimate} then transfer to BSPDE \eqref{eq:bspde}. Hence, from now on we shall assume that $\eta = 0$.

We obtain existence and uniqueness for BSPDE \eqref{eq:bspde} through a fixed-point argument, so we first have to consider linear BSPDEs of the form
\begin{align} \label{eq:bspde_linear}
\begin{split}
    \d u_t(x) = -\Bigl(h_t(x) + \alpha_t(x) : \nabla^2 u_t(x) + \beta_t(x) : \nabla q_t(x)\Bigr) \, \d t  + q_t(x) \, \d W_t + \d m_t(x)
\end{split}
\end{align}
with terminal condition $u_T(x) = \Psi(x)$. The process $h$ lies in $L^2_{\bb{F}}([0, T]; L^2(\R^d))$ and fulfills the role of the coefficient $F$. A solution $(u, q, m) \in \cal{S}_0$ is understood in the sense of Definition \ref{def:bspde}. We construct a solution to BSPDE \eqref{eq:bspde_linear} by virtue of a Galerkin approximation. 

\begin{proposition} \label{prop:bspde_linear}
Let Assumption \ref{ass:bspde} be satisfied and let $h \in L^2_{\bb{F}}([0, T]; L^2(\R^d))$. Then BSPDE \eqref{eq:bspde_linear} has a unique solution $(u, q, m)$ for which $u \in D_{\bb{F}}^2([0, T]; L^2(\R^d))$. Moreover, for any $t \in [0, T]$, it holds that
\begin{align} \label{eq:bspde_linear_estimate}
\begin{split}
    \ev\lVert u_t\rVert_{L^2}^2 + \ev\lVert m_T - m_t\rVert_{L^2}^2 + \ev&\int_t^T \lVert \nabla u_s \rVert_{L^2}^2 + \lVert q_s \rVert_{L^2}^2 \, \d s \\
    &\leq C\biggl(\ev\lVert \Psi \rVert_{L^2}^2 + \ev\int_t^T  \lvert \langle u_s, h_s\rangle\rvert \, \d s\biggr)
\end{split}
\end{align}
for a constant $C > 0$ independent of $h$ and $t$.
\end{proposition}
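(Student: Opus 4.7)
The plan is a Galerkin approximation whose a priori estimates are driven by the It\^o formula of Theorem \ref{thm:ito} combined with a G\aa rding-type inequality rooted in the coercivity assumption $\alpha_t - \beta_t\beta_t^\top/2 \geq c I_d$. The same energy identity, applied to the difference of two candidate solutions, will yield uniqueness.

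For the Galerkin step, I fix a smooth orthonormal basis $(e_k)_{k\geq 1}$ of $L^2(\R^d)$ contained in $H^2(\R^d)$ and dense in $H^1(\R^d)$ (for instance, rescaled Hermite functions); set $V_n = \spanset\{e_1,\ldots,e_n\}$ and let $P_n$ denote the $L^2$-projection onto $V_n$. I look for an approximate solution of the form $u^n_t = \sum_{k=1}^n y^{n,k}_t e_k$ together with $q^n \in L^2_{\bb{F}}([0,T]; V_n^{d_W})$ and a $V_n$-valued martingale $m^n$ very strongly orthogonal to $W$, satisfying \eqref{eq:bspde_linear} tested against each $e_k$ with terminal condition $P_n \Psi$. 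Testing produces a finite system of linear BSDEs in the general filtration $\bb{F}$; existence and uniqueness follow from a standard $L^2$-contraction argument, the martingale component being obtained by the Kunita--Watanabe decomposition of the $\bb{F}$-martingale part into its $W$-driven summand and an orthogonal remainder.

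Applying Theorem \ref{thm:ito} with $F(\varphi) = \lVert\varphi\rVert_{L^2}^2$ and integrating by parts (licit because $u^n \in V_n \subset H^2(\R^d)$ and $\alpha$, $\beta$ are weakly differentiable with bounded divergence by Assumption \ref{ass:bspde}\ref{it:diff_reg}, \ref{it:growth_bspde}) produces
\begin{align*}
\lVert u^n_t\rVert_{L^2}^2 + \int_t^T \lVert q^n_s\rVert_{L^2}^2 \, \d s &+ [m^n]_T - [m^n]_t + 2\int_t^T \bigl(\langle \nabla u^n_s, \alpha_s \nabla u^n_s\rangle + \langle \nabla u^n_s, \beta_s q^n_s\rangle\bigr) \, \d s \\
&= \lVert P_n\Psi\rVert_{L^2}^2 + 2\int_t^T \langle u^n_s, h_s\rangle \, \d s + R^n_t - (\text{martingales}),
\end{align*}
where $R^n_t$ collects divergence remainders bounded by $C \int_t^T \bigl(\lVert u^n_s\rVert_{L^2}\lVert \nabla u^n_s\rVert_{L^2} + \lVert u^n_s\rVert_{L^2}\lVert q^n_s\rVert_{L^2}\bigr)\, \d s$. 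The completion-of-squares identity
\[
    \langle \nabla u, \alpha \nabla u\rangle + \langle \nabla u, \beta q\rangle + \tfrac{1}{2}\lVert q\rVert_{L^2}^2 = \tfrac{1}{2}\lVert q + \beta^\top \nabla u\rVert_{L^2}^2 + \langle \nabla u, (\alpha - \beta\beta^\top/2)\nabla u\rangle \geq c\lVert \nabla u\rVert_{L^2}^2
\]
combined with Young's inequality on $R^n_t$, absorbing fractional copies of $\lVert \nabla u^n\rVert_{L^2}^2$ and $\lVert q^n\rVert_{L^2}^2$, yields --- after taking expectations to kill the martingales and applying Gronwall to $t \mapsto \ev\lVert u^n_t\rVert_{L^2}^2$ --- the estimate \eqref{eq:bspde_linear_estimate} uniformly in $n$.

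These uniform bounds deliver a subsequence with $(u^n, q^n, m^n) \rightharpoonup (u, q, m)$ weakly in $L^2_{\bb{F}}([0,T]; H^1(\R^d)) \times L^2_{\bb{F}}([0,T]; L^2(\R^d; \R^{d_W})) \times \M^2_{\bb{F}}([0,T]; L^2(\R^d))$; linearity of \eqref{eq:bspde_linear} and density of $\bigcup_n V_n$ in $H^1(\R^d)$ allow me to pass to the limit in the weak formulation of Definition \ref{def:bspde}, while weak continuity of the pairing $(m, M) \mapsto \ev[m, M]_T$ on $\M^2_{\bb{F}}$ preserves $[[m, W]] = 0$. Corollary \ref{cor:ito} upgrades $u$ to an element of $D^2_{\bb{F}}([0,T]; L^2(\R^d))$, and uniqueness follows by applying the same It\^o/G\aa rding identity to the difference of two solutions, which is itself a BSPDE with $h \equiv 0$ and $\Psi \equiv 0$. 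The main technical obstacle is precisely this energy inequality: one has to balance the $\lVert q\rVert_{L^2}^2$ contribution from $[W]$ against the cross coupling $\langle \nabla u, \beta q\rangle$, which is exactly what the coercivity threshold $\alpha - \beta\beta^\top/2 \geq c I_d$ is tailored to accommodate.
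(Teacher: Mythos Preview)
Your proposal is correct and follows essentially the same route as the paper: a Galerkin approximation with a smooth $L^2$-basis dense in $H^1$, the It\^o energy identity for $\lVert u^n_t\rVert_{L^2}^2$, and the coercivity $\alpha-\tfrac{1}{2}\beta\beta^\top\geq cI_d$ exploited via completion of squares to absorb the cross term $\langle\nabla u,\beta q\rangle$; weak compactness in $\cal{S}_0$ then gives existence, and the same energy estimate applied to the difference of two solutions (with $h\equiv 0$, $\Psi\equiv 0$) gives uniqueness. The paper organises the squares slightly differently (writing $-2\lVert\sigma\nabla u\rVert^2-\lVert\beta^\top\nabla u+q\rVert^2$ with $\sigma^\top\sigma=\alpha-\tfrac{1}{2}\beta\beta^\top$ and then splitting off a small multiple of $\lVert q\rVert^2$), but this is algebraically equivalent to your identity.
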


\begin{proof}
\textit{Existence}: Our arguments mirror those given in the proof of \cite[Theorem 3.1]{jin_1999_bspde} and \cite[Lemma 4.5]{al_hussein_bspde_2009}. The main difference lies in the fact that BSPDE \eqref{eq:bspde_linear} is driven by a martingale with jumps. As mentioned above, we proceed via a Galerkin approximation: for any $n \geq 1$, we construct a Galerkin approximation $(u^n, q^n, m^n)$ on $n$ basis elements, then we show that the resulting sequence is bounded in $\cal{S}_0$ and that any weakly convergent subsequence yields a solution to BSPDE \eqref{eq:bspde_linear}.

Let us fix an orthonormal basis $(e_n)_{n \geq 1}$ of $L^2(\R^d)$ consisting of elements in $C^{\infty}_c(\R^d)$, such that the linear span of $(e_n)_{n \geq 1}$ is dense in $H^1(\R^d)$. For $n \geq 1$ we consider the BSDE
\begin{align*}
    \d u^{ni}_t = -\sum_{j = 1}^n \Bigl(\langle \alpha_t : \nabla^2 e_j, e_i\rangle u^{nj}_t + \langle \beta_t : \nabla e_j, e_i\rangle q^{nj}_t\Bigr) \, \d t - \langle h_t, e_i\rangle\, \d t  + q^{ni}_t \, \d W_t + \d m^{ni}_t
\end{align*}
with terminal condition $u^{ni}_T = \langle \Psi, e_i\rangle$ for $1 \leq i \leq n$. The existence of an $\bb{F}$-progressively measurable solution $u^{ni} \in D_{\bb{F}}^2([0, T])$, $q^{ni} \in L^2_{\bb{F}}([0, T]; \R^{d_W})$, $m^{ni} \in \cal{M}^2_{\bb{F}}([0, T])$, $i = 1$,\ldots, $n$ is guaranteed by \cite[Example 1.20]{carmona_mfg_ii_2018}. Next, we define the random function $u^n \in D_{\bb{F}}^2([0, T]; H^2(\R^d))$ by $u^n_t(x) = \sum_{i = 1}^n u^{ni}_t e_i(x)$ and, similarly, $q^n \in L_{\bb{F}}^2([0, T]; H^2(\R^d; \R^{d_W}))$ and $m^n \in \cal{M}_{\bb{F}}^2([0, T]; H^2(\R^d))$. Then, let $\pi_n$ denote the orthogonal projection of $L^2(\R^d)$ onto the linear span of $e_1$,~\ldots, $e_n$, which keeps $u^n_t$, $q^n_t$, and $m^n_t$ fixed. The projection allows us to rewrite the finite-dimensional linear BSDE above in the form
\begin{align} \label{eq:fd_adjoint}
\begin{split}
    \d u^n_t(x) = -\pi_n\bigl(h_t + \alpha_t : \nabla^2 u^n_t + \beta_t : \nabla q^n_t\bigr)(x) \, \d t + q^n_t(x) \, \d W_t + \d m^n_t(x)
\end{split}
\end{align}
with terminal condition $u^n_T(x) = (\pi_n\Psi)(x)$. 

Now, we apply the generalisation of It\^o's formula from Theorem \ref{thm:ito} for Hilbert space-valued semimartingales with jumps to obtain
\begin{align} \label{eq:fd_galerkin_ito}
\begin{split}
    \lVert u^n_t\rVert_{L^2}^2 &= \lVert \pi_n \Psi\rVert_{L^2}^2 - \int_t^T 2\Bigl(\bigl\langle \nabla u^n_s, \nabla \cdot (\alpha_s u^n_s) \bigr\rangle + \bigl\langle q^n_s, \nabla \cdot (\beta_s u^n_s) \bigr\rangle\Bigr) \, \d s \\
    &\ \ \ + 2\int_t^T \langle h_s, u^n_s\rangle \, \d s- \int_t^T 2\langle q^n_s, u^n_s\rangle \cdot \d W_s - \int_t^T 2 \langle u^n_{s-}, \d m^n_s\rangle \\
    &\ \ \ - \int_t^T \lVert q^n_s\rVert_{L^2}^2 \, \d s - ([m^n]_T - [m^n]_t).
\end{split}
\end{align}
The stochastic integrals appearing in \eqref{eq:fd_galerkin_ito} are true martingales by Proposition \ref{prop:bdg}, so disappear upon taking expectations on both sides. Hence, if we apply the same estimates as in the proof of \cite[Theorem 3.1]{jin_1999_bspde}, we find that for any $\delta > 0$ it holds that
\begin{align} \label{eq:fd_galerkin_est}
\begin{split}
    \ev\lVert u^n_t\rVert_{L^2}^2 &\leq \ev\lVert \pi_n \Psi\rVert_{L^2}^2 + \ev \int_t^T \Bigl(\delta \bigl(\lVert \nabla u^n_s\rVert_{L^2}^2 + \lVert q^n_s\rVert_{L^2}^2\bigr) + \tfrac{C_{\alpha}^2 + C_{\beta}^2}{\delta} \lVert u^n_s\rVert_{L^2}^2 + \lVert h_s\rVert_{L^2}^2\Bigr) \, \d s \\
    &\ \ \ - \ev\int_t^T \Bigl(2 \lVert \sigma_s \nabla u^n_s\rVert_{L^2}^2 + \bigl\lVert \beta^{\top}_s \nabla u^n_s + q^n_s\bigr\rVert_{L^2}^2\Bigr) \, \d s - \ev([m^n]_T - [m^n]_t),
\end{split}
\end{align}
where $\sigma_s(x)$ is a matrix with $\sigma^{\top}_s(x)\sigma_s(x) = \alpha_s(x) - \tfrac{\beta_s(x) \beta^{\top}_s(x)}{2}$, so that $\sigma^{\top}_s(x)\sigma_s(x) \geq c_{\alpha, \beta} I_d$ by Assumption \ref{ass:bspde} \ref{it:nondeg_bspde}. For any $\epsilon \in (0, 1)$, we have
\begin{equation*}
    -\bigl\lVert \beta^{\top}_s \nabla u^n_s + q^n_s\bigr\rVert_{L^2}^2 \leq \frac{(1 - \epsilon)C_{\beta}^2}{\epsilon} \lVert \nabla u^n_s\rVert_{L^2}^2 - (1 - \epsilon) \lVert q^n_s\rVert_{L^2}^2.
\end{equation*}
Now, we first choose $\epsilon$ sufficiently close to $1$ such that $\frac{(1 - \epsilon)C_{\beta}^2}{\epsilon} \leq \frac{c_{\alpha, \beta}}{2}$, then we fix $\delta = \tfrac{c_{\alpha, \beta}}{2} \land \frac{1 - \epsilon}{2}$. In view of Equation \eqref{eq:fd_galerkin_est} this yields
\begin{align*}
    \ev\lVert u^n_t\rVert_{L^2}^2 + \ev([m^n]_T - [&m^n]_t) + \ev\int_t^T \lVert \nabla u^n_s\rVert_{L^2}^2 + \lVert q^n_s\rVert_{L^2}^2 \, \d s \\
    &\leq C \ev\lVert \pi_n \Psi\rVert_{L^2}^2 + C \ev\int_t^T \lVert h_s \rVert + \lVert u^n_s\rVert_{L^2}^2 \, \d s
\end{align*}
for a positive constant $C > 0$ independent of $n \geq 1$ and $h$. Hence, by Gr\"onwall's inequality, we can deduce that
\begin{align} \label{eq:galerkin_estimate_2}
\begin{split}
    \ev\lVert u^n_t\rVert_{L^2}^2 + \ev \lVert m^n_T - m^n_t&\rVert_{L^2}^2 + \ev\int_t^T \lVert \nabla u^n_s\rVert_{L^2}^2 + \lVert q^n_s\rVert_{L^2}^2 \, \d s \\
    &\leq e^{CT}\ev\lVert \pi_n \Psi\rVert_{L^2}^2 + e^{CT}\ev \int_t^T \lVert h_t\rVert_{L^2}^2 \, \d t,
\end{split}
\end{align}
where we used that $\ev([m^n]_T - [m^n]_t) = \ev\bigl(\lVert m^n_T\rVert_{L^2}^2 - \lVert m^n_t\rVert_{L^2}^2\bigr) = \ev \lVert m^n_T - m^n_t\rVert_{L^2}^2$ because the martingale $m^n$ satisfies $\ev\langle m^n_T, m^n_t\rangle = \ev \langle m^n_t, m^n_t\rangle = \ev \lVert m^n_t\rVert_{L^2}^2$. Consequently, the sequence $(u^n, q^n, m^n)_n$ is bounded in $\cal{S}_0$ and, therefore, has a weakly convergent subsequence. Then we proceed as in the proofs of \cite[Theorem 3.1]{jin_1999_bspde} and \cite[Lemma 4.5]{al_hussein_bspde_2009} to show that any weak limit $(u, q, m) \in \cal{S}_0$ is a solution of BSPDE \eqref{eq:bspde_linear}. We will not provide the details and instead refer the reader to the cited references. Note that the test functions for the solutions lie in the closure of the linear of span of $(e_n)_{n \geq 1}$ in $H^1(\R^d)$. By our choice of $(e_n)_{n \geq 1}$ this closure is given by $H^1(\R^d)$, as desired. 

\textit{Uniqueness}: To prove uniqueness for BSPDE \eqref{eq:bspde_linear}, it is convenient to first establish \eqref{eq:bspde_linear_estimate} for arbitrary solutions to BSPDE \eqref{eq:bspde_linear}.
% Note that we cannot use \eqref{eq:galerkin_estimate_3} to establish uniqueness for BSPDE \eqref{eq:bspde_linear}, since it only holds for solutions obtained through Galerkin approximations. Hence, we must extend this estimate to arbitrary solutions of BSPDE \eqref{eq:bspde_linear}. 
So let us fix a solution $(u, q, m)$ of BSPDE \eqref{eq:bspde_linear}. We apply the generalisation of It\^o's formula from Theorem \ref{thm:ito} to $u$, noting this in particular gives $u \in D_{\bb{F}}^2([0, T]; L^2(\R^d))$, and then use the same estimates as above to deduce the inequality
\begin{align*}
    \ev\lVert u_t\rVert_{L^2}^2 + \ev&([m]_T - [m]_t) + \ev\int_t^T \lVert \nabla u_s\rVert_{L^2}^2 + \lVert q_s\rVert_{L^2}^2 \, \d s \\
    &\leq C \ev\lVert \Psi\rVert_{L^2}^2 + C \ev\int_t^T \lvert \langle h_s, u_s\rangle\rvert + \lVert u_s\rVert_{L^2}^2 \, \d s
\end{align*}
for a positive constant $C > 0$ independent of $h$ and $t$. Then a simple application of Gr\"onwall's inequality yields the estimate \eqref{eq:bspde_linear_estimate} for $(u, q, m)$. 

Now, if $(u', q', m')$ is another solution to BSPDE \eqref{eq:bspde_linear} with the same terminal condition $\Psi$ and input $h$, then the difference $(u - u', q - q', m - m')$ satisfies BSPDE \eqref{eq:bspde_linear} with vanishing initial condition and input. Hence, the just established bound from \eqref{eq:bspde_linear_estimate} immediately implies that the difference $(u - u', q - q', m - m')$ vanishes. Thus, BSPDE \eqref{eq:bspde_linear} has a unique solution.
\end{proof}

We can use Proposition \ref{prop:bspde_linear} and, in particular, the estimate \eqref{eq:bspde_linear_estimate} to obtain a solution to BSPDE \eqref{eq:bspde} via a fixed-point argument. Let us note again that we will only prove Theorem \ref{thm:bspde} for a vanishing weight $\eta = 0$.

\begin{proof}[Proof of Theorem \ref{thm:bspde}]
We construct a map $\Phi \define \cal{S}_0 \to \cal{S}_0$ as follows: for any $(u, q, m) \in \cal{S}_0$ we define the random function $h^u \define [0, T] \times \Omega \times \R^d \to \R$ by $h^u_t(x) = F_t(x, u_t, \nabla u_t, q_t)$. Note that Assumption \ref{ass:bspde} \ref{it:growth_bspde} implies that
\begin{align} \label{eq:h_bound}
\begin{split}
    \ev \int_0^T \lVert h^u_t\rVert_{L^2}^2 \, \d t &\leq \ev \int_0^T C^2\bigl(1 + \lVert u_t\rVert_{L^2} + \lVert \nabla u_t\rVert_{L^2} + \lVert q_t\rVert_{L^2}\bigr)^2 \, \d t < \infty,
\end{split}
\end{align}
so that $h^u$ is an element of $L^2_{\bb{F}}([0, T]; L^2(\R^d))$. Then, we define $\Phi(u, q, m)$ as the unique solution $(u', q', m')$ of the linear BSPDE \eqref{eq:bspde_linear} with input $h$ replaced by $h^u$. Using the Banach fixed-point theorem, we will show that $\Phi$ has a unique fixed point, which constitutes a solution of BSPDE \eqref{eq:bspde}. Uniqueness of the fixed point then implies uniqueness for BSPDE \eqref{eq:bspde}.

Fix two elements $(u^1, q^1, m^1)$, $(u^2, q^2, m^2) \in \cal{S}_0$, so the difference $\Delta = (\Delta^u, \Delta^q, \Delta^m) = \Phi(u^1, q^1, m^1) - \Phi(u^2, q^2, m^2)$ solves the linear BSPDE \eqref{eq:bspde_linear} with input $h^{u^1} - h^{u^2}$ and initial condition zero. Using \eqref{eq:bspde_linear_estimate} we get that
\begin{align} \label{eq:bspde_cutoff_estimate}
    \ev\lVert \Delta^u_t\rVert_{L^2}^2 &+ \ev\lVert \Delta^m_T - \Delta^m_t\rVert_{L^2}^2 + \ev\int_t^T \lVert \nabla \Delta^u_s \rVert_{L^2}^2 + \lVert \Delta^q_s \rVert_{L^2}^2 \, \d s \notag \\
    &\leq C\ev\int_t^T \bigl\lvert \bigl\langle \Delta^u_s, h^{u^1}_s - h^{u^2}_s\bigr\rangle \bigr\rvert \, \d s \notag \\
    &\leq \frac{C}{2\epsilon} \ev \int_t^T \lVert \Delta^u_s\rVert_{L^2}^2 \, \d t + \frac{\epsilon}{2} \ev \int_t^T \bigl\lVert h^{u^1}_s - h^{u^2}_s\bigr\rVert_{L^2}^2 \, \d s \notag \\
    &\leq \frac{C}{2\epsilon} \ev \int_t^T \lVert \Delta^u_s\rVert_{L^2}^2 \, \d t + \frac{3C_F^2\epsilon}{2} \ev \int_t^T \lVert u^1_s - u^2_s\rVert_{H^1}^2 + \lVert q^1_s - q^2_s\rVert_{L^2}^2 \, \d s,
\end{align}
where the last inequality follows from Assumption \ref{ass:bspde} \ref{it:lipschitz_bspde}. Next, we fix $\epsilon = (3C_F^2)^{-1}$ and set $C_0 = 3 C C_F^2$ to find that
\begin{align} \label{eq:bspde_estimate_1}
\begin{split}
    \ev\lVert \Delta^u_t\rVert_{L^2}^2 &+ \ev\lVert \Delta^m_T - \Delta^m_t\rVert_{L^2}^2 + \ev\int_t^T \lVert \nabla \Delta^u_s \rVert_{L^2}^2 + \lVert \Delta^q_s \rVert_{L^2}^2 \, \d s \\
    &\leq \frac{C_0}{2} \ev \int_t^T \lVert \Delta^u_s\rVert_{L^2}^2 \, \d t + \frac{1}{2} \ev \int_t^T \lVert u^1_s - u^2_s\rVert_{H^1}^2 + \lVert q^1_s - q^2_s\rVert_{L^2}^2 \, \d s.
\end{split}
\end{align}
Now, we proceed as in \cite[Example 1.20]{carmona_mfg_ii_2018}. We multiply the previous inequality by $e^{\kappa t}$ for $\kappa \geq 1$ and then integrate over $[0, T]$ to obtain
\begin{align*}
    \int_0^T &e^{\kappa t} \ev\lVert \Delta^u_t\rVert_{L^2}^2 \, \d t + \int_0^T \frac{1}{\kappa}(e^{\kappa s} - 1) \ev\bigl[\lVert \nabla \Delta^u_s \rVert_{L^2}^2 + \lVert \Delta^q_s \rVert_{L^2}^2\bigr] \, \d s \\
    &\leq \frac{C_0}{2\kappa} \int_0^T e^{\kappa s} \ev\lVert \Delta^u_s\rVert_{L^2}^2 \, \d s + \frac{1}{2} \int_0^T \frac{1}{\kappa}(e^{\kappa s} - 1) \ev\bigl[\lVert u^1_s - u^2_s\rVert_{H^1}^2 + \lVert q^1_s - q^2_s\rVert_{L^2}^2\bigr]\, \d s.
\end{align*}
Next, we evaluate Equation \eqref{eq:bspde_estimate_1} at $t = 0$, multiply it by $\frac{1}{2\kappa}$, and add it to the inequality above. This yields
\begin{align*}
    \int_0^T e^{\kappa t} \ev\lVert&\Delta^u_t\rVert_{L^2}^2 \, \d t + \frac{1}{2\kappa} \ev \lVert m_T\rVert_{L^2}^2 + \int_0^T \theta(s) \ev\bigl[\lVert \nabla \Delta^u_s \rVert_{L^2}^2 + \lVert \Delta^q_s \rVert_{L^2}^2\bigr] \, \d s \\
    &\leq \frac{C_0}{4\kappa} \int_0^T e^{\kappa s} \ev\lVert \Delta^u_s\rVert_{L^2}^2 \, \d s + \frac{1}{2} \int_0^T \theta(s) \ev\bigl[\lVert u^1_s - u^2_s\rVert_{H^1}^2 + \lVert q^1_s - q^2_s\rVert_{L^2}^2\bigr] \, \d s,
\end{align*}
where $\theta(s) = \frac{1}{2\kappa} + \frac{1}{\kappa}(e^{\kappa s} - 1)$.
Then we fix $\kappa \geq 1$ large enough so that $\frac{C_0}{4\kappa} < \frac{1}{4}$ and rearrange, which finally implies
\begin{equation} \label{eq:bspde_estimate_2}
    \int_0^T \theta(t) \ev\bigl[\lVert\Delta^u_t\rVert_{H^1}^2 + \lVert \Delta^q_t \rVert_{L^2}^2\bigr] \, \d t + \frac{2}{3\kappa} \ev \lVert m_T\rVert_{L^2}^2 \leq \frac{2}{3} \int_0^T \theta(t) \ev\bigl[\lVert u^1_s - u^2_s\rVert_{H^1}^2 + \lVert q^1_s - q^2_s\rVert_{L^2}^2\bigr]\, \d t.
\end{equation}
Here we used that $\theta(t) \leq e^{\kappa t}$. We can equipped $\cal{S}_0$ with the norm
\begin{equation*}
    (u, q, m) \mapsto \int_0^T \theta(t) \ev\bigl[\lVert u_t\rVert_{H^1}^2 + \ev\lVert q_t \rVert_{L^2}^2\bigr] \, \d t + \frac{2}{3\kappa} \ev \lVert m_T\rVert_{L^2}^2,
\end{equation*}
which turns $\cal{S}_0$ into a Banach space. Moreover, owing to \eqref{eq:bspde_estimate_2}, the map $\Phi$ is a contraction on this Banach space and by the Banach fixed-point theorem has a unique fixed point $(u, q, m)$. This fixed point is a solution to BSPDE \eqref{eq:bspde} in the sense of Definition \eqref{def:bspde}. Moreover, since any other solution to BSPDE \eqref{eq:bspde} is also a fixed point of $\Phi$, the solution $(u, q, m)$ is unique.

Lastly, we derive \eqref{eq:bspde_estimate} for $(u, q, m)$. Since $(u, q, m)$ solves BSPDE \eqref{eq:bspde_linear} with input $h^u$, the estimate \eqref{eq:bspde_linear_estimate} yields
\begin{align*}
    \ev\lVert u_t\rVert_{L^2}^2 + \ev\lVert m_T - m_t\rVert_{L^2}^2 + \ev&\int_t^T \lVert \nabla u_s \rVert_{L^2}^2 + \lVert q_s \rVert_{L^2}^2 \, \d s \\
    &\leq C\biggl(\ev\lVert \Psi \rVert_{L^2}^2 + \ev\int_t^T  \lvert \langle u_s, h^u_s\rangle\rvert \, \d s\biggr)
\end{align*}
Now, we proceed as above, using a bound analogous to \eqref{eq:h_bound} to estimate $\ev \int_t^T \lVert h^u_s\rVert_{L^2}^2 \, \d s$, in order to establish \eqref{eq:bspde_estimate}. This concludes the proof.
\end{proof}

\begin{proof}[Proof of Propositions \ref{prop:adjoint} and \ref{prop:hjb}]
The existence and uniqueness of solutions to BSPDE \eqref{eq:adjoint} and BSPDE \eqref{eq:hjb} is a straightforward application of Theorem \ref{thm:bspde}. We simply need to verify the assumptions of Theorem \ref{thm:bspde} under the hypotheses of Propositions \ref{prop:adjoint} and \ref{prop:hjb}. Note that in the context of Propositions \ref{prop:adjoint} and \ref{prop:hjb}, the exponent $\eta$ used in this section is given by $-\eta_0 \sqrt{1 + \lvert x \rvert^2}$. This in particular ensures the Lipschitz continuity of the nonlocal dependence on $u_t$ and $\nabla u_t$ appearing in the coefficients of BSPDEs \eqref{eq:adjoint} and \eqref{eq:hjb} with respect to the norm $\lVert \cdot \rVert_{\eta}$. We shall not provide the details here.
\end{proof}

\section{Optimal Control for the Random Fokker--Planck Equation} \label{sec:control}

The goal of this section is to analyse the control problem for the random Fokker--Planck equation \eqref{eq:rfpe}. We begin by establishing well-posedness for the state equation and, subsequently, establish the existence of optimal controls. Then, we prove the necessary and sufficient SMP.

\subsection{Well-Posedness of the Fokker--Planck Equation} \label{sec:sfpe}

In this subsection, we study the stochastic and random Fokker--Planck equations, (S)PDEs \eqref{eq:sfpe} and \eqref{eq:rfpe} introduced in Section \ref{sec:main_results}. In particular, we prove Propositions \ref{prop:sfpe_to_rfpe} and \ref{prop:rfpe}. Throughout this section Assumption \ref{ass:fpe} is in place and we recall that $\eta(x) = \eta_0 \sqrt{1 + \lvert x \rvert^2}$ for some $\eta_0 > 0$ and $\bar{\eta} = -\eta$.

\begin{proof}[Proof of Proposition \ref{prop:sfpe_to_rfpe}]
We only show that any solution $\nu$ to the stochastic Fokker--Planck equation \eqref{eq:sfpe} induces a solution $\mu$ to the random Fokker--Planck equation \eqref{eq:rfpe}, given by $\mu_t = (\id - \sigma_0W_t)^{\#}\nu_t$. By mollifying $\nu$ if necessary, we may assume that $\nu_t$ has a density in $L^2(\R^d)$, which we also denote by $\nu_t$, such that $\nu \in L_{\bb{F}}^2([0, T]; H^1(\R^d))$. Next, let $\varphi \in C^2_c(\R^d)$ and set $\tilde{\varphi}_t(x) = \varphi(x - \sigma_0 W_t)$, so that
\begin{equation*}
    \d \tilde{\varphi}_t(x) = -(\nabla \tilde{\varphi}_t(x))^{\top} \sigma_0 \, \d W_t + \frac{1}{2} \nabla^2 \tilde{\varphi}_t(x) \colon \sigma_0\sigma_0^{\top} \, \d t.
\end{equation*}
Then, we may apply the generalisation of It\^o's formula from Theorem \ref{thm:ito} to $\langle \nu_t, \tilde{\varphi}\rangle$ to find
\begin{align*}
    \d \langle \mu_t, \varphi\rangle &= \d \langle \nu_t, \tilde{\varphi}_t\rangle \\
    &= \bigl\langle \nu_t, \L \tilde{\varphi}_t(t, \cdot, \nu_t, \gamma_t)\bigr\rangle \, \d t + \frac{1}{2} \bigl\langle \nu_t, \nabla^2 \tilde{\varphi}_t \colon \sigma_0\sigma_0^{\top}\bigr\rangle \, \d t - \bigl\langle \nu_t, \nabla^2 \tilde{\varphi}_t \colon \sigma_0 \sigma_0^{\top}\bigr\rangle \, \d t \\
    &= \bigl\langle \mu_t, \tilde{\L}\varphi(t, \cdot, \mu_t, \tilde{\gamma}_t)\bigr\rangle \, \d t.
\end{align*}
Since $\varphi \in C^2_c(\R^d)$ was arbitrary, this shows that $\mu$ solves PDE \eqref{eq:rfpe}. The equality of the cost functions, $\tilde{J}(\tilde{\gamma}) = J(\gamma)$, is clear.
\end{proof}

We immediately continue with the proof of Proposition \ref{prop:rfpe}. 

\begin{proof}[Proof of Proposition \ref{prop:rfpe}]
We proceed in several steps. First, we study general linear PDEs with random coefficients. Then, a solution $\mu \in C_{\bb{F}}^2([0, T]; L_{\eta}^2(\R^d)) \cap L_{\bb{F}}^2([0, T]; H_{\eta}^1(\R^d))$ to PDE \eqref{eq:rfpe} is obtained through a fixed-point argument. Subsequently, we verify that this solution is in fact measure-valued and, lastly, we established uniqueness among such measure-valued solutions.  

\textit{Step 1}: We begin by studying linear PDEs with random coefficients. For arbitrary functions $\alpha \in L_{\bb{F}}^{\infty}([0, T]; L^{\infty}(\R^d))$, $\beta \in L_{\bb{F}}^{\infty}([0, T]; L^{\infty}(\R^d; \R^d))$, $h^0 \in L_{\bb{F}}^2([0, T]; L^2_{\eta}(\R^d))$, and $h^1 \in L_{\bb{F}}^2([0, T]; L^2_{\eta}(\R^d; \R^d))$ let us consider the linear PDE
\begin{equation} \label{eq:sfpe_random}
    \d \langle \mu_t, \varphi\rangle = \Bigl(\bigl\langle \mu_t, \alpha_t \varphi + \beta_t \cdot \nabla \varphi + \tilde{a}(t, \cdot) \colon \nabla^2 \varphi\bigr\rangle + \langle h^0_t, \varphi\rangle + \langle h^1_t, \nabla \varphi\rangle\Bigr)\, \d t
\end{equation}
for $\varphi \in C_c^2(\R^d)$, with initial condition $\mu_0 \in L^2_{\eta}(\R^d)$. Using the parabolic analogue of the well-known Lax--Milgram theorem, see e.g.\@ \cite[Theorem 4.1]{lions_bvp_1972}, we obtain a unique solution $\mu \in C_{\bb{F}}^2([0, T]; L_{\eta}^2(\R^d)) \cap L_{\bb{F}}^2([0, T]; H_{\eta}^1(\R^d))$ to PDE \eqref{eq:sfpe_random}. Moreover, applying It\^o's formula, Theorem \ref{thm:ito}, to the PDE satisfied by $e^{\eta/2} \mu$ and applying standard estimates, we obtain
\begin{align} \label{eq:sfpe_estimate}
\begin{split}
    \sup_{0 \leq s \leq t} &\lVert \mu_s\rVert_{\eta}^2 + \int_0^t \lVert \nabla \mu_s\rVert_{\eta}^2 \, \d s \\
    &\leq C\lVert \mu_0\rVert_{\eta}^2 + C \int_0^t \Bigl(\bigl\lvert \bigl\langle \mu_s (\alpha_s + \lvert \beta_s\rvert) + h^0_s + \lvert h^1_s\rvert, \mu_s\bigr\rangle_{\eta}\bigr\rvert + \bigl\lvert\bigl\langle \mu_s \beta_s + h^1_s, \nabla \mu_s\bigr\rangle_{\eta}\bigr\rvert\Bigr) \, \d s
\end{split}
\end{align}
for $0 \leq t \leq T$ and a constant $C > 0$ depending on the uniform parabolicity constant from Assumption \ref{ass:fpe} \ref{it:nondeg_fpe}, the parameter $\eta_0$, and the time horizon $T$.

\textit{Step 2}: Next, we employ a fixed-point argument to find a solution for the nonlinear PDE \eqref{eq:rfpe}. Let us construct a map $\Phi$ from $L_{\bb{F}}^2([0, T]; L_{\eta}^2(\R^d))$ to itself as follows: for $\mu \in L_{\bb{F}}^2([0, T]; L_{\eta}^2(\R^d))$ we define the processes $\alpha^{\mu} \in L_{\bb{F}}^{\infty}([0, T]; L^{\infty}(\R^d))$ and $\beta^{\mu} \in L_{\bb{F}}^{\infty}([0, T]; L^{\infty}(\R^d; \R^d))$ by $\alpha^{\mu}_t(x) = \tilde{\lambda}(t, x, \mu_t)$ and $\beta^{\mu}_t(x) = \tilde{b}\bigl(t, x, \mu_t, \gamma_t(x)\bigr)$ for $(t, \omega,x) \in [0, T] \times \Omega \times \R^d$. Then, we let $\Phi(\mu) \in C_{\bb{F}}^2([0, T]; L_{\eta}^2(\R^d)) \cap L_{\bb{F}}^2([0, T]; H_{\eta}^1(\R^d)) \subset L_{\bb{F}}^2([0, T]; L_{\eta}^2(\R^d))$ denote the unique solution to PDE \eqref{eq:sfpe_random} with $\alpha$, $\beta$ replaced by $\alpha^{\mu}$, $\beta^{\mu}$, $h^0 = 0$, $h^1 = 0$, and initial condition $\mu_0 = v_0$, which is in $L_{\eta}^2(\R^d)$. From \eqref{eq:sfpe_estimate}, the boundedness of $\alpha$ and $\beta$, and an application of Gr\"onwall's inequality, it follows that $\sup_{0 \leq t \leq T} \lVert \Phi(\mu)_t\rVert_{\eta}$ is bounded by a constant independent of $\mu \in L_{\bb{F}}^2([0, T]; L_{\eta}^2(\R^d))$. 
% Moreover, by the discussion below Equation \eqref{eq:sfpe_estimate}, $\ev \sup_{0 \leq t \leq T} M_2^2(\Phi(\mu)_t) \leq e^{C + C C_b} \ev[\lvert \xi\rvert^2 + \lvert \zeta\rvert^2]$, so that $\Phi(\mu) \in \bb{S}$. 
We will show that $\Phi$ is a contraction on $L_{\bb{F}}^2([0, T]; L_{\eta}^2(\R^d))$, when equipped with a norm (that we shall define at a later stage) which is different from but equivalent to the standard norm. 

For two flows $\mu^1$, $\mu^2 \in L_{\bb{F}}^2([0, T]; L_{\eta}^2(\R^d))$ we can consider the difference $\Delta = (\Delta_t)_{0 \leq t \leq T}$ given by $\Delta_t = \Phi(\mu^1)_t - \Phi(\mu^2)_t$. The process $\Delta$ solves PDE \eqref{eq:sfpe_random} with $\alpha$, $\beta$ replaced by $\alpha^{\mu_1},$ $\beta^{\mu_1}$ and $h^0$ and $h^1$ defined by
\begin{equation*}
    h^0_t(x) = \Phi(\mu^2)_t(x) \bigl(\alpha^{\mu_1}_t(x) - \alpha^{\mu_2}_t(x)\bigr) \quad \text{and} \quad h^1_t(x) = \Phi(\mu^2)_t(x) \bigl(\beta^{\mu_1}_t(x) - \beta^{\mu_2}_t(x)\bigr),
\end{equation*}
respectively. Thus, the estimate \eqref{eq:sfpe_estimate} implies
\begin{align} \label{eq:bounds_delta}
\begin{split}
    \sup_{0 \leq s \leq t}& \lVert \Delta_t\rVert_{\eta}^2 + \int_0^t \lVert \nabla \Delta_s\rVert_{\eta}^2 \, \d s \\
    &\leq C \int_0^t \bigl\lvert \bigl\langle \Delta_s, (\alpha^{\mu_1}_s + \lvert \beta^{\mu_1}_s\rvert) \Delta_s + \beta^{\mu_1}_s \cdot \nabla \Delta_s\bigr\rangle_{\eta}\bigr\rvert \, \d s \\
    &\ \ \ + C\int_0^T \Bigl\lvert \Bigl\langle \Phi(\mu^2)_s, \bigl(\alpha^{\mu_1}_t - \alpha^{\mu_2}_t + \lvert \beta^{\mu_1}_s - \beta^{\mu_2}_s\rvert\bigr)\Delta_s + (\beta^{\mu_1}_s - \beta^{\mu_2}_s) \cdot \nabla \Delta_s\Bigr\rangle_{\eta}\Bigr\rvert \, \d s.
\end{split}
\end{align}
The integrand in the second line above is easily estimated by $\frac{2\epsilon (C_{\lambda} + C_b) + C_b^2}{2\epsilon} \lVert \Delta_s\rVert_{\eta}^2 + \frac{\epsilon}{2} \lVert \nabla \Delta_s \rVert_{\eta}^2$ for an $\epsilon > 0$ that we shall choose later. Next, by Assumption \ref{ass:fpe} \ref{it:continuity_fpe} it holds that
\begin{align*}
    \Bigl\lvert \Bigl\langle \Phi(\mu^2)_s&, \bigl(\alpha^{\mu_1}_t - \alpha^{\mu_2}_t + \lvert \beta^{\mu_1}_s - \beta^{\mu_2}_s\rvert\bigr)\Delta_s + (\beta^{\mu_1}_s - \beta^{\mu_2}_s) \cdot \nabla \Delta_s\Bigr\rangle_{\eta}\Bigr\rvert \\
    &\leq \lVert \Phi(\mu^2)_s\rVert_{\eta} d_0(\mu^1_t, \mu^2_t)\bigl((C_{\lambda} + C_b)\lVert \Delta_s\rVert_{\eta}  + C_b\lVert \nabla \Delta_s\rVert_{\eta}\bigr)  \\
    &\leq \frac{C_0 (1 + \epsilon)\lVert \Phi(\mu^2)_s\rVert_{\eta}^2}{2\epsilon}\lVert \mu^1_s - \mu^2_s\rVert_{\eta}^2 + \frac{1}{2} \lVert \Delta_s\rVert_{\eta}^2 + \frac{\epsilon}{2}\lVert \nabla \Delta_s \rVert_{\eta}^2
\end{align*}
for some constant $C_0 > 0$. Here we used that $d_0(v, v') \leq \lVert e^{-\eta/2}\rVert_{L^2} \lVert v - v'\rVert_{\eta}$ for all $v$, $v' \in L^2_{\eta}(\R^d)$. Inserting the previous two estimates into \eqref{eq:bounds_delta}, choosing $\epsilon = \frac{1}{2C}$, rearranging, and then applying Gr\"onwall's inequality implies that
\begin{equation*}
    \lVert \Delta_t\rVert_{\eta}^2 + \frac{1}{2} \int_0^t \lVert \nabla \Delta_s\rVert_{\eta}^2 \, \d s \leq C_1 \int_0^t\lVert \mu^1_s - \mu^2_s\rVert_{\eta}^2 \, \d s
\end{equation*}
for $C_1 > 0$, where we used that $ \lVert \Phi(\mu^2)_s\rVert_{\eta}^2$ is bounded uniformly over $(t, \omega) \in [0, T] \times \Omega$ and $\mu^2 \in L^2_{\bb{F}}([0, T]; L^2_{\eta}(\R^d))$. We drop the integral on the left-hand side above, multiply both sides with $e^{-2C_1t}$, and then integrate over $t \in [0, T]$. This yields
\begin{equation*}
    \ev \int_0^T e^{-2C_1t} \lVert \Delta_t\rVert_{\eta}^2 \, \d t \leq \frac{1}{2} \ev \int_0^T e^{-2C_1t}\lVert \mu^1_t - \mu^2_t\rVert_{\eta}^2 \, \d t.
\end{equation*}
Clearly, the function $\mu \mapsto \ev\int_0^T e^{-2C_1t} \lVert \mu_t\rVert_{\eta}^2 \, \d t$ defines a norm on $L_{\bb{F}}^2([0, T]; L_{\eta}^2(\R^d))$, which is equivalent to the standard norm and turns $\Phi$ into a contraction. Thus, by the Banach fixed-point theorem, there exists a unique fixed point $\mu \in L_{\bb{F}}^2([0, T]; L_{\eta}^2(\R^d))$. Since $\mu = \Phi(\mu)$, it holds that $\mu \in C_{\bb{F}}^2([0, T]; L_{\eta}^2(\R^d)) \cap L_{\bb{F}}^2([0, T]; H_{\eta}^1(\R^d))$ solves PDE \eqref{eq:rfpe}. If $\mu' \in C_{\bb{F}}^2([0, T]; L_{\eta}^2(\R^d)) \cap L_{\bb{F}}^2([0, T]; H_{\eta}^1(\R^d))$ is another solution to PDE \eqref{eq:rfpe}, then $\mu'$ is also a fixed point of $\Phi$, so by uniqueness of the fixed point of $\Phi$ we get $\mu' = \mu$. In other words, $\mu$ is the unique solution in $C_{\bb{F}}^2([0, T]; L_{\eta}^2(\R^d)) \cap L_{\bb{F}}^2([0, T]; H_{\eta}^1(\R^d))$. The estimate \eqref{eq:rfpe_estimate} follows from \eqref{eq:sfpe_estimate}.

\textit{Step 3}: Next, let us show that $\mu$ actually takes values in $C([0, T]; \M(\R^d))$. First, we establish $\mu_t(x) \geq 0$ for a.e.\@ $x \in \R^d$ and $\leb \otimes \pr$-a.e.\@ $(t, \omega) \in [0, T] \times \Omega$. For that we choose the function $F$ in the generalisation of It\^o's formula from Theorem \ref{thm:ito} to be $F(v) = \int_{\R^d} \varphi(v(x)) \, \d x$ with $\varphi(x) = (x_-)^2$. Using the identities $x \varphi'(x) = -2 \varphi(x)$ and $x^2 \varphi''(x) = 2 \varphi(x)$ it is easy to see that
\begin{equation*}
    F(\mu_t) \leq F(\mu_0) + \int_0^t \biggl(2 C_{\lambda} + \frac{C_b^2 + C_a^2}{c_a}\biggr) F(\mu_s) \, \d s,
\end{equation*}
where $c_a > 0$ is the uniform parabolicity constant from Assumption \ref{ass:fpe} \ref{it:nondeg_fpe}. Since $F(\mu_0) = F(v_0) = 0$, Gr\"onwall's inequality implies that $F(\mu_t) = 0$, which means that $\mu_t(x) \geq 0$ for a.e.\@ $x \in \R^d$. By the discussion below the statement of Proposition \ref{prop:rfpe} this shows that $\mu_t \in \M(\R^d)$. Continuity of the trajectories then follows from the fact that $\langle \mu_s, \varphi\rangle \to \langle \mu_t, \varphi\rangle$ as $s \to t$ for $\varphi \in C^2_c(\R^d)$ and that $\lVert e^{\eta/2} \mu_s\rVert_{L^2}$ is bounded uniformly over $s \in [0, T]$.

\textit{Step 4}: So far we only proved uniqueness of solutions in the space $C_{\bb{F}}^2([0, T]; L_{\eta}^2(\R^d)) \cap L_{\bb{F}}^2([0, T]; H_{\eta}^1(\R^d))$, but we need to establish uniqueness within the space of $\bb{F}$-adapted $C([0, T]; \M(\R^d))$-valued processes. Assume we have another $\bb{F}$-adapted solution $\mu'$ of PDE \eqref{eq:rfpe} that takes values in $C([0, T]; \M(\R^d))$. Then, using a mollification argument together with standard PDE estimates, we can show that $\mu'$ must actually take values in $C_{\bb{F}}^2([0, T]; L_{\eta}^2(\R^d)) \cap L_{\bb{F}}^2([0, T]; H_{\eta}^1(\R^d))$ as well. Consequently, uniqueness within $C_{\bb{F}}^2([0, T]; L_{\eta}^2(\R^d)) \cap L_{\bb{F}}^2([0, T]; H_{\eta}^1(\R^d))$ guarantees that $\mu' = \mu$. This concludes the proof.
\end{proof}

\subsection{Existence of Optimal Controls}

In this subsection, we show that the value of the optimal control problem for PDE \eqref{eq:rfpe} does not depend on the probabilistic setup (Proposition \ref{prop:independent}) and that there exists a probabilistic setup, which admits an optimal control. To establish the latter, we build on the compactness arguments from \cite{anita_sde_fpe_2021}. For the remainder of the subsection we will emphasise the probabilistic setup $\bb{W}$ by adding it as a subscript to the symbols denoting the generator, the space of controls, and the cost functional.

We begin with some preliminary remarks on the parabolic function spaces appearing in our analysis. First, we know that for any $R > 0$, the space $H^1_{\eta}(B_R)$ is compactly embedded in $L_{\eta}^2(B_R)$. Here $B_R$ denotes the centred ball of radius $R$ in $\R^d$. The space $L_{\eta}^2(B_R)$, in turn, is continuously embedded in $H^1_{\eta}(\R^d)^{\ast}$, where we extend real-valued functions on $B_R$ to $\R^d$ by zero. Consequently, the Aubin--Lions lemma (see e.g.\@ \cite[Theorem 3.1.1]{zheng_nonlinear_2004}) implies that the space $W([0, T]; H^1_{\eta}(B_R))$ of functions $h \in L^2([0, T]; H^1_{\eta}(B_R))$ whose distributional time derivative $\dot{h}$ lies in $L^2([0, T]; H^1_{\eta}(\R^d)^{\ast})$, is compactly embedded in $L^2([0, T]; L_{\eta}^2(B_R))$. 

Now, let $\gamma \in \bb{G}_{\bb{W}}$ and denote the corresponding solution to PDE \eqref{eq:rfpe} by $\mu$. Then, it follows immediately from PDE \eqref{eq:rfpe} and the estimate \eqref{eq:rfpe_estimate} that
\begin{equation} \label{eq:distr_der}
    \int_0^T \lVert \dot{\mu}_t\rVert_{H^1_{\eta}(\R^d)^{\ast}}^2 \, \d t \leq C \int_0^T \lVert \mu_t\rVert_{H^1_{\eta}(\R^d)}^2 \, \d t \leq C \lVert v_0\rVert_{\eta}^2
\end{equation}
for some $C > 0$. Together with estimate \eqref{eq:rfpe_estimate}, this implies that every realisation of $\mu$ for any control $\gamma \in \bb{G}_{\bb{W}}$ on any probabilistic setup $\bb{W}$ lies in the same sufficiently large but bounded subset of $W([0, T]; H^1_{\eta}(B_R))$. Hence, in view of the compact embedding from above, if $(\gamma^n)_n$ is a sequence of controls, each defined on some probabilistic setup $\bb{W}_n$, and we denote the corresponding solutions to PDE \eqref{eq:rfpe} by $\mu^n$, then the sequence $(\mu^n)_n$ is tight on $L^2([0, T]; L_{\eta}^2(B_R))$ for all $R > 0$. 

The following proposition establishes semicontinuity properties of the cost functional under suitable assumption on the drift coefficient and the running cost. In its statement, we add the qualifier ``$\text{weak}$'' to the function space $L^2([0, T]; L_{\textup{loc}}^2(\R^d; \R^{d_G}))$, to indicate that $L^2([0, T]; L_{\textup{loc}}^2(\R^d; \R^{d_G}))$ is equipped with the weak topology.

\begin{proposition} \label{prop:stability}
Let Assumption \ref{ass:fpe} be satisfied. Let $(\bb{W}_n)_n$ be a sequence of probabilistic setups with associated Brownian motions $W^n$, let $\gamma^n \in \bb{G}_{\bb{W}^n}$, and denote the corresponding solution to PDE \eqref{eq:rfpe} by $\mu^n$. Then $(\mu^n, \gamma^n, W^n)_n$ is tight on $L^2([0, T]; L_{\eta, \textup{loc}}^2(\R^d)) \times (L^2([0, T]; L_{\textup{loc}}^2(\R^d; \R^{d_G})), \textup{weak}) \times C([0, T]; \R^{d_W})$ and any subsequential limit $(\mu, \gamma, W)$ can be realised on some probabilistic setup $\bb{W}$ such that $W$ is the Brownian motion underlying $\bb{W}$ and $\gamma \in \bb{G}_{\bb{W}}$. If in addition
\begin{enumerate}[noitemsep, label = \textup{(\roman*)}]
    \item \label{it:linear_convex} for all $(t, x, v) \in [0, T] \times \R^d \times \M(\R^d)$ the map $G \ni g \mapsto b(t, x, v, g)$ is affine and the map $G \ni g \mapsto f(t, x, v, g)$ is convex, then $\mu$ is the solution to PDE \eqref{eq:rfpe} corresponding to $\gamma$ and
    \begin{equation} \label{eq:cost_lsc}
        \liminf_{k \to \infty} \tilde{J}_{\bb{W}_{n_k}}(\gamma^{n_k}) \geq \tilde{J}_{\bb{W}}(\gamma)
    \end{equation}
    along the convergent subsequence $(n_k)_k$;
    \item \label{it:cont_usc} the subsequential weak convergence of $(\gamma^n)_n$ is with respect to the strong topology on $L^2([0, T]; L_{\textup{loc}}^2(\R^d; \R^{d_G}))$ and for all $(t, x, v) \in [0, T] \times \R^d \times \M(\R^d)$ the map $G \ni g \mapsto b(t, x, v, g)$ is continuous and the map $G \ni g \mapsto f(t, x, v, g)$ is upper semicontinuous, then $\mu$ is the solution to PDE \eqref{eq:rfpe} corresponding to $\gamma$ and
    \begin{equation} \label{eq:cost_usc}
        \limsup_{k \to \infty} \tilde{J}_{\bb{W}_{n_k}}(\gamma^{n_k}) \leq \tilde{J}_{\bb{W}}(\gamma)
    \end{equation}
    along the convergent subsequence $(n_k)_k$.
\end{enumerate}
\end{proposition}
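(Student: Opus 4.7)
The proof proceeds in three stages: joint tightness of $(\mu^n, \gamma^n, W^n)$ on the stated product space, realization of a subsequential limit on a probabilistic setup $\bb{W}$, and passing to the limit in the state equation \eqref{eq:rfpe} and the cost \eqref{eq:cost_rfpe} under each of the hypotheses \ref{it:linear_convex} and \ref{it:cont_usc}. Tightness of $(\mu^n)_n$ on $L^2([0, T]; L^2_{\eta, \textup{loc}}(\R^d))$ follows from the Aubin--Lions compactness argument sketched just above the statement: estimate \eqref{eq:rfpe_estimate} bounds $\mu^n$ in $L^2([0,T]; H^1_\eta(\R^d))$ in probability, the bound \eqref{eq:distr_der} controls $\dot \mu^n$ in $L^2([0,T]; H^1_\eta(\R^d)^\ast)$, and Aubin--Lions then yields relative compactness in $L^2([0,T]; L^2_\eta(B_R))$ for each $R$. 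Tightness of $(W^n)_n$ on $C([0,T]; \R^{d_W})$ is immediate from Kolmogorov's criterion (identical laws). For $(\gamma^n)_n$, since $G$ is compact, each $\gamma^n$ lies almost surely in a fixed bounded ball of the separable Hilbert space $L^2([0,T]; L^2(B_R; \R^{d_G}))$, and such balls are weakly metrizable and weakly compact; a diagonal argument over $R = k \in \bb{N}$ gives tightness in the weak topology of $L^2([0,T]; L^2_{\textup{loc}}(\R^d; \R^{d_G}))$.

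\textbf{Realization.} By Prokhorov a subsequence $(n_k)_k$ converges in joint law, and Skorokhod representation on this Polish product space produces a new probability space carrying copies with the same joint law that converge $\pr$-almost surely to a limit $(\mu, \gamma, W)$. Take $\bb{F}$ to be the usual augmentation of the filtration generated by $(\mu, \gamma, W)$ and set $\bb{W} = (\Omega, \F, \bb{F}, \pr, W)$. \emph{The core technical difficulty} is showing that $W$ remains an $\bb{F}$-Brownian motion, since Skorokhod transfers joint laws but not martingale properties under filtration enlargement. I adapt the compatibility argument from \cite{djete_mkv_control_limit_2022}: for each $n_k$, $W^{n_k}$ is Brownian for its own filtration, so the conditional characteristic function of $W^{n_k}_t - W^{n_k}_s$ given $(W^{n_k}_r, \mu^{n_k}_r, \gamma^{n_k}_r)_{r \leq s}$ equals the Brownian one; bounded convergence applied along the a.s.\@ convergent copies preserves this identity in the limit, yielding independence of $W_t - W_s$ from $\F_s$. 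Progressive measurability and $G$-valuedness of $\gamma$ transfer from $\gamma^{n_k}$, placing $\gamma \in \bb{G}_{\bb{W}}$.

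\textbf{Case \ref{it:linear_convex}.} Linearity writes $\tilde b_1(t, x, g) = \tilde A(t, x) g$ for bounded measurable $\tilde A$, so weak convergence $\gamma^{n_k} \rightharpoonup \gamma$ upgrades to weak convergence $\tilde b_1(\cdot, \gamma^{n_k}) \rightharpoonup \tilde b_1(\cdot, \gamma)$ in $L^2([0,T]; L^2_{\textup{loc}}(\R^d; \R^d))$. Strong convergence $\mu^{n_k} \to \mu$ in $L^2([0,T]; L^2_{\eta, \textup{loc}}(\R^d))$ together with the uniform $L^2_\eta$-bound delivers, via a split into $B_R$ and $B_R^c$ using $\lVert e^{-\eta/2}\rVert_{L^2(B_R^c)} \to 0$, total variation convergence $\mu^{n_k}_t \to \mu_t$ for almost every $t$, and Assumption \ref{ass:fpe} \ref{it:continuity_fpe} then yields pointwise a.e.\@ convergence of $\tilde \lambda(t, x, \mu^{n_k}_t)$ and $\tilde b_0(t, x, \mu^{n_k}_t)$. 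Integrating \eqref{eq:rfpe} against $\varphi \in C^2_c(\R^d)$, the strong--weak product on the compactly supported $\varphi$ passes to the limit, and Proposition \ref{prop:rfpe} identifies $\mu$ as the solution for control $\gamma$. For \eqref{eq:cost_lsc}, the $\tilde f_0$ and $\tilde \psi$ contributions converge by continuity; for $\tilde f_1$, decompose $\langle \mu^{n_k}_t, \tilde f_1(t, \cdot, \gamma^{n_k}_t)\rangle = \langle \mu^{n_k}_t - \mu_t, \tilde f_1(t, \cdot, \gamma^{n_k}_t)\rangle + \langle \mu_t, \tilde f_1(t, \cdot, \gamma^{n_k}_t)\rangle$, where the first term vanishes by boundedness and total variation convergence and the second has the correct $\liminf$ by Ioffe's lower semicontinuity theorem for integrands convex in $g$.

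\textbf{Case \ref{it:cont_usc}.} Strong $L^2_{\textup{loc}}$ convergence of $\gamma^{n_k}$ provides, along a further subsequence, pointwise a.e.\@ convergence $\gamma^{n_k}_t(x) \to \gamma_t(x)$; continuity of $b_1$ in $g$ together with boundedness upgrades this to strong $L^2_{\textup{loc}}$ convergence $\tilde b_1(\cdot, \gamma^{n_k}) \to \tilde b_1(\cdot, \gamma)$ by dominated convergence, and the state equation passes to the limit as in Case \ref{it:linear_convex} (now with strong--strong products). For \eqref{eq:cost_usc}, upper semicontinuity of $f_1$ combined with pointwise convergence gives $\limsup_k \tilde f_1(t, x, \gamma^{n_k}_t(x)) \leq \tilde f_1(t, x, \gamma_t(x))$ pointwise a.e., and a reverse Fatou argument justified by the boundedness in Assumption \ref{ass:fpe} \ref{it:growth_fpe} together with the total variation convergence of $\mu^{n_k}_t$ established above delivers \eqref{eq:cost_usc}.
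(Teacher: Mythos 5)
The overall architecture of your proof (Aubin--Lions tightness, realization of the subsequential limit, then passing to the limit in the weak PDE and the cost) matches the paper's. Your compatibility argument via conditional characteristic functions is a genuine elaboration of a step the paper glosses over, and your handling of the weak-topology metrizability on bounded control sets is fine. However, there is a substantive gap in the limit passage.

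\textbf{The shift along $W^{n_k}$ versus $W$ is not handled.} The $n_k$-th state equation and cost involve the coefficients shifted along $W^{n_k}$, i.e.\@ $\tilde{\lambda}^{(n_k)}(t, x, v) = \lambda\bigl(t, x + \sigma_0 W^{n_k}_t, (\id + \sigma_0 W^{n_k}_t)^{\#}v\bigr)$ and similarly for $\tilde{b}_0$, $\tilde{f}_0$, $\tilde{f}_1$; the limit problem uses the shift along $W$. You assert that ``Assumption \ref{ass:fpe} \ref{it:continuity_fpe} then yields pointwise a.e.\@ convergence of $\tilde{\lambda}(t, x, \mu^{n_k}_t)$,'' but Assumption \ref{ass:fpe} gives Lipschitz continuity of $\lambda$ only in the measure argument, \emph{not} in $x$. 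Since $W^{n_k}_t \to W_t$, you would need $\lambda(t, x + \sigma_0 W^{n_k}_t, \cdot) \to \lambda(t, x + \sigma_0 W_t, \cdot)$, which fails without continuity of $\lambda$ in the spatial argument. (Only $a$ is Lipschitz, because $\partial_x a$ is bounded; $\lambda$, $b_0$, $f_0$, $f_1$, $\psi$ need not be continuous in $x$.) The same gap appears in your cost decomposition: the inner product $\langle \mu^{n_k}_t, \tilde{f}_1(t, \cdot, \gamma^{n_k}_t)\rangle$ silently freezes the shift at $W$, whereas $\tilde{J}_{\bb{W}_{n_k}}$ is defined with the shift along $W^{n_k}$. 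The paper circumvents this by a change of variables (the content of Lemma \ref{lem:int_conv}): one pushes $\mu^n_t$ and $\gamma^n_t$ forward along $\id + \sigma_0 W^n_t$, so that the coefficient is always evaluated at the unshifted point and the varying shift is transferred either to the smooth test function $\varphi$ (which supplies an explicit modulus of continuity) or to the $L^2_{\text{loc}}$-convergent control. Without this pushforward, the pointwise a.e.\@ convergence of the coefficients, the total variation convergence you invoke for the cross term, and the reverse-Fatou step in Case \ref{it:cont_usc} are all working with the wrong shift and cannot be justified from Assumption \ref{ass:fpe} alone. Incorporating the change of variables is exactly the missing ingredient; the rest of your argument then goes through.
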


\begin{proof}
First, we prove that under the assumptions from \ref{it:linear_convex} and \ref{it:cont_usc}, the limiting process $\mu$ is a solution to PDE \eqref{eq:rfpe} with control $\gamma$ on some suitable probabilistic setup. This is achieved by passing to the limit in a slightly weakened formulation of PDE \eqref{eq:rfpe}. For this to work, we must show that $\mu$ has continuous trajectories as an $L^2_{\eta}(\R^d)$-valued process. Lastly, we establish the statements \eqref{eq:cost_lsc} and \eqref{eq:cost_usc}.

Let $(\bb{W}_n)_n$ and $(\mu^n, \gamma^n, W^n)_n$ be as in the statement of the proposition. By the remarks above the statement of the proposition, we know that the sequence $(\mu^n)_n$ is bounded in $L^2([0, T]; H_{\eta}^1(\R^d))$ and tight on $L^2([0, T]; L_{\eta, \text{loc}}^2(\R^d))$, while $(g^n)_n$ is tight on the space $(L^2([0, T]; L_{\text{loc}}^2(\R^d; \R^{d_G})), \text{weak})$. Thus, selecting a subsequence if necessary, we may assume that the sequence $(\mu^n, \gamma^n, W^n)_n$ converges weakly to a random variable $(\mu, \gamma, W)$, where $\mu$ takes values in $L^2([0, T]; H^1_{\eta}(\R^d))$, $g$ takes values in $L^2([0, T]; L^2_{\text{loc}}(\R^d; \R^{d_G}))$, and $W$ is a $d_W$-dimensional Brownian motion. The random variables $\mu$, $\gamma$, and $W$ can be accommodated on some probabilistic setup $\bb{W} = (\Omega, \F, \bb{F}, \pr, W)$, such that $\gamma$ is $\bb{F}$-progressively measurable. Note that the filtration $\bb{F}$ will in general be larger than the filtration generated by $W$. Moreover, since
\begin{equation*}
    \int_0^T \bigl(\lVert \mu^n_t\rVert_{H_{\eta}^1(\R^d)}^2 + \lVert \dot{\mu}^n_t\rVert_{H^1_{\eta}(\R^d)^{\ast}}^2\bigr) \, \d t \leq C \lVert v_0\rVert_{\eta}^2
\end{equation*}
for all $n \geq 1$ by \eqref{eq:rfpe_estimate} and \eqref{eq:distr_der}, the Portmanteau theorem implies that the same holds for $\mu$, i.e.\@
\begin{equation*}
    \int_0^T \bigl(\lVert \mu_t\rVert_{H_{\eta}^1(\R^d)}^2 + \lVert \dot{\mu}_t\rVert_{H^1_{\eta}(\R^d)^{\ast}}^2\bigr) \, \d t \leq C \lVert v_0\rVert_{\eta}^2.
\end{equation*}
Thus, by \cite[Theorem 3.4]{barbu_ana_contr_1993}, $\mu$ in fact takes values in $C([0, T]; L^2_{\eta}(\R^d))$. Finally, $G$ is a convex set and $\gamma^n_t(x) \in G$ for $(t, \omega, x) \in [0, T] \times \Omega \times \R^d$, so the random variable $\gamma$ actually takes values in $L^2([0, T]; L^2(\R^d; G))$, whence $\gamma \in \bb{G}_{\bb{W}}$.

Next, we will show that under the additional assumptions from \ref{it:linear_convex} and \ref{it:cont_usc}, $\mu$ is the solution to PDE \eqref{eq:rfpe} corresponding to $\gamma$. To achieve this, we pass to the limit in a slightly weaker but nonetheless equivalent formulation of the PDE satisfied by $\mu^n$:
\begin{equation} \label{eq:rfpe_weak}
    \int_0^T \bigl\langle \mu^n_t, \partial_t \varphi_t + \tilde{\L}_{\bb{W}_n}\varphi_t(t, \cdot, \mu^n_t, \gamma^n_t)\bigr\rangle \, \d t = 0
\end{equation}
for all $\varphi \in C^{1, 2}_c([0, T] \times \R^d)$ with initial condition $\mu^n_0 = v_0$. That $\mu_0 = v_0$ follows from the fact that $(\mu^n)_n$ is tight on $H^1([0, T]; H^1_{\eta}(\R^d)^{\ast})$. Next, combining the continuous mapping theorem with Lemma \ref{lem:int_conv} \ref{it:conv_1} under the assumptions from \ref{it:linear_convex} or Lemma \ref{lem:int_conv} \ref{it:conv_3} under the assumptions from \ref{it:cont_usc} allows us to pass to the weak limit in \eqref{eq:rfpe_weak}, so that
\begin{equation*}
    \int_0^T \bigl\langle \mu_t, \partial_t \varphi_t + \tilde{\L}_{\bb{W}}\varphi_t(t, \cdot, \mu_t, \gamma_t)\bigr\rangle \, \d t = 0
\end{equation*}
for all $\varphi \in C^{1, 2}_c([0, T] \times \R^d)$. From that and the continuity of the trajectories of $\mu$, we can easily deduce PDE \eqref{eq:rfpe}.

Lastly, let us derive the semicontinuity statements \eqref{eq:cost_lsc} and \eqref{eq:cost_usc}. Here, we can again make use of Lemma \ref{lem:int_conv}. Under the assumptions from \ref{it:linear_convex} we use Lemma \ref{lem:int_conv} \ref{it:conv_2} while under the assumptions from \ref{it:cont_usc} we apply Lemma \ref{lem:int_conv} \ref{it:conv_4}. Combining these statements with the Portmanteau theorem yields \eqref{eq:cost_lsc} and \eqref{eq:cost_usc}.
\end{proof}

Using Proposition \ref{prop:stability}, we can show that the value $\inf_{\gamma \in \bb{G}_{\bb{W}}} \tilde{J}_{\bb{W}}(\gamma)$ of the control problem is independent of the underlying probabilistic setup $\bb{W}$. We achieve this by showing that any control in $\bb{G}_{\bb{W}}$ can be approximated by a sequence of controls that only depend on the information generated by the noise $W$. To that end, we let $\bb{G}_{\bb{W}}^0$ denote the space of random functions $[0, T] \times \Omega \times \R^d \to G$ which are progressively measurable with respect to the filtration generated by $W$.

\begin{proposition} \label{prop:independent}
Let Assumption \ref{ass:fpe} be satisfied and suppose that for all $(t, x, v) \in [0, T] \times \R^d \times \M(\R^d)$ the map $G \ni g \mapsto b(t, x, v, g)$ is continuous and the map $G \ni g \mapsto f(t, x, v, g)$ is upper semicontinuous. Then $\inf_{\gamma \in \bb{G}_{\bb{W}}^0} \tilde{J}_{\bb{W}}(\gamma) = \inf_{\gamma \in \bb{G}_{\bb{W}}} \tilde{J}_{\bb{W}}(\gamma)$. In particular, $\inf_{\gamma \in \bb{G}_{\bb{W}}} \tilde{J}_{\bb{W}}(\gamma)$ does not depend on the probabilistic setup $\bb{W}$.
\end{proposition}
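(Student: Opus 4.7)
The inclusion $\bb{G}_{\bb{W}}^0 \subseteq \bb{G}_{\bb{W}}$ holds because the natural filtration of $W$ is contained in $\bb{F}$, yielding the trivial inequality $\inf_{\gamma \in \bb{G}_{\bb{W}}^0} \tilde{J}_{\bb{W}}(\gamma) \geq \inf_{\gamma \in \bb{G}_{\bb{W}}} \tilde{J}_{\bb{W}}(\gamma)$. Once the converse is established, the independence of the value from $\bb{W}$ is immediate: $\bb{G}_{\bb{W}}^0$ can be identified with the collection of progressively measurable functions of the Wiener path and $\tilde{J}_{\bb{W}}$ restricted to this set is computed against the Wiener measure on $C([0, T]; \R^{d_W})$, neither of which depends on the underlying setup.

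For the substantive direction, I fix $\gamma \in \bb{G}_{\bb{W}}$ and aim to construct $\tilde{\gamma} \in \bb{G}_{\bb{W}}^0$ satisfying $\tilde{J}_{\bb{W}}(\tilde{\gamma}) \leq \tilde{J}_{\bb{W}}(\gamma)$. Following the randomisation strategy of \cite{djete_mkv_control_limit_2022}, I introduce an auxiliary uniform random variable $U \sim U([0, 1])$ independent of $W$, giving rise to the enlarged setup $\bb{W}' = (\Omega \times [0, 1], \F \otimes \cal{B}([0, 1]), \bb{F}', \pr \otimes \leb, W)$ with $\bb{F}'_t = \F_t \otimes \cal{B}([0, 1])$. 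I then disintegrate the law of the pair $(W, \gamma)$ -- viewed as a random element of $C([0, T]; \R^{d_W}) \times L^2([0, T]; L^2_{\mathrm{loc}}(\R^d; G))$ -- along the marginal of $W$, obtaining a measurable kernel $w \mapsto \kappa_w$. Because $\gamma$ is $\bb{F}$-progressively measurable while the increments of $W$ after any time $s$ are independent of $\F_s$, the restriction $\kappa_w|_{[0, t] \times \R^d}$ depends only on $w|_{[0, t]}$. A noise-outsourcing argument, applied time-slice by time-slice, then produces a measurable map $\Gamma \define [0, T] \times C([0, T]; \R^{d_W}) \times [0, 1] \times \R^d \to G$, progressively measurable in its first two arguments, such that the joint law of $(W, \Gamma(\cdot, W, U, \cdot))$ on $\bb{W}'$ coincides with that of $(W, \gamma)$ on $\bb{W}$.

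Because $\mu$ is a deterministic functional of $(W, \gamma)$ through PDE \eqref{eq:rfpe} by Proposition \ref{prop:rfpe}, equality of these joint laws transfers to the corresponding state processes and yields $\tilde{J}_{\bb{W}'}(\Gamma) = \tilde{J}_{\bb{W}}(\gamma)$. For each fixed $u \in [0, 1]$ the frozen control $\gamma^u_t(\omega, x) := \Gamma(t, W(\omega), u, x)$ lies in $\bb{G}_{\bb{W}}^0$ by construction, and uniqueness of PDE \eqref{eq:rfpe} identifies the conditional state on $\bb{W}'$ as $\mu^{\Gamma}(\omega, u, \cdot) = \mu^{\gamma^u}(\omega, \cdot)$. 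Fubini's theorem therefore yields
\begin{equation*}
    \tilde{J}_{\bb{W}}(\gamma) = \tilde{J}_{\bb{W}'}(\Gamma) = \int_0^1 \tilde{J}_{\bb{W}}(\gamma^u) \, \d u,
\end{equation*}
so some $u^* \in [0, 1]$ must satisfy $\tilde{J}_{\bb{W}}(\gamma^{u^*}) \leq \tilde{J}_{\bb{W}}(\gamma)$, producing the required control $\tilde{\gamma} = \gamma^{u^*} \in \bb{G}_{\bb{W}}^0$.

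The principal technical obstacle is the progressive-measurability aspect of the construction of $\Gamma$: a direct Blackwell--Dubins representation of the law of the full path $(\gamma_t)_{t \in [0, T]}$ given $W$ only produces measurability in the entire Wiener path $w$ rather than its restriction to $[0, t]$. I would handle this by first replacing $\gamma$ by a piecewise-constant-in-time approximation $\gamma^n_t = \gamma_{t_i}$ on $[t_i, t_{i+1})$, applying the outsourcing lemma at each grid time with a fresh uniform $U_i$ (which is possible because $\kappa_w$ respects the causal structure), assembling the $U_i$ into a single $U$ via a Borel isomorphism $[0, 1]^{\bb{N}} \cong [0, 1]$, and then invoking the stability statement of Proposition \ref{prop:stability}~\ref{it:cont_usc} -- for which the continuity of $b_1$ in $g$ and upper semicontinuity of $f_1$ in $g$ are essential -- to control the passage to the limit $n \to \infty$.
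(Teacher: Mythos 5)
Your proposal takes a genuinely different route for the final reduction, and it appears to be correct; let me compare. Both arguments share the reduction to piecewise-constant-in-time controls (via \cite[Lemma 4.4]{liptser_statistics_rp_1977} plus Proposition~\ref{prop:stability}~\ref{it:cont_usc}) and the noise-outsourcing representation of the control given the Brownian path. They diverge in how the auxiliary randomness is then disposed of. The paper encodes the auxiliary randomness into a shrinking initial segment $W_{s_k}$ of the \emph{same} Brownian motion, reparametrises the remainder as $W^k$, observes $(\gamma^k, W^k) \sim (\gamma, W)$ with $W^k \to W$ uniformly a.s., and closes via Proposition~\ref{prop:stability}~\ref{it:cont_usc} a second time. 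You instead adjoin an independent uniform $U$, identify the cost as the average $\int_0^1 \tilde{J}_{\bb{W}}(\gamma^u)\,\d u$ over frozen controls $\gamma^u \in \bb{G}^0_{\bb{W}}$ (using that $\mu$ is a pathwise, deterministic functional of $(W, \gamma)$ and that the cost depends only on the joint law of $(W,\gamma)$), and select a favourable $u^\ast$ by a mean-value argument. Your Fubini route is cleaner: it avoids the time-reparametrisation of $W$ and the second stability passage, and the verification that $\bb{W}'$ is an admissible setup is immediate since $U \perp W$ implies $W$ remains an $\bb{F}'$-Brownian motion. It does require joint measurability of $(u,\omega) \mapsto \mu^{\gamma^u}(\omega)$ to justify Fubini, which is unproblematic given a measurable solution map.

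The one place where you are materially vaguer than the paper is the progressive-measurability of the outsourcing map $\Gamma$ in its $(t, w)$ arguments. You correctly identify this as the crux, and you observe that a Blackwell--Dubins representation initially depends on the full path $w$. The paper's resolution is a non-trivial backward induction: represent $g_i = \Phi_i(B_{\cdot\wedge t_i}, B_{\cdot\vee t_i}-B_{t_i}, \xi_1,\dots,\xi_i)$, then exploit that $g_i$ is $\F_{t_i}$-measurable while $B_{\cdot\vee t_i}-B_{t_i}$ is independent of $\F_{t_i}$ to replace the future increment by an external copy $\zeta_i$, one slice at a time from $i=n-1$ down. Your phrase ``applying the outsourcing lemma at each grid time with a fresh uniform $U_i$ (which is possible because $\kappa_w$ respects the causal structure)'' gestures at this but does not carry it: you would need to state explicitly that the $\bb{F}$-adaptedness of $\gamma$ forces the conditional law of $\gamma_{t_i}$ given $W$ to factor through $W_{\cdot\wedge t_i}$, and then to perform the inductive replacement to convert the representation $\Phi_i(W, U_1,\dots,U_i)$ into one depending only on $(W_{\cdot\wedge t_i}, U_1,\dots,U_i)$ and the external $\zeta_i$'s. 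With that step made precise, your argument goes through and is a legitimate, arguably simpler, alternative to the one in the paper.
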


\begin{proof}
First, we reduce to the case of piecewise constant controls. Let $\gamma \in \bb{G}_{\bb{W}}$. By \cite[Lemma 4.4]{liptser_statistics_rp_1977} there exists a sequence of partitions $0 = t^n_0 < t^n_1 < \dots < t^n_n = T$ for $n \geq 1$ as well as controls $(\gamma^n)_n$, such that $\gamma^n_0$ is a nonrandom element in $\cal{G}$, the space of measurable functions $\R^d \to G$, $\gamma^n$ is constant on the interval $[t^n_i, t^n_{i + 1})$ for $i = 0$,~\ldots, $n - 1$, and for any $R > 0$ we have $\ev \int_0^T \lVert \bf{1}_{B_R}(\gamma^n_t - \gamma_t)\rVert_{L^2}^2 \, \d t \to 0$ as $n \to \infty$. Then Proposition \ref{prop:stability} \ref{it:cont_usc} implies that $\limsup_{n \to \infty} \tilde{J}_{\bb{W}}(\gamma^n) \leq \tilde{J}_{\bb{W}}(\gamma)$ as $n \to \infty$. Consequently, minimising over the set of controls which are piecewise constant in time leads to the same value as minimising over all of $\bb{G}_{\bb{W}}$. Hence, from now on we may assume that $\gamma$ is piecewise constant itself, i.e.\@ there exists a partition $0 = t_0 < t_1 < \dots < t_n = T$ such that $\gamma_0$ is a nonrandom element in $\cal{G}$ and $\gamma$ is constant on the interval $[t_i, t_{i + 1})$ for $i = 0$,~\ldots, $n - 1$. 

The next step is to remove the randomness in $\gamma$ beyond that provided by the noise $W$, replacing it with the randomness of an increasingly small initial segment of the Brownian motion $W$. To achieve this, we must first separate the stochasticity in $\gamma$ originating from $W$ and from the remaining filtration $\bb{F}$. By \cite[Lemma 1.3]{kurtz_weak_strong_2014}, we can inductively find maps $\Phi_i \define C([0, T]; \R^{d_W}) \times C([0, T]; \R^{d_W}) \times [0, 1]^i \to \cal{G}$, for $i = 1$,~\ldots, $n - 1$, such that if $B$ is a $d_W$-dimensional Brownian motion and $\xi_1$,~\ldots, $\xi_{n - 1}$ are uniformly distributed on $[0, 1]$ (all defined on some sufficiently rich probability space $(\Omega', \F', \pr')$), such that $(B, \xi_1, \dots, \xi_{n - 1})$ forms an independent family, then defining
\begin{equation*}
    g_i = \Phi_i\bigl(B_{\cdot \land t_i}, B_{\cdot \lor t_i} - B_{t_i}, \xi_1, \dots, \xi_i\bigr),
\end{equation*}
we have $(B, g_1, \dots, g_{n - 1}) \sim (W, \gamma_{t_1}, \dots, \gamma_{t_{n - 1}})$. Note that since $\gamma$ is adapted to $\bb{F}$ and $W$ is an $\bb{F}$-Brownian motion, it follows that $B_{\cdot \lor t_i} - B_{t_i}$ is independent of $g_i$. We can use this fact to successively replace the increments $B_{\cdot \lor t_i} - B_{t_i}$ in $g_i$ by some external randomness. Indeed, let $\zeta_1$,~\ldots, $\zeta_{n - 1}$ be random variables on $(\Omega', \F', \pr')$, such that $\zeta_i \sim B_{\cdot \lor t_i} - B_{t_i}$ and $(B, \xi_1, \dots, \xi_{n - 1}, \zeta_1, \dots, \zeta_{n - 1})$ forms an independent family. We set
\begin{equation*}
    g'_i = \Phi_i\bigl(B_{\cdot \land t_i}, \zeta_i, \xi_1, \dots, \xi_i\bigr)
\end{equation*}
and claim that $(B, g_1, \dots, g_{n - 1}) \sim (B, g_1, \dots, g_{i - 1}, g'_i, \dots, g'_{n - 1})$ for any $i \in \{1, \dots, n\}$. This is obvious for $i = n$, so for the sake of induction let us assume the claim holds for some $i \leq n$. Then, since $g_{i - 1}$ is independent of $B_{\cdot \lor t_{i - 1}} - B_{t_{i - 1}}$, we can replace the appearance of $B_{\cdot \lor t_{i - 1}} - B_{t_{i - 1}}$ in $g_{i - 1}$ by $\zeta_{i - 1}$ without changing the law of $(B, g_1, \dots, g_{i - 1}, g'_i, \dots, g'_{n - 1})$. However, after this replacement, $g_{i - 1}$ becomes $g'_{i - 1}$, which concludes the induction step and, therefore, proves the claim. Evaluating the claim for $i = 1$ implies that
\begin{equation*}
    (B, g'_1, \dots, g'_{n - 1}) \sim (B, g_1, \dots, g_{n - 1}) \sim (W, \gamma_{t_1}, \dots, \gamma_{t_{n - 1}}).
\end{equation*}

We now want to express all the randomness in the vector $(B, g'_1, \dots, g'_{n - 1})$ through $W$. Let us set $s_k = t_1/k$ for $k \geq 1$ and then define the $d_W$-dimensional Brownian motion $W^k$ by 
\begin{equation*}
    W^k_t =
    \begin{cases}
         \frac{\sqrt{k}}{\sqrt{k - 1}} (W_{s_k + \frac{k - 1}{k} t} - W_{s_k}) &\text{if } t \in [0, t_1], \\
         W_t - W_{t_1} + \frac{\sqrt{k}}{\sqrt{k - 1}} (W_{t_1} - W_{s_k}) &\text{if } t \in [t_1, T].
    \end{cases}
\end{equation*}
Next, we choose a map $\Psi \define \R \to [0, 1]^{n - 1} \times C([0, T]; \R^{d_W})^{n - 1}$ such that
\begin{equation*}
    (\xi^k_1, \dots, \xi^k_{n - 1}, \zeta^k_1, \dots, \zeta^k_{n - 1}) = \Psi\bigl(\tfrac{1}{\sqrt{s_k}}W_{s_k}\bigr) \sim (\xi_1, \dots, \xi_{n - 1}, \zeta_1, \dots, \zeta_{n - 1}).
\end{equation*}
Finally, we define controls $\gamma^k \in \bb{G}$ by $\gamma^k_t = g_0$ if $t \in [0, t_1)$ and $\gamma^k_t = \Phi_i(W^k_{\cdot \land t_i}, \zeta^k_i, \xi^k_1, \dots, \xi^k_i)$ for $t \in [t_i, t_{i + 1})$ with $i \in \{1, \dots, n - 1\}$, so that $(\gamma^k, W^k) \sim (\gamma, W)$. In particular, it holds that the sequence $(\gamma^k, W)_k$ converges weakly to $(\gamma, W)$ on $L^2([0, T]; L_{\text{loc}}^2(\R^d; \R^{d_G})) \times C([0, T]; \R^{d_W})$, so using Proposition \ref{prop:stability} \ref{it:cont_usc} once more we conclude $\limsup_{k \to \infty} \tilde{J}_{\bb{W}}(\gamma^k) \leq \tilde{J}_{\bb{W}}(\gamma)$. Now, for any $\epsilon > 0$ we may choose $\gamma$ such that $\tilde{J}_{\bb{W}}(\gamma) \leq \inf_{\gamma' \in \bb{G}_{\bb{W}}}\tilde{J}_{\bb{W}}(\gamma') + \epsilon$, whence
\begin{equation*}
    \inf_{\gamma' \in \bb{G}_{\bb{W}}^0}\tilde{J}_{\bb{W}}(\gamma') \leq \limsup_{k \to \infty} \tilde{J}_{\bb{W}}(\gamma^k) \leq \inf_{\gamma' \in \bb{G}_{\bb{W}}}\tilde{J}_{\bb{W}}(\gamma') + \epsilon.
\end{equation*}
Letting $\epsilon \to 0$ gives the desired identity.
\end{proof}

Let us now turn to the existence of optimal controls and the proof of Theorem \ref{thm:optimal}. 

\begin{proof}[Proof of Theorem \ref{thm:optimal}]
The first statement of Theorem \ref{thm:optimal} follows immediately from Proposition \ref{prop:independent}. Next, let $\bb{W}$ be any probabilistic setup and fix a sequence of controls $(\gamma^n)_n$ such that $\lim_{n \to \infty} \tilde{J}_{\bb{W}}(\gamma^n) = \inf_{\gamma \in \bb{G}}\tilde{J}_{\bb{W}}(\gamma) = V$ and denote the corresponding solutions to PDE \eqref{eq:rfpe} by $\mu^n$. By Proposition \ref{prop:stability}, the sequence $(\mu^n, \gamma^n, W)_n$ is tight on the space $L^2([0, T]; L_{\eta, \textup{loc}}^2(\R^d)) \times (L^2([0, T]; L_{\textup{loc}}^2(\R^d; \R^{d_G})), \textup{weak}) \times C([0, T]; \R^{d_W})$. Any subsequential limit $(\mu, \gamma, W')$ can be realised on some probabilistic setup $\bb{W}'$ such that $W'$ is the Brownian motion underlying $\bb{W}'$ and $\gamma \in \bb{G}_{\bb{W}'}$. Moreover, by Item \ref{it:linear_convex} of Proposition \ref{prop:stability}, $\mu$ is a solution to PDE \eqref{eq:rfpe} corresponding to the control $\gamma$ and
\begin{equation*}
    V = \liminf_{k \to \infty} \tilde{J}_{\bb{W}}(\gamma^{n_k}) \geq \tilde{J}_{\bb{W}'}(\gamma) \geq V
\end{equation*}
along the convergent subsequence $(n_k)_k$. Thus $\gamma$ is an optimal control on $\bb{W}'$.
\end{proof}

\subsection{The Necessary Stochastic Maximum Principle} \label{sec:smp}

The objective of this subsection is to establish the necessary SMP, Theorem \ref{thm:nsmp}. Throughout the subsection we shall assume that Assumptions \ref{ass:fpe} and \ref{ass:smp} are in place. 

The main step in the proof the necessary SMP is to derive a suitable expression of the Gateaux derivative $\frac{\d}{\d \epsilon} \tilde{J}(\gamma + \epsilon h)\vert_{\epsilon = 0}$ of the cost functional $\tilde{J}$ at a control $\gamma \in \bb{G}$ in some direction $h$ in terms of the adjoint process. The necessary condition then follows by inserting the convexity of the Hamiltonian $H(t, x, v, p, g) := \tilde{b}(t, x, v, g) \cdot p + \tilde{f}(t, x, v, g)$ in the control argument, guaranteed by Assumption \ref{ass:smp} \ref{it:convexity}, into this expression. To compute the Gateaux derivative, we have to study the variation of $\mu$ under small perturbations $\epsilon h$ of the control $\gamma$. Let $h \define [0, T] \times \Omega \times \R^d \to \R^{d_G}$ be a bounded $\bb{F}$-progressively measurable random function such that $\gamma^{\epsilon} = \gamma + \epsilon h$ maps into $G$ for all $\epsilon \geq 0$. Denote the unique solution to PDE \eqref{eq:rfpe} with control $\gamma^{\epsilon} \in \bb{G}$ by $\mu^{\epsilon}$ and define $\cal{V}^{\epsilon} = \frac{1}{\epsilon} (\mu^{\epsilon} - \mu)$. We want to show that $\cal{V}^{\epsilon}$ converges in $C_{\bb{F}}^2([0, T]; L_{\eta}^2(\R^d)) \cap L_{\bb{F}}^2([0, T]; H_{\eta}^1(\R^d))$ to the solution of the random linear PDE
\begin{align} \label{eq:spde_variation}
\begin{split}
    \d \langle \cal{V}_t, \varphi\rangle &= \bigl\langle \cal{V}_t, \tilde{\L}\varphi(t, \cdot, \mu_t, \gamma_t)\big\rangle \, \d t + \bigl\langle \mu_t,  h_t^{\top} \nabla_g \tilde{b}(t, \cdot, \mu_t) \nabla \varphi\bigr\rangle \, \d t \\
    &\ \ \ + \biggl(\int_{\R^d} \Bigl\langle \mu_t, D\tilde{\lambda}(t, \cdot, \mu_t)(x) \varphi + D\tilde{b}(t, \cdot, \mu_t, \gamma_t)(x) \cdot \nabla \varphi \Bigr\rangle \cal{V}_t(x) \, \d x\biggr) \, \d t
\end{split}
\end{align}
for $\varphi \in C_c^2(\R^d)$ with initial condition $\cal{V}_0 = 0$. Note that by Assumption \ref{ass:smp} \ref{it:convexity}, the function $g \mapsto \tilde{b}(t, x, v, g)$ is affine, so $\nabla_g \tilde{b}(t, x, v, g)$ does not depend on $g$ and, therefore, we write $\nabla_g \tilde{b}(t, x, v)$ instead of $\nabla_g \tilde{b}(t, x, v, g)$. We understand $\nabla_g \tilde{b}$ as a function with values in $\R^{d_G \times d}$.

\begin{proposition} \label{prop:variation}
Let Assumptions \ref{ass:fpe} and \ref{ass:smp} be satisfied. Then $(\cal{V}^{\epsilon})_{\epsilon > 0}$ converges in $C_{\bb{F}}^2([0, T]; L_{\eta}^2(\R^d)) \cap L_{\bb{F}}^2([0, T]; H_{\eta}^1(\R^d))$ to the unique solution $\cal{V}$ of PDE \eqref{eq:spde_variation}.
\end{proposition}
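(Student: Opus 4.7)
The proof proceeds in four steps: well-posedness of the linear PDE \eqref{eq:spde_variation}, preliminary stability of $\mu^\epsilon \to \mu$, a uniform a priori bound on $\mathcal{V}^\epsilon$, and finally strong convergence $\mathcal{V}^\epsilon \to \mathcal{V}$ via an estimate on the difference $\Delta^\epsilon = \mathcal{V}^\epsilon - \mathcal{V}$.

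For Step 1, PDE \eqref{eq:spde_variation} is linear in $\mathcal{V}$ with the same principal part $\tilde{\mathcal{L}}$ as \eqref{eq:rfpe} plus a bounded, nonlocal zeroth-order perturbation (the integral involving $D\tilde{\lambda}$ and $D\tilde{b}_0$) and a bounded source $\langle \mu_t, h_t^\top \nabla_g \tilde{b}_1 \nabla\varphi\rangle$. Well-posedness in $C^2_{\bb{F}}([0,T]; L^2_\eta(\R^d)) \cap L^2_{\bb{F}}([0,T]; H^1_\eta(\R^d))$ follows by repeating the fixed-point argument in the proof of Proposition \ref{prop:rfpe}: the bounded source is absorbed via \eqref{eq:sfpe_estimate}, and the nonlocal term is turned into a contraction by equipping $L^2_{\bb{F}}([0,T]; L^2_\eta(\R^d))$ with the weighted norm $\mu \mapsto \ev\int_0^T e^{-2C_1 t}\lVert \mu_t\rVert_\eta^2 \, \d t$ for $C_1$ large. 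For Step 2, taking the difference of the PDEs solved by $\mu^\epsilon$ and $\mu$, using the Lipschitz estimate from Assumption \ref{ass:fpe}\ref{it:continuity_fpe} and the bound $d_0(v,v') \leq \lVert e^{-\eta/2}\rVert_{L^2} \lVert v - v'\rVert_\eta$, the same Gr\"onwall-type argument as in Step 2 of the proof of Proposition \ref{prop:rfpe} yields $\lVert \mu^\epsilon - \mu\rVert_{C([0,T]; L^2_\eta(\R^d))} \leq C\epsilon \lVert h\rVert_\infty$.

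For Step 3, by writing out $\mu^\epsilon_t(\varphi) - \mu_t(\varphi)$, dividing by $\epsilon$, and expanding each nonlinearity, $\mathcal{V}^\epsilon$ satisfies
\begin{align*}
    \d \langle \mathcal{V}^\epsilon_t, \varphi\rangle &= \bigl\langle \mathcal{V}^\epsilon_t, \tilde{\mathcal{L}}\varphi(t,\cdot,\mu_t,\gamma_t)\bigr\rangle \, \d t + \bigl\langle \mu^\epsilon_t, h_t^\top \nabla_g \tilde{b}_1(t,\cdot) \nabla\varphi\bigr\rangle \, \d t \\
    &\ \ \ + \biggl(\int_0^1 \!\! \int_{\R^d} \Bigl\langle \mu^\epsilon_t, D\tilde{\lambda}^\theta_t(\cdot)(x)\varphi + D\tilde{b}^\theta_{0,t}(\cdot)(x) \cdot \nabla\varphi\Bigr\rangle \mathcal{V}^\epsilon_t(x) \, \d x \, \d \theta\biggr) \d t,
\end{align*}
where the $b_1$-contribution uses Assumption \ref{ass:smp}\ref{it:convexity} (so $\tilde{b}_1(\gamma^\epsilon) - \tilde{b}_1(\gamma) = \epsilon \nabla_g \tilde{b}_1 h$ exactly), and the $\tilde{\lambda}$, $\tilde{b}_0$ contributions invoke the linear functional derivative of Assumption \ref{ass:smp}\ref{it:lfd} along the interpolation $\theta \mu^\epsilon_t + (1-\theta)\mu_t$, abbreviated by the superscript $\theta$. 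Applying the weighted energy estimate \eqref{eq:sfpe_estimate} to this linear PDE with random coefficients, together with the uniform bounds on $D\tilde{\lambda}$, $D\tilde{b}_0$, $\nabla_g \tilde{b}_1$, $h$, and $\sup_\epsilon \lVert \mu^\epsilon\rVert_{C([0,T];L^2_\eta)}$, yields $\sup_\epsilon \lVert \mathcal{V}^\epsilon\rVert_{C^2_{\bb{F}}([0,T];L^2_\eta) \cap L^2_{\bb{F}}([0,T];H^1_\eta)} < \infty$ by Gr\"onwall.

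For Step 4, the difference $\Delta^\epsilon = \mathcal{V}^\epsilon - \mathcal{V}$ solves a PDE of the same form as \eqref{eq:spde_variation} with $\mathcal{V}$ replaced by $\Delta^\epsilon$, plus source terms $A^\epsilon_t(\varphi) - A_t(\varphi)$ where
\begin{align*}
    A^\epsilon_t(\varphi) - A_t(\varphi) &= \bigl\langle \mu^\epsilon_t - \mu_t, h_t^\top \nabla_g \tilde{b}_1 \nabla\varphi\bigr\rangle \\
    &\ \ \ + \int_0^1 \!\! \int \Bigl(\bigl\langle \mu^\epsilon_t, D\tilde{\lambda}^\theta_t(\cdot)(x)\varphi + D\tilde{b}^\theta_{0,t}(\cdot)(x) \cdot \nabla\varphi\bigr\rangle \\
    &\qquad - \bigl\langle \mu_t, D\tilde{\lambda}(t,\cdot,\mu_t)(x)\varphi + D\tilde{b}_0(t,\cdot,\mu_t)(x) \cdot \nabla\varphi\bigr\rangle\Bigr) \mathcal{V}_t(x) \, \d x \, \d \theta.
\end{align*}
Step 2 shows the first line tends to zero in $L^2([0,T]; H^1_\eta(\R^d)^\ast)$. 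For the second line, Step 2 and dominated convergence combined with the pointwise continuity of $v \mapsto D\tilde{\lambda}(t,x,v)(y)$ and $v \mapsto D\tilde{b}_0(t,x,v)(y)$ from Assumption \ref{ass:smp}\ref{it:lfd} and their uniform boundedness imply convergence to zero as $\epsilon \to 0$. Applying the energy estimate \eqref{eq:sfpe_estimate} to $\Delta^\epsilon$ and invoking Gr\"onwall then yields the desired convergence in $C^2_{\bb{F}}([0,T]; L^2_\eta(\R^d)) \cap L^2_{\bb{F}}([0,T]; H^1_\eta(\R^d))$. The main obstacle is the second line above: the integrand is evaluated at the shifting measure $\theta\mu^\epsilon_t + (1-\theta)\mu_t$ rather than $\mu_t$, so one must interchange the limit with the outer $x$-integration against $\mathcal{V}_t$ and the inner inner product against $\mu^\epsilon_t$; this is handled by the uniform bound on $\mathcal{V} \in L^2_\eta$ together with the uniform boundedness of the linear functional derivatives.
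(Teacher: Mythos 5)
Your proposal is correct, but it takes a genuinely different route from the paper. The paper does not separately prove well-posedness of the limit PDE \eqref{eq:spde_variation}; instead it shows $(\cal{V}^{\epsilon})_{\epsilon > 0}$ is a Cauchy family in $C_{\bb{F}}^2([0,T];L^2_{\eta}) \cap L_{\bb{F}}^2([0,T];H^1_{\eta})$ by estimating $\Delta^{\epsilon,\delta} = \cal{V}^{\epsilon} - \cal{V}^{\delta}$, and then identifies the limit as the (unique) solution of \eqref{eq:spde_variation} by passing to the limit in the PDE. You instead front-load a well-posedness argument for \eqref{eq:spde_variation} (your Step 1), prove the stability $\|\mu^\epsilon - \mu\|_{C([0,T];L^2_\eta)} \le C\epsilon$ (your Step 2, which in the paper is a byproduct of the uniform bound on $\cal{V}^{\epsilon}$), and then estimate $\cal{V}^{\epsilon} - \cal{V}$ directly. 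Both are valid. Your approach is more modular; the paper's is self-contained and avoids a separate existence argument, at the cost of having to run the Cauchy estimate with both indices free. Your decomposition of the PDE for $\cal{V}^\epsilon$ is also the mirror image of the paper's: you put the fixed pair $(\mu_t,\gamma_t)$ in the principal part $\tilde{\L}\varphi(t,\cdot,\mu_t,\gamma_t)$ and the perturbed $\mu^\epsilon_t$ in the source terms, whereas the paper puts $(\mu^\epsilon_t,\gamma^\epsilon_t)$ in the principal part and $\mu_t$ in the source. Both are algebraically correct and lead to the same kind of energy estimate.

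One minor imprecision in your Step 4: when you say $\Delta^\epsilon$ "solves a PDE of the same form as \eqref{eq:spde_variation} with $\cal{V}$ replaced by $\Delta^\epsilon$", the nonlocal kernel acting on $\Delta^\epsilon$ is actually $\bigl\langle \mu^{\epsilon}_t, D\tilde{\lambda}\bigl(t,\cdot,\theta\mu^{\epsilon}_t + (1-\theta)\mu_t\bigr)(x)\varphi + \cdots\bigr\rangle$ (averaged over $\theta$), not the limiting kernel $\bigl\langle \mu_t, D\tilde{\lambda}(t,\cdot,\mu_t)(x)\varphi + \cdots\bigr\rangle$ that appears in \eqref{eq:spde_variation}. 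This does not harm the argument — since $D\tilde{\lambda}$, $D\tilde{b}_0$ are uniformly bounded and $\mu^\epsilon_t$ is uniformly bounded in $L^2_\eta$ (hence in $L^1$), the kernel-on-$\Delta^\epsilon$ term is absorbed by Gr\"onwall regardless of $\epsilon$ — but the statement should make this explicit rather than asserting the form is literally the same.
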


\begin{proof}
We will first show that the family $(\cal{V}^{\epsilon > 0})_{\epsilon}$ is Cauchy in the space $C_{\bb{F}}^2([0, T]; L_{\eta}^2(\R^d)) \cap L_{\bb{F}}^2([0, T]; H_{\eta}^1(\R^d))$. It is then straightforward to see that its limit $\cal{V}$ satisfies PDE \eqref{eq:spde_variation}. To prove that $(\cal{V}^{\epsilon})_{\epsilon > 0}$ is a Cauchy sequence, we derive a PDE for $\cal{V}^{\epsilon}$. Then applying the estimate \eqref{eq:sfpe_estimate} to this PDE, we establish the boundedness of the family $(\cal{V}^{\epsilon})_{\epsilon > 0}$ from which we subsequently deduce the Cauchy property.

We set $\mathfrak{l}^{\epsilon}_t(x) = \tilde{\lambda}(t, x, \mu^{\epsilon}_t)$ and $\mathfrak{b}^{\epsilon}_t(x) = \tilde{b}\bigl(t, x, \mu^{\epsilon}_t, \gamma^{\epsilon}_t(x)\bigr)$ for $(t, \omega, x) \in [0, T] \times \Omega \times \R^d$ and $\epsilon \geq 0$. Our first objective is to derive a PDE for $\cal{V}^{\epsilon}$. It holds for $\varphi \in C^2_c(\R^d)$ that
\begin{equation} \label{eq:differential_var}
    \d \langle \cal{V}^{\epsilon}, \varphi\rangle = \bigl\langle \cal{V}^{\epsilon}_t, \tilde{\L}\varphi(t, \cdot, \mu^{\epsilon}_t, \gamma^{\epsilon}_t)\big\rangle \, \d t + \frac{1}{\epsilon}\Bigl\langle \mu_t, (\mathfrak{l}^{\epsilon}_t - \mathfrak{l}^0_t)\varphi + (\mathfrak{b}^{\epsilon}_t - \mathfrak{b}^0_t) \cdot \nabla \varphi \Bigr\rangle \, \d t.
\end{equation}
Let us compute the last term on the right-hand side. We can decompose the difference $\mathfrak{b}^{\epsilon}_t(x) - \mathfrak{b}^0_t(x)$ as
\begin{equation*}
    \bigl(\mathfrak{b}^{\epsilon}_t(x) - \tilde{b}\bigl(t, x, \mu^{\epsilon}_t, \gamma_t(x)\bigr)\bigr) + \bigl(\tilde{b}\bigl(t, x, \mu^{\epsilon}_t, \gamma_t(x)\bigr) - \mathfrak{b}^0_t(x)\bigr).
\end{equation*}
The function $\tilde{b}$ is linear in the control argument by Assumption \ref{ass:smp} \ref{it:convexity}, so it holds that $\mathfrak{b}^{\epsilon}_t(x) - \tilde{b}(t, x, \mu^{\epsilon}_t, \gamma_t(x)) = \epsilon (h_t(x))^{\top}\nabla_g \tilde{b}(t, x, \mu^{\epsilon}_t)$. Next, by Assumption \ref{ass:smp} \ref{it:lfd}, for $(t, x, g) \in [0, T] \times \R^d \times G$ the maps $\M(\R^d) \ni v \mapsto \tilde{\lambda}(t, x, v)$ and $\M(\R^d) \ni v \mapsto \tilde{b}(t, x, v, g)$ have the linear functional derivatives $D\tilde{\lambda}$ and $D\tilde{b}$ given by
\begin{align*}
    D\tilde{\lambda}(t, x, v)(y) &= D\lambda\bigl(t, x + \sigma_0 W_t, (\id + \sigma_0 W_t)^{\#}v\bigr)(y + \sigma_0 W_t), \\
    D\tilde{b}(t, x, v, g)(y) &= Db\bigl(t, x + \sigma_0 W_t, (\id + \sigma_0 W_t)^{\#}v, g\bigr)(y + \sigma_0 W_t)
\end{align*}
for $(t, x, v, g, y) \in [0, T] \times \R^d \times \M(\R^d) \times G \times \R^d \to \R$. Thus, we obtain that
\begin{align} \label{eq:derivative_in_m_lambda}
    \tilde{\lambda}(t, x, \mu^{\epsilon}_t) - \tilde{\lambda}(t, x, \mu_t) &= \int_0^1 \Bigl\langle \mu^{\epsilon}_t - \mu_t, D\tilde{\lambda}\bigl(t, x, \theta \mu^{\epsilon}_t + (1 - \theta)\mu_t\bigr)\Bigr\rangle \, \d \theta \notag \\
    &= \int_0^1 \Bigl\langle \epsilon \cal{V}^{\epsilon}_t, D\tilde{\lambda}\bigl(t, x, \theta \mu^{\epsilon}_t + (1 - \theta)\mu_t\bigr)\Bigr\rangle \, \d \theta
\end{align}
and, similarly,
\begin{equation} \label{eq:derivative_in_m_b}
    \tilde{b}\bigl(t, x, \mu^{\epsilon}_t, \gamma_t(x)\bigr) - \tilde{b}\bigl(t, x, \mu_t, \gamma_t(x)\bigr) = \int_0^1 \Bigl\langle \epsilon \cal{V}^{\epsilon}_t, D\tilde{b}\bigl(t, x, \theta \mu^{\epsilon}_t + (1 - \theta)\mu_t, \gamma_t(x)\bigr)\Bigr\rangle \, \d \theta.
\end{equation}
We set $\Lambda^{\epsilon, \theta}_t(x, y) = D\tilde{\lambda}\bigl(t, x, \theta \mu^{\epsilon}_t + (1 - \theta)\mu_t\bigr)(y)$ and $B^{\epsilon, \theta}_t(x, y) = D\tilde{b}\bigl(t, x, \theta \mu^{\epsilon}_t + (1 - \theta)\mu_t, \gamma_t(x)\bigr)(y)$ for $(t, \omega, x, y) \in [0, T] \times \Omega \times \R^d \times \R^d$. Then, we insert the expressions $\mathfrak{b}^{\epsilon}_t(x) - \tilde{b}(t, x, \mu^{\epsilon}_t, \gamma_t(x)) = \epsilon (h_t(x))^{\top}\nabla_g \tilde{b}(t, x, \mu^{\epsilon}_t)$, \eqref{eq:derivative_in_m_lambda}, and \eqref{eq:derivative_in_m_b} into Equation \eqref{eq:differential_var}, which yields
\begin{align} \label{eq:var_eps_spde}
\begin{split}
    \d \langle \cal{V}^{\epsilon}_t, \varphi\rangle = &\bigl\langle \cal{V}^{\epsilon}_t, \tilde{\L}\varphi(t, \cdot, \mu^{\epsilon}_t, \gamma^{\epsilon}_t)\big\rangle \, \d t + \bigl\langle \mu_t, h_t^{\top}\nabla_g \tilde{b}(t, x, \mu^{\epsilon}_t) \nabla \varphi\bigr\rangle \, \d t \\
    &+ \biggl(\int_0^1\biggl(\int_{\R^d} \Bigl\langle \mu_t, \Lambda^{\epsilon, \theta}_t(\cdot, x)\varphi + B^{\epsilon, \theta}_t(\cdot, x) \cdot \nabla \varphi\Bigr\rangle \cal{V}^{\epsilon}_t(x)\, \d x\biggr) \d \theta\biggr) \, \d t
\end{split}
\end{align}
for $\varphi \in C^2_c(\R^d)$. % We will show that the family $(\cal{V}^{\epsilon})_{\epsilon > 0}$ is Cauchy in $C_{\bb{F}}^2([0, T]; L_{\eta}^2(\R^d)) \cap L_{\bb{F}}^2([0, T]; H_{\eta}^1(\R^d))$. 

Next, we derive a uniform bound on $\sup_{0 \leq t \leq T} \lVert \cal{V}^{\epsilon}_t\rVert_{\eta}^2$. Clearly, the PDE \eqref{eq:var_eps_spde} has the same form as PDE \eqref{eq:sfpe_random} with $\alpha$, $\beta$, $h^0$, and $h^1$ replaced by $\mathfrak{l}^{\epsilon}$, $\mathfrak{b}^{\epsilon}$, $\mathfrak{h}^{\epsilon, 0}$, and $\mathfrak{h}^{\epsilon, 1}$, respectively, where the latter two are defined as
\begin{align*}
    \mathfrak{h}^{\epsilon, 0}_t(x) &= \mu_t(x) \int_0^1 \bigl\langle \Lambda^{\epsilon, \theta}_t(x, \cdot), \cal{V}^{\epsilon}_t\bigr\rangle \, \d \theta, \\
    \mathfrak{h}^{\epsilon, 1}_t(x) &= \mu_t(x) (h_t(x))^{\top}\nabla_g \tilde{b}(t, x, \mu^{\epsilon}_t) + \mu_t(x) \int_0^1 \bigl\langle B^{\epsilon, \theta}_t(x, \cdot), \cal{V}^{\epsilon}_t\bigr\rangle \, \d \theta.
\end{align*}
We will now apply the estimate \eqref{eq:sfpe_estimate} to $\cal{V}^{\epsilon}$ and then further bound the terms appearing on the right-hand side of this estimate. Since $\bigl\lvert \bigl\langle \Lambda^{\epsilon, \theta}_t(x, \cdot), \cal{V}^{\epsilon}_t\bigr\rangle\bigr\rvert \leq C_{\lambda} \lVert e^{-\eta/2}\rVert_{L^2} \lVert \cal{V}^{\epsilon}_t\rVert_{\eta}$ and $\bigl\lvert \bigl\langle B^{\epsilon, \theta}_t(x, \cdot), \cal{V}^{\epsilon}_t\bigr\rangle\bigr\rvert \leq C_{b} \lVert e^{-\eta/2}\rVert_{L^2} \lVert \cal{V}^{\epsilon}_t\rVert_{\eta}$, we get $\lvert \cal{V}^{\epsilon}_t(x) \mathfrak{l}^{\epsilon}_t(x)\rvert + \lvert \mathfrak{h}^{\epsilon, 0}_t(x)\rvert \leq C_{\lambda} \lvert \cal{V}^{\epsilon}_t(x)\rvert + C_{\lambda}\lVert e^{-\eta/2}\rVert_{L^2} \lVert \cal{V}^{\epsilon}_t\rVert_{\eta} \mu_t(x)$ and
\begin{equation*}
    \lvert \cal{V}^{\epsilon}_t(x) \mathfrak{b}^{\epsilon}_t(x)\rvert + \lvert \mathfrak{h}^{\epsilon, 1}_t(x)\rvert \leq C_b \lvert \cal{V}^{\epsilon}_t(x)\rvert + C_{b, G} \bigl(1 + \lVert e^{-\eta/2}\rVert_{L^2} \lVert \cal{V}^{\epsilon}_t\rVert_{\eta}\bigr) \mu_t(x).
\end{equation*}
Inserting this into the bound provided by \eqref{eq:sfpe_estimate} yields
\begin{equation} \label{eq:var_eps_est}
    \sup_{0 \leq s \leq t} \lVert \cal{V}^{\epsilon}_s\rVert_{\eta}^2 + \int_0^t \lVert \nabla \cal{V}^{\epsilon}_s\rVert_{\eta}^2 \, \d s \leq C \int_0^t \Bigl(1 + \lVert \cal{V}^{\epsilon}_s\rVert_{\eta}^2 + (1 + \lVert \cal{V}^{\epsilon}_s\rVert_{\eta}) \lVert \nabla \cal{V}^{\epsilon}_s\rVert_{\eta}\Bigr) \, \d s
\end{equation}
for a constant $C > 0$ depending on $\mu$, $\eta_0$, the bounds on the coefficients $\lambda$ and $b$, and $\lVert h\rVert_{L^{\infty}} \leq \sup_{y \in G} \lvert y\rvert$. Using the estimate $(1 + \lVert \cal{V}^{\epsilon}_s\rVert_{\eta}) \lVert \nabla \cal{V}^{\epsilon}_s\rVert_{\eta} \leq \frac{C}{2} (1 + \lVert \cal{V}^{\epsilon}_s\rVert_{\eta})^2 + \frac{1}{2C} \lVert \nabla \cal{V}^{\epsilon}_s\rVert_{\eta}^2$, rearranging \eqref{eq:var_eps_est}, and applying Gr\"onwall's inequality implies that 
\begin{equation*}
    \sup_{0 \leq t \leq T} \lVert \cal{V}^{\epsilon}_t\rVert_{\eta}^2 + \frac{1}{2}\int_0^T \lVert \nabla \cal{V}^{\epsilon}_t\rVert_{\eta}^2 \, \d t \leq C
\end{equation*}
for a possibly enlarged constant $C$ independent of $\epsilon$.

Using the uniform bound on the family $(\cal{V}^{\epsilon})_{\epsilon > 0}$, we can show that it is Cauchy in $C_{\bb{F}}^2([0, T]; L^2_{\eta}(\R^d)) \cap L_{\bb{F}}^2([0, T]; H^1_{\eta}(\R^d))$. Indeed, let us define $\Delta^{\epsilon, \delta} = \cal{V}^{\epsilon} - \cal{V}^{\delta}$. Note that again, the PDE satisfied by $\Delta^{\epsilon, \delta}$ is of the same form as PDE \eqref{eq:sfpe_random}. This time with $\alpha$, $\beta$, $h^0$, and $h^1$ replaced by $\mathfrak{l}^{\epsilon}$, $\mathfrak{b}^{\epsilon}$, $\mathfrak{h}^{\epsilon, \delta, 0}$, and $\mathfrak{h}^{\epsilon, \delta, 1}$, respectively, where
\begin{equation*}
    \mathfrak{h}^{\epsilon, \delta, 0}_t(x) = \cal{V}^{\delta}_t(x) \bigl(\mathfrak{l}^{\epsilon}_t(x) - \mathfrak{l}^{\delta}_t(x)\bigr) + \mathfrak{h}^{\epsilon, 0}_t(x) - \mathfrak{h}^{\delta, 0}_t(x)
\end{equation*}
and
\begin{equation*}
    \mathfrak{h}^{\epsilon, \delta, 1}_t(x) = \cal{V}^{\delta}_t(x) \bigl(\mathfrak{b}^{\epsilon}_t(x) - \mathfrak{b}^{\delta}_t(x)\bigr) +  \mathfrak{h}^{\epsilon, 1}_t(x) - \mathfrak{h}^{\delta, 1}_t(x).
\end{equation*}
Once more, we make us of the estimate \eqref{eq:sfpe_estimate} to show that the expression $\sup_{0 \leq t \leq T} \lVert \Delta^{\epsilon, \delta}_t\rVert_{\eta}^2 + \int_0^T \lVert \nabla \Delta^{\epsilon, \delta}_t\rVert_{\eta}^2 \, \d t$ vanishes as $\epsilon$, $\delta \to 0$. The estimate reads
\begin{align} \label{eq:sfpe_estimate_1}
\begin{split}
    \sup_{0 \leq s \leq t} \lVert \Delta^{\epsilon, \delta}_s\rVert_{\eta}^2 + \int_0^t \lVert \nabla \Delta^{\epsilon, \delta}_s\rVert_{\eta}^2 \, \d s &\leq C \int_0^t \bigl\lvert \bigl\langle \Delta^{\epsilon, \delta}_s (\mathfrak{l}^{\epsilon}_s + \lvert \mathfrak{b}^{\epsilon}_s\rvert) + \mathfrak{h}^{\epsilon, \delta, 0}_s + \lvert \mathfrak{h}^{\epsilon, \delta, 1}_s\rvert, \Delta^{\epsilon, \delta}_s\bigr\rangle_{\eta}\bigr\rvert \, \d s \\
    &\ \ \ + C \int_0^t \bigl\lvert\bigl\langle \Delta^{\epsilon, \delta}_s \mathfrak{b}^{\epsilon}_s + \mathfrak{h}^{\epsilon, \delta, 1}_s, \nabla \Delta^{\epsilon, \delta}_s\bigr\rangle_{\eta}\bigr\rvert \, \d s.
\end{split}
\end{align}
Let us first derive bounds on the functions $\mathfrak{h}^{\epsilon, \delta, 0}$ and $\mathfrak{h}^{\epsilon, \delta, 1}$. Note that
\begin{align*}
    \mathfrak{h}^{\epsilon, 0}_s(x) - \mathfrak{h}^{\delta, 0}_s(x) = \mu_s(x) \int_0^1 \bigl\langle \Lambda^{\epsilon, \theta}_s(x, \cdot), \Delta^{\epsilon, \delta}_s\bigr\rangle \, \d \theta + \mu_s(x) \int_0^1 \bigl\langle \Lambda^{\epsilon, \theta}_s(x, \cdot) - \Lambda^{\delta, \theta}_s(x, \cdot), \cal{V}^{\delta}_s\bigr\rangle \, \d \theta,
\end{align*}
so that
\begin{align*}
    \lvert \mathfrak{h}^{\epsilon, \delta, 0}_s(x)\rvert &\leq C_{\lambda}  d_0(\mu^{\epsilon}_s, \mu^{\delta}_s) \lvert \cal{V}^{\delta}_s(x)\rvert + C_{\lambda} \lVert \Delta^{\epsilon, \delta}_s\rVert_{\eta} \mu_s(x) \\
    &\ \ \ + \lVert \cal{V}^{\delta}_s\rVert_{\eta} \mu_s(x) \int_0^1 \bigl\lVert e^{-\eta/2} \bigl(\Lambda^{\epsilon, \theta}_s(x, \cdot) - \Lambda^{\delta, \theta}_s(x, \cdot)\bigr)\bigr\rVert_{L^2} \, \d \theta \\
    &\leq C_{\lambda} \lVert e^{-\eta/2}\rVert_{L^2} \lVert \mu^{\epsilon}_s - \mu^{\delta}_s\rVert_{\eta} \lvert \cal{V}^{\delta}_s(x)\rvert + \bigl(C_{\lambda} \lVert \Delta^{\epsilon, \delta}_s\rVert_{\eta} + \lVert \cal{V}^{\delta}_s\rVert_{\eta} I^{\epsilon, \delta, 0}_s(x)\bigr) \mu_s(x),
\end{align*}
where $I^{\epsilon, \delta, 0}_s(x) = \int_0^1 \bigl\lVert e^{-\eta/2} \bigl(\Lambda^{\epsilon, \theta}_s(x, \cdot) - \Lambda^{\delta, \theta}_s(x, \cdot)\bigr)\bigr\rVert_{L^2} \, \d \theta$. Similarly, we obtain
\begin{align*}
    \lvert \mathfrak{h}^{\epsilon, \delta, 1}_s(x)\rvert &\leq C_{b, G} \bigl(\lVert e^{-\eta/2}\rVert_{L^2} \lVert \mu^{\epsilon}_s - \mu^{\delta}_s\rVert_{\eta} + \lvert \epsilon - \delta\rvert\bigr)\lvert \cal{V}^{\delta}_s(x)\rvert \\
    &\ \ \ + \bigl(C_{b, G} \lVert \Delta^{\epsilon, \delta}_s\rVert_{\eta} + \lVert \cal{V}^{\delta}_s\rVert_{\eta} I^{\epsilon, \delta, 1}_s(x)\bigr) \mu_s(x)
\end{align*}
with $I^{\epsilon, \delta, 1}_s(x) = \int_0^1 \bigl\lVert e^{-\eta/2} \bigl(B^{\epsilon, \theta}_s(x, \cdot) - B^{\delta, \theta}_s(x, \cdot)\bigr)\bigr\rVert_{L^2} \, \d \theta$. Plugging the previous two displays into \eqref{eq:sfpe_estimate_1}, we find
\begin{align*}
    \sup_{0 \leq s \leq t} &\lVert \Delta^{\epsilon, \delta}_s\rVert_{\eta}^2 + \int_0^t \lVert \nabla \Delta^{\epsilon, \delta}_s\rVert_{\eta}^2 \, \d s \\
    &\leq C\int_0^t \bigl(\lVert \Delta^{\epsilon, \delta}_s\rVert_{\eta} + \lVert \nabla \Delta^{\epsilon, \delta}_s\rVert_{\eta}\bigr) \Bigl(\lVert \Delta^{\epsilon, \delta}_s\rVert_{\eta} + \lVert \mu^{\epsilon}_s - \mu^{\delta}_s\rVert_{\eta} + \lvert \epsilon - \delta\rvert\Bigr) \, \d s \\
    &\ \ \ + C\int_0^t \bigl(\lVert \Delta^{\epsilon, \delta}_s\rVert_{\eta} + \lVert \nabla \Delta^{\epsilon, \delta}_s\rVert_{\eta}\bigr) \bigl\lVert (I^{\epsilon, \delta, 0}_s + I^{\epsilon, \delta, 1}_s)\mu_s\bigr\rVert_{\eta} \, \d s \\
    &\leq \int_0^t \frac{C + 2 C^2}{2} \Bigl(3\lVert \mu^{\epsilon}_s - \mu^{\delta}_s\rVert_{\eta}^2 + 3\lvert \epsilon - \delta\rvert^2 + \bigl\lVert (I^{\epsilon, \delta, 0}_s + I^{\epsilon, \delta, 1}_s)\mu_t\bigr\rVert_{\eta}^2\Bigr) \, \d s \\
    &\ \ \ + \int_0^t (C + 3C^2)\lVert \Delta^{\epsilon, \delta}_s\rVert_{\eta}^2 + \frac{1}{2}\lVert \nabla \Delta^{\epsilon, \delta}_s\rVert_{\eta}^2 \, \d s
\end{align*}
for a constant $C > 0$ independent of $\epsilon$, $\delta > 0$. Again, we rearrange and apply Gr\"onwall's inequality, whereby
\begin{equation*}
    \sup_{0 \leq t \leq T} \lVert \Delta^{\epsilon, \delta}_t\rVert_{\eta}^2 + \int_0^T \lVert \nabla \Delta^{\epsilon, \delta}_t\rVert_{\eta}^2 \, \d t \leq C\int_0^T \Bigl(\lVert \mu^{\epsilon}_t - \mu^{\delta}_t\rVert_{\eta}^2 + \lvert \epsilon - \delta\rvert^2 + \bigl\lVert (I^{\epsilon, \delta, 0}_t + I^{\epsilon, \delta, 1}_t)\mu_t\bigr\rVert_{\eta}^2\Bigr) \, \d t,
\end{equation*}
where we enlarge $C$ if necessary. We claim that the three expressions on the right-hand side vanish as $\epsilon$, $\delta \to 0$. Firstly, it holds that $\lVert \mu^{\epsilon}_t - \mu^{\delta}_t\rVert_{\eta}^2 \leq \lVert \epsilon \cal{V}^{\epsilon}_t - \delta \cal{V}^{\delta}_t\lVert_{\eta}^2$, which tends to zero due to the uniform in $\epsilon$ bound on $\sup_{0 \leq s \leq T} \lVert \cal{V}^{\epsilon}_s\rVert_{\eta}^2$. The convergence of the second term is clear. For the third term, we note that owing to the continuity of $D\tilde{\lambda}$ and $D\tilde{b}$ in the measure argument, we have $I^{\epsilon, \delta, 0}_t(x) + I^{\epsilon, \delta, 1}_t(x) \to 0$ for a.e.\@ $(t, x) \in [0, T] \times \R^d$. Since $I^{\epsilon, \delta, 0}_t(x)$ and $I^{\epsilon, \delta, 1}_t(x)$ are bounded uniformly over $(t, x) \in [0, T] \times \R^d$, the dominated convergence theorem implies that $\bigl\lVert (I^{\epsilon, \delta, 0}_t + I^{\epsilon, \delta, 1}_t)\mu_t\bigr\rVert_{\eta} \to 0$. Hence, $\sup_{0 \leq t \leq T} \lVert \Delta^{\epsilon, \delta}_t\rVert_{\eta}^2 + \int_0^T \lVert \nabla \Delta^{\epsilon, \delta}_t\rVert_{\eta}^2 \, \d t \to 0$ as $\epsilon$, $\delta \to 0$, so that $(\cal{V}^{\epsilon})_{\epsilon > 0}$ is Cauchy in $C_{\bb{F}}^2([0, T]; L^2_{\eta}(\R^d)) \cap L_{\bb{F}}^2([0, T]; H^1_{\eta}(\R^d))$.

Let $\cal{V}$ denote the limit of $(\cal{V}^{\epsilon})_{\epsilon > 0}$ in $C_{\bb{F}}^2([0, T]; L^2_{\eta}(\R^d)) \cap L_{\bb{F}}^2([0, T]; H^1_{\eta}(\R^d))$. Using the convergence of $\cal{V}^{\epsilon}$ to $\cal{V}$, we can pass to the limit in PDE \eqref{eq:var_eps_spde} to see that $\cal{V}$ solves PDE \eqref{eq:spde_variation}. Uniqueness for PDE \eqref{eq:spde_variation} can be easily deduced from \eqref{eq:sfpe_estimate}. This concludes the proof.
\end{proof}

Next, we use the adjoint process $(u, m)$, characterised by BSPDE \eqref{eq:adjoint}, and the variation $\cal{V}$ to derive an expression for the derivative of the cost function $\tilde{J}$ defined in \eqref{eq:cost_rfpe}. Inserting the convexity of $g \mapsto H(t, x, v, p, g) := \tilde{b}(t, x, v, g) \cdot p + \tilde{f}(t, x, v, g)$ implied by Assumption \ref{ass:smp} \ref{it:convexity} into this expression then yields the necessary SMP.

\begin{proposition} \label{eq:cost_derivative}
Let Assumptions \ref{ass:fpe} and \ref{ass:smp} be satisfied. Then for any control $\gamma \in \bb{G}$ and any $\bb{F}$-progressively measurable random function $h \define [0, T] \times \Omega \times \R^d \to \R^{d_G}$, such that $\gamma + \epsilon h$ maps into $G$, it holds that
\begin{equation} \label{eq:gateaux_derivative}
    \frac{\d}{\d \epsilon} \tilde{J}(\gamma + \epsilon h)\Big\vert_{\epsilon = 0} = \ev \int_0^T \bigl\langle \mu_t, \nabla_g H(t, \cdot, \mu_t, \nabla u_t, \gamma_t) \cdot h_t\bigr\rangle \, \d t.
\end{equation}
\end{proposition}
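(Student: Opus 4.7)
The plan is to compute the Gateaux derivative directly from its definition, identify the resulting expression in terms of the variation process $\cal V$ furnished by Proposition \ref{prop:variation}, and then use a duality relation between $\cal V$ and the adjoint pair $(u, m)$ to eliminate the $\cal V$-dependent contributions.

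First, I would split the running-cost increment using the decomposition $\tilde f = \tilde f_0 + \tilde f_1$, writing
\begin{align*}
    \bigl\langle \mu^\epsilon_t, \tilde f(t, \cdot, \mu^\epsilon_t, \gamma^\epsilon_t)\bigr\rangle - \bigl\langle \mu_t, \tilde f(t, \cdot, \mu_t, \gamma_t)\bigr\rangle &= \Bigl(\bigl\langle \mu^\epsilon_t, \tilde f_0(t, \cdot, \mu^\epsilon_t)\bigr\rangle - \bigl\langle \mu_t, \tilde f_0(t, \cdot, \mu_t)\bigr\rangle\Bigr) \\
    &\quad + \bigl\langle \mu^\epsilon_t - \mu_t, \tilde f_1(t, \cdot, \gamma^\epsilon_t)\bigr\rangle + \bigl\langle \mu_t, \tilde f_1(t, \cdot, \gamma^\epsilon_t) - \tilde f_1(t, \cdot, \gamma_t)\bigr\rangle.
\end{align*}
The first grouping is handled via the product rule for linear functional derivatives (Lemma \ref{lem:ldf_product}), so that $v \mapsto \langle v, \tilde f_0(t, \cdot, v)\rangle$ has linear functional derivative $\tilde f_0(t, x, v) + \langle v, D\tilde f_0(t, \cdot, v)(x)\rangle$; the $\tilde f_1$ difference is treated via the integral representation in Assumption \ref{ass:smp}\ref{it:cost_cntrl_diff}; and the terminal increment $\tilde\psi(\mu^\epsilon_T) - \tilde\psi(\mu_T)$ is expanded via $D\tilde\psi$. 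Dividing by $\epsilon$ and using the convergence $\cal V^\epsilon \to \cal V$ in $C^2_{\bb F}([0, T]; L^2_\eta)$ from Proposition \ref{prop:variation}, the continuity of the linear functional derivatives in the measure argument from Assumption \ref{ass:smp}\ref{it:lfd}, uniform boundedness, and dominated convergence, I obtain
\begin{equation*}
    \frac{d}{d\epsilon}\tilde J(\gamma + \epsilon h)\Big|_{\epsilon = 0} = \ev\int_0^T \Bigl(\bigl\langle \cal V_t, \tilde f(t, \cdot, \mu_t, \gamma_t) + \langle \mu_t, D\tilde f_0(t, \cdot, \mu_t)(\cdot)\rangle\bigr\rangle + \bigl\langle \mu_t, \nabla_g \tilde f_1(t, \cdot, \gamma_t) \cdot h_t\bigr\rangle\Bigr) dt + \ev\bigl\langle \cal V_T, D\tilde\psi(\mu_T)\bigr\rangle.
\end{equation*}

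Next, I would apply Corollary \ref{cor:ito} to the pairing $\langle \cal V_t, u_t\rangle$ with $u^+ = \cal V$ (no martingale part) and $u^- = u$ (martingale part $m \in \cal M^2_{\bb F}([0, T]; L^2_{\bar\eta})$); the cross bracket vanishes and the stochastic integral $\int_0^\cdot \langle \cal V_s, dm_s\rangle$ is a true martingale by Proposition \ref{prop:bdg} combined with the $L^2$-bounds on $\cal V$ and $m$. Using $\cal V_0 = 0$ and $u_T = D\tilde\psi(\mu_T)$, taking expectations yields
\begin{equation*}
    \ev\bigl\langle \cal V_T, D\tilde\psi(\mu_T)\bigr\rangle = \ev\int_0^T \bigl(\zeta^{\cal V}_s(u_s) + \zeta^u_s(\cal V_s)\bigr) ds,
\end{equation*}
where $\zeta^{\cal V}$ and $\zeta^u$ denote the drifts in the weak formulations of PDE \eqref{eq:spde_variation} and BSPDE \eqref{eq:adjoint}. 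When summed, the local terms $\langle \cal V_s, \tilde\lambda u_s + \tilde b \cdot \nabla u_s\rangle$ cancel (they appear with opposite signs in $\zeta^{\cal V}$ and in $-\zeta^u$ through the expansion of $D\cal H$), and so do the $\tilde a$-contributions after integrating by parts. The crucial cancellation is between the nonlocal pieces $\int \langle \mu_s, D\tilde\lambda(\cdot, x) u_s + D\tilde b_0(\cdot, x) \cdot \nabla u_s\rangle \cal V_s(x) dx$ in $\zeta^{\cal V}$ and $\langle \cal V_s, \langle \mu_s, D\tilde\lambda u_s + D\tilde b_0 \cdot \nabla u_s\rangle\rangle$ in $-\zeta^u$, which coincide by Fubini. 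What survives is
\begin{equation*}
    \zeta^{\cal V}_s(u_s) + \zeta^u_s(\cal V_s) = \bigl\langle \mu_s, h_s^\top \nabla_g \tilde b_1(s, \cdot) \nabla u_s\bigr\rangle - \bigl\langle \cal V_s, \tilde f(s, \cdot, \mu_s, \gamma_s)\bigr\rangle - \bigl\langle \cal V_s, \langle \mu_s, D\tilde f_0(s, \cdot, \mu_s)(\cdot)\rangle\bigr\rangle.
\end{equation*}

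Substituting this identity into the expression for the Gateaux derivative from the first step, the two $\cal V$-dependent running-cost terms cancel exactly, leaving
\begin{equation*}
    \frac{d}{d\epsilon}\tilde J(\gamma + \epsilon h)\Big|_{\epsilon = 0} = \ev\int_0^T \bigl\langle \mu_t, h_t^\top \nabla_g \tilde b_1(t, \cdot) \nabla u_t + \nabla_g \tilde f_1(t, \cdot, \gamma_t) \cdot h_t\bigr\rangle dt,
\end{equation*}
which coincides with $\ev\int_0^T \langle \mu_t, \nabla_g H_1(t, \cdot, \nabla u_t, \gamma_t) \cdot h_t\rangle dt$ after invoking the linearity of $g \mapsto \tilde b_1(t, x, g)$ from Assumption \ref{ass:smp}\ref{it:convexity} to identify $\nabla_g H_1(t, x, p, g) = (\nabla_g \tilde b_1(t, x))^\top p + \nabla_g \tilde f_1(t, x, g)$. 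The main obstacle will be the rigorous application of Corollary \ref{cor:ito} in this mixed weighted setting and the identification of the drifts $\zeta^{\cal V}$ and $\zeta^u$ as elements of $(H^1_{\bar\eta})^\ast$ and $(H^1_\eta)^\ast$ respectively; the nonlocal pieces are controlled via the embedding $d_0(v, v') \leq \lVert e^{-\eta/2}\rVert_{L^2} \lVert v - v'\rVert_\eta$ already exploited in Section \ref{sec:sfpe}, and the second-order $\tilde a$-terms must be handled through the integration by parts prescribed by Definition \ref{def:adjoint}.
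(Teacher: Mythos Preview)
Your proposal is correct and follows essentially the same route as the paper: compute the Gateaux derivative of $\tilde J$ via the convergence $\cal V^\epsilon \to \cal V$ from Proposition~\ref{prop:variation}, then apply Corollary~\ref{cor:ito} to $\langle \cal V_t, u_t\rangle$ so that the $\cal V$-dependent cost terms cancel against the adjoint dynamics, leaving only the $\nabla_g H_1$ contribution. The only cosmetic difference is in how the running-cost increment is split: you bundle $\langle \mu^\epsilon_t, \tilde f_0(t,\cdot,\mu^\epsilon_t)\rangle - \langle \mu_t, \tilde f_0(t,\cdot,\mu_t)\rangle$ and invoke Lemma~\ref{lem:ldf_product}, whereas the paper keeps $\langle \mu^\epsilon_t - \mu_t, \tilde f(t,\cdot,\mu^\epsilon_t,\gamma^\epsilon_t)\rangle$ together and treats the $\tilde f_0$-in-$v$ difference separately; both lead to the same intermediate expression for the derivative.
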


\begin{proof}
Set $\gamma^{\epsilon} = \gamma + \epsilon h$ and denote the solutions to PDE \eqref{eq:rfpe} corresponding to the inputs $\gamma^{\epsilon}$ and $\gamma$ by $\mu^{\epsilon}$ and $\mu$, respectively. Further, let $\cal{V}^{\epsilon} = \frac{1}{\epsilon}(\mu^{\epsilon} - \mu)$ and let $\cal{V}$ denote the limit of $(\cal{V}^{\epsilon})_{\epsilon > 0}$ in $C_{\bb{F}}^2([0, T]; L^2_{\eta}(\R^d)) \cap L_{\bb{F}}^2([0, T]; H^1_{\eta}(\R^d))$, which exists by Proposition \ref{prop:variation}. Then, we decompose
\begin{align} \label{eq:decomp_cost}
\begin{split}
    \tilde{J}(g^{\epsilon}&) - \tilde{J}(g) \\
    &= \ev \int_0^T \bigl\langle \mu^{\epsilon}_t - \mu_t, \tilde{f}(t, \cdot, \mu^{\epsilon}_t, \gamma^{\epsilon}_t)\bigr\rangle \, \d t + \ev\int_0^T \bigl\langle \mu_t, \tilde{f}(t, \cdot, \mu^{\epsilon}_t, \gamma^{\epsilon}_t) - \tilde{f}(t, \cdot, \mu^{\epsilon}_t, \gamma_t)\bigr\rangle \, \d t \\
    &\ \ \ + \ev\int_0^T \bigl\langle \mu_t, \tilde{f}(t, \cdot, \mu^{\epsilon}_t, \gamma_t) - \tilde{f}(t, \cdot, \mu_t, \gamma_t)\bigr\rangle \, \d t + \ev[\tilde{\psi}(\mu^{\epsilon}_T) - \tilde{\psi}(\mu_T)].
\end{split}
\end{align}
We divide the equation by $\epsilon$ and then take the limit as $\epsilon$ tends to zero. Owing to Assumptions \ref{ass:fpe} \ref{it:continuity_fpe} and \ref{ass:smp} \ref{it:cost_cntrl_diff} as well as Proposition \ref{prop:variation}, we can pass to the limit in the first expression on the right-hand side above, which is given by $\bigl\langle \cal{V}_t, \tilde{f}(t, \cdot, \mu_t, \gamma_t)\bigr\rangle$. For the second term, we use Assumption \ref{ass:smp} \ref{it:cost_cntrl_diff}, whereby
\begin{align*}
    \frac{1}{\epsilon}\Bigl(\tilde{f}\bigl(t, x, \mu^{\epsilon}_t, \gamma^{\epsilon}_t(x)\bigr) - \tilde{f}\bigl(t, x, \mu^{\epsilon}_t, \gamma_t(x)\bigr)\Bigr) = \int_0^1 \nabla_g f\bigl(t, x, \mu^{\epsilon}_t, \gamma_t(x) + \epsilon \theta h_t(x)\bigr) \cdot h_t(x) \, \d \theta,
\end{align*}
where $\nabla_g \tilde{f}$ denotes the shift of $\nabla_g f$ along $W$. Since $(v, g) \mapsto \nabla_g f(t, x, v, g)$ is continuous and bounded by Assumption \ref{ass:smp} \ref{it:cost_cntrl_diff}, we conclude by the dominated convergence theorem that the second expression tends to $\ev\int_0^T \bigl\langle \mu_t, \nabla_g \tilde{f}(t, \cdot, \mu_t, \gamma_t) \cdot h_t\bigr\rangle \, \d t$. Next, by Assumption \ref{ass:smp} \ref{it:lfd}, for any $(t, x, g) \in [0, T] \times \R^d \times G$ the map $v \mapsto f(t, x, v, g)$ has a linear functional derivative, which is bounded and continuous in the measure argument by Assumption \ref{ass:smp} \ref{it:lfd}. Hence, we can write
\begin{align*}
    \frac{1}{\epsilon}\Bigl(\tilde{f}\bigl(t, x, \mu^{\epsilon}_t, \gamma_t(x)\bigr) - \tilde{f}\bigl(t, x, \mu_t, \gamma_t(x)\bigr)\Bigr) &= \int_0^1 \Bigl\langle \cal{V}^{\epsilon}_t, D\tilde{f}\bigl(t, x, \theta \mu^{\epsilon}_t + (1 - \theta)\mu_t, \gamma_t(x)\bigr)\Bigr\rangle \, \d \theta.
\end{align*}
Again, the dominated convergence theorem allows us to pass to the limit, so the third term on the right-hand side of \eqref{eq:decomp_cost} tends to $\ev\int_0^T \bigl(\int_{\R^d} \bigl\langle \mu_t, D\tilde{f}(t, \cdot, \mu_t, \gamma_t)(x)\bigr\rangle \cal{V}_t(x) \, \d x\bigr) \, \d t$. The same reasoning allows us to take the limit as $\epsilon \to 0$ in the last term on the right-hand side of Equation \eqref{eq:decomp_cost}. Putting everything together, we obtain
\begin{align} \label{eq:gateaux_der_1}
\begin{split}
    \frac{\d}{\d \epsilon} &\tilde{J}(\gamma + \epsilon h)\Big\vert_{\epsilon = 0} \\
    &= \ev\int_0^T \bigl\langle \cal{V}_t, \tilde{f}(t, \cdot, \mu_t, \gamma_t)\bigr\rangle \, \d t + \ev\int_0^T \bigl\langle \mu_t, \nabla_g \tilde{f}(t, \cdot, \mu_t, \gamma_t) \cdot h_t\bigr\rangle \, \d t \\
    &\ \ \ + \ev\int_0^T \biggl(\int_{\R^d} \bigl\langle \mu_t, D\tilde{f}(t, \cdot, \mu_t, \gamma_t)(x)\bigr\rangle \cal{V}_t(x) \, \d x\biggr) \, \d t + \ev \bigl\langle \cal{V}_t, D\tilde{\psi}(\mu_T)\bigr\rangle.
\end{split}
\end{align}

Next, we apply the generalisation of It\^o's formula from Corollary \ref{cor:ito} to $\langle \cal{V}_t, u_t\rangle$, where $(u, m)$ is the adjoint process, solving BSPDE \eqref{eq:adjoint}. This implies that
\begin{align*}
    &\bigl\langle \cal{V}_T, D\tilde{\psi}(\mu_T)\bigr\rangle \notag \\
    &= \langle \cal{V}_0, u_0\rangle + \int_0^T \biggl(\Bigl\langle \cal{V}_t, \tilde{\lambda}(t, \cdot, \mu_t) u_t + \tilde{b}(t, \cdot, \mu_t, \gamma_t) \cdot \nabla u_t\Bigr\rangle - \bigl\langle  \nabla \cdot (\tilde{a}(t, \cdot) \cal{V}_t), \nabla u_t\big\rangle\biggr) \, \d t \notag \\
    &\ \ \ + \int_0^T \biggl(\int_{\R^d} \Bigl\langle \mu_t, D\tilde{\lambda}(t, \cdot, \mu_t)(x) u_t + D\tilde{b}(t, \cdot, \mu_t, \gamma_t)(x) \cdot \nabla u_t \Bigr\rangle \cal{V}_t(x) \, \d x\biggr) \, \d t \notag \\
    &\ \ \ + \int_0^T \bigl\langle \mu_t, h_t^{\top} \nabla_g \tilde{b}(t, \cdot, \mu_t) \nabla u_t \bigr\rangle \, \d t \notag \\
    &\ \ \ - \int_0^T \biggl(\Bigl\langle \tilde{\lambda}(t, \cdot, \mu_t) u_t + \tilde{b}(t, \cdot, \mu_t, \gamma_t) \cdot \nabla u_t + \tilde{f}(t, \cdot, \mu_t, \gamma_t), \cal{V}_t\Bigr\rangle - \bigl\langle \nabla u_t, \nabla \cdot (\tilde{a}(t, \cdot) \cal{V}_t)\bigr\rangle\biggr) \, \d t \notag \\
    & \ \ \ - \int_0^T \biggl(\int_{\R^d} \Bigl\langle \mu_t, D\tilde{\lambda}(t, \cdot, \mu_t)(x) u_t + D\tilde{b}(t, \cdot, \mu_t, \gamma_t)(x) \cdot \nabla u_t \cal{V}_t(x) \, \d x\biggr) \, \d t \notag \\
    &\ \ \ - \int_0^T \biggl(\int_{\R^d} \bigl\langle \mu_t, D\tilde{f}(t, \cdot, \mu_t, \gamma_t)(x) \bigr\rangle \cal{V}_t(x) \, \d x\biggr) \, \d t \notag \\
    & = \int_0^T \biggl(\bigl\langle \mu_t, h_t^{\top} \nabla_g \tilde{b}(t, \cdot, \mu_t) \nabla u_t \bigr\rangle - \bigl\langle \cal{V}_t, \tilde{f}(t, \cdot, \mu_t, \gamma_t)\bigr\rangle\, \d x\biggr) \, \d t \\
    &\ \ \ - \int_0^T \biggl(\int_{\R^d} \bigl\langle \mu_t, D\tilde{f}(t, \cdot, \mu_t, \gamma_t)(x)\bigr\rangle \cal{V}_t(x) \, \d x\biggr) \, \d t,
\end{align*}
where we used that $\cal{V}_0 = 0$. Now, we take expectations on both sides of the above equality and then plug the resulting expression for $\ev \bigl\langle \cal{V}_T, D\tilde{\psi}(\mu_T)\bigr\rangle$ into Equation \eqref{eq:gateaux_der_1}. This yields the desired identity \eqref{eq:gateaux_derivative}.
\end{proof}

From \eqref{eq:gateaux_derivative} and the convexity of $H$ in the control argument, guaranteed by Assumption \ref{ass:smp} \ref{it:convexity}, we can deduce the necessary SMP.

\begin{proof}[Proof of Theorem \ref{thm:nsmp}]
Let $\gamma$, $\gamma' \in \bb{G}$. Since $G$ is convex, for any $\epsilon > 0$ the random function $\gamma^{\epsilon} = (1 - \epsilon) \gamma + \epsilon \gamma' = \gamma + \epsilon (\gamma' - \gamma)$ maps into $G$. Hence, we can apply Proposition \ref{eq:cost_derivative} with $h$ replaced by $\gamma' - \gamma$, which implies that
\begin{align*}
    \frac{\d}{\d \epsilon} \tilde{J}(\gamma^{\epsilon})\Big\vert_{\epsilon = 0} &= \ev \int_0^T \bigl\langle \mu_t, \nabla_g H(t, \cdot, \mu_t, \nabla u_t, \gamma_t) \cdot (\gamma'_t - \gamma_t)\bigr\rangle \, \d t \\
    &\leq \ev \int_0^T \bigl\langle \mu_t, H(t, \cdot, \mu_t, \nabla u_t, \gamma'_t) - H(t, \cdot, \mu_t, \nabla u_t, \gamma_t)\bigr\rangle \, \d t,
\end{align*}
where we used that the function $g \mapsto H(t, x, v, p, g)$ is convex by Assumption \ref{ass:smp} \ref{it:convexity}. On the other hand, we know that $\gamma$ is a minimiser of the cost functional $\tilde{J}$, so that
\begin{equation*}
    \frac{\d}{\d \epsilon} \tilde{J}(\gamma^{\epsilon})\Big\vert_{\epsilon = 0} = \lim_{\epsilon \to 0}\frac{1}{\epsilon} \bigl(\tilde{J}(\gamma + \epsilon(\gamma' - \gamma)) - \tilde{J}(\gamma)\bigr) \geq 0.
\end{equation*}
Combining the previous two inequalities yields
\begin{equation} \label{eq:conv_ineq_cost}
    \ev \int_0^T \bigl\langle \mu_t, H(t, \cdot, \mu_t, \nabla u_t, \gamma'_t) - H(t, \cdot, \mu_t, \nabla u_t, \gamma_t)\bigr\rangle \, \d t \geq 0.
\end{equation}
Now, for any $\bb{F}$-progressively measurable set $A \subset [0, T] \times \Omega \times \R^d$ and any $g \in G$ we can define a control $\gamma' \in \bb{G}$ by $\gamma'_t(x) = g$ if $(t, \omega, x) \in A$ and $\gamma'_t(x) = \gamma_t(x)$ otherwise. Applying \eqref{eq:conv_ineq_cost} with this choice of $\gamma'$ implies that
\begin{equation*}
    \ev\int_0^T \biggl(\int_{\R^d} \bf{1}_A(t, \cdot, x) \Bigl(H\bigl(t, x, \mu_t, \nabla u_t(x), g\bigr) - H\bigl(t, x, \mu_t, \nabla u_t(x), \gamma_t(x)\bigr)\Bigr) \, \d \mu_t(x)\biggr) \, \d t
\end{equation*}
is nonnegative. Since $A$ and $g$ were arbitrary, we conclude that for all $g \in G$ it holds that
\begin{equation*}
    H\bigl(t, x, \mu_t, \nabla u_t(x), g\bigr) \geq H\bigl(t, x, \mu_t, \nabla u_t(x), \gamma_t(x)\bigr)
\end{equation*}
for $\mu_t$-a.e.\@ $x \in \R^d$ and $\leb \otimes \pr$-a.e.\@ $(t, \omega) \in [0, T] \times \Omega$.
\end{proof}

\subsection{The Sufficient Stochastic Maximum Principle}

Lastly, we shall establish the sufficient SMP under Assumptions \ref{ass:fpe}, \ref{ass:smp} \ref{it:lfd}, and \ref{ass:separability}. The idea of its proof is to compare the cost of the control $\gamma \in \bb{G}$ from the statement of the sufficient SMP, Theorem \ref{thm:ssmp}, with that of any other control, and then to insert the convexity assumption on the functions $\psi$ and $\cal{H}_0$. 

By Lemma \ref{lem:ldf_convex}, for any convex function $F \define \M(\R^d) \to \R$, which has a linear functional derivative $DF$ such that for all $v$, $v' \in \M(\R^d)$ it holds that $DF(\theta v + (1 - \theta)v')(x) \to DF(v')(x)$ as $\theta \in (0, 1]$ tends to zero, we have $F(v) - F(v') \geq \langle v - v', DF(v')\rangle$. 
Under Assumptions \ref{ass:smp} \ref{it:lfd} and \ref{ass:separability}, the function $v \mapsto \cal{H}_0(t, v, r, p)$ satisfies the above continuity condition for any $(t, \omega, r, p) \in [0, T] \times \Omega \times \B_b(\R^d) \times \B_b(\R^d; \R^d)$, so if $v \mapsto \cal{H}_0(t, v, r, p)$ is convex in the measure argument, then
\begin{equation*}
    \cal{H}_0(t, v, r, p) - \cal{H}_0(t, v', r, p) \geq \bigl\langle v - v', D\cal{H}_0(t, v', r, p)\bigr\rangle.
\end{equation*}
This inequality readily extends to $v$, $v' \in L^2_{\eta}(\R^d; [0, \infty))$, $r \in L^2_{\bar{\eta}}(\R^d)$, and $p \in L^2_{\bar{\eta}}(\R^d; \R^d)$. Consequently, writing $\cal{H}(t, v, r, p, g) = \cal{H}_0(t, v, r, p) + \bigl\langle v, H_1(t, \cdot, p, g)\bigr\rangle$, if $g'$ is any other element of $\cal{G}$, we obtain that
\begin{align} \label{eq:convexity_expression}
    &\cal{H}(t, v, r, p, g) - \cal{H}(t, v', r, p, g') - \bigl\langle v - v', D\cal{H}(t, v, r, p, g)\bigr\rangle \notag \\
    &= \cal{H}_0(t, v, r, p) - \cal{H}_0(t, v', r, p) - \bigl\langle v - v', D\cal{H}_0(t, v, r, p)\bigr\rangle \notag \\
    &\ \ \ + \bigl\langle v, H_1(t, \cdot, p, g)\bigr\rangle - \bigl\langle v', H_1(t, \cdot, p, g')\bigr\rangle - \bigl\langle v - v', H_1(t, \cdot, p, g)\bigr\rangle \notag \\
    &\leq \bigl\langle v', H_1(t, \cdot, p, g) - H_1(t, \cdot, p, g')\bigr\rangle.
\end{align}
Now, if $g \in \cal{G}$ is a pointwise minimiser of the map $h \mapsto H_1(t, x, p, h)$, then clearly the expression on the right-hand side is nonpositive. This is the key insight in the proof of Theorem \ref{thm:ssmp}.

\begin{proof}[Proof of Theorem \ref{thm:ssmp}]
Let $\gamma \in \bb{G}$, $\mu$, and $(u, m)$ be as in the statement of the theorem. We may assume without loss of generality that Equation \eqref{eq:ssmp} holds for $\leb$-a.e.\@ $x \in \R^d$ and $\leb \otimes \pr$-a.e.\@ $(t, \omega) \in [0, T] \times \Omega$. Indeed, otherwise we follow the steps outlined below Theorem \ref{thm:nsmp} to obtain a new control $\gamma^{\ast}$, which coincides with $\gamma$ for $\mu_t$-a.e.\@ $x \in \R^d$ and $\leb \otimes \pr$-a.e.\@ $(t, \omega) \in [0, T] \times \Omega$, and a corresponding adjoint process $(u^{\ast}, m^{\ast})$, such that Equation \eqref{eq:ssmp} holds with $\gamma$ replaced by $\gamma^{\ast}$ and $(u, m)$ replaced by $(u^{\ast}, m^{\ast})$ for $\leb$-a.e.\@ $x \in \R^d$ and $\leb \otimes \pr$-a.e.\@ $(t, \omega) \in [0, T] \times \Omega$.

Next, fix $\gamma' \in \bb{G}$ and denote the corresponding solution of PDE \eqref{eq:rfpe} by $\mu'$. We compute
\begin{align} \label{eq:cost_diff}
    \tilde{J}(\gamma) - \tilde{J}(\gamma') &= \ev\biggl[\int_0^T \bigl\langle \mu_t, \tilde{f}(t, \cdot, \mu_t, \gamma_t)\bigr\rangle - \bigl\langle \mu'_t, \tilde{f}(t, \cdot, \mu'_t, \gamma'_t)\bigr\rangle \, \d t \biggr] + \ev\bigl[\tilde{\psi}(\mu_T) - \tilde{\psi}(\mu'_T)\bigr] \notag \\
    &\leq \ev\biggl[\int_0^T \bigl\langle \mu_t, \tilde{f}(t, \cdot, \mu_t, \gamma_t)\bigr\rangle - \bigl\langle \mu'_t, \tilde{f}(t, \cdot, \mu'_t, \gamma'_t)\bigr\rangle \, \d t \biggr] + \ev\langle \mu_T - \mu'_T, u_T\rangle,
\end{align}
where we used that the map $v \mapsto \psi(v)$ is convex and has a linear functional derivative that is continuous in the measure argument, so that Lemma \ref{lem:ldf_convex} applies. Note further that $D\tilde{\psi}(\mu_T) = u_T$. Then, as in the proof of Proposition \ref{eq:cost_derivative}, we apply the generalisation of It\^o's formula from Corollary \ref{cor:ito} to $\langle \mu_t - \mu'_t, u_T\rangle$, whereby
\begin{align*}
    \langle \mu_T - \mu'_T, u_T\rangle &= \langle \mu_0 - \mu'_0, u_0\rangle + \int_0^T \bigl\langle \mu_t, \tilde{\lambda}(t, \cdot, \mu_t) u_t + \tilde{b}_0(t, \cdot, \mu_t, \gamma_t) \cdot \nabla u_t\bigr\rangle \, \d t \\
    &\ \ \ - \int_0^T \bigl\langle \mu'_t, \tilde{\lambda}(t, \cdot, \mu'_t) u_t + \tilde{b}_0(t, \cdot, \mu'_t, \gamma_t) \cdot \nabla u_t\bigr\rangle \, \d t \\
    &\ \ \ - \int_0^T \bigl\langle \mu_t - \mu'_t, D\cal{H}(t, \mu_t, u_t, \nabla u_t, \gamma_t)\bigr\rangle \, \d t \\
    &= -\int_0^T \Bigl(\bigl\langle \mu_t, \tilde{f}(t, \cdot, \mu_t, \gamma_t)\bigr\rangle - \bigl\langle \mu'_t, \tilde{f}(t, \cdot, \mu'_t, \gamma'_t)\bigr\rangle\Bigr) \, \d t \\
    &\ \ \ + \int_0^T \Bigl(\cal{H}(t, \mu_t, u_t, \nabla u_t, \gamma_t) - \cal{H}(t, \mu'_t, u_t, \nabla u_t, \gamma'_t)\Bigr) \, \d t \\
    &\ \ \ - \int_0^T\bigl\langle \mu_t - \mu'_t, D\cal{H}(t, \mu_t, u_t, \nabla u_t, \gamma_t)\bigr\rangle \, \d t \\
    &\leq -\int_0^T \Bigl(\bigl\langle \mu_t, \tilde{f}(t, \cdot, \mu_t, \gamma_t)\bigr\rangle - \bigl\langle \mu'_t, \tilde{f}(t, \cdot, \mu'_t, \gamma'_t)\bigr\rangle\Bigr) \, \d t \\
    &\ \ \ + \int_0^T \bigl\langle \mu'_t, H_1(t, \cdot, \nabla u_t, \gamma_t) - H_1(t, \cdot, \nabla u_t, \gamma'_t)\bigr\rangle \, \d t,
\end{align*}
where we made use of inequality \eqref{eq:convexity_expression}, which followed from the convexity assumption on $\cal{H}_0$. By Equation \eqref{eq:ssmp}, the last term on the right-hand side is nonpositive, so that 
\begin{equation} \label{eq:bound_on_mult}
    \langle \mu_T - \mu'_T, u_T\rangle \leq -\int_0^T \Bigl(\bigl\langle \mu_t, \tilde{f}(t, \cdot, \mu_t, \gamma_t)\bigr\rangle - \bigl\langle \mu'_t, \tilde{f}(t, \cdot, \mu'_t, \gamma'_t)\bigr\rangle\Bigr) \, \d t.
\end{equation}
Taking expectation on both sides above and inserting the resulting expression into \eqref{eq:cost_diff} yields $\tilde{J}(\gamma) - \tilde{J}(\gamma') \leq 0$. Since $\gamma'$ was arbitrary, we obtain $\tilde{J}(\gamma) = \inf_{\gamma' \in \bb{G}}\tilde{J}(\gamma')$, so $\gamma$ is optimal.

Next, assume that $G \ni g \mapsto H_1\bigl(t, x, \nabla u_t(x), g\bigr)$ has a unique minimum for $\leb$-a.e.\@ $x \in \R^d$ and $\leb \otimes \pr$-a.e.\@ $(t, \omega) \in [0, T] \times \Omega$. Our goal is to show that $\gamma$ is the unique optimal control up to modification on null sets of $\mu_t$. If $\gamma' \in \bb{G}$ is not a modification of $\gamma$ on null sets of $\mu_t$, either \eqref{eq:bound_on_mult} holds with strict inequality with positive probability, so that $\tilde{J}(\gamma) - \tilde{J}(\gamma') < 0$ or $\gamma_t(x) = \gamma'_t(x)$ for $\mu'_t$-a.e.\@ $x \in \R^d$ and $\leb \otimes \pr$-a.e.\@ $(t, \omega) \in [0, T] \times \Omega$. In the former case, we are done, so let us lead the latter case to a contradiction. If $\gamma$ and $\gamma'$ are modifications of each other on null sets of $\mu'_t$, then the solution to the random Fokker--Planck equation \eqref{eq:rfpe} with input $\gamma$ is  $\mu'_t$. Since PDE \eqref{eq:rfpe} has a unique solution by Proposition \ref{prop:rfpe} and $\mu$ is another solution with input $\gamma$, it follows that $\mu' = \mu$. Thus, $\gamma'$ is a modification of $\gamma$ on null sets of $\mu_t$, in contradiction to our choice of $\gamma'$. This concludes the proof.
\end{proof}

\section{Application to a Model of Government Interventions in Financial Systems} \label{sec:application}

In this final section we discuss the application of the theory developed in this article to the mean-field model for government interventions in financial markets from \cite{hambly_mvcp_arxiv_2023}. The model consists of an infinite system of banks, each represented by the capital it holds in excess of the regulatory capital requirements. The institutions have common exposures, modelled by the noise $W$, and mutual obligations. Whenever the excess capital of a bank is negative, so that it is in breach of the capital requirements, it has a positive default intensity, expressed by the coefficient $\lambda$. Through the mutual obligations, the default of an institution leads to losses at other banks, making them more likely to default as well. This contagion may precipitate systemic events, where a large portion of the system is wiped out in a short period of time. The controller, which should be thought of as a government or another central authority, can inject capital into the system to prevent defaults. 

The excess capital $X = (X_t)_{0 \leq t \leq T}$ of a representative bank in the mean-field system follows the McKean--Vlasov SDE
\begin{equation} \label{eq:sde_model}
    X_t = (\xi - c) + \int_0^t \alpha_s \, \d s + \sigma B_t + \sigma_0 W_t - \kappa L_t,
\end{equation}
where $\xi$ is the initial capital and $c > 0$ denotes the regulatory capital requirements, $\alpha_s \in [0, g_{\text{max}}]$ is the amount of capital injected by the government at time $t \in [0, T]$, $B$ and $W$ are independent Brownian motions that represent the fluctuations of banks' assets values correlated through the common noise $W$, $\kappa > 0$ measures the size of the mutual obligations, and $L_t$ denotes the fraction of banks that have defaulted up to time $t \in [0, T]$ conditional on the common noise $W$. In particular, we assume that the losses from default are proportional to the fraction of defaulted entities $L_t$. Let us try to give a more precise account of the latter quantity. As mentioned above, the representative institution's instantaneous default intensity is $\lambda(X_t)$, where we assume that $\lambda$ is bounded and $\lambda(x) = 0$ for $x \geq 0$ while $\lambda(x) > 0$ for $x < 0$. Default occurs, when the cumulative intensity $\Lambda_t = \int_0^t \lambda(X_s) \, \d s$ exceeds a standard exponentially distributed random variable $\theta$, which is independent of $X_0$, $B$, $W$, and $\alpha$. Hence, $L_t = \pr(\theta \geq \Lambda_t \vert W) = 1 - \ev[e^{-\Lambda_t}] = \int_0^t \langle \nu_s, \lambda\rangle \, \d s$, where $\nu_t = \pr(X_t \in \cdot, \, \theta > \Lambda_t \vert W)$ is the conditional subprobability distribution of the solvent banks. The government's objective is to minimise the cost functional
\begin{equation*}
    \alpha \mapsto \ev\biggl[\int_0^T \bf{1}_{\theta > \Lambda_t} w \alpha_t + L_T\biggr] = \ev\biggl[\int_0^T e^{-\Lambda_t} w \alpha_t + L_T\biggr]
\end{equation*}
over $\bb{F}^{\xi, B, W}$-progressively measurable $[0, g_{\text{max}}]$-valued processes $\alpha = (\alpha_t)_{0 \leq t \leq T}$. The weight $w > 0$ trades off the negative externalities caused by the bailout, such as moral hazard, measured by the size of the capital injections with the cost of a potential systemic crisis, captured by the fraction of defaulted entities $L_T$ at time $T$.

If $\alpha$ takes the form $\alpha_t = \gamma_t(X_t)$ for an $\bb{F}^W$-progressively measurable random function $\gamma \define [0, T] \times \Omega \times \R \to [0, g_{\text{max}}]$, then one can show that $\nu = (\nu_t)_{0 \leq t \leq T}$ satisfies the nonlinear stochastic Fokker--Planck equation
\begin{equation} \label{eq:sfpe_model}
    \d \langle \nu_t, \varphi\rangle = \Bigl\langle \nu_t, -\lambda \varphi + \gamma_t \partial_x \varphi - \kappa \langle \nu_t, \lambda\rangle \partial_x \varphi + \tfrac{1}{2}(\sigma^2 + \sigma_0^2\bigr) \partial_x^2 \varphi \Bigr\rangle \, \d t + \langle \nu_t, \sigma_0 \partial_x \varphi\rangle \, \d W_t
\end{equation}
for $\varphi \in C_c^2(\R)$ with initial condition $\nu_0 = \L(X_0)$. This SPDE is a special case of SPDE \eqref{eq:sfpe} (note, however, the sign change of $\lambda$) and we can rewrite the cost functional in terms of $\nu$ as
\begin{equation*}
    J(\gamma) = \ev\biggl[\int_0^T w\langle \nu_t, \gamma_t\rangle \, \d t + 1 - \nu_T(\R)\biggr].
\end{equation*}
This setup fits the framework introduced in Section \ref{sec:main_results}. Moreover, by Theorem \ref{thm:equivalence}, the control of McKean--Vlasov SDE \eqref{eq:sde_model} over $\bb{F}^{X_0, B, W}$-progressively measurable $[0, g_{\text{max}}]$-valued controls yields the same optimal value as the control of the nonlinear stochastic Fokker--Planck equation \eqref{eq:sfpe_model} over $\bb{F}^W$-progressively measurable random functions $[0, T] \times \Omega \times \R \to [0, g_{\text{max}}]$. (Note that here we use that the weak formulation for the McKean--Vlasov control problem as introduced in Appendix \ref{sec:equivalence} is equivalent to the strong formulation with $\bb{F}^{X_0, B, W}$-progressively measurable controls considered here by \cite[Theorem 2.8]{hambly_mvcp_arxiv_2023}, and that the value for the control problem of the nonlinear stochastic Fokker--Planck equation is independent of the weak setup, cf.\@ Theorem \ref{thm:optimal}.) The nonlinear random Fokker--Planck equation associated to SPDE \eqref{eq:sfpe_model} is
\begin{equation} \label{eq:rfpe_model}
    \d \langle \mu_t, \varphi\rangle = \Bigl\langle \mu_t, -\tilde{\lambda} \varphi + \gamma_t \partial_x \varphi - \kappa \langle \mu_t, \tilde{\lambda}\rangle \partial_x \varphi + \tfrac{\sigma^2}{2} \partial_x^2 \varphi \Bigr\rangle \, \d t
\end{equation}
for $\varphi \in C_c^2(\R)$ with initial condition $\mu_0 = \L(X_0)$, where $\gamma$ is still an $\bb{F}^W$-progressively measurable random function $[0, T] \times \Omega \times \R \to [0, g_{\text{max}}]$.

After replacing $\bb{F}^W$ with a larger filtration $\bb{F}$ if necessary, Theorem \ref{thm:optimal} implies that there exists an optimal $\bb{F}$-progressively measurable control $\gamma \define [0, T] \times \Omega \times \R \to [0, g_{\text{max}}]$ for $g_{\text{max}} > 0$. Now, for $p \in \R$, the reduced Hamiltonian $g \mapsto H_1(p, g) = g p + w g = (p + w) g$ is linear and, therefore, convex. Hence, the necessary SMP, Theorem \ref{thm:nsmp}, implies that the optimal control takes the form $\gamma_t(x) = g_{\text{max}} \bf{1}_{\partial_x u_t(x) \leq -w}$, where we use that the minimiser of the map $g \mapsto H_1(p, g)$ over $g \in [0, g_{\text{max}}]$ is given by $p \mapsto g_{\text{max}}\bf{1}_{p \leq -w}$. The adjoint process $(u, m)$ solves the semilinear BSPDE
\begin{align} \label{eq:hjb_model}
\begin{split}
    \d u_t(x) &= \Bigl(\tilde{\lambda}(x) u_t(x) + g_{\text{max}} (\partial_x u_t(x) + w)_- + \kappa \langle \mu_t, \tilde{\lambda}\rangle \partial_x u_t(x) + \kappa\langle \mu_t, \partial_x u_t\rangle \tilde{\lambda}(x) \Bigr) \, \d t \\
    &\ \ \ -\tfrac{\sigma^2}{2} \partial_x^2 u_t(x) \, \d t + \d m_t(x)
\end{split}
\end{align}
with terminal condition $u_T(x) = D(1 - \nu_T(\R)) = -1$. The nonlinearity $p \mapsto -g_{\text{max}} (p + w)_-$ arises from the minimisation of $(p + w) g$ over $g \in [0, g_{\text{max}}]$. As one would expect, the optimal control is of bang-bang type, i.e.\@ either the controller injects the maximal amount of capital allowed (this happens when $\partial_x u_t(x) \leq -w$) or none at all. Naturally, a smaller weight lowers the threshold at which the controller intervenes. 

Let us assume for the moment, that $\kappa$ and $\sigma_0$ vanish, so that BSPDE \eqref{eq:hjb_model} becomes a local and deterministic PDE:
\begin{equation} \label{eq:hjb_model_deterministic}
    -\partial_t u_t(x) + \lambda(x) u_t(x) + g_{\text{max}}(\partial_x u_t(x) + w)_- - \frac{\sigma^2}{2} \partial_x^2 u_t(x) = 0.
\end{equation}
The functions which are constantly equal to $-1$ and $0$ are sub- and supersolution to this PDE, respectively, so by the comparison principle we have $-1 \leq u_t(x) \leq 0$. Next, set $u^h_t(x) = u_t(x + h)$ for $h > 0$. Then, since $\lambda$ is nonincreasing and $u_t$ is nonpositive, it is easy to see that $u^h$ is a subsolution, whence $u_t(x + h) = u^h_t(x) \leq u_t(x)$. Since $h > 0$ is arbitrary, we see that $x \mapsto u_t(x)$ is nonincreasing, in other words $\partial_x u_t(x) \leq 0$. One may then conjecture that for each time $t \in [0, T)$ the set $\{x \in \R \define \partial_x u_t(x) \leq - w\}$, where the controller is active, is a possibly empty and otherwise compact interval $[a_t, b_t]$ in $\R$. That is, banks whose excess capital is above $b_t$ are safe and do not obtain any capital injections while banks with an excess capital below $a_t$ are not worth saving. This conjecture is confirmed by a finite-difference approximation of the solution to PDE \eqref{eq:hjb_model_deterministic}, depicted on the left-hand plot in Figure \ref{fig:adjoint_process}, but at this stage we are unable to prove it. 

% \begin{figure}[t]
%     \centering
%     \includegraphics[width=.75\columnwidth]{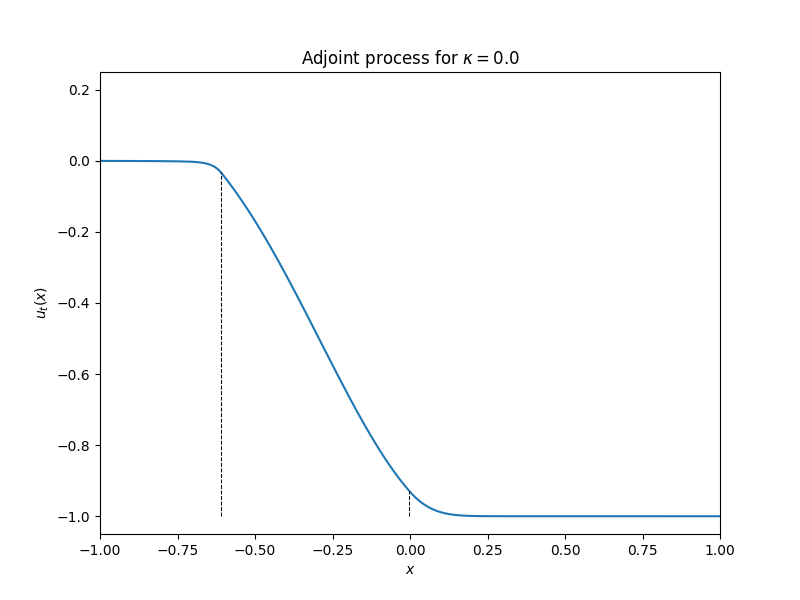}
%     \caption{\textcolor{red}{Add caption}}
%     \label{fig:feedback_heatplot}
% \end{figure}

\begin{figure}[tb] 
    \makebox[\linewidth][c]{
    \begin{subfigure}[b]{0.5\columnwidth}
        \centering
        \includegraphics[width=\columnwidth]{plots/adjoint_local.png}
    \end{subfigure}

    \hspace{-0.5cm}

    \begin{subfigure}[b]{0.5\columnwidth}
        \centering
        \includegraphics[width=\columnwidth]{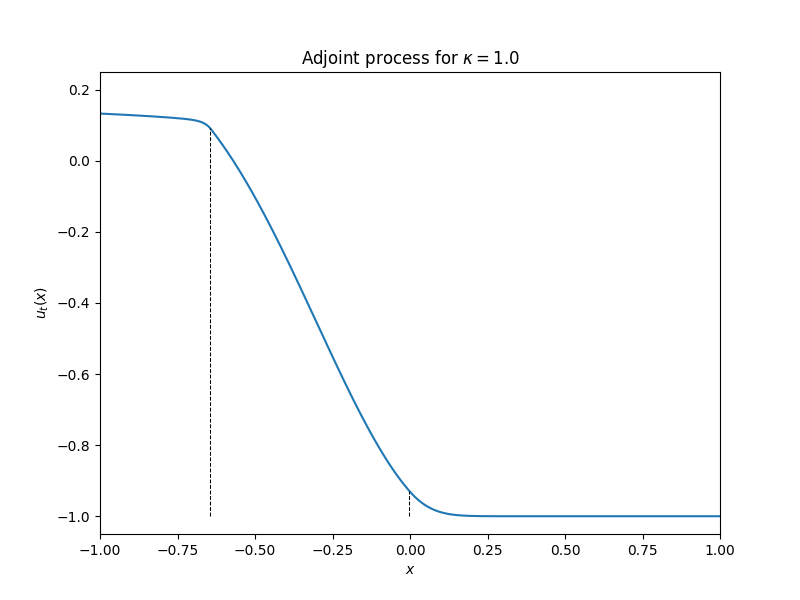}
    \end{subfigure}
    }
    \caption{Plot of numerical approximations of the adjoint process $u_t(x)$ for $\kappa = 0$ (left-hand side) and $\kappa = 1$ (right-hand side). The dotted lines indicate the interval for which $\partial_x u_t(x) \leq - w$.}
    \label{fig:adjoint_process}
 \end{figure}

Things become more complicated if $\kappa > 0$, in which case the state equation \eqref{eq:sfpe_model} becomes nonlinear. Then, owing to the nonlocal term $\kappa \langle \mu_t, \partial_x u_t\rangle \tilde{\lambda}(x)$ in BSPDE \eqref{eq:hjb_model}, it is no longer necessarily the case that $u_t$ is nonpositive.
% , nor does $u_t$ need to be nonincreasing. Instead, we expect that the function $u_t$ may be nondecreasing on the interval $(-\infty, \ell_t]$ for some $\ell_t \in [-\infty, \infty)$ and only becomes nonincreasing above $\ell_t$. 
However, as for the local PDE we conjecture that the set $\{x \in \R \define \partial_x u_t(x) \leq - w\}$ is either empty or a compact interval in $\R$.
% As for the local PDE the set $\{x \in \R \define \partial_x u_t(x) \leq - w\}$ may be an interval lying in $(\ell_t, \infty]$. 
This is once again confirmed by a numerical approximation of the solution to PDE \eqref{eq:hjb_model} in the setting without noise, i.e.\@ where $\sigma_0 = 0$ so that $m$ vanishes. The result is plotted on the right-hand side of Figure \ref{fig:adjoint_process}. To compute this approximation, we discretised the noiseless versions of both the Fokker--Planck equation \eqref{eq:rfpe_model} and PDE \eqref{eq:hjb_model} using a finite-difference scheme and then employed a Picard iteration to simulate the forward-backward system. The code used to produce both plots in Figure \ref{fig:adjoint_process} can be found on GitHub\footnote{\url{https://github.com/philkant/fokker-planck-control}}.

In conclusion, let us remark that the sufficient SMP, Theorem \ref{thm:ssmp}, cannot be applied to the nonlinear model, because if $\kappa > 0$, then the map $\M(\R) \ni v \mapsto \cal{H}_0(t, v, u_t, \partial_x u_t) = \bigl\langle v, -\tilde{\lambda} u_t - \kappa \langle v, \tilde{\lambda}\rangle \partial_x u_t\bigr\rangle$ will not be convex. This is due to the quadratic term $v \mapsto -\kappa \langle v, \tilde{\lambda}\rangle \langle v, \partial_x u_t\rangle$, which is convex if and only if $\tilde{\lambda} = -\partial_x u_t$ and the latter will not be satisfied. On the other hand, if $\kappa$ vanishes, this issue disappears and the sufficient SMP holds. In particular, since the map $g \mapsto g \partial_x u_t(x) + w g$ has a unique minimiser, namely $g_{\text{max}}\bf{1}_{\partial_x u_t(x) \leq - w}$, the optimal control $\gamma_t(x) = g_{\text{max}}\bf{1}_{\partial_x u_t(x) \leq - w}$ is unique (up to modifications on null sets of $\mu_t$).

\appendix

\section{Appendix} \label{sec:appendix}

\subsection{Equivalence between the Mean-Field Control Problem and the Control of the Nonlinear Stochastic Fokker--Planck Equation} \label{sec:equivalence}

In this section we establish the equivalence between the optimal control of the nonlinear stochastic Fokker--Planck equation \eqref{eq:sfpe} and the associated McKean--Vlasov SDE.

% Let $(\Omega, \F, \bb{F}, \pr)$ be a complete filtered probability space that carries a $d_W$-dimensional $\bb{F}$ Brownian motion $(W_t)_{0 \leq t \leq T}$. 
We will first introduce a weak formulation of the control problem for the McKean--Vlasov SDE. We adopt the concept from \cite[Definition 8.1]{lacker_mimicking_2020}.

\begin{definition}
A \textit{weak control} is a tuple $(\Omega, \F, \bb{F}, \bb{F}^0, \pr, B, W, X, \nu, \alpha)$ such that
\begin{enumerate}[noitemsep, label = (\roman*)]
    \item $(\Omega, \F, \bb{F}, \pr)$ is a filtered probability space and $\bb{F}^0$ is a subfiltration of $\bb{F}$;
    \item $B$ is a $d$-dimensional $\bb{F}$-Brownian motion, $W$ is a $d_W$-dimensional $\bb{F}^0$-adapted $\bb{F}$-Brownian motion, $X$ is a continuous $\R^d$-valued $\bb{F}$-adapted process, $\nu$ is a continuous $\M(\R^d)$-valued $\bb{F}^0$-adapted process, and $\alpha$ is a $G$-valued $\bb{F}$-progressively measurable process;
    \item $X_0$, $B$, and $\F^0_T$ are independent;
    \item for all $t \in [0, T]$ we have $\F^{X, \alpha}_t \perp \F^B_T \lor \F^0_T \vert \F^B_t \lor \F^0_t$;
    \item for all $t \in [0, T]$ we have
    \begin{equation} \label{eq:sde}
        \d X_t = b(t, X_t, \nu_t, \alpha_t) \, \d t + \sigma(t, X_t) \, \d B_t + \sigma_0 \, \d W_t, \quad X_0 \sim \nu_0,
    \end{equation}
    and $\nu_t = \ev[e^{\Lambda_t} \delta_{X_t} \vert \F^0_T]$, where $\Lambda_t = \int_0^t \lambda(s, X_s, \nu_s) \, \d s$.
\end{enumerate}
\end{definition}

% Next, let $(\tilde{\Omega}, \tilde{\F}, \tilde{\bb{F}}, \tilde{\pr})$ be an extension of $(\Omega, \F, \bb{F}, \pr)$, which supports a $d$-dimensional $\tilde{\bb{F}}$-Brownian motion $B$, a  $G$-valued $\tilde{\bb{F}}$-progressively measurable process $\alpha$, an $\R^d$-valued continuous $\tilde{\bb{F}}$-adapted process $X$, and an $\M(\R^d)$-valued continuous $\bb{F}$-adapted process $\nu$. We say that $(\tilde{\Omega}, \tilde{\F}, \tilde{\bb{F}}, \tilde{\pr}, B, W, \alpha, X, \nu)$ is a \textit{weak control} if 
% \begin{enumerate}[noitemsep, label = (\roman*)]
%     \item $W$ is an $\tilde{\bb{F}}$-Brownian motion;
%     \item $\xi$, $B$, and $\F_T$ are independent;
%     \item for all $t \in [0, T]$ we have $\F^{X, \alpha}_t \perp \F^B_T \lor \F_T \vert \F^B_t \lor \F_t$;
%     \item for all $t \in [0, T]$ we have
%     \begin{equation} \label{eq:sde}
%         \d X_t = b(t, X_t, \nu_t, \alpha_t) \, \d t + \sigma(t, X_t) \, \d B_t + \sigma_0 \, \d W_t, \quad X_0 \sim \nu_0,
%     \end{equation}
%     and $\nu_t = \ev[e^{\Lambda_t} \delta_{X_t} \vert \F_T]$, where $\Lambda_t = \int_0^t \lambda(s, X_s, \nu_s) \, \d s$.
% \end{enumerate}

The coefficient $\sigma \define [0, T] \times \R^d \to \R^{d \times d}$ in \eqref{eq:sde} is a square root of the diffusion coefficient $a$ appearing in the generator $\L$ of the stochastic Fokker--Planck equation \eqref{eq:sfpe}.

We have the following result that establishes the equivalence between the two control formulations. Note that its proof still holds if both $\sigma$ and $\sigma_0$ depend on space and the measure argument as long as they are uniformly Lipschitz continuous in the space variable.

\begin{theorem} \label{thm:equivalence}
Let Assumption \ref{ass:fpe} be satisfied. If, in addition, the map $G \ni g \mapsto b(t, x, v, g)$ is affine and $G \ni g \mapsto f(t, x, v, g)$ is convex for all $(t, x, v) \in [0, T] \times \R^d \times \M(\R^d)$, then for any weak control $(\Omega, \F, \bb{F}, \bb{F}^0, \pr, B, W, X, \nu, \alpha)$, there exists a probabilistic setup $(\tilde{\Omega}, \tilde{\F}, \tilde{\bb{F}}, \tilde{\pr}, \tilde{W})$ and a control $\gamma \in \bb{G}_{\tilde{\bb{F}}}$ with a corresponding solution $\tilde{\nu}$ to SPDE \eqref{eq:sfpe} such that $\L^{\tilde{\pr}}(\tilde{\nu}_t) = \L^{\pr}(\nu_t)$ for all $t \in [0, T]$ and
\begin{equation*}
    \tilde{\ev}\biggl[\int_0^T \langle \tilde{\nu}_t, f(t, \cdot, \tilde{\nu}_t, \gamma_t)\rangle \, \d t + \psi(\tilde{\nu}_T)\biggr] \leq \ev\biggl[\int_0^T e^{\Lambda_t} f(t, X_t, \nu_t, \alpha_t) \, \d t + \psi(\nu_T)\biggr].
\end{equation*}
Conversely, if $\nu$ is a solution to SPDE \eqref{eq:sfpe} for a control $\gamma \in \bb{G}_{\bb{F}}$ on some probabilistic setup $(\Omega, \F, \bb{F}, \pr, W)$, then the probabilistic setup $(\Omega, \F, \bb{F}, \pr, W)$ together with $\nu$ and $\gamma$ can be extended to a weak control $(\tilde{\Omega}, \tilde{\F}, \tilde{\bb{F}}, \tilde{\bb{F}}^0, \tilde{\pr}, \tilde{B}, \tilde{W}, \tilde{X}, \tilde{\nu}, \tilde{\alpha})$ such that $\tilde{\bb{F}}^0$, $\tilde{W}$, and $\tilde{\nu}$ are the extensions of $\bb{F}$, $W$, and $\nu$, respectively, and $\tilde{\alpha}_t = \tilde{\gamma}_t(\tilde{X}_t)$ for $\leb \otimes \tilde{\pr}$-a.e.\@ $(t, \omega) \in [0, T] \times \tilde{\Omega}$, where $\tilde{\gamma}$ denotes the extension of $\gamma$.
\end{theorem}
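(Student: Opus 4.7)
The backward direction is the easier one and I would dispatch it first. Given a probabilistic setup $(\Omega, \F, \bb{F}, \pr, W)$, a control $\gamma \in \bb{G}_{\bb{F}}$, and the corresponding solution $\nu$ of SPDE \eqref{eq:sfpe}, enlarge the probability space by an independent $d$-dimensional Brownian motion $\tilde B$ and an initial $\F_0$-measurable random variable $\tilde X_0 \sim \nu_0$ independent of $\bb{F}$. Set $\tilde{\bb{F}}^0 = \bb{F}$, $\tilde W = W$, $\tilde \nu = \nu$, and construct a weak solution $\tilde X$ of
\begin{equation*}
    \d \tilde X_t = b(t, \tilde X_t, \tilde\nu_t, \gamma_t(\tilde X_t)) \, \d t + \sigma(t, \tilde X_t) \, \d\tilde B_t + \sigma_0 \, \d\tilde W_t
\end{equation*}
using standard weak-existence results (Stroock--Varadhan/Krylov--Zvonkin), which apply since $b$ is bounded measurable, $\sigma$ is bounded with $a = \sigma\sigma^{\top}$ uniformly non-degenerate, and the coefficients are Lipschitz in the measure argument. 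Set $\tilde\alpha_t = \tilde\gamma_t(\tilde X_t)$. It remains to verify $\tilde\nu_t = \tilde{\mathbb E}[e^{\tilde\Lambda_t}\delta_{\tilde X_t} \mid \tilde{\mathcal F}^0_T]$, which reduces to showing that the random measure on the right solves SPDE \eqref{eq:sfpe} with control $\gamma$: this follows by an It\^o computation applied to $e^{\tilde\Lambda_t}\varphi(\tilde X_t)$ together with the uniqueness statement in Proposition \ref{prop:rfpe} (after the shift of Proposition \ref{prop:sfpe_to_rfpe}).

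For the forward direction I would implement the mimicking strategy, carefully isolating a feedback control that is independent of the auxiliary intensity process. Starting from a weak control $(\Omega, \F, \bb{F}, \bb{F}^0, \pr, B, W, X, \nu, \alpha)$, define, using regular conditional probabilities,
\begin{equation*}
    \gamma_t(x) \coloneqq \frac{\mathbb E[e^{\Lambda_t}\alpha_t \mid X_t = x,\, \F^0_T]}{\mathbb E[e^{\Lambda_t} \mid X_t = x,\, \F^0_T]},
\end{equation*}
choosing a jointly measurable version $(t, \omega, x) \mapsto \gamma_t(x)$. Convexity of $G$ places this quotient in $G$. Apply It\^o's formula to $e^{\Lambda_t}\varphi(X_t)$ for $\varphi \in C^2_c(\R^d)$, take conditional expectations with respect to $\F^0_T$, and use the independence conditions in the definition of a weak control to kill the $\d B_t$-martingale. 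The diffusion and $\lambda$ terms turn into $\langle \nu_t, \cdot\rangle$ directly; for the drift, crucially, $b_1$ is linear in $g$ by Assumption \ref{ass:smp}\ref{it:convexity}, so
\begin{equation*}
    \mathbb E[e^{\Lambda_t} b_1(t, X_t, \alpha_t) \mid X_t = x, \F^0_T] = b_1(t, x, \gamma_t(x))\, \mathbb E[e^{\Lambda_t} \mid X_t = x, \F^0_T],
\end{equation*}
which combines with the $b_0$-contribution to give $\langle \nu_t, b(t, \cdot, \nu_t, \gamma_t) \cdot \nabla\varphi\rangle$. Thus $\nu$ satisfies SPDE \eqref{eq:sfpe} on the probabilistic setup $(\Omega, \F, \bb{F}^0, \pr, W)$. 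For the cost comparison, the decomposition $f = f_0 + f_1$ with $g \mapsto f_1(t, x, g)$ convex lets me apply Jensen's inequality pointwise to the conditional probability measure with density proportional to $e^{\Lambda_t}$, yielding
\begin{equation*}
    \mathbb E[e^{\Lambda_t}\mid X_t = x, \F^0_T]\, f_1(t, x, \gamma_t(x)) \leq \mathbb E[e^{\Lambda_t} f_1(t, x, \alpha_t) \mid X_t = x, \F^0_T],
\end{equation*}
which integrates to the required inequality $\tilde J(\gamma) \leq J(\alpha)$.

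The main obstacle is the measurability and integrability management surrounding the feedback control $\gamma$. Two technical points deserve care. First, the disintegration yielding $\gamma_t(x)$ must be performed so that $\gamma$ is $\bb{F}^0$-progressively measurable, not just $\F^0_T \otimes \B([0,T]) \otimes \B(\R^d)$-measurable; this is achieved by a filtered version of the measurable-selection argument, exploiting that $(t, \nu_t, \Lambda_t, \alpha_t)$ is $\bb{F}$-adapted and that $\nu_t$ is $\bb{F}^0$-adapted, so conditioning on $\F^0_T$ can be replaced by conditioning on $\F^0_t$ modulo the $\bb{F}^0$-martingale structure of $\nu$. Second, the definition of $\gamma_t(x)$ is only meaningful for $\nu_t$-a.e.\ $x$, but modifying $\gamma$ off $\supp \nu_t$ by any measurable selection into $G$ does not affect the Fokker--Planck dynamics of $\nu$ (as in the modification argument following Theorem \ref{thm:nsmp}). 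The key structural simplification over the approach of \cite{carmona_nssc_2023} -- and what allows me to bypass an analysis of the associated HJB equation -- is precisely the linearity of $b_1$ and convexity of $f_1$ in $g$, which turn the mimicking step into an elementary Jensen inequality rather than a variational problem.
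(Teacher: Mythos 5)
Your overall strategy is the same as the paper's (mimic to a feedback control, Jensen for the cost inequality, superposition for the converse), but both directions have a genuine gap that the paper resolves in a structurally different way.

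\textbf{Forward direction.} You define $\gamma_t(x)$ by disintegrating against $\sigma(X_t) \vee \F^0_T$. The resulting random function is $\F^0_T$-measurable in $\omega$, not $\bb{F}^0$-progressive, and the conditional independence in the definition of a weak control, $\F^{X,\alpha}_t \perp \F^B_T \vee \F^0_T \mid \F^B_t \vee \F^0_t$, does not let you collapse the conditioning from $\F^0_T$ to $\F^0_t$: the $\sigma$-algebra $\sigma(X_t)$ sits inside $\F^{X,\alpha}_t$, on the ``wrong side'' of the conditional independence, so $\ev[e^{\Lambda_t}\alpha_t \mid X_t, \F^0_T]$ can genuinely depend on the future of $\F^0$. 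Your proposed fix (``filtered measurable-selection exploiting the martingale structure of $\nu$'') is not a proof, and I do not see how to make it one. The paper avoids this entirely by conditioning only on $(X_t, \nu_t)$, yielding a \emph{deterministic} Borel function $g(t,x,v)$. This has a price: on the original space, $\nu$ need not solve SPDE \eqref{eq:sfpe} with feedback $g(t,\cdot,\nu_t)$ (the extra randomness of $\alpha$ has been averaged out). The paper therefore derives a Fokker--Planck equation on $\cal{P}(\M(\R^d))$ for $P_t = \L(\nu_t)$ (using your It\^o computation and the observation $\langle\nu_t,\varphi\rangle = \ev[e^{r(t,X_t,\nu_t)}\varphi(X_t)\mid\nu_t]$) and then invokes the measure-valued superposition principle of \cite[Theorem 1.5]{lacker_mimicking_2020} to produce a \emph{new} probabilistic setup $\tilde{\bb{W}}$ with $\tilde\nu$ solving SPDE \eqref{eq:sfpe} for the feedback $\gamma_t(x) = g(t,x,\tilde\nu_t)$, which is now automatically progressive. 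Your Jensen computation for the cost inequality is fine, but must also be rephrased relative to the $(X_t,\nu_t)$-conditional probability and transferred to $\tilde{\bb{W}}$ by equality of laws.

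\textbf{Backward direction.} Constructing a weak solution $\tilde X$ via Stroock--Varadhan is insufficient on two counts. First, the drift $b(t,\tilde X_t,\nu_t,\gamma_t(\tilde X_t))$ has \emph{random} coefficients (both $\nu$ and $\gamma$ are random), so the classical weak-existence results you cite do not directly apply. Second, and more seriously, even granted a weak solution, you need the conditional-independence properties of the weak-control definition and the identity $\L(\tilde X_t \mid \tilde\F^0_T) = \mu_t$ (for the renormalised probability flow $\mu$) in order to run the It\^o-plus-uniqueness argument; weak existence controls the marginal law of $(\tilde X, \tilde B)$ but not its joint law with $\tilde\F^0_T$. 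The paper instead first solves a linear, zeroth-order-term-free SPDE for a $\cal{P}(\R^d)$-valued process $\mu$ and applies the superposition principle for SDEs with random coefficients, \cite[Theorem 1.3]{lacker_mimicking_2020}, which produces $X$ on an extension with \emph{exactly} the required conditional independence and $\mu_t = \L^{\tilde\pr}(X_t \mid \F_T)$; only then does your final It\^o-plus-uniqueness step (which is correct) close the argument. As a minor point, you attribute linearity of $b_1$ to Assumption \ref{ass:smp}\ref{it:convexity}; in the statement of Theorem \ref{thm:equivalence} it is a stand-alone hypothesis, since Assumption \ref{ass:smp} is not assumed there.
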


\begin{proof}
Fix a weak control $(\Omega, \F, \bb{F}, \bb{F}^0, \pr, B, W, X, \nu, \alpha)$
% For any $t \geq 0$, we define a probability measure $\pr_t$ by $\pr_t(A) = \frac{1}{e^{\Lambda_t}}\ev[e^{\Lambda_t} \bf{1}_A]$ for $A \in \F$ and denote the expectation under $\pr_t$ by $\ev_t$. Then as in the proof of Theorem 8.3 in \cite{lacker_mimicking_2020}, we find a measurable function $g \define \define [0, T] \times \R^d \times \M(\R^d) \to \R$ such that
% \begin{align*}
%     \ev_t[b(t, X_t, \nu_t, \gamma_t) \vert X_t, \, \nu_t] &= b\bigl(t, X_t, \nu_t, g(t, X_t, \nu_t)\bigr) \\
%     \ev_t[f(t, X_t, \nu_t, \gamma_t) \vert X_t, \, \nu_t] &\geq f\bigl(t, X_t, \nu_t, g(t, X_t, \nu_t)\bigr).
% \end{align*}
and define measurable functions $r$, $g \define [0, T] \times \R^d \times \M(\R^d) \to \R$ by $r(t, x, v) = \log\ev[e^{\Lambda_t} \vert X_t = x,\, \nu_t = v]$ and
\begin{equation*}
    g(t, x, v) = \ev\bigl[e^{\Lambda_t - r(t, X_t, \nu_t)} \alpha_t \bigr\vert X_t = x,\, \nu_t = v\bigr],
\end{equation*}
so that $\langle \nu_t, \varphi\rangle = \ev[e^{r(t, X_t, \nu_t)} \varphi(X_t) \vert \nu_t]$ for any $\varphi \define \R^d \to \R$ measurable and bounded. Next, fix $\varphi \in C^2_c(\R^d)$ and apply It\^o's formula to $e^{\Lambda_t} \varphi(X_t)$, whereby
\begin{equation*}
    \d (e^{\Lambda_t} \varphi(X_t)) = \L\varphi(t, X_t, \nu_t, \alpha_t) \, \d t + e^{\Lambda_t} (\nabla \varphi(X_t))^{\top} \sigma(t, X_t) \, \d B_t + e^{\Lambda_t} (\nabla \varphi(X_t))^{\top} \sigma_0 \, \d W_t.
\end{equation*}
We take conditional expectation with respect to $\F^0_T$ on both sides to obtain
\begin{equation*}
    \d \langle \nu_t, \varphi\rangle = \ev[\L\varphi(t, X_t, \nu_t, \alpha_t) \vert \F^0_T] \, \d t + \langle \nu_t, (\nabla \varphi)^{\top} \sigma_0 \rangle \, \d W_t,
\end{equation*}
where we use the stochastic Fubini theorem to exchange the conditional expectation with respect to $\F^0_T$ and the stochastic integration against $W$. The stochastic integral with respect to $B$ vanishes, since $B$ is independent of $\F^0_T$. Now, we may proceed as in the proof of Corollary 1.6 in \cite{lacker_mimicking_2020}, to show that the law $P_t = \L(\nu_t)$ satisfies the Fokker--Planck equation
\begin{align*}
    \int_{\M(\R^d)} \Phi&(\langle v, \varphi\rangle) \, \d (P_t - P_0)(v) \\
    &= \sum_{i = 1}^k \int_0^t \biggl(\int_{\M(\R^d)}  \partial_{x_i} \Phi(\langle v, \varphi\rangle) \bigl\langle v, \L\varphi_i\bigl(s, \cdot, v, g(s, \cdot, v)\bigr)\bigr\rangle \, \d P_s(v)\biggr) \, \d s \\
    &\ \ \ + \frac{1}{2}\sum_{i, j = 1}^k \int_0^t \biggl(\int_{\M(\R^d)}  \partial^2_{x_i x_j} \Phi(\langle v, \varphi\rangle) \langle v, (\nabla \varphi_i)^{\top} \sigma_0\rangle \cdot \langle v, (\nabla \varphi_j)^{\top} \sigma_0\rangle \, \d P_s(v)\biggr) \, \d s
\end{align*}
for $\varphi = (\varphi_1, \dots, \varphi_k) \in C^2_c(\R^d; \R^k)$ and $\Phi \in C^2_c(\R^k)$ on $\M(\R^d)$. Here we used that
\begin{align*}
    \ev\bigl[\partial_{x_i}\Phi&(\langle \nu_t, \varphi\rangle) e^{\Lambda_t} \nabla \varphi_i(X_t) \cdot b(t, X_t, \nu_t, \alpha_t)\bigr] \\
    &= \ev\Bigl[\partial_{x_i}\Phi(\langle \nu_t, \varphi\rangle) e^{r(t, X_t, \nu_t)} \nabla \varphi_i(X_t) \cdot b\Bigl(t, X_t, \nu_t, \ev\bigl[e^{\Lambda_t - r(t, X_t, \nu_t)} \alpha_t \big\vert X_t,\, \nu_t\bigr]\Bigr) \Bigr] \\
    &= \ev\Bigl[\partial_{x_i}\Phi(\langle \nu_t, \varphi\rangle) e^{r(t, X_t, \nu_t)} \nabla \varphi_i(X_t) \cdot b\bigl(t, X_t, \nu_t, g(t, X_t, \nu_t\bigr) \Bigr] \\
    &= \ev\Bigl[\partial_{x_i}\Phi(\langle \nu_t, \varphi\rangle) \bigl\langle \nu_t, \nabla \varphi_i \cdot b\bigl(t, X_t, \nu_t, g(t, \cdot, \nu_t\bigr)\bigr\rangle\Bigr],
\end{align*}
where we used in the first equality that the expectation operator $\ev[e^{\Lambda_t - r(t, X_t, \nu_t)} \cdot \lvert X_t,\, \nu_t]$ can be moved inside the control argument of $b$ since $b$ is affine in this argument. Hence, we can apply the superposition principle for stochastic Fokker--Planck equations from \cite[Theorem 1.5]{lacker_mimicking_2020}, which can easily be extended to equations with zeroth-order terms, to show there exists a probabilistic setup $(\tilde{\Omega}, \tilde{\F}, \tilde{\bb{F}}, \tilde{\pr}, \tilde{W})$ with $\tilde{\F}$ countable generated, supporting a continuous $\tilde{\bb{F}}$-adapted $\M(\R^d)$-valued process $\tilde{\nu}$ that solves SPDE \eqref{eq:sfpe} with input $\gamma \in \bb{G}_{\tilde{\bb{F}}}$ given by $\gamma_t(x) = g(t, x, \tilde{\nu}_t)$ for $(t, \omega, x) \in [0, T] \times \tilde{\Omega} \times \R^d$. Moreover, by convexity of $f_1$ in the control argument, it holds that
\begin{align*}
    \ev\biggl[\int_0^T e^{\Lambda_t}& f(t, X_t, \nu_t, \alpha_t) \, \d t + \psi(\nu_T)\biggr] \\
    &= \ev\biggl[\int_0^T e^{r(t, X_t, \nu_t)} \ev\bigl[e^{\Lambda_t - r(t, X_t, \nu_t)} f(t, X_t, \nu_t, \alpha_t) \big\vert X_t, \, \nu_t\bigr] \, \d t + \psi(\nu_T)\biggr] \\
    &\geq \ev\biggl[\int_0^T e^{r(t, X_t, \nu_t)} f\bigl(t, X_t, \nu_t, \ev\bigl[e^{\Lambda_t - r(t, X_t, \nu_t)}\alpha_t\big\vert X_t, \, \nu_t\bigr]\bigr) \, \d t + \psi(\nu_T)\biggr] \\
    &= \ev\biggl[\int_0^T \bigl\langle \nu_t, f\bigl(t, \cdot, \nu_t, g(t, \cdot, \nu_t)\bigr)\bigr\rangle \, \d t + \psi(\nu_T)\biggr] \\
    &= \tilde{\ev}\biggl[\int_0^T \bigl\langle \tilde{\nu}_t, f\bigl(t, \cdot, \tilde{\nu}_t, g(t, \cdot, \tilde{\nu}_t)\bigr)\bigr\rangle \, \d t + \psi(\tilde{\nu}_T)\biggr],
\end{align*}
where we applied Jensen's inequality using that $\F \ni A \mapsto \ev[e^{\Lambda_t - r(t, X_t, \nu_t)} \bf{1}_A \vert X_t, \nu_t]$ is a (random) probability measure. This concludes the proof of the first part of the proposition.

Next, let us fix a probabilistic setup $(\Omega, \F, \bb{F}, \pr, W)$ and let $\nu$ be a solution to SPDE \eqref{eq:sfpe} for a control $\gamma \in \bb{G}_{\bb{F}}$. Then we let $\mu = (\mu_t)_{0 \leq t \leq T}$ be the unique solution to the linear SPDE
\begin{align*}
    \d \langle \mu_t, \varphi\rangle = \bigl\langle \mu_t, \L\varphi(t, \cdot, \nu_t, \gamma_t)\bigr\rangle \, \d t - \langle \mu_t, \lambda(t, \cdot, \nu_t)\rangle \, \d t + \bigl\langle \mu_t, (\nabla \varphi)^{\top} \sigma_0\bigr\rangle \, \d W_t
\end{align*}
for $\varphi \in C_c^2(\R^d)$ with initial condition $\mu_0 = \nu_0$. Since this stochastic Fokker--Planck equation does not have a zeroth-order term and $\nu_0 \in \P(\R^d)$, it follows that $\mu_t$ takes values in $\P(\R^d)$. Consequently, by the superposition principle for SDEs with random coefficients from \cite[Theorem 1.3]{lacker_mimicking_2020}, there exists an extension $(\tilde{\Omega}, \tilde{\F}, \tilde{\bb{F}}, \tilde{\pr})$ of $(\Omega, \F, \bb{F}, \pr)$ supporting a $d$-dimensional $\tilde{\bb{F}}$-Brownian motion $B$ independent of $\F_T$ and a continuous $\R^d$-valued $\tilde{\bb{F}}$-adapted process $X$ such that 
\begin{enumerate}[noitemsep, label = (\roman*)]
    \item for all $t \in [0, T]$ we have $\F^X_t \perp \F^B_T \lor \F_T \vert \F^B_t \lor \F_t$;
    \item $W$ is an $\tilde{\bb{F}}$-Brownian motion;
    \item for all $t \in [0, T]$ we have
    \begin{equation*}
        \d X_t = b\bigl(t, X_t, \nu_t, \gamma_t(X_t)\bigr) \, \d t + \sigma(t, X_t) \, \d B_t + \sigma_0 \, \d W_t, \quad X_0 \sim \nu_0,
    \end{equation*}
    and $\mu_t = \L^{\tilde{\pr}}(X_t \vert \F_T)$.
\end{enumerate}
Let us set $\Lambda_t = \int_0^t \lambda(s, X_s, \nu_s) \, \d s$ and then define the $\M(\R^d)$-valued process $\tilde{\nu} = (\tilde{\nu}_t)_{0 \leq t \leq T}$ by $\tilde{\nu}_t = \tilde{\ev}[e^{\Lambda_t} \delta_{X_t} \vert \F_T]$ for $t \in [0, T]$. Proceeding as at the beginning of the proof, we can show that $\tilde{\nu}$ solves the linear SPDE
\begin{equation*}
    \d \langle \tilde{\nu}_t, \varphi\rangle = \bigl\langle \tilde{\nu}_t, \L\varphi(t, \cdot, \nu_t, \gamma_t)\bigr\rangle \, \d t + \bigl\langle \tilde{\nu}_t, (\nabla \varphi)^{\top} \sigma_0\bigr\rangle \, \d W_t
\end{equation*}
for $\varphi \in C_c^2(\R^d)$ with initial condition $\tilde{\nu}_0 = \mu_0 = \nu_0$. But the unique solution to this SPDE is $\nu$, so we conclude that $\tilde{\nu}_t = \nu_t$, meaning that $(X, \nu)$ is a solution to McKean--Vlasov SDE \eqref{eq:sde}. Consequently, $(\tilde{\Omega}, \tilde{\F}, \tilde{\bb{F}}, \bb{F}, B, W, X, \nu, \alpha)$ with $\alpha = (\alpha_t)_{0 \leq t \leq T}$ defined by $\alpha_t = \gamma_t(X_t)$ is the desired weak control.
\end{proof}

\subsection{Some Properties of the Linear Functional Derivative}

\begin{lemma} \label{lem:ldf_diff_quot}
Let $F \define \M(\R^d) \to \R$ be a measurable function. If $F$ has a linear functional derivative $DF \define \R^d \times \M(\R^d) \to \R$ such that $v \mapsto DF(v)(x)$ is continuous for every $x \in \R^d$, then 
\begin{equation*}
    DF(v)(x) = \lim_{\epsilon \to 0} \frac{1}{\epsilon} \bigl(F(v + \epsilon \delta_x) - F(v)\bigr)
\end{equation*}
for all $(x, v) \in \R^d \times \M(\R^d)$. Conversely, if there exists a measurable and bounded function $\F \define \R^d \times \M(\R^d) \to \R$ such that 
\begin{equation} \label{eq:der_f}
    \lim_{\epsilon \to 0} \frac{1}{\epsilon} \bigl(F(v + \epsilon (v' - v)) - F(v)\bigr) = \langle v' - v, \F(v)\rangle
\end{equation}
for all $v$, $v' \in \M(\R^d)$, then $\F$ is a linear functional derivative of $F$.
\end{lemma}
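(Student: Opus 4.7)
The plan is to establish the two implications separately and independently. For the forward direction, I would substitute $v' = v + \epsilon \delta_x$ directly into the defining identity of the linear functional derivative, yielding
\begin{equation*}
    F(v + \epsilon \delta_x) - F(v) = \int_0^1 \bigl\langle \epsilon \delta_x, DF(v + \theta \epsilon \delta_x)\bigr\rangle \, \d \theta = \epsilon \int_0^1 DF(v + \theta \epsilon \delta_x)(x) \, \d \theta.
\end{equation*}
Dividing by $\epsilon$ and sending $\epsilon \to 0$, I would use that $d_0(v + \theta \epsilon \delta_x, v) \leq \theta \epsilon$, together with the assumed continuity of $v \mapsto DF(v)(x)$ and the uniform boundedness of $DF$, to invoke the dominated convergence theorem and obtain the pointwise limit $DF(v)(x)$.

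For the converse direction, I would consider the scalar function $g(\theta) = F(\theta v + (1-\theta) v')$ on $[0,1]$ and aim to show that it is Lipschitz with $g'(\theta) = \langle v - v', \mathcal{F}(\theta v + (1-\theta)v')\rangle$. The main obstacle here is that the differentiability condition \eqref{eq:der_f} is stated for perturbations $\mu + \epsilon(\mu' - \mu)$ with $\mu' \in \mathcal{M}(\R^d)$ a positive measure, whereas the natural increment $h(v - v')$ for the interpolation is only a signed measure. To circumvent this, for $\theta \in [0,1)$ I would set $\mu = \theta v + (1-\theta) v'$ and $\mu' = v$, so that $\mu + s(\mu' - \mu) = (\theta + s(1-\theta))v + (1-\theta)(1-s)v' = \mu_{\theta + s(1-\theta)}$. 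Applying \eqref{eq:der_f}, then reparameterising via $h = s(1-\theta)$ and cancelling the factor $(1-\theta)$, yields $g'(\theta) = \langle v - v', \mathcal{F}(\mu_\theta)\rangle$. A mirror-image argument with $\mu' = v'$ handles $\theta = 1$.

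Since $\mathcal{F}$ is bounded and $v - v'$ has finite total variation, this derivative is uniformly bounded on $[0,1]$, so $g$ is Lipschitz and hence absolutely continuous. The fundamental theorem of calculus then gives
\begin{equation*}
    F(v) - F(v') = g(1) - g(0) = \int_0^1 \bigl\langle v - v', \mathcal{F}(\theta v + (1-\theta)v')\bigr\rangle \, \d \theta,
\end{equation*}
which is precisely the definition of $\mathcal{F}$ being a linear functional derivative of $F$. Apart from the signed-measure workaround, each step is essentially a routine application of standard calculus, so the rescaling trick is the only genuine subtlety.
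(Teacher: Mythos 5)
Your proof is correct and takes essentially the same approach as the paper: substituting $v' = v + \epsilon\delta_x$ into the defining integral for the forward direction, and, for the converse, differentiating the scalar interpolation $\theta \mapsto F(\theta v + (1-\theta)v')$ and invoking the fundamental theorem of calculus. The one place you add value is in spelling out the reparametrisation (choosing the base point $\mu_\theta = \theta v + (1-\theta)v'$ and target $v$ or $v'$, then rescaling) needed to extract both one-sided derivatives of the interpolation from \eqref{eq:der_f}, a step the paper states as an immediate consequence without elaboration; just note that the ``mirror'' argument with $\mu' = v'$ is needed not only at $\theta = 1$ but to supply the left-hand derivative at every interior $\theta$.
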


\begin{proof}
For the first statement, it follows from the definition of the linear functional derivative that
\begin{equation*}
    \lim_{\epsilon \to 0} \frac{1}{\epsilon} \bigl(F(v + \epsilon \delta_x) - F(v)\bigr) = \lim_{\epsilon \to 0} \int_0^1 \langle \delta_x, DF(v + \epsilon \theta \delta_x)\rangle \, \d \theta = DF(v)(x),
\end{equation*}
where we used the continuity of $DF$ in the measure argument in the last step. Note that in fact much less than continuity with respect to $d_0$ is required here.

To establish the second statement, let us fix $v$, $v' \in \M(\R^d)$ and define the function $f \define [0, 1] \to \R$ by $f(\theta) = F(v + \theta (v' - v))$. From \eqref{eq:der_f}, it follows that $f$ is differentiable with $f'(\theta) = \bigl\langle v' - v, \F(v + \theta(v' - v))\bigr\rangle$. Thus, by the fundamental theorem of calculus it holds that
\begin{equation*}
    F(v') - F(v) = f(1) - f(0) = \int_0^1 f'(\theta) \, \d \theta = \int_0^1 \bigl\langle v' - v, \F(\theta v' + (1 - \theta)v) \bigr\rangle \, \d \theta.
\end{equation*}
This shows that $\F$ is indeed a linear functional derivative of $F$.
\end{proof}

\begin{lemma} \label{lem:ldf_product}
Let $F \define \R^d \times \M(\R^d) \to \R$ be a measurable and bounded function such that $\M(\R^d) \ni v \mapsto F(x, v)$ has a linear functional derivative $DF(x, \cdot)$ for all $x \in \R^d$, such that $DF \define \R^d \times \M(\R^d) \times \R^d \to \R$ is measurable, bounded, and continuous in the measure argument. Then the map $\M(\R^d) \ni v \mapsto \langle v, F(\cdot, v)\rangle$ possesses the linear functional derivative $F(x, v) + \langle v, DF(\cdot, v)(x)\rangle$.
\end{lemma}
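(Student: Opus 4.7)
The plan is to verify the hypothesis of the converse statement in Lemma \ref{lem:ldf_diff_quot}, applied to the functional $G \colon \M(\R^d) \to \R$ defined by $G(v) = \langle v, F(\cdot, v)\rangle$ with the candidate derivative $\F(v)(x) = F(x, v) + \langle v, DF(\cdot, v)(x)\rangle$. Since $F$ and $DF$ are bounded and measurable and $v \mapsto DF(x, v)(y)$ is continuous, the map $(x, v) \mapsto \F(v)(x)$ is measurable and bounded. It therefore suffices to show that for all $v$, $v' \in \M(\R^d)$ the limit \eqref{eq:der_f} of Lemma \ref{lem:ldf_diff_quot} holds with this choice of $\F$.

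To that end, I would write $v^{\epsilon} = v + \epsilon(v' - v)$ and decompose the difference quotient via the obvious telescoping trick:
\begin{equation*}
    \frac{G(v^{\epsilon}) - G(v)}{\epsilon} = \Bigl\langle v, \tfrac{1}{\epsilon}\bigl(F(\cdot, v^{\epsilon}) - F(\cdot, v)\bigr)\Bigr\rangle + \bigl\langle v' - v, F(\cdot, v^{\epsilon})\bigr\rangle.
\end{equation*}
The second term converges to $\langle v' - v, F(\cdot, v)\rangle$ as $\epsilon \to 0$ by continuity of $F$ in the measure argument (which follows from the existence of the linear functional derivative together with boundedness of $DF$) and dominated convergence applied to $\lvert v' - v\rvert$. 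For the first term, I would use the defining formula for the linear functional derivative of $F(x, \cdot)$ to write
\begin{equation*}
    \tfrac{1}{\epsilon}\bigl(F(x, v^{\epsilon}) - F(x, v)\bigr) = \int_0^1 \bigl\langle v' - v, DF(x, v + \epsilon\theta(v' - v))\bigr\rangle \, \d \theta,
\end{equation*}
integrate against $v$, and then pass to the limit inside the double integral using boundedness of $DF$ and pointwise continuity of $v \mapsto DF(x, v)(y)$. The limit is $\int_{\R^d} \int_{\R^d} DF(x, v)(y) \, \d(v' - v)(y) \, \d v(x)$, which by Fubini (justified by boundedness of $DF$ and finiteness of $v$, $v'$) equals $\bigl\langle v' - v, \langle v, DF(\cdot, v)(\cdot)\rangle\bigr\rangle$.

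Adding the two contributions gives $\langle v' - v, \F(v)\rangle$, which is precisely the condition required by the converse direction of Lemma \ref{lem:ldf_diff_quot}. Hence $\F$ is a linear functional derivative of $G$, as claimed.

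The main (minor) obstacle is the careful handling of the iterated limit and the application of Fubini: both rely essentially on the global boundedness of $DF$ and the continuity in the measure variable, so there is no real analytic difficulty, only bookkeeping.
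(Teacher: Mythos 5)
Your proof is correct and follows essentially the same route as the paper's: invoke the converse direction of Lemma \ref{lem:ldf_diff_quot}, decompose the difference quotient of $G(v^{\epsilon})$ into the same two terms, pass to the limit in the first via continuity of $F$ in the measure argument and dominated convergence, and in the second via the integral representation of $DF$, dominated convergence, and Fubini. The only cosmetic difference is that the paper references the first part of Lemma \ref{lem:ldf_diff_quot} where you unfold the defining integral formula directly, and you are somewhat more explicit about the Fubini step; neither changes the substance of the argument.
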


\begin{proof}
Set $\bb{F}(v) = \langle v, F(\cdot, v)\rangle$ for $v \in \M(\R^d)$. By the second part of Lemma \ref{lem:ldf_diff_quot} it is enough to show that for any $v$, $v' \in \M(\R^d)$ the limit $\lim_{\epsilon \to 0}\frac{1}{\epsilon} \bigl(\bb{F}(v + \epsilon (v' - v)) - \bb{F}(v)\bigr)$ exists and is equal to
\begin{equation} \label{eq:ldf_product}
    \int_{\R^d} \bigl(F(x, v) + \langle v, DF(\cdot, v)(x)\rangle\bigr) \, \d (v' - v)(x).
\end{equation}
We compute
\begin{align} \label{eq:decomp_ldf_product}
\begin{split}
    \frac{1}{\epsilon} \bigl(\bb{F}(v + \epsilon (v' - v)) - \bb{F}(v)\bigr) &= \bigl\langle v' - v, F(\cdot, v + \epsilon (v' - v))\bigr\rangle \\
    &\ \ \ + \frac{1}{\epsilon}\bigl\langle v, F(\cdot, v + \epsilon(v' - v)) - F(\cdot, v)\bigr\rangle.
\end{split}
\end{align}
Now, since for any $x \in \R^d$, the map $m \mapsto F(x, m)$ has a linear functional derivative, its continuous. Consequently, the first term on the right-hand side above converges to $\langle v' - v, F(\cdot, v)\rangle$ by the dominated convergence theorem. Next, for fixed $x \in \R^d$, it follows from the continuity of $m \mapsto DF(x, m)(y)$ for all $y \in \R^d$ and the first part of Lemma \ref{lem:ldf_diff_quot} that
\begin{equation*}
    \frac{1}{\epsilon}\bigl(F(x, v + \epsilon(v' - v)) - F(x, v)\bigr) \to \langle v' - v, DF(x, v)\rangle.
\end{equation*}
Applying the dominated convergence theorem once more we conclude that the second expression on the right-hand side of \eqref{eq:decomp_ldf_product} tends to
\begin{equation*}
    \int_{\R^d} \langle v, DF(\cdot, v)(x)\rangle \, \d (v' - v)(x).
\end{equation*}
Combining the two expression shows that $\frac{1}{\epsilon} \bigl(\bb{F}(v + \epsilon (v' - v)) - \bb{F}(v)\bigr)$ indeed tends to the expression in \eqref{eq:ldf_product}. 
\end{proof}

\begin{lemma} \label{lem:ldf_convex}
Let $F \define \M(\R^d) \to \R$ be convex and assume $F$ has a linear functional derivative $DF$ such that for all $v$, $v' \in \M(\R^d)$ and $x \in \R^d$ it holds that $DF(\theta v + (1 - \theta) v')(x) \to DF(v')(x)$ as $\theta \in (0, 1]$ tends to zero. Then we have $F(v) - F(v') \geq \langle v - v', DF(v')\rangle$.
\end{lemma}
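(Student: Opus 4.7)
The proof is a standard convex analysis argument: we differentiate $F$ along the segment from $v'$ to $v$ at the endpoint $v'$, use convexity to bound this derivative from above by $F(v) - F(v')$, and then identify the derivative with the linear functional derivative at $v'$.

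Concretely, the plan is as follows. First, for $\theta \in (0, 1]$ set $v_\theta = \theta v + (1-\theta) v' \in \M(\R^d)$. By convexity of $F$ we have
\begin{equation*}
    F(v_\theta) \leq \theta F(v) + (1-\theta) F(v'),
\end{equation*}
which, after subtracting $F(v')$ and dividing by $\theta$, yields
\begin{equation*}
    \frac{F(v_\theta) - F(v')}{\theta} \leq F(v) - F(v').
\end{equation*}

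Second, I would compute the left-hand side using the definition of the linear functional derivative applied to the pair $(v_\theta, v')$. Since $v_\theta - v' = \theta(v - v')$, the definition gives
\begin{equation*}
    F(v_\theta) - F(v') = \int_0^1 \bigl\langle v_\theta - v', DF(s v_\theta + (1-s) v')\bigr\rangle \, \d s = \theta \int_0^1 \bigl\langle v - v', DF(s\theta v + (1 - s\theta) v')\bigr\rangle \, \d s.
\end{equation*}
Dividing by $\theta$ and letting $\theta \downarrow 0$, the hypothesis that $DF(s\theta v + (1-s\theta)v')(x) \to DF(v')(x)$ for every $x \in \R^d$, together with the boundedness of $DF$ guaranteed by Definition \ref{def:linear_functional_derivatve} (which provides a dominating constant against the finite signed measure $v - v'$), allows me to pass the limit inside the integrals via dominated convergence. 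The limit of the right-hand side is $\int_0^1 \langle v - v', DF(v')\rangle \, \d s = \langle v - v', DF(v')\rangle$.

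Combining the two steps gives $\langle v - v', DF(v')\rangle \leq F(v) - F(v')$, which is the desired inequality. There is no real obstacle here: the only point to double-check is the interchange of limit and integration, which is immediate from the boundedness of $DF$ and the finiteness of the measures involved, so the pointwise continuity hypothesis on $DF$ is exactly what is needed.
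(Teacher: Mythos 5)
Your proof is correct and follows essentially the same route as the paper: bound the difference quotient $\frac{1}{\theta}(F(v' + \theta(v-v')) - F(v'))$ above by $F(v) - F(v')$ via convexity, express that quotient through the linear functional derivative, and pass to the limit $\theta \downarrow 0$ using boundedness of $DF$ and the pointwise continuity hypothesis. The only difference is cosmetic (your $\theta$, $s$ versus the paper's $\epsilon$, $\theta$) and that you spell out the dominated-convergence step, which the paper leaves implicit.
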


\begin{proof}
By convexity of $F$, we have for any $\epsilon \in (0, 1]$ that
\begin{align*}
    F(v) - F(v') &\geq \frac{1}{\epsilon} \bigl(F(v' + \epsilon(v - v')) - F(v')\bigr) \\
    &= \int_0^1 \bigl\langle v - v', DF\bigl(v' + \epsilon \theta (v - v')\bigr)\bigr\rangle \, \d \theta.
\end{align*}
The expression on the right-hand side converges to $\langle v - v', DF(v')\rangle$ as $\epsilon \to 0$, as required. 
\end{proof}

\subsection{Auxiliary Results for Section \ref{sec:bspde}}

For the rest of this section we assume that $\bb{H}$ is a separable Hilbert space. We begin with an infinite-dimensional analogue of the Kunita--Watanabe inequality.

\begin{proposition} \label{prop:kunita_watanabe}
Let $m$, $\tilde{m}$ be elements of $\cal{M}_{\bb{F}}^2([0, T]; \bb{H})$ and let $F$, $\tilde{F}$ be measurable functions $[0, T] \times \Omega \to B(\bb{H})$. Then for all $t \in [0, T]$ it holds that
\begin{equation*}
    \int_0^t \trace\bigl(\lvert \tilde{F}_s F_s\rvert \, \d A_s\bigr) \leq \biggl(\int_0^t \lVert F^2_s \rVert \, \d [m]_s\biggr)^{1/2} \biggl(\int_0^t \lVert \tilde{F}^2_s\rVert \, \d [\tilde{m}]_s\biggr)^{1/2},
\end{equation*}
where $A$ is the total variation of $[[m, \tilde{m}]]$ and $\lvert \cdot \rvert$ denotes the absolute value of a bounded linear operator. Here $\lVert \cdot \rVert$ is the operator norm for elements of $B(\bb{H})$.
\end{proposition}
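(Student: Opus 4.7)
The plan is to extend the classical scalar Kunita--Watanabe inequality to the Hilbert-space setting by combining a singular value decomposition (SVD) of the trace-class covariation $[[m, \tilde m]]$ with a density argument. The proof naturally decomposes into three stages.

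The first and key stage establishes an operator-valued increment bound: for $0 \leq s \leq t \leq T$,
\[ A_t - A_s \;\leq\; \bigl([m]_t - [m]_s\bigr)^{1/2} \bigl([\tilde m]_t - [\tilde m]_s\bigr)^{1/2}. \]
To prove this, I would fix a singular value decomposition $[[m, \tilde m]]_t - [[m, \tilde m]]_s = \sum_k \sigma_k\, u_k \otimes v_k$ of the trace-class increment (so that $A_t - A_s = \sum_k \sigma_k$) and identify each singular value with a scalar quadratic covariation $\sigma_k = \langle u_k, ([[m,\tilde m]]_t - [[m,\tilde m]]_s) v_k\rangle_{\bb H} = [\langle m, u_k\rangle, \langle \tilde m, v_k\rangle]_t - [\langle m, u_k\rangle, \langle \tilde m, v_k\rangle]_s$. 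The classical scalar Kunita--Watanabe inequality then bounds each $\sigma_k$ by $\sqrt{[\langle m, u_k\rangle]_t - [\langle m, u_k\rangle]_s} \cdot \sqrt{[\langle \tilde m, v_k\rangle]_t - [\langle \tilde m, v_k\rangle]_s}$. Summing over $k$, a discrete Cauchy--Schwarz together with the Parseval-type identities $\sum_k ([\langle m, u_k\rangle]_t - [\langle m, u_k\rangle]_s) \leq [m]_t - [m]_s$ and the analogous one for $\tilde m$ (valid because $(u_k)$ and $(v_k)$ are each orthonormal) yield the stated increment bound.

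In the second stage, I would establish the inequality for simple step integrands constant on a common partition $0 = t_0 < \dots < t_n = t$. Using the Schatten--H\"older bound $\lVert G K \rVert_{\mathrm{tr}} \leq \lVert G\rVert \lVert K\rVert_{\mathrm{tr}}$, and interpreting $\d A_s$ via the operator-valued Radon--Nikodym derivative of $[[m, \tilde m]]$ with respect to its trace-norm total variation, one obtains $\int_{t_i}^{t_{i+1}} \trace(\lvert \tilde F_s F_s\rvert\, \d A_s) \leq \lVert \tilde F_{t_i}\rVert \lVert F_{t_i}\rVert\, (A_{t_{i+1}} - A_{t_i})$ on each sub-interval. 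Inserting the increment bound from the first stage and then applying the discrete Cauchy--Schwarz inequality to the sums over $i$ with $a_i = \lVert F_{t_i}\rVert ([m]_{t_{i+1}} - [m]_{t_i})^{1/2}$ and $b_i = \lVert \tilde F_{t_i}\rVert ([\tilde m]_{t_{i+1}} - [\tilde m]_{t_i})^{1/2}$ gives the full inequality for simple integrands.

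The third stage extends the bound to general measurable operator-valued $F, \tilde F$ by a standard monotone class / density argument in $L^2(\d A)$. The main obstacle is the first stage: unlike the scalar case, where Kunita--Watanabe follows directly from the positivity $[[\alpha m + \beta \tilde m]] \geq 0$ by taking traces, the trace norm is a sum of singular values rather than eigenvalues of $[[m, \tilde m]]$, so the naive polarisation does not suffice. The SVD-based reduction above, which pairs each singular direction with a tailored scalar Kunita--Watanabe estimate, is essentially forced; the delicate technical point is ensuring that the partition- and interval-dependent choice of singular vectors $(u_k), (v_k)$ still allows the Parseval bounds on the $[m]$ and $[\tilde m]$ increments to be invoked uniformly when summing across sub-intervals.
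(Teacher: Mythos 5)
Your overall template matches the paper's: establish a scalar increment bound $\trace(A_t - A_s) \leq \sqrt{[m]_t - [m]_s}\,\sqrt{[\tilde m]_t - [\tilde m]_s}$ and then run the classical one-dimensional Kunita--Watanabe argument via simple integrands, discrete Cauchy--Schwarz, and a density step. Where you differ is in how the increment bound is obtained: the paper appeals to M\'etivier's theorem on approximating $[[m,\tilde m]]$ by sums of square increments, whereas you decompose the increment by SVD and pair each singular direction with a scalar Kunita--Watanabe estimate plus a Parseval bound from orthonormality of $(u_k)$ and $(v_k)$. That is a legitimate and arguably more self-contained route, since it requires only the positivity and trace-class structure of $[[m]]$ and $[[\tilde m]]$ rather than the specific approximation theorem.

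There is, however, a genuine gap in your first stage. The parenthetical ``(so that $A_t - A_s = \sum_k \sigma_k$)'' equates the total-variation increment with the trace norm $\lVert [[m,\tilde m]]_t - [[m,\tilde m]]_s\rVert_{\mathrm{tr}}$ of the single increment. These are not the same: the total variation over $[s,t]$ is the supremum over partitions of the sum of trace norms of sub-increments and can strictly exceed $\lVert [[m,\tilde m]]_t - [[m,\tilde m]]_s\rVert_{\mathrm{tr}}$ whenever $[[m,\tilde m]]$ backtracks. It is genuinely the total variation that your second stage needs, because $\trace(\lvert \tilde F_s F_s\rvert\,\d A_s)$ integrates against the positive measure $\d A_s$, not against $\d [[m,\tilde m]]_s$, so the step-integrand estimate $\int_{t_i}^{t_{i+1}} \trace(\lvert \tilde F_s F_s\rvert\,\d A_s) \leq \lVert \tilde F_{t_i}\rVert\lVert F_{t_i}\rVert\,(A_{t_{i+1}}-A_{t_i})$ requires control of $A_{t_{i+1}}-A_{t_i}$ and not merely of the trace norm of the tensor covariation increment. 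The repair is short: apply your SVD estimate on each sub-interval of an arbitrary partition of $[s,t]$, sum, use discrete Cauchy--Schwarz on the resulting scalars, and pass to the supremum. This also disposes of the worry you raise at the end about the partition-dependence of the singular vectors: each sub-interval's SVD is used only to produce a single scalar bound, and all aggregation thereafter is by scalar Cauchy--Schwarz, so nothing needs to be harmonised across sub-intervals.
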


\begin{proof}
Using the approximation of the tensor quadratic variation by sums of square increments, see \cite[Theorem 26.11 ]{metivier_semimartingales_1982}, we can show that for $0 \leq s \leq t \leq T$ and $L$, $\tilde{L} \in B(\bb{H})$ it holds that
\begin{align*}
    \trace\bigl(\lvert \tilde{L} L\rvert (A_t - A_s)\bigr) \leq \lVert L\rVert \lVert \tilde{L}\rVert \trace(A_t - A_s) \leq \lVert L\rVert \lVert \tilde{L}\rVert \sqrt{[m]_t - [m]_s} \sqrt{[\tilde{m}]_t - [\tilde{m}]_s}.
\end{align*}
With this inequality established one can follow the proof of the classical one-dimensional Kunita--Watanabe inequality. See e.g.\@ \cite[Proposition 4.18]{gall_bm_2016}.
\end{proof}

\begin{lemma} \label{lem:stoch_int_conv}
Suppose that $(u^n)_{n \geq 1}$ is a sequence in $D_{\bb{F}}^2([0, T]; \bb{H})$ that converges to some process $u \in D_{\bb{F}}^2([0, T]; \bb{H})$, and that $(m^n)_{n \geq 1}$, $(\tilde{m}^n)_{n \geq 1}$ are sequences in $\cal{M}_{\bb{F}}^2([0, T]; \bb{H})$ that converge to processes $m$, $\tilde{m} \in \cal{M}_{\bb{F}}^2([0, T]; \bb{H})$, respectively. Next, let $F_1 \define \bb{H} \to \bb{H}$ and $F_2 \define \bb{H} \to B(\bb{H})$ be continuous and bounded on bounded sets of $\bb{H}$. Then
\begin{align} 
    \int_0^t \langle F_1(u^n_{s-}), \d m^n_s\rangle_{\bb{H}} &\to \int_0^t \langle F_1(u_{s-}), \d m_s\rangle_{\bb{H}}, \label{eq:conv_stoch_int_1} \\
    \int_0^t \trace\bigl(F_2(u^n_{s-}) \, \d [[m^n, \tilde{m}^n]]^c_s\bigr) &\to \int_0^t \trace\bigl(F_2(u_{s-}) \, \d [[m, \tilde{m}]]^c_s\bigr) \label{eq:conv_stoch_int_2}
\end{align}
uniformly in $t \in [0, T]$ in probability.
\end{lemma}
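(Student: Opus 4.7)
The strategy is to combine a subsequence and localization argument with the two stochastic estimates from Propositions \ref{prop:bdg} and \ref{prop:kunita_watanabe}. Since convergence in probability is characterised by subsequential almost sure convergence, I may pass to a subsequence along which $\sup_t\lVert u^n_t - u_t\rVert_{\bb H}$, $\sup_t\lVert m^n_t - m_t\rVert_{\bb H}$ and $\sup_t\lVert \tilde m^n_t - \tilde m_t\rVert_{\bb H}$ all tend to zero almost surely; the last two follow from the $\cal{M}^2$ convergence via Doob's $L^2$-inequality applied to $m^n - m$ and $\tilde m^n - \tilde m$. Next I localise at $\tau_R = \inf\{t\in[0,T]: \sup_n\lVert u^n_t\rVert_{\bb H} \vee \lVert u_t\rVert_{\bb H} > R\}$. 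On $[0,\tau_R]$ the processes $u^n,u$ stay in the ball of radius $R$, so $F_1\circ u^n$, $F_1\circ u$, $F_2\circ u^n$, $F_2\circ u$ are uniformly bounded by some $M(R)$; and since $\pr(\tau_R < T)\to 0$ as $R\to\infty$, it is enough to verify convergence on $[0,\tau_R]$ for each fixed $R$.

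For \eqref{eq:conv_stoch_int_1} I decompose
\begin{equation*}
\int_0^{t\wedge\tau_R}\langle F_1(u^n_{s-}),\,\d m^n_s\rangle_{\bb H} - \int_0^{t\wedge\tau_R}\langle F_1(u_{s-}),\,\d m_s\rangle_{\bb H} = I^n_1(t) + I^n_2(t),
\end{equation*}
with $I^n_1(t) = \int_0^{t\wedge\tau_R}\langle F_1(u^n_{s-}) - F_1(u_{s-}),\,\d m^n_s\rangle_{\bb H}$ and $I^n_2(t) = \int_0^{t\wedge\tau_R}\langle F_1(u_{s-}),\,\d(m^n-m)_s\rangle_{\bb H}$. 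Proposition \ref{prop:bdg} with $p=1$ and Cauchy--Schwarz give $\ev\sup_t|I^n_2(t)| \leq C_1 M(R)(\ev[m^n-m]_T)^{1/2}\to 0$ (using $\ev[m^n-m]_T = \ev\lVert m^n_T - m_T\rVert^2_{\bb H}\to 0$), and $\ev\sup_t|I^n_1(t)| \leq C_1(\ev\sup_{s\leq\tau_R}\lVert F_1(u^n_{s-}) - F_1(u_{s-})\rVert_{\bb H}^2)^{1/2}(\ev[m^n]_T)^{1/2}$; the second factor is bounded since $\ev[m^n]_T\to \ev[m]_T$, so the remaining task is to show the supremum tends to zero. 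For \eqref{eq:conv_stoch_int_2} I polarise the tensor quadratic covariation to reduce to $\tilde m^n = m^n$, $\tilde m = m$, and apply the same decomposition; the bound $|\trace(L A)| \leq \lVert L\rVert\cdot\trace(A)$ for positive trace class $A$ (valid for $A = \d[[m^n]]^c$) reduces the first error term to $\int\lVert F_2(u^n_{s-}) - F_2(u_{s-})\rVert_{B(\bb H)}\,\d [m^n]^c_s$, handled as before, while Proposition \ref{prop:kunita_watanabe} (applied with $F_s = \tilde F_s = I$ and difference integrators) controls the cross term involving $\d[[m^n]]^c - \d[[m]]^c$ via $\ev[m^n-m]_T\to 0$.

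The main obstacle is justifying $\sup_{s\leq\tau_R}\lVert F_i(u^n_{s-}) - F_i(u_{s-})\rVert \to 0$, because in infinite dimensions continuity of $F_i$ does not upgrade to uniform continuity on bounded sets, so the uniform convergence $u^n_{s-} \to u_{s-}$ does not transfer automatically. The rescue is that almost surely the restriction of the c\`adl\`ag path $u$ to $[0,\tau_R]$, together with its left limits, has relatively compact image in $\bb H$: only finitely many jumps exceed any given $\epsilon > 0$, and the c\`adl\`ag property allows the complement to be covered by finitely many $\epsilon$-balls. A standard subsequence argument then shows that $F_i$, although only continuous, is \emph{uniformly} continuous on a small open neighbourhood of this compact image. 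Since the values $u^n_{s-}(\omega)$ eventually lie in such a neighbourhood (by uniform convergence), the desired uniform continuity of $F_i\circ u^n - F_i\circ u$ follows, and bounded convergence finishes the $L^2$-estimate $\ev\sup_{s\leq\tau_R}\lVert F_i(u^n_{s-}) - F_i(u_{s-})\rVert^2 \to 0$.
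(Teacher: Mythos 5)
Your proof is correct and follows the same scheme as the paper's: decompose each difference into an integrand--error term and an integrator--error term, then bound the former via Proposition \ref{prop:bdg} (after truncation) and the latter via Proposition \ref{prop:bdg} or Proposition \ref{prop:kunita_watanabe} together with polarisation for the tensor quadratic covariation. Your localisation by the stopping time $\tau_R$ is a cosmetic variant of the paper's localisation via the probability that the paths stay in the ball $B_k$; both exploit boundedness of $F_1,F_2$ on bounded sets in the same way.

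The genuine added value of your write-up is that it supplies a justification that the paper's proof glosses over. To pass from $\sup_t\lVert u^n_t - u_t\rVert_{\bb H}\to 0$ a.s.\@ to $\sup_t\lVert F_i(u^n_{t-}) - F_i(u_{t-})\rVert\to 0$ a.s.\@ the paper simply cites continuity of $F_1\circ\pi_k$ and the dominated convergence theorem, but closed balls in an infinite-dimensional $\bb H$ are not compact, so continuity of $F_i$ on a ball does not yield uniform continuity there, and pointwise-in-$t$ convergence does not automatically upgrade to convergence of the supremum. Your observation --- that a.s.\@ the range of a c\`adl\`ag path $u(\omega)$ together with its left limits has compact closure, and that a sequential-compactness argument then delivers $\sup_{s\le\tau_R}\lVert F_i(u^n_{s-}) - F_i(u_{s-})\rVert\to 0$ --- is precisely what makes the lemma hold under the stated hypothesis that $F_1,F_2$ are merely continuous. (In the applications within Theorem \ref{thm:ito} the derivatives $D_iF$ and $D^2_{ij}F$ are assumed uniformly continuous on bounded sets, so the paper's shorter argument would suffice there.) Two small points of phrasing: (i) the subsequence reduction should be stated as \emph{for every subsequence there is a further subsequence along which the hypotheses hold a.s.}, since this is the criterion that establishes convergence in probability; (ii) what the compactness argument actually yields is not uniform continuity of $F_i$ on an open neighbourhood of the compact range, but the slightly weaker and sufficient statement that for every $\varepsilon>0$ there is $\delta>0$ with $\lVert F_i(y)-F_i(x)\rVert<\varepsilon$ whenever $x$ lies in the compact set and $\lVert y-x\rVert_{\bb H}<\delta$.
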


\begin{proof}
We prove the convergences in \eqref{eq:conv_stoch_int_1} and \eqref{eq:conv_stoch_int_2} separately. For the first one we write
\begin{align} \label{eq:decomp_first_integral}
\begin{split}
    \int_0^t \langle F_1(u^n_{s-})&, \d m^n_s\rangle_{\bb{H}} - \int_0^t \langle F_1(u_{s-}), \d m_s\rangle_{\bb{H}} \\
    &= \int_0^t \bigl\langle F_1(u^n_{s-}) - F_1(u_{s-}), \d m^n_s\bigr\rangle_{\bb{H}} + \int_0^t \bigl\langle F_1(u_{s-}), \d (m^n_s - m_s)\bigr\rangle_{\bb{H}}.
\end{split}
\end{align}
We will deal with the terms on the right-hand side in order. Our goal is to prove that both converge to zero in probability. For $k \geq 1$ let $\pi_k \define \bb{H} \to \bb{H}$ denote the projection onto the centred ball $B_k$ of radius $k$ in $\bb{H}$. Now, since $\ev \sup_{0 \leq s \leq t} \lVert u^n_s - u_s\rVert_{\bb{H}}^2 \to 0$, for any $\delta > 0$ we can find $K \geq 1$ such that for all $k \geq K$ it holds that 
\begin{align*}
    \pr\Bigl(\pi_k(u^n_{s-}) = u^n_{s-},\, \pi_k(u_{s-}) = u_{s-} \ \text{for } s \in (0, t]\Bigr) &= \pr\biggl(\sup_{0 \leq s < t}\lVert u^n_s\rVert_{\bb{H}}^2 \lor \sup_{0 \leq s < t}\lVert u_s\rVert_{\bb{H}}^2 \leq k\biggr) \\
    &\geq 1 - \delta.
\end{align*}
Hence, for any $\epsilon > 0$ we obtain
\begin{align} \label{eq:probability_bound}
\begin{split}
    \pr\biggl(\sup_{0 \leq t \leq T}\biggl\lvert &\int_0^t \bigl\langle F_1(u^n_{s-}) - F_1(u_{s-}), \d m^n_s\bigr\rangle_{\bb{H}}\biggr\rvert > \epsilon\biggr) \\
    &\leq \pr\biggl(\sup_{0 \leq t \leq T}\biggl\lvert \int_0^t \bigl\langle F_1(\pi_k(u^n_{s-})) - F_1(\pi_k(u_{s-})), \d m^n_s\bigr\rangle_{\bb{H}}\biggr\rvert > \epsilon\biggr) + \delta.
\end{split}
\end{align}
Next, using Proposition \ref{prop:bdg} we find that 
\begin{align} \label{eq:expectation_bound_conv}
    \pr\biggl(\sup_{0 \leq t \leq T}\biggl\lvert \int_0^t \bigl\langle F_1&(\pi_k(u^n_{s-})) - F_1(\pi_k(u_{s-})), \d m^n_s\bigr\rangle_{\bb{H}}\biggr\rvert > \epsilon\biggr) \notag \\
    &\leq \frac{1}{\epsilon}\ev\sup_{0 \leq t \leq T}\biggl\lvert \int_0^t \bigl\langle F_1(\pi_k(u^n_{t-})) - F_1(\pi_k(u_{t-})), \d m^n_s\bigr\rangle_{\bb{H}}\biggr\rvert \notag \\
    &\leq \frac{C_1}{\epsilon} \ev\biggl(\int_0^T \bigl\lVert F_1(\pi_k(u^n_{t-})) - F_1(\pi_k(u_{t-}))\bigr\rVert_{\bb{H}}^2 \, \d [m^n]_t\biggr)^{1/2} \notag \\
    &\leq \frac{C_1}{\epsilon} \ev\biggl[\sup_{0 \leq t \leq T} \bigl\lVert F_1(\pi_k(u^n_t)) - F_1(\pi_k(u_t))\bigr\rVert_{\bb{H}} \bigl([m^n]_T - [m^n]_0\bigr)^{1/2}\biggr] \notag \\
    &\leq \frac{C_1^2}{\delta \epsilon^2} \ev \sup_{0 \leq t \leq T} \bigl\lVert F_1(\pi_k(u^n_t)) - F_1(\pi_k(u_t))\bigr\rVert_{\bb{H}}^2 + \delta \ev\bigl([m^n]_T - [m^n]_0\bigr).
\end{align}
Now, since $m^n$ converges to $m$ in $\cal{M}_{\bb{F}}^2([0, T]; \bb{H})$, the quantity $\ev\bigl([m^n]_T - [m^n]_0\bigr)$ is bounded uniformly in $n \geq 1$ by some constant $C > 0$. Moreover, the map $F_1 \circ \pi_k$ is continuous and bounded, so since $\ev \sup_{0 \leq t \leq T} \lVert u^n_t - u_t\rVert_{\bb{H}}^2 \to 0$ by the dominated convergence theorem, we may choose $N \geq 1$ large enough so that for all $n \geq N$ we have $\ev \sup_{0 \leq t \leq T} \bigl\lVert F_1(\pi_k(u^n_t)) - F_1(\pi_k(u_t))\bigr\rVert_{\bb{H}}^2 \leq \frac{\delta^2 \epsilon^2}{C_1^2}$. Plugging this into \eqref{eq:expectation_bound_conv} and combining it with Equation \eqref{eq:probability_bound} implies that
\begin{equation*}
    \pr\biggl(\sup_{0 \leq t \leq T} \biggl\lvert \int_0^t \bigl\langle F_1(u^n_{s-}) - F_1(u_{s-}), \d m^n_s\bigr\rangle_{\bb{H}}\biggr\rvert > \epsilon\biggr) \leq (2 + C)\delta.
\end{equation*}
Since $\delta$ and $\epsilon$ were arbitrary this implies that $\int_0^t \bigl\langle F_1(u^n_{s-}) - F_1(u_{s-}), \d m^n_s\bigr\rangle_{\bb{H}}$ converges uniformly in $t \in [0, T]$ to zero in probability.

Next, let us consider the second term on the right-hand side of \eqref{eq:decomp_first_integral}. Arguing as above for any $\delta > 0$ we can choose $K \geq 1$ and $k \geq K$ such that for any $\epsilon > 0$,
\begin{align*}
    \pr\biggl(\biggl\lvert\int_0^t \bigl\langle F_1&(u_{s-}), \d (m^n_s - m_s)\bigr\rangle_{\bb{H}}\biggr\rvert > 
    \epsilon\biggr) \\
    &\leq \frac{1}{\epsilon}\ev\biggl\lvert\int_0^t \bigl\langle F_1(\pi_k(u_{s-})), \d (m^n_s - m_s)\bigr\rangle_{\bb{H}}\biggr\rvert + \delta \\
    &\leq \frac{\delta}{c_k^2} \ev \sup_{0 \leq t \leq T} \lVert F_1(\pi_k(u_t))\rVert_{\bb{H}}^2 + \frac{C_1^2 c_k^2}{\delta \epsilon^2} \ev\bigl[[m^n]_t - [m^{\epsilon}]_0 - ([m]_t - [m]_0)\bigr] + \delta \\
    &\leq 2\delta + \frac{C_1^2 c_k^2}{\delta \epsilon^2} \ev\bigl[[m^n]_t - [m^n]_0 - ([m]_t - [m]_0)\bigr],
\end{align*}
where $c_k = \sup_{h \in B_k}\lVert F_1(h)\rVert_{\bb{H}}$. Then we choose $N \geq 1$ large enough such that for all $n \geq 1$ we have $\ev\bigl[[m^n]_t - [m^n]_0 - ([m]_t - [m]_0)\bigr] \leq \frac{\delta^2 \epsilon^2}{C_1^2 c_n^2}$ to obtain $\pr\bigl(\sup_{0 \leq t \leq T}\bigl\lvert\int_0^t \bigl\langle F_1(u_{s-}), \d (m^n_s - m_s)\bigr\rangle_{\bb{H}}\bigr\rvert > \epsilon\bigr) \leq 3 \delta$, which means that $\int_0^t \bigl\langle F_1(u_{s-}), \d (m^n_s - m_s)\bigr\rangle_{\bb{H}}$ converges uniformly in $t \in [0, T]$ to zero in probability as well. This concludes our treatment of the convergence in \eqref{eq:conv_stoch_int_1}.

We continue with \eqref{eq:conv_stoch_int_2}. By appealing to the polarisation identity $[[m, \tilde{m}]] = \frac{1}{4}([[m + \tilde{m}]] - [[m - \tilde{m}]])$, we can reduce to the case $\tilde{m}^n = m^n$ and $\tilde{m} = m$. Then similarly to Equation \eqref{eq:decomp_first_integral}, we decompose
\begin{align*}
    \int_0^t \trace\bigl(&F_2(u^n_{s-}) \, \d [[m^n]]^c_s\bigr) - \int_0^t \trace\bigl(F_2(u_{s-}) \, \d [[m]]^c_s\bigr) \\
    &= \int_0^t \trace\bigl((F_2(u^n_{s-}) - F_2(u_{s-})) \, \d [[m^n]]^c_s\bigr) + \int_0^t \trace\bigl(F_2(u_{s-}) \, \d ([[m^n]]^c_s - [[m]]^c_s)\bigr).
\end{align*}
For the first term we apply the inequality
\begin{equation} \label{eq:bound_trace}
    \sup_{0 \leq t \leq T}\biggl\lvert \int_0^t \trace\bigl((F_2(u^n_{s-}) - F_2(u_{s-})) \, \d [[m^n]]^c_s\bigr) \biggr\rvert \leq \int_0^T \bigl\lVert F_2(u^n_{t-}) - F_2(u_{t-})\bigr\rVert \, \d [m^n]^c_t,
\end{equation}
where $\bigl\lVert F_2(u^n_{t-}) - F_2(u_{t-})\bigr\rVert$ denotes the operator norm of $F_2(u^n_{t-}) - F_2(u_{t-}) \define \bb{H} \to \bb{H}$ and then proceed as above to show that the right-hand side converges to zero in probability as $n \to \infty$. To deal with the second term, we write $[[m^n]]^c_s - [[m]]^c_s = [[m^n - m, m^n]]^c_s + [[m, m^n - m]]^c_s$ and then apply the Kunita--Watanabe inequality, Proposition \ref{prop:kunita_watanabe}, to obtain
\begin{align*}
    \sup_{ 0 \leq t \leq T} \biggl\lvert \int_0^t \trace\bigl(F_2&(u_{s-}) \, \d ([[m^n]]^c_s - [[m]]^c_s)\bigr)\biggr\rvert \\
    &\leq \sqrt{2}\biggl(\int_0^T \lVert F_2(u_{t-})\rVert \, \d ([m^n]_t + [m]_t)\biggr)^{1/2}\biggl(\int_0^T \lVert F_2(u_{t-})\rVert \, \d [m^n - m]_t\biggr)^{1/2} \\
    &\leq \delta \int_0^T \lVert F_2(u_{t-})\rVert \, \d ([m^n]_t + [m]_t) + \frac{1}{2\delta} \int_0^T \lVert F_2(u_{t-})\rVert \, \d [m^n - m]_t.
\end{align*}
The first expression on the right-hand can be made arbitrarily small by choosing $\delta > 0$ appropriately. The second term tends to zero in probability as $n \to \infty$. Hence, $\sup_{ 0 \leq t \leq T} \bigl\lvert \int_0^t \trace\bigl(F_2(u_{s-}) \, \d ([[m^n]]^c_s - [[m]]^c_s)\bigr)\bigr\rvert \to 0$ in probability, which concludes the proof.
\end{proof}

\subsection{Auxiliary Results for Section \ref{sec:control}}

In what follows, $\eta \define \R^d \to \R$ is defined by $\eta(x) = \eta_0 \sqrt{1 + \lvert x\rvert^2}$ for $x \in \R^d$ and some $\eta_0 > 0$. Recall the distance $d_0$ on the space $\M(\R^d)$ of finite measures on $\R^d$ defined in Equation \eqref{eq:blip}. 

\begin{lemma} \label{lem:loc_l2_to_d0}
Let $(\mu^n)_n$ be a bounded sequence in $L^2([0, T]; L^2_{\eta}(\R^d))$ that converges to some $\mu \in L^2([0, T]; L^2_{\eta}(\R^d))$ in $L^2([0, T]; L^2_{\eta, \textup{loc}}(\R^d))$. Then it holds that
\begin{equation*}
    \int_0^T \sup_{\lVert \varphi\rVert_{\infty} \leq 1} \langle \mu^n_t - \mu_t, \varphi\rangle \, \d t \to 0
\end{equation*}
as $n \to \infty$, where the supremum is taken over all bounded and measurable functions $\varphi \define \R^d \to \R$ with $\lVert \varphi\rVert_{\infty} \leq 1$.
\end{lemma}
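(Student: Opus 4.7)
The plan is to bound the $L^1$-norm of $\mu^n_t - \mu_t$, which is exactly the supremum $\sup_{\lVert\varphi\rVert_\infty \le 1}\langle \mu^n_t - \mu_t, \varphi\rangle$ (the $L^1$-norm is realised by $\varphi_t(x) = \sgn(\mu^n_t(x) - \mu_t(x))$). To do so, I would split the spatial integration over a ball $B_R$ and its complement, using the local convergence on $B_R$ and exploiting the exponential growth of $\eta$ on $B_R^c$.

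Let me be more precise. Given $\epsilon > 0$, I first choose $R > 0$ large enough so that $\lVert e^{-\eta/2}\bf{1}_{B_R^c}\rVert_{L^2} \leq \epsilon$; this is possible because $\eta(x) = \eta_0 \sqrt{1 + \lvert x\rvert^2}$ grows at infinity. For any $t$ and any bounded measurable $\varphi$ with $\lVert \varphi\rVert_\infty \leq 1$, the Cauchy--Schwarz inequality (with the weights $e^{\eta/2}$ and $e^{-\eta/2}$) gives
\begin{align*}
    \bigl\lvert \langle \mu^n_t - \mu_t, \varphi\rangle\bigr\rvert &\leq \lVert (\mu^n_t - \mu_t)\bf{1}_{B_R}\rVert_{L^1} + \lVert (\mu^n_t - \mu_t)\bf{1}_{B_R^c}\rVert_{L^1} \\
    &\leq \lVert e^{-\eta/2}\bf{1}_{B_R}\rVert_{L^2}\lVert (\mu^n_t - \mu_t)\bf{1}_{B_R}\rVert_{\eta} + \epsilon \, \lVert \mu^n_t - \mu_t\rVert_{\eta}.
\end{align*}

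Taking the supremum over $\varphi$ and integrating over $t \in [0, T]$, the second term yields, by Cauchy--Schwarz in time,
\begin{equation*}
    \epsilon \int_0^T \lVert \mu^n_t - \mu_t\rVert_{\eta} \, \d t \leq \epsilon \, T^{1/2}\bigl(\lVert \mu^n\rVert_{L^2([0, T]; L^2_\eta)} + \lVert \mu\rVert_{L^2([0, T]; L^2_\eta)}\bigr) \leq C\epsilon,
\end{equation*}
where $C$ is independent of $n$ by the hypothesis that $(\mu^n)_n$ is bounded in $L^2([0, T]; L^2_\eta(\R^d))$. The first term, again by Cauchy--Schwarz in time, is bounded by $\lVert e^{-\eta/2}\rVert_{L^2}\, T^{1/2}\lVert (\mu^n - \mu)\bf{1}_{B_R}\rVert_{L^2([0, T]; L^2_\eta)}$, which tends to $0$ as $n \to \infty$ by the assumed convergence in $L^2([0, T]; L^2_{\eta, \textup{loc}}(\R^d))$. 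Therefore $\limsup_{n \to \infty} \int_0^T \lVert \mu^n_t - \mu_t\rVert_{L^1} \, \d t \leq C\epsilon$, and since $\epsilon > 0$ was arbitrary we conclude. No serious obstacles are anticipated; the only minor point is ensuring that the tail estimate (with $\eta$ growing only like $\sqrt{1 + \lvert x\rvert^2}$) is sufficient, which it is because $\int_{B_R^c} e^{-\eta_0\sqrt{1 + \lvert x\rvert^2}}\, \d x \to 0$ as $R \to \infty$.
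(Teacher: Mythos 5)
Your proof is correct and follows essentially the same route as the paper's: split the spatial integral over a ball $B_R$ and its complement, control the tail by the exponential decay of $e^{-\eta/2}$ together with the uniform bound on $\lVert \mu^n_t\rVert_\eta$, and use the local $L^2_\eta$-convergence on $B_R$. The only cosmetic difference is that you pass through Cauchy--Schwarz in time to relate $L^1$ and $L^2$ norms in $t$, whereas the paper works with the $L^1$-in-time integrals directly; the underlying decomposition and both estimates are the same.
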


\begin{proof}
For $R > 0$, we can write
\begin{align*}
    \int_0^T \sup_{\lVert \varphi\rVert_{\infty} \leq 1} \langle &\mu^n_t - \mu_t, \varphi\rangle \, \d t \\
    &\leq 2 \int_0^T \langle \lvert \mu^n_t - \mu_t\rvert, \bf{1}_{B_R}\rangle \, \d t + 2 \int_0^T \bigl\langle \lvert \mu^n_t - \mu_t\rvert, \bf{1}_{B_R^C}\bigr\rangle \, \d t \\
    &\leq 2 \lVert e^{-\eta/2}\rVert_{L^2} \int_0^T \lVert \bf{1}_{B_R}(\mu^n_t - \mu_t)\rVert_{\eta} \, \d t + 2 \bigl\lVert \bf{1}_{B_R^C} \bigr\rVert_{\bar{\eta}} \int_0^T \lVert \mu^n_t\rVert_{\eta} + \lVert \mu_t\rVert_{\eta} \, \d t.
\end{align*}
The second integral in the last line is bounded uniformly in $n \geq 1$ while $\bigl\lVert \bf{1}_{B_R^C} \bigr\rVert_{\bar{\eta}}$ vanishes as $R \to \infty$. Thus, for any $\epsilon > 0$, choosing $R$ large enough, we can make the supremum in $n \geq 1$ of the second summand in the last line smaller than $\epsilon/2$. Then, we pick $N \geq 1$ large enough, such that for any $n \geq N$, the first term in the last line is bounded by $\epsilon/2$. Together, we get $\int_0^T \sup_{\lVert \varphi\rVert_{\infty} \leq 1} \langle \mu^n_t - \mu_t, \varphi\rangle \, \d t \leq \epsilon$ for all $n \geq N$. Since $\epsilon > 0$ was arbitrary, the desired convergence follows.
\end{proof}

\begin{lemma} \label{lem:int_conv}
Suppose that
\begin{itemize}
    \item $(\mu^n)_n$ and $\mu$ are as in the statement of Lemma \ref{lem:loc_l2_to_d0};
    \item $(g^n)_n$ is a sequence in $L^2([0, T]; L^2_{\textup{loc}}(\R^d; \R^{d_G}))$, such that $g^n_t(x) \in G$ for all $(t, x) \in [0, T] \times \R^d$ and which converges weakly to some $g \in L^2([0, T]; L^2_{\textup{loc}}(\R^d; \R^{d_G}))$;
    \item $(w^n)_n$ is a convergent sequence in $C([0, T]; \R^d)$ with limit $w$;
    \item $F \define [0, T] \times \R^d \times \M(\R^d) \times G \to \R$ is measurable and bounded such that $v \mapsto F(t, x, v, h)$ is continuous uniformly in $h \in G$ for all $(t, x) \in [0, T] \times \M(\R^d)$;
    \item $\varphi \in C_c([0, T] \times \R^d)$.
\end{itemize}
Set $\tilde{\mu}^n_t = (\id + w^n_t)^{\#}\mu^n_t$ and $\tilde{\mu}_t = (\id + w_t)^{\#}\mu_t$. Then, 
\begin{enumerate}[noitemsep, label = \textup{(\roman*)}]
    \item \label{it:conv_1} if $F$ is affine in $h$, it follows that
    \begin{align*}
        \lim_{n \to \infty}\int_0^T \Bigl\langle \mu^n_t, \varphi(t, \cdot) F\bigl(t, \cdot + w^n_t, \tilde{\mu}^n_t, g^n_t\bigr) \Bigr\rangle \, \d t = \int_0^T \Bigl\langle \mu_t, \varphi(t, \cdot) F\bigl(t, \cdot + w_t, \tilde{\mu}_t, g_t\bigr) \Bigr\rangle \, \d t;
    \end{align*}
    % \item \label{it:conv_2} if $F_2 \define [0, T] \times \R^d \times \R^d \to \R^{d_G}$ is bounded and measurable and such that there exists a bounded set $K \subset \R^d$ such that $F_2(t, x, y) = 0$ for $t \in [0, T]$ and $x$, $y$ outside of $K$, it follows that
    % \begin{equation*}
    %     \lim_{n \to \infty}\int_0^T \bigl\langle \mu^n_t, F_2(t, \cdot + w^n_t, \cdot) \cdot g^n_t \bigr\rangle \, \d t = \int_0^T \bigl\langle \mu_t, F_2(t, \cdot + w_t, \cdot) \cdot g_t\bigr\rangle \, \d t;
    % \end{equation*}
    \item \label{it:conv_2} if $h \mapsto F(t, x, h, v)$ is convex for all $(t, x, v) \in [0, T] \times \R^d \times \M(\R^d)$, it follows that
    \begin{equation*}
        \liminf_{n \to \infty}\int_0^T \bigl\langle \mu^n_t, F\bigl(t, \cdot + w^n_t, \tilde{\mu}^n_t, g^n_t\bigr)\bigr\rangle \, \d t \geq \int_0^T \bigl\langle \mu_t, F\bigl(t, \cdot + w_t, \tilde{\mu}_t, g_t\bigr)\bigr\rangle \, \d t.
    \end{equation*}
    \item \label{it:conv_3} if $(g^n)_n$ converges in the strong topology on $L^2([0, T]; L^2_{\textup{loc}}(\R^d; \R^{d_G}))$ and the map $h \mapsto F(t, x, v, h)$ is continuous for all $(t, x, v) \in [0, T] \times \R^d \times \M(\R^d)$, it follows that
    \begin{equation*}
        \lim_{n \to \infty}\int_0^T \Bigl\langle \mu^n_t, \varphi(t, \cdot) F\bigl(t, \cdot + w^n_t, \tilde{\mu}^n_t, g^n_t\bigr) \Bigr\rangle \, \d t = \int_0^T \Bigl\langle \mu_t, \varphi(t, \cdot) F\bigl(t, \cdot + w_t, \tilde{\mu}_t, g_t\bigr)\Bigr\rangle \, \d t;
    \end{equation*}
    \item \label{it:conv_4} if $(g^n)_n$ converges in the strong topology on $L^2([0, T]; L^2_{\textup{loc}}(\R^d; \R^{d_G}))$ and the map $h \mapsto F(t, x, v, h)$ is upper semicontinuous for all $(t, x, v) \in [0, T] \times \R^d \times \M(\R^d)$, it follows that
    \begin{equation*}
        \limsup_{n \to \infty}\int_0^T \bigl\langle \mu^n_t, F\bigl(t, \cdot + w^n_t, \tilde{\mu}^n_t, g^n_t\bigr) \bigr\rangle \, \d t \leq \int_0^T \bigl\langle \mu_t, F\bigl(t, \cdot + w_t, \tilde{\mu}_t, g_t\bigr)\bigr\rangle \, \d t.
    \end{equation*}
\end{enumerate}
\end{lemma}

\begin{proof}
Let us define $\tilde{g}^n$, $\tilde{g} \in L^2([0, T]; L^2_{\text{loc}}(\R^d; \R^{d_G}))$ by $\tilde{g}^n_t(x) = g^n_t(x - w^n_t)$ and $\tilde{g}_t(x) = g_t(x - w_t)$ and, similarly, set $\tilde{\varphi}^n_t(x) = \varphi(t,x - w^n_t)$ and $\tilde{\varphi}_t(x) = \varphi(t, x - w_t)$. Note that, like $(\mu^n_t)_n$, the sequence $(\tilde{\mu}^n)_n$ is bounded in $L^2([0, T]; L^2_{\eta}(\R^d))$ and converges to $\tilde{\mu}$ in $L^2([0, T]; L^2_{\eta, \text{loc}}(\R^d))$, and, like $(g^n)_n$, the sequence $(\tilde{g}^n)_n$ converges weakly to $\tilde{g}$ in the space $L^2([0, T]; L^2_{\text{loc}}(\R^d; \R^{d_G}))$. Thus, by Lemma \ref{lem:loc_l2_to_d0} it holds that 
\begin{equation} \label{eq:pushfwd_conv}
    \int_0^T \sup_{\lVert \varphi\rVert_{\infty} \leq 1} \langle \tilde{\mu}^n_t - \tilde{\mu}_t, \varphi\rangle \, \d t \to 0
\end{equation}
and, in particular, $\int_0^T d_0^2(\tilde{\mu}^n_t, \tilde{\mu}_t) \, \d t \to 0$. Next, since $G$ is convex and $(g^n)_n$ converges weakly to $g$ in $L^2([0, T]; L^2_{\text{loc}}(\R^d; \R^{d_G}))$, $g^n_t(x) \in G$ for all $(t, x) \in [0, T] \times \R^d$ implies the same is true for $g$, i.e.\@ $g_t(x) \in G$ for all $(t, x) \in [0, T] \times \R^d$. In the proof of the four statements, we use the decomposition
\begin{align} \label{eq:conv_1}
\begin{split}
    \int_0^T &\Bigl(\Bigl\langle \mu^n_t, \varphi(t, \cdot) F\bigl(t, \cdot + w^n_t, \tilde{\mu}^n_t, g^n_t\bigr) \Bigr\rangle - \Bigl\langle \mu_t, \varphi(t, \cdot) F\bigl(t, \cdot + w_t, \tilde{\mu}_t, g_t\bigr) \Bigr\rangle\Bigr) \, \d t \\
    &= \int_0^T \bigl\langle \tilde{\mu}^n_t - \tilde{\mu}_t, \tilde{\varphi}^n_t F\bigl(t, \cdot, \tilde{\mu}^n_t, \tilde{g}^n_t\bigr) \bigr\rangle \, \d t \\
    &\ \ \ + \int_0^T \Bigl\langle \tilde{\mu}_t, (\tilde{\varphi}^n_t - \tilde{\varphi}_t)F\bigl(t, \cdot, \tilde{\mu}^n_t, \tilde{g}^n_t \bigr) \Bigr\rangle \, \d t \\
    &\ \ \ + \int_0^T \Bigl\langle \tilde{\mu}_t,\tilde{\varphi}_t \Bigl(F\bigl(t, \cdot, \tilde{\mu}^n_t, \tilde{g}^n_t \bigr) - F\bigl(t, \cdot, \tilde{\mu}_t, \tilde{g}^n_t \bigr)\Bigr) \Bigr\rangle \, \d t \\
    &\ \ \ + \int_0^T \Bigl\langle \tilde{\mu}_t, \tilde{\varphi}_t \Bigl(F\bigl(t, \cdot, \tilde{\mu}_t, \tilde{g}^n_t\bigr) - F\bigl(t, \cdot, \tilde{\mu}_t, \tilde{g}_t \bigr)\Bigr) \Bigr\rangle \, \d t.
\end{split}
\end{align}
Note that for Item \ref{it:conv_2} and \ref{it:conv_4}, the appearances of $\varphi$ and the functions derived from it should be removed, in which case the second term on the right-hand side above equals zero. Now, the first three terms on the right-hand side above vanish as $n \to \infty$ independently of the specific assumptions made in Items \ref{it:conv_1} to \ref{it:conv_4}. Indeed, the first expression vanishes by \eqref{eq:pushfwd_conv}. Next, for the second term in \eqref{eq:conv_1} (relevant for Items \ref{it:conv_1} and \ref{it:conv_3}), we have by assumption that
\begin{equation*}
    \Bigl\lvert (\tilde{\varphi}^n_t(x) - \tilde{\varphi}_t(x))F\bigl(t, x, \tilde{\mu}^n_t, \tilde{g}^n_t(x) \bigr) \Bigr\rvert \leq \lVert F\rVert_{\infty} \lvert \tilde{\varphi}^n_t(x) - \tilde{\varphi}_t(x)\rvert \to 0
\end{equation*}
since $w^n_t \to w_t$ and by continuity of $\varphi$. So convergence follows from the dominated convergence theorem. Lastly, since $F$ is continuous in the measure argument uniformly in the control, we have
\begin{equation*}
    \Bigl\lvert F\bigl(t, x, \tilde{\mu}^n_t, \tilde{g}^n_t(x) \bigr) - F\bigl(t, x, \tilde{\mu}^n_t, \tilde{g}_t(x) \bigr)\Bigr\rvert \to 0
\end{equation*}
as $n \to \infty$. Thus, convergence of the third expression on the right-hand side of \eqref{eq:conv_1} is again a consequence of the dominated convergence theorem. 

The behaviour as $n \to \infty$ of the remaining terms on the right-hand side will be determined based on the specific assumptions made in Items \ref{it:conv_1} to \ref{it:conv_1}. We proceed in sequence.

\textit{Item \ref{it:conv_1}}: The last term on the right-hand side of \eqref{eq:conv_1} vanishes as $n \to \infty$ since $\tilde{\varphi}$ has a compact support, $F$ is affine in the control argument, and the sequence $(\tilde{g}^n)_n$ converges weakly to $\tilde{g}$ in $L^2([0, T]; L^2_{\text{loc}}(\R^d; \R^{d_G}))$.

\textit{Item \ref{it:conv_2}}: Note that the convexity of $F$ in the control argument implies convexity of the function $L^2([0, T]; L^2_{\text{loc}}(\R^d; G)) \ni h \mapsto \int_0^T \langle \tilde{\mu}_t, F(t, \cdot, \tilde{\mu}_t, h_t)\rangle \, \d t$ and, therefore, its weak lower semicontinuity. Since $(\tilde{g}^n)_n$ converges weakly to $\tilde{g}$ in $L^2([0, T]; L^2_{\text{loc}}(\R^d; \R^{d_G}))$, we get that
\begin{equation*}
    \liminf_{n \to \infty} \int_0^T \Bigl\langle \tilde{\mu}_t, F(t, \cdot, \tilde{\mu}_t, \tilde{g}^n_t) - F(t, \cdot, \tilde{\mu}_t, \tilde{g}_t)\Bigr\rangle \, \d t \geq 0.
\end{equation*}

\textit{Item \ref{it:conv_3}}: This is again a straightforward consequence of the dominated convergence theorem.

\textit{Item \ref{it:conv_4}}: Let us pick a subsequence $(n_k)_k$ that achieves the limit superior of the expression
\begin{equation*}
    \int_0^T \Bigl\langle \tilde{\mu}_t, F(t, \cdot, \tilde{\mu}_t, \tilde{g}^n_t) - F(t, \cdot, \tilde{\mu}_t, \tilde{g}_t)\Bigr\rangle \, \d t.
\end{equation*}
By choosing a further subsequence if necessary (which we still denote by $(n_k)_k$), we may assume that $\tilde{g}^{n_k}_t(x)$ converges to $\tilde{g}_t(x)$ for a.e.\@ $(t, x) \in [0, T] \times \R^d$. Then upper semicontinuity of $F$ in the control argument implies that
\begin{equation*}
    \limsup_{k \to \infty} F\bigl(t, x, \tilde{\mu}_t, \tilde{g}^{n_k}_t(x)\bigr) \leq F\bigl(t, x, \tilde{\mu}_t, \tilde{g}_t(x)\bigr).
\end{equation*}
Multiplying both sides by $\tilde{\mu}_t(x)$, integrating over $[0, T] \times \R^d$, and then applying the reverse version of Fatou's lemma gives
\begin{align*}
    \limsup_{n \to \infty} \int_0^T \Bigl\langle \tilde{\mu}_t, F&(t, \cdot, \tilde{\mu}_t, \tilde{g}^n_t) - F(t, \cdot, \tilde{\mu}_t, \tilde{g}_t)\Bigr\rangle \, \d t \\
    &= \lim_{k \to \infty} \int_0^T \Bigl\langle \tilde{\mu}_t, F(t, \cdot, \tilde{\mu}_t, \tilde{g}^{n_k}_t) - F(t, \cdot, \tilde{\mu}_t, \tilde{g}_t)\Bigr\rangle \, \d t \\
    &\leq \int_0^T \limsup_{k \to \infty} \Bigl\langle \tilde{\mu}_t, F(t, \cdot, \tilde{\mu}_t, \tilde{g}^{n_k}_t) - F(t, \cdot, \tilde{\mu}_t, \tilde{g}_t)\Bigr\rangle \, \d t \\
    &\leq 0.
\end{align*}
This establishes Item \ref{it:conv_4} and concludes the proof.
\end{proof}

\section*{Acknowledgement}
This research has been supported by the EPSRC Centre for Doctoral Training in Mathematics of Random Systems: Analysis, Modelling and Simulation (EP/S023925/1).

\sloppypar
\printbibliography

@book{bogachev_measure_theory_vol_2_2007,
    author = {Vladimir I. Bogachev},
    doi = {10.1007/978-3-540-34514-5},
    publisher = {Springer Berlin, Heidelberg},
    title = {Measure Theory},
    volume = {2},
    year = {2007}
}

@book{barbu_ana_contr_1993,
  title={Analysis and Control of Nonlinear Infinite Dimensional Systems},
  author={Viorel Barbu},
  series={Mathematics in Science and Engineering},
  year={1993},
  volume={190},
  publisher={Academic Press, Inc.}
}

@article{daudin_oc_const_2023,
title = {Optimal control of the Fokker--Planck equation under state constraints in the Wasserstein space},
journal = {J.\@ Math.\@ Pures Appl.\@},
volume = {175},
pages = {37--75},
year = {2023},
doi = {10.1016/j.matpur.2023.05.002},
author = {Samuel Daudin}
}

@article{barbu_nonlin_fp_ctrl_2023,
author = {Viorel Barbu},
title = {Existence of Optimal Control for Nonlinear Fokker--Planck Equations in \(\boldsymbol{L^1(\mathbb{R}^d)}\).},
journal = {SIAM J.\@ Control},
volume = {61},
number = {3},
pages = {1213--1230},
year = {2023},
doi = {10.1137/22M1485243}
}

@article{anita_nonlin_fp_2024,
title = {Controlling a generalized Fokker--Planck equation via inputs with nonlocal action},
journal = {Nonlinear Anal.\@},
volume = {241},
pages = {113476},
year = {2024},
doi = {10.1016/j.na.2023.113476},
author = {\c{S}tefana-Lucia Ani\c{t}a}
}

@article{carrillo_mfc_2020,
title = {On a mean field optimal control problem},
journal = {Nonlinear Anal.\@},
volume = {199},
pages = {112039},
year = {2020},
doi = {10.1016/j.na.2020.112039},
author = {Jos\'e A.\@ Carrillo and Edgard A.\@ Pimentel and Vardan K.\@ Voskanyan}
}

@misc{carmona_nssc_2023,
    title={Non-standard Stochastic Control with Nonlinear Feynman-Kac Costs},
    author={Ren\'e Carmona and Mathieu Lauri\`ere and Pierre-Louis Lions},
    year={2023},
    eprint={2312.00908},
    archivePrefix={arXiv}
    % primaryClass={math.PR}
}

@Article{hambly_spde_model_2019,
  author={Ben Hambly and Andreas S{\o}jmark},
  title={{An SPDE model for systemic risk with endogenous contagion}},
  journal={Financ.\@ Stoch.\@},
  year=2019,
  volume={23},
  number={3},
  pages={535--594},
  month={6},
  doi={10.1007/s00780-019-00396-1}
}

@article{burzoni_mean_field_absorption_2023,
title = {Mean field games with absorption and common noise with a model of bank run},
journal = {Stoch.\@ Process.\@ Their Appl.\@},
volume = {164},
pages = {206--241},
year = {2023},
doi = {10.1016/j.spa.2023.07.007},
author = {Matteo Burzoni and Luciano Campi}
}

@article{pham_dynamic_programming_mkv_2017,
author = {Huy\^{e}n Pham and Xiaoli Wei},
title = {Dynamic Programming for Optimal Control of Stochastic McKean--Vlasov Dynamics},
journal = {SIAM J.\@ Control Optim.\@},
volume = {55},
number = {2},
pages = {1069--1101},
year = {2017},
doi = {10.1137/16M1071390}
}

@book{liptser_statistics_rp_1977,
  title={Statistics of Random Processes I: General Theory},
  author={Robert S.\@ Liptser and Albert N.\@ Shiryaev},
  isbn={978-1-4757-1665-8},
  series={Stochastic Modelling and Applied Probability},
  year={1977},
  doi={10.1007/978-1-4757-1665-8},
  publisher={Springer New York}
}

@article{bensoussan_smp_spde_1983,
title = {Stochastic maximum principle for distributed parameter systems},
author={Alain Bensoussan},
journal = {J.\@ Franklin Inst.\@},
volume = {315},
number = {5},
pages = {387--406},
year = {1983},
doi = {10.1016/0016-0032(83)90059-5}
}

@article{lasry_mfg_st_2006,
title = {Jeux à champ moyen. I – Le cas stationnaire},
journal = {C.\@ R.\@ Math.\@},
volume = {343},
number = {9},
pages = {619--625},
year = {2006},
doi = {10.1016/j.crma.2006.09.019},
author = {Jean-Michel Lasry and Pierre-Louis Lions}
}

@book{gall_bm_2016,
  title={Brownian Motion, Martingales, and Stochastic Calculus},
  author={Jean-Fran\c{c}ois {Le Gall}},
  series={Graduate Texts in Mathematics},
  year={2016},
  doi={doi.org/10.1007/978-3-319-31089-3},
  publisher={Springer International Publishing}
}

@article{kurtz_weak_strong_2014,
author = {Thomas Kurtz},
title = {{Weak and strong solutions of general stochastic models}},
volume = {19},
journal = {Electron.\@ Commun.\@ Probab.\@},
publisher = {Institute of Mathematical Statistics and Bernoulli Society},
pages = {1 -- 16},
year = {2014},
doi = {10.1214/ECP.v19-2833}
}

@article{djete_mkv_control_limit_2022,
author = {Mao Fabrice Djete and Dylan Possama\"i and Xiaolu Tan},
title = {McKean–Vlasov Optimal Control: Limit Theory and Equivalence Between Different Formulations},
journal = {Math.\@ Oper.\@ Res.\@},
volume={47},
number={4},
year = {2022},
pages = {2547--3399},
doi = {10.1287/moor.2021.1232}
}

@book{lions_bvp_1972,
  title={Non-Homogeneous Boundary Value Problems and Applications},
  subtitle={Vol.\@ 1},
  author={Jacques-Louis Lions and Enrico Magenes},
  isbn={978-3-642-65161-8},
  series={Grundlehren der mathematischen Wissenschaften},
  edition={1},
  year={1972},
  publisher={Springer-Verlag, Berlin Heidelberg},
  doi={10.1007/978-3-642-65161-8}
}

@article{lacker_mimicking_2020,
  title={Superposition and mimicking theorems for conditional McKean–Vlasov equations},
  author={Daniel Lacker and Mykhaylo Shkolnikov and Jiacheng Zhang},
  journal={J.\@ Eur.\@ Math.\@ Soc.\@ },
  year={2023},
  volume={25},
  number={8},
  pages={3229--3288},
  doi={10.4171/JEMS/1266}
}

@article{achdou_mfc_pde_2015,
title = {On the system of partial differential equations arising in mean field type control},
journal = {Discrete Contin.\@ Dyn.\@ Syst.\@},
volume = {35},
number = {9},
pages = {3879--3900},
year = {2015},
doi = {10.3934/dcds.2015.35.3879},
author = {Yves Achdou and Mathieu Lauri\`ere}
}

@book{bensoussan_mfg_mfc_2013,
  title={Mean Field Games and Mean Field Type Control Theory},
  author={Alain Bensoussan and Jens Frehse and Phillip Yam},
  edition={1},
  series = {SpringerBriefs in Mathematics},
  year={2013},
  isbn={978-1-4614-8508-7},
  publisher={Springer, New York, NY},
  doi={10.1007/978-1-4614-8508-7}
}

@article{hambly_mckean_vlasov_blow_up_2019,
    title={A McKean-Vlasov Equation with Positive Feedback and Blow-Ups},
    author={Ben Hambly and Sean Ledger and Andreas S\o{}jmark},
    year={2019},
    journal={Ann.\@ Appl.\@ Probab.\@},
    volume={29},
    number={4},
    pages={2338--2373},
    doi={10.1214/18-AAP1455}
}

@article{nadtochiy_mean_field_network_2020,
title = "Mean field systems on networks, with singular interaction through hitting times",
author = "Sergey Nadtochiy and Mykhaylo Shkolnikov",
year = "2020",
doi = "10.1214/19-AOP1403",
volume = "48",
pages = "1520--1556",
journal = "Ann.\@ Probab.\@",
fjournal = "Annals of Probability",
issn = "0091-1798",
publisher = "Institute of Mathematical Statistics",
number = "3",
}

@article{nadtochiy_ps_singular_inter_2019,
author = {Sergey Nadtochiy and Mykhaylo Shkolnikov},
title = {Particle systems with singular interaction through hitting times: Application in systemic risk modeling},
volume = {29},
journal = {Ann.\@ Appl.\@ Probab.\@},
number = {1},
publisher = {Institute of Mathematical Statistics},
pages = {89 -- 129},
year = {2019},
doi = {10.1214/18-AAP1403}
}

@article{hambly_mckean_vlasov_absorbing_2017,
    title={A stochastic McKean–Vlasov equation for absorbing diffusions on the half-line},
    author={Ben Hambly and Sean Ledger},
    year={2017},
    journal={Ann.\@ Appl.\@ Probab.\@},
    fjournal = {The Annals of Applied Probability},
    volume={27},
    number={5},
    pages={2698--2752},
    doi={10.1214/16-AAP1256}
}

@article{hu_semilinear_bspde_1991,
author = {Ying Hu and Shige Peng},
title = {Adapted solution of a backward semilinear stochastic evolution equation},
journal = {Stoch.\@ Anal.\@ Appl.\@},
volume = {9},
number = {4},
pages = {445--459},
year = {1991},
publisher = {Taylor \& Francis},
doi = {10.1080/07362999108809250}
}

@article{zhou_duality_1992,
title = {A duality analysis on stochastic partial differential equations},
journal = {J.\@ Funct.\@ Anal.\@},
volume = {103},
number = {2},
pages = {275--293},
year = {1992},
doi = {10.1016/0022-1236(92)90122-Y},
author = {Xun Yu Zhou}
}

@article{zhou_nec_spde_1993,
author = {Xun Yu Zhou},
title = {On the Necessary Conditions of Optimal Controls for Stochastic Partial Differential Equations},
journal = {SIAM J.\@ Control Optim.\@},
volume = {31},
number = {6},
pages = {1462--1478},
year = {1993},
doi = {10.1137/0331068}
}

@article{pardoux_bspde_1980,
author = {\'Etienne Pardoux},
title = {Stochastic partial differential equations and filtering of diffusion processes},
journal = {Stoch.\@},
fjournal={Stochastics},
volume = {3},
number = {1--4},
pages = {127--167},
year  = {1980},
publisher = {Taylor \& Francis},
doi = {10.1080/17442507908833142}
}

@book{zheng_nonlinear_2004,
   title =     {Nonlinear Evolution Equations},
   author =    {Songmu Zheng},
   publisher = {Chapman \& Hall/CRC},
   year =      {2004},
   edition={1},
   doi={10.1201/9780203492222},
   series =    {Monographs and Surveys in Pure and Applied Math}
  }

@article{peng_stochastic_hjb_1992,
author = {Shige Peng},
title = {Stochastic Hamilton–Jacobi–Bellman Equations},
journal = {SIAM J.\@ Control Optim.\@},
volume = {30},
number = {2},
pages = {284--304},
year = {1992},
doi = {10.1137/0330018}
}

@article{al_hussein_bspde_2009,
author = {Abdul Rahman {Al-Hussein}},
sortkey  = {Alhussein},
title = {Backward stochastic partial differential equations driven by infinite-dimensional martingales and applications},
journal = {Stoch.\@},
volume = {81},
number = {6},
pages = {601--626},
year  = {2009},
publisher = {Taylor \& Francis},
doi = {10.1080/17442500903370202}
}

@book{carmona_mfg_2018, 
    title = {Probabilistic Theory of Mean Field Games with Applications I},
    author = {Ren\'e Carmona and Fran\c{c}ois Delarue},
    year = {2018},
    series = {Probability Theory and Stochastic Modelling},
    publisher = {Springer International Publishing},
    edition = {1},
    volume = {83},
    isbn = {978-3-319-58920-6},
    doi = {10.1007/978-3-319-58920-6}
}

@book{carmona_mfg_ii_2018, 
    title = {Probabilistic Theory of Mean Field Games with Applications II},
    author = {Ren\'e Carmona and Fran\c{c}ois Delarue},
    year = {2018},
    series = {Probability Theory and Stochastic Modelling},
    publisher = {Springer International Publishing},
    edition = {1},
    volume = {84},
    isbn = {978-3-319-56436-4},
    doi = {10.1007/978-3-319-56436-4}
}

@article{benoussan_mp_dp_1983,
author = {Alain Bensoussan},
title = {Maximum principle and dynamic programming approaches of the optimal control of partially observed diffusions},
journal = {Stoch.\@},
volume = {9},
number = {3},
pages = {169--222},
year = {1983},
publisher = {Taylor \& Francis},
doi = {10.1080/17442508308833253}
}

@article{lauriere_dp_mfc_2014,
title = {Dynamic programming for mean-field type control},
journal = {C.\@ R.\@ Math.\@},
fjournal = {Comptes Rendus Mathematique},
volume = {352},
number = {9},
pages = {707--713},
year = {2014},
issn = {1631-073X},
doi = {10.1016/j.crma.2014.07.008},
author = {Mathieu Lauri\`ere and Olivier Pironneau}
}

@article{campi_mfg_absorption_2018,
author = {Luciano Campi and Markus Fischer},
title = {$N$-player games and mean-field games with absorption},
volume = {28},
journal = {Ann.\@ Appl.\@ Probab.\@},
fjournal = {The Annals of Applied Probability},
number = {4},
publisher = {Institute of Mathematical Statistics},
pages = {2188 -- 2242},
year = {2018},
doi = {10.1214/17-AAP1354}
}

@article{campi_mfg_hitting_2021,
author = {Luciano Campi and Maddalena Ghio and Giulia Livieri},
title = {N-Player games and mean-field games with smooth dependence on past absorptions},
volume = {57},
journal = {Ann.\@ Inst.\@ H.\@ Poincar\'e Probab.\@ Statist.\@},
number = {4},
publisher = {Institut Henri Poincar\'e},
pages = {1901--1939},
year = {2021},
doi = {10.1214/20-AIHP1138}
}

@article{lasry_mfg_2007,
author = {Jean-Michel Lasry and Pierre-Louis Lions},
title = {Mean field games},
journal = {Jpn.\@ J.\@ Math.\@},
fjournal = {Japanese Journal of Mathematics},
volume = {2},
number={1},
year={2007},
doi ={10.1007/s11537-007-0657-8},
pages = {229--260}
}

@article{cardaliaguet_mfg_2014,
author = {Pierre Cardaliaguet and P.\@ Jameson Graber and Alessio Porretta and Daniela Tonon},
title = {Second order mean field games with degenerate diffusion and local coupling},
journal = {Nonlinear Differ.\@ Equ.\@ Appl.\@},
volume={22},
year={2014},
doi ={10.1007/s00030-015-0323-4},
pages = {1287--1317}
}

@misc{lions_cond_proc_2016,
      title={\'Equations de HJB et extensions de la th\'eorie classique du contr\^ole stochastique}, 
      author={Pierre-Louis Lions},
      howpublished={Coll\`ege de France},
      year={2016},
      % url={https://www.college-de-france.fr/fr/agenda/cours/equations-de-hjb-et-extensions-de-la-theorie-classique-du-controle-stochastique}
}

@article{fleig_oc_fpe_2017,
author = {Arthur Fleig and Roberto Guglielmi},
title = {Optimal Control of the Fokker--Planck Equation with Space-Dependent Controls},
volume = {174},
journal = {J.\@ Optim.\@ Theory Appl.\@},
pages = {408 -- 427},
year = {2017},
doi = {10.1007/s10957-017-1120-5}
}

@article{breitenbach_mp_fpe_2020,
author = {Tim Breitenbach and Alessio Borz\`i},
title = {The Pontryagin maximum principle for solving Fokker–Planck optimal control problems},
volume = {76},
journal = {Comput.\@ Optim.\@ Appl.\@},
pages = {499 -- 533},
year = {2020},
doi = {10.1007/s10589-020-00187-x}
}

@article{carmona_fbsde_smp_2015,
author = {Ren\'e Carmona and Fran\c{c}ois Delarue},
title = {Forward–backward stochastic differential equations and controlled McKean–Vlasov dynamics},
volume = {43},
journal = {Ann. Probab.},
number = {5},
publisher = {Institute of Mathematical Statistics},
pages = {2647--2700},
year = {2015},
doi = {10.1214/14-AOP946}
}

@misc{cardaliaguet_rate_mfc_2023,
      title={An algebraic convergence rate for the optimal control of McKean--Vlasov dynamics}, 
      author={Pierre Cardaliaguet and Samuel Daudin and Joe Jackson and Panagiotis Souganidis},
      year={2023},
      eprint={2203.14554},
      archivePrefix={arXiv}
}

@article{anita_sde_fpe_2021,
  title={Optimal Control of Stochastic Differential Equations via Fokker--Planck Equations},
  author={\c{S}tefana-Lucia Ani\c{t}a},
  journal={Appl.\@ Math.\@ Opt.\@},
  year={2021},
  volume={84},
  pages={1555 -- 1583},
  doi={10.1007/s00245-021-09804-5}
}

@book{metivier_semimartingales_1982,
title = {Semimartingales},
author = {Michel M\'etivier},
publisher = {De Gruyter},
address = {Berlin, New York},
doi = {doi:10.1515/9783110845563},
isbn = {9783110845563},
year = {1982},
volume = {2},
series = {De Gruyter Studies in Mathematics}
}

@misc{hambly_mvcp_arxiv_2023,
      title={Control of McKean--Vlasov SDEs with Contagion Through Killing at a State-Dependent Intensity}, 
      author={Ben Hambly and Philipp Jettkant},
      year={2023},
      eprint={2310.15854},
      archivePrefix={arXiv}
      % primaryClass={math.PR}
}

@article{al_hussein_bspde_2006,
title = {Backward stochastic partial differential equations in infinite dimensions},
author = {Abdul Rahman {Al-Hussein}},
sortkey  = {Alhussein},
pages = {1--22},
volume = {14},
number = {1},
journal = {Random Oper.\@ and Stoch.\@ Equ.\@},
doi = {10.1515/156939706776138020},
year = {2006}
}

@article{jin_1999_bspde,
    author = {Jin Ma and Jiongmin Yong},
    doi = {10.1007/s004400050205},
    journal = {Probab.\@ Theory Relat.\@ Fields},
    number = {2},
    pages = {135--170},
    title = {On linear, degenerate backward stochastic partial differential equations},    
    volume = {113},
    year = {1999}
}

@article{gomes_mfg_2016,
title = {Local Regularity for Mean-Field Games in the Whole Space},
journal = {Minimax Theory Appl.\@},
volume = {1},
number = {1},
pages = {65--82},
year = {2016},
author = {Diogo A.\@ Gomes and Edgard Pimentel}
}

@article{lasry_mfg_fh_2006,
title = {Jeux \`a champ moyen. II – Horizon fini et contr\^ole optimal},
journal = {C.\@ R.\@ Math.\@},
volume = {343},
number = {10},
pages = {679--684},
year = {2006},
doi = {10.1016/j.crma.2006.09.018},
author = {Jean-Michel Lasry and Pierre-Louis Lions}
}

@article{cardaliaguet_fo_mgf_2022,
author = {Pierre Cardaliaguet and Panagiotis E. Souganidis},
title = {On first order mean field game systems with a common noise},
volume = {32},
journal = {Ann.\@ Appl.\@ Probab.\@},
number = {3},
publisher = {Institute of Mathematical Statistics},
pages = {2289--2326},
year = {2022},
doi = {10.1214/21-AAP1734}
}

@book{cardaliaguet_master_2019,
  title={The Master Equation and the Convergence Problem in Mean Field Games},
  author={Pierre Cardaliaguet and Fran\c{c}ois Delarue and Jean-Michel Lasry and Pierre-Louis Lions},
  series={Annals of Mathematics Studies},
  year={2019},
  isbn={978-0-691-19070-9},
  publisher={Princeton University Press}
}

@misc{cardaliaguet_mfg_common_2022,
    title={Mean field games with common noise and degenerate idiosyncratic noise},
    author={Pierre Cardaliaguet and Benjamin Seeger and Panagiotis Souganidis},
    year={2022},
    eprint={2207.10209},
    archivePrefix={arXiv}
}

@article{buckdahn_mfc_smp_2011,
    author = {Rainer Buckdahn and Boualem Djehiche and Juan Li},
    doi = {10.1007/s00245-011-9136-y},
    journal = {Appl. Math. Optim.},
    mrnumber = {2822408},
    number = {2},
    pages = {197--216},
    title = {A General Stochastic Maximum Principle for SDEs of Mean-field Type},
    volume = {64},
    year = {2011}
}

@article{andersson_smp_mfc_2011,
    author = {Daniel Andersson and Boualem Djehiche},
    doi = {10.1007/s00245-010-9123-8},
    journal = {Appl. Math. Optim.},
    mrnumber = {2011},
    number = {3},
    pages = {341--356},
    title = {A Maximum Principle for SDEs of Mean-Field Type},
    volume = {63},
    year = {2011}
}

@article{acciaio_smp_mfc_2019,
    author = {Beatrice Acciaio and Julio Backhoff-Veraguas and Ren\'e Carmona},
    doi = {10.1137/18M1196479},
    journal = {SIAM J. Control Optim.},
    mrnumber = {4029806},
    number = {6},
    pages = {3666--3693},
    title = {Extended Mean Field Control Problems: Stochastic Maximum Principle and Transport Perspective},
    volume = {57},
    year = {2019}
}

@article{li_smp_mfc_2012,
    author = {Juan Li},
    doi = {10.1016/j.automatica.2011.11.006},
    journal = {Automatica},
    number = {2},
    pages = {366--373},
    title = {Stochastic maximum principle in the mean-field controls},
    volume = {48},
    year = {2012}
}

@article{jiequn_deep_bsde_2018,
author = {Han Jiequn and Arnulf Jentzen and E. Weinan},
title = {Solving high-dimensional partial differential equations using deep learning},
journal = {Proceedings of the National Academy of Sciences},
volume = {115},
number = {34},
pages = {8505--8510},
year = {2018},
doi = {10.1073/pnas.1718942115}
}

\end{document}